\theoremstyle{plain}
\newtheorem{lemma}{Lemma}[section]
\newtheorem{theorem}[lemma]{Theorem}
\newtheorem{propn}[lemma]{Proposition}
\newtheorem{cor}[lemma]{Corollary}
\newtheorem{claim}[lemma]{Claim}
\theoremstyle{definition}
\newtheorem{defn}[lemma]{Definition}
\newtheorem{remark}[lemma]{Remark}
\newtheorem{notation}[lemma]{Notation}
\theoremstyle{remark}
\newcommand{\norma}[1]{\left|{#1}\right|}
\newcommand{\C}{\mathbb{C}}
\newcommand{\R}{\mathbb{R}}
\newcommand{\Z}{\mathbb{Z}}
\newcommand{\D}{\mathbb{D}}
\newcommand{\est}{e^{2\pi(s+it)}}
\newcommand{\util}{\tilde{u}}
\newcommand{\wtil}{\tilde{w}}
\newcommand{\vtil}{\tilde{v}}
\newcommand{\jtil}{\tilde{J}}
\newcommand{\jcal}{\mathcal{J}}
\newcommand{\Dcal}{\mathcal{D}}
\newcommand{\Lcal}{\mathcal{L}}
\newcommand{\Mcal}{\mathcal{M}}
\newcommand{\interior}[1]{\mathring{{#1}}}
\newcommand{\escp}[2]{\left<{#1},{#2}\right>}
\newcommand{\wind}{\text{wind}}
\newcommand{\cl}[1]{\overline{{#1}}}
\newcommand{\mob}{\text{M\"ob}}
\newcommand{\p}{\mathcal{P}}
\newcommand{\W}{\mathcal{W}}
\newcommand{\Ucal}{\mathcal{U}}
\newcommand{\OO}{\mathcal{O}}
\newcommand{\cov}{\text{cov}}
\newcommand{\Sp}{\text{Sp}}
\newcommand{\tr}{\text{tr }}
\newcommand{\maslov}{\text{Maslov}}
\begin{document}

\title[FAST FINITE-ENERGY PLANES]{FAST FINITE-ENERGY PLANES IN SYMPLECTIZATIONS\\AND APPLICATIONS}
\author[Umberto Hryniewicz]{Umberto Hryniewicz}
\address[Umberto Hryniewicz]{Departamento de Matem\'atica Aplicada, IM-UFRJ, Rio de Janeiro, Brazil.}
\email{umberto@labma.ufrj.br}
\date{June 24, 2011 (preliminary version)}
\subjclass[2000]{Primary 53D35, 53D10; Secondary 53D25, 37J99}
\keywords{Hamiltonian dynamics, pseudo-holomorphic curves, contact geometry}

\begin{abstract}
We define the notion of fast finite-energy planes in the symplectization of a closed $3$-dimensional energy level $M$ of contact type. We use them to construct special open book decompositions of $M$ when the contact structure is tight and induced by a (non-degenerate) dynamically convex contact form. The obtained open books have disk-like pages that are global surfaces of section for the Hamiltonian dynamics. Let $S \subset \R^4$ be the boundary of a smooth, strictly convex, non-degenerate and bounded domain. We show that a necessary and sufficient condition for a closed Hamiltonian orbit $P\subset S$ to be the boundary of a disk-like global surface of section for the Hamiltonian dynamics is that $P$ is unknotted and has self-linking number $-1$.
\end{abstract}

\maketitle

\tableofcontents

\section{Introduction}\label{intro}

We intend to give a systematic treatment to the procedure of constructing global surfaces of section for the Hamiltonian dynamics on strictly convex $3$-dimensional energy levels.

\begin{defn}
A global surface of section for a vector field $X$ on a $3$-manifold $M$ is a compact embedded surface $\Sigma \hookrightarrow M$ satisfying:
\begin{enumerate}
 \item $X$ is transversal to $\Sigma\setminus \partial\Sigma$ and $\partial \Sigma$ consists of periodic orbits of $X$.
 \item For every $x \in M \setminus \partial \Sigma$ one finds sequences $t^\pm_n \rightarrow \pm\infty$ such that $\phi_{t^\pm_n}(x) \in \Sigma$.
\end{enumerate}
Here $\phi_t$ denotes the flow of $X$.
\end{defn}

In~\cite{convex} Hofer, Wysocki and Zehnder studied the Hamiltonian dynamics on a bounded and strictly convex regular level $S = H^{-1}(1) \subset \R^4$, where $H : \R^4 \rightarrow \R$ is a smooth Hamiltonian. If $z = (q_1,p_1,q_2,p_2)$ are linear coordinates in $\R^4$ equipped with its canonical symplectic form
\begin{equation}
 \omega_0 = dq_1 \wedge dp_1 + dq_2 \wedge dp_2,
\end{equation}
then Hamilton's equations can be rewritten as
\[
 \dot z = X_H(z)
\]
where $X_H$ is the so-called Hamiltonian vector field. It is uniquely determined by
\[
 i_{X_H}\omega_0 = dH
\]
and its flow preserves level-sets of $H$. They consider the case when $S = \partial K$ for some bounded, smooth and strictly convex domain $K \subset \R^4$. If $G$ is some other Hamiltonian realizing $S$ as a regular energy level then $\R X_G|_S = \R X_H|_S$, that is, Hamiltonian dynamics depends only on $S$ and $\omega_0$, up to time-reparametrization. This can be checked by inspection, or by noting that $$ \R X_H(z) = \left( T_zS \right) ^{\omega_0} \ \forall z\in S, $$ where $\left( T_zS \right) ^{\omega_0} := \left\{ v \in \R^4 : \omega_0(v,u) = 0, \ \forall u \in T_zS \right\}$ is the $\omega_0$-symplectic orthogonal of $T_zS$. The line bundle $(TS)^{\omega_0}$ is called the characteristic line field.

The study of Hamiltonian dynamics on such strictly convex hypersurfaces is now a classical subject. In 1978 P. Rabinowitz \cite{rab1} and A. Weinstein \cite{weinstein} proved existence of periodic orbits on these energy levels. In~\cite{convex} Hofer, Wysocki and Zehnder proved the following remarkable result.

\begin{theorem}[Hofer, Wysocki and Zehnder]\label{sectionconvex}
Let $S \subset \R^4$ be a bounded, smooth and strictly convex hypersurface. Then $S$ carries an unknotted periodic Hamiltonian orbit $P_0$ bounding a disk-like global surface of section $\Dcal$ for the Hamiltonian dynamics.
\end{theorem}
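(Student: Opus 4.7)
The plan is to realize the convex energy surface $S$ as a contact manifold and apply the theory of fast finite-energy planes developed earlier in the paper to produce an open book decomposition whose pages are global surfaces of section.

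First, I would observe that the Liouville $1$-form $\lambda_0 = \frac{1}{2}(q_1 dp_1 - p_1 dq_1 + q_2 dp_2 - p_2 dq_2)$ satisfies $d\lambda_0 = \omega_0$, and that strict convexity of $K$ together with $0 \in \mathrm{int}(K)$ (after translation) makes $S$ everywhere transverse to the radial Liouville vector field. Hence $\lambda := \lambda_0|_S$ is a contact form, its Reeb vector field is a positive reparametrization of $X_H|_S$, and $(S,\xi := \ker\lambda)$ is contactomorphic to the standard tight $(S^3,\xi_0)$. Producing a disk-like global surface of section for the Reeb flow is therefore equivalent to the claim for $X_H$.

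Second, I would verify the dynamical convexity hypothesis: every closed Reeb orbit of $\lambda$ has Conley--Zehnder index $\geq 3$. This is a linearized computation along closed orbits in which strict convexity of $K$ forces the linearized Reeb flow to rotate the contact planes at a definite positive rate, so that each full rotation contributes to a winding count bounding the index from below. A $C^\infty$-small perturbation then yields a non-degenerate, dynamically convex contact form $\lambda'$ defining the same tight contact structure.

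Third, I would invoke the main construction announced in the abstract: on a closed $3$-manifold carrying a non-degenerate, dynamically convex contact form inducing a tight structure, the moduli space of fast finite-energy planes in the symplectization, taken up to biholomorphisms of $\C$, organizes itself into an open book decomposition with disk-like pages that are global surfaces of section; the binding is a single closed Reeb orbit. The standard self-linking formula for finite-energy planes, combined with the Conley--Zehnder index $3$ forced on the binding by dynamical convexity, forces the binding to be unknotted with self-linking number $-1$. Applied to $\lambda'$ this produces an unknotted binding orbit $P'_0$ with $\mathrm{sl}(P'_0)=-1$ and an associated disk-like global surface of section $\Dcal'$.

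Finally, I would pass to the limit $\lambda'\to\lambda$ to recover the statement for the original (possibly degenerate) form. The main obstacle, which I expect to be where essentially all the work sits, is precisely this compactness-and-limit step: one must rule out bubbling, breaking into several SFT levels, and loss of asymptotic control for the one-parameter family of fast planes as $\lambda'\to\lambda$. Here dynamical convexity is again the key input, since any broken or bubbled configuration would force an asymptotic limit with Conley--Zehnder index $\leq 2$, contradicting the bound of the previous step. Once this compactness is in place, the limit family of fast planes projects to a foliation of $S\setminus P_0$, giving the required binding orbit $P_0$ and disk-like global surface of section $\Dcal$; combined with the statement of Theorem~\ref{sectionconvex}, this also yields the necessity of being unknotted with self-linking $-1$ claimed in the abstract.
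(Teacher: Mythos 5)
The statement you are asked to prove is cited from \cite{convex} and the paper does not re-prove it; the paper's own new results (Theorems~\ref{thmA} and~\ref{openbook}) recover it only under an added non-degeneracy hypothesis, with the degenerate case explicitly deferred to~\cite{hry}. So the proposal has to be judged against what the machinery in this paper can actually deliver, and there it has a genuine gap.

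The gap is at the seed of the construction. Every piece of machinery you invoke from the paper — Theorem~\ref{existfast} (existence of a fast plane), Theorem~\ref{openbook}, Theorem~\ref{ob2} — takes as \emph{input} an unknotted, simply covered periodic Reeb orbit $P$ with $sl(P,\text{disk})=-1$ (and, for Theorem~\ref{ob2}, an already-existing embedded fast plane $\util_0$ asymptotic to $P$). Theorem~\ref{sectionconvex}, by contrast, asserts the bare existence of such an orbit: producing the first orbit, and the first finite-energy plane asymptotic to it, is exactly the hard part. Your third paragraph slides past this by asserting that ``the moduli space of fast finite-energy planes \ldots organizes itself into an open book'' and then arguing that the index and self-linking are forced on ``the binding''; but this presupposes the moduli space is nonempty, which is not a consequence of dynamical convexity and tightness alone within the framework of this paper. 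In the actual proof in~\cite{convex} (following~\cite{93} and~\cite{char1,char2}) the Bishop family is launched from a totally real boundary condition that is \emph{not} tied to a pre-chosen orbit — one starts from a Giroux-normalized sphere or disk in $S^3$ and the closed orbit is \emph{produced} by the bubbling-off, together with its unknottedness and $sl=-1$. The paper's Theorem~\ref{bishop}/\ref{existfast} runs the Bishop family only on a disk already spanning a prescribed orbit $P$, so it cannot serve as your starting point without circularity.

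A second, lesser, issue is your final limiting step. You correctly flag it as the hard step, but the justification offered (``any broken or bubbled configuration would force an asymptotic limit with Conley--Zehnder index $\leq 2$'') does not account for the real difficulty in the degenerate case: the asymptotic analysis underlying $\wind_\infty$, the Fredholm theory, and the compactness in Section~\ref{compactness} is developed under the hypothesis that relevant contractible orbits up to the given period are non-degenerate (or that the planes have non-degenerate asymptotic behavior in the sense of Definition~\ref{behavior}). When $\lambda'\to\lambda$ with $\lambda$ degenerate, one can lose control of the asymptotic operator's spectrum, and the index bound alone does not rule out loss of compactness. The paper itself notes this removal of non-degeneracy is ``rather technical'' and postpones it; your sketch would need to engage with that analysis rather than appeal again to the index estimate.

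The remainder of your outline — realizing $S$ as a tight contact $S^3$ via $\lambda_0|_S$, the dynamical convexity of $\lambda_0|_S$, and the use of an open book of fast planes as global surfaces of section once one such plane exists — matches the paper's strategy for the non-degenerate case.
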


The Poincar\'e return map to $\interior{\Dcal}$ preserves a smooth area-form, with total area $\int_{\Dcal}\omega_0<\infty$. Brouwer's translation theorem provides a second periodic orbit $P_1$, geometrically distinct of $P_0$. It corresponds to a fixed point of the first return map to $\interior{\Dcal}$. One can, as described in~\cite{convex}, apply results of J. Franks \cite{franks} on periodic points of area-preserving diffeomorphisms of the open annulus to obtain the following important corollary.

\begin{cor}[Hofer, Wysocki and Zehnder]
Hamiltonian dynamics on a bounded, smooth, strictly convex energy level inside $\R^4$ has either $2$ or $\infty$-many periodic orbits.
\end{cor}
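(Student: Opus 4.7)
The plan is to combine Theorem~\ref{sectionconvex} with classical two-dimensional area-preserving dynamics, following the outline already sketched in the paragraph preceding the statement.

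First I would invoke Theorem~\ref{sectionconvex} to pick an unknotted periodic orbit $P_0$ bounding a disk-like global surface of section $\Dcal$, and consider the first-return map $\varphi:\interior{\Dcal}\to\interior{\Dcal}$. Since $\omega_0$ restricts to a $\varphi$-invariant area form on $\interior{\Dcal}$ with $\int_{\Dcal}\omega_0<\infty$, the map $\varphi$ is an area-preserving diffeomorphism of a topological open disk of finite total area. Brouwer's translation theorem then prevents $\varphi$ from being fixed-point-free: any such map would be conjugate to a translation of $\R^2$, which is incompatible with the finite total area. This produces a fixed point $p_1\in\interior{\Dcal}$ whose flow line is a closed Hamiltonian orbit $P_1$, geometrically distinct from $P_0$ because $p_1\notin \partial\Dcal$.

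Next I would restrict $\varphi$ to the open annulus $A:=\interior{\Dcal}\setminus\{p_1\}$, where it is again an area-preserving diffeomorphism of finite total area, isotopic to the identity (the ambient Hamiltonian flow provides such an isotopy on $S$ and it descends to $A$ once the invariant orbits $P_0,P_1$ are removed). At this point Franks' theorem~\cite{franks} intervenes: for an area-preserving diffeomorphism of the open annulus of finite area which is isotopic to the identity, a single periodic point forces the existence of infinitely many. Therefore, if any closed Hamiltonian orbit other than $P_0$ and $P_1$ exists, its intersection with $A$ is a periodic point of $\varphi|_A$, yielding infinitely many periodic points of $\varphi|_A$; since each closed orbit meets $\Dcal$ only finitely many times, one recovers infinitely many geometrically distinct periodic orbits of $X_H$. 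The dichotomy between exactly two and infinitely many periodic orbits then follows.

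The main technical obstacle, and the reason why this is genuinely a corollary rather than an independent argument, is the careful verification of Franks' hypotheses for $\varphi|_A$: one must identify the isotopy class of $\varphi|_A$, check the finite-area and annulus topological setup, and translate periodic points of the return map into distinct closed orbits of the flow (the last point being immediate from transversality of $X_H$ with $\Dcal$). Once this bookkeeping is carried out, the statement reduces to a formal combination of Theorem~\ref{sectionconvex}, Brouwer's translation theorem, and Franks' theorem.
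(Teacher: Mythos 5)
Your proposal is correct and follows the same route the paper takes: Theorem~\ref{sectionconvex} produces the disk-like global surface of section, Brouwer's translation theorem yields the fixed point $p_1$ of the area-preserving return map, and Franks' theorem on the punctured disk (an open annulus of finite area) gives the $2$-or-$\infty$ dichotomy. The only inessential addition is the isotopy-to-identity hypothesis you flag for Franks' theorem; Franks' result in~\cite{franks} is stated for area-preserving homeomorphisms of the open annulus with at least one periodic point and does not require that hypothesis, so that piece of bookkeeping can be dropped.
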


Theorem~\ref{sectionconvex} immediately prompts the following question: What are the necessary and sufficient conditions for a periodic Hamiltonian orbit to bound an embedded disk-like global surface of section?

Our main result answers this question when $S$ is non-degenerate, that is, when $1$ is not a transverse eigenvalue of the linearized Poincar\'e return map of every closed orbit. This is a $C^\infty$-generic condition on $S$. Our answer is stated in terms of a certain contact-topological invariant, called the {\it self-linking number}, which we now describe.

A $1$-form $\lambda$ on a $3$-manifold $M$ is a contact form if $\lambda\wedge d\lambda$ never vanishes. The associated contact structure is the hyperplane distribution
\begin{equation}\label{kernel}
 \xi = \ker \lambda.
\end{equation}
A co-oriented contact $3$-manifold is a pair $(M,\xi)$ such that (\ref{kernel}) holds for some contact form $\lambda$. We call $\lambda$ tight if $\xi$ is a tight contact structure, see Subsection~\ref{tightstr}. The associated Reeb vector field $R$ is defined implicitly by
\begin{equation}\label{reebvector}
 \begin{array}{ccc}
   i_Rd\lambda = 0 & \text{ and } & i_R\lambda = 1.
 \end{array}
\end{equation}

\begin{defn}[Self-linking Number]\label{selflink}
Let $L \subset M$ be a knot transverse to $\xi$, and let $\Sigma \hookrightarrow M$ be a Seifert surface\footnote{By a Seifert surface for $L$ we mean an orientable embedded connected compact surface $\Sigma \hookrightarrow M$ such that $L = \partial \Sigma$.} for $L$. Since the bundle $\xi|_\Sigma$ carries the symplectic bilinear form $d\lambda$, there exists a smooth non-vanishing section $Z$ of $\xi|_\Sigma$, which can be used to slightly perturb $L$ to another transverse knot $L_\epsilon = \{ \exp_x (\epsilon Z_x) : x\in L \}$. Here $\exp$ is any exponential map. A choice of orientation for $\Sigma$ induces orientations of $L$ and of $L_\epsilon$. The {\it self-linking number} is defined as the oriented intersection number
\begin{equation}\label{defselflink0}
 sl(L,\Sigma) := L_\epsilon \cdot \Sigma \in \Z,
\end{equation}
where $M$ is oriented by $\lambda\wedge d\lambda$. It is independent of $\Sigma$ when $c_1(\xi) \in H^2(M)$ vanishes.
\end{defn}

Recall that $\omega_0$ has a special primitive $\lambda_0 = \frac{1}{2} \sum_{k=1}^2 q_kdp_k - p_kdq_k$. We assume $0$ is in the bounded component $K$ of $\R^4\setminus S$, so that $\lambda_0|_S$ is a contact form. If $R$ is the associated Reeb field on $S$ then $\R R = \R X_H$. Our main result can be stated as follows.

\begin{theorem}\label{thmA}
Let $S \subset \R^4$ be a non-degenerate, bounded, smooth and strictly convex hypersurface. A necessary and sufficient condition for a periodic Hamiltonian orbit $P\subset S$ to be the boundary of a disk-like global surface of section for the Hamiltonian dynamics is that $P$ is unknotted and has self-linking number $-1$.
\end{theorem}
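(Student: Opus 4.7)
The plan is to split into necessity and sufficiency, with the latter invoking the fast-plane technology developed earlier in the paper.

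\emph{Necessity.} If $P$ bounds a disk-like global surface of section $\Dcal$, then $P$ is unknotted, since $\Dcal \hookrightarrow S \cong S^3$ is a Seifert surface. To show $sl(P,\Dcal) = -1$, trivialize the symplectic bundle $\xi|_\Dcal$ (trivial because $\Dcal$ is contractible) by a non-vanishing section $Z$, push $P$ off along $Z$ to obtain $P_\epsilon$, and compute $P_\epsilon \cdot \Dcal$ via the characteristic line field $\xi \cap T\Dcal$. Transversality of $R$ to $\interior{\Dcal}$ together with tangency of $R$ to $\partial\Dcal$ makes this line field non-singular on $\Dcal$ and identifies its restriction to $\partial\Dcal$ with the $\xi$-component of the outward Seifert normal. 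A winding count of $Z$ against this Seifert direction, equivalent to the standard computation that a Reeb-transverse disk realizes the sharp Bennequin bound $sl = -\chi(\Dcal)$, yields $sl(P) = -1$.

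\emph{Sufficiency.} The strategy is to produce a fast finite-energy plane $\tilde u : \C \to \R \times S$ in the symplectization asymptotic to $P$ at its positive puncture, and then quote the open-book construction from earlier in the paper, which assembles such a plane into a disk-like open book whose pages are global surfaces of section for the Reeb flow, hence for the Hamiltonian dynamics. The hypotheses on $S$ furnish the inputs of that construction: strict convexity implies, by HWZ, that $\lambda_0|_S$ is dynamically convex ($\mu_{CZ} \geq 3$ for all closed orbits); non-degeneracy makes the relevant Fredholm theory work; and the induced contact structure is tight. The topological hypotheses on $P$ enter through the index theory: unknottedness gives a trivialization of $\xi$ over a disk spanning $P$, and $sl(P) = -1$ pins the relative first Chern number along this trivialization to the value needed for a finite-energy plane asymptotic to $P$ to be fast in the paper's sense. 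The plane is then produced by a moduli-space argument patterned on the HWZ Bishop family: form the moduli space of embedded finite-energy planes asymptotic to $P$, establish non-emptiness (e.g.\ via a local filling near $P$), and compactify using SFT compactness; dynamical convexity rules out breakings into low-index orbits, while the self-linking hypothesis together with positivity of intersections in dimension four obstructs the remaining nodal degenerations.

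The principal obstacle is this last existence-and-compactness step: dynamical convexity does most of the Reeb-dynamical work, but adapting the Bishop-family construction to an arbitrary orbit $P$ satisfying only the topological hypotheses, rather than to the distinguished orbit appearing in Theorem~\ref{sectionconvex}, is where the technical core of the argument must live. I would expect to set up a parametric moduli space of fast planes and bootstrap from an initial filling near $P$, using the contact-topological data $(P \text{ unknotted},\ sl(P)=-1)$ to propagate the fast-plane property across the moduli space and to rule out the degenerations that would otherwise prevent reaching an actual fast plane asymptotic to $P$.
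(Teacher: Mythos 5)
Your overall architecture matches the paper's: necessity by a degree-theoretic winding count, and sufficiency by producing an embedded fast finite-energy plane asymptotic to $P$ (via a Bishop family) and feeding it into the open-book construction, with dynamical convexity of $\lambda_0|_S$ (from HWZ) and tightness (Bennequin) supplying the hypotheses. The necessity argument you outline is essentially the paper's Proposition~\ref{propnec}.

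The genuine gap is at exactly the point you flag as ``the technical core.'' You propose to run the HWZ Bishop-family argument starting from a spanning disk for an arbitrary unknotted orbit $P$ with $sl(P)=-1$, but you do not supply the idea that makes this work when $\mu_{CZ}(P)>3$. In~\cite{char2} the bubbling-off analysis of the Bishop family terminates because $\mu_{CZ}(P)=3$ forces the limit building to have a single plane in its top non-trivial level; when only $\mu_{CZ}(P)\geq 3$ is known, that argument breaks down. The paper's resolution is Proposition~\ref{perturbdisk}: one first replaces the spanning disk $\Dcal_0$ by a $C^0$-close embedded disk $\Dcal_1$ with the additional property that the Reeb vector field is \emph{nowhere tangent} to $\Dcal_1$ on a punctured neighborhood of $\partial\Dcal_1$ (a nontrivial construction requiring the Martinet-tube normal form and a case analysis on $\det Y$ for a logarithm $Y$ of the linearized return map). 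This boundary transversality is then used in Lemma~\ref{pidunnotvanish} to show that $\pi\cdot du_n$ is nowhere zero on $\mathbb{D}$ for the Bishop disks, which pins $\wind_\infty$ of the bubble to $1$ (Lemma~\ref{winfty}) and, via the index inequality of Lemma~\ref{notdyn} ($\wind_\pi\geq 0$ combined with $\mu_{CZ}\geq 3$ at negative punctures), kills all further bubbling. Without this perturbation, ``ruling out low-index breakings by dynamical convexity'' alone does not control $\wind_\infty$ of the first bubble, and the conclusion that the limit is a single \emph{fast} plane does not follow.

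A secondary, smaller discrepancy: the paper deliberately avoids SFT compactness and carries out a self-contained bubbling-off analysis (Lemmas~\ref{beforeclaim}--\ref{round1} and the auxiliary Lemma~\ref{notdyn}), partly so that the argument is transversality-free. Quoting SFT compactness would be a different route to the compactness half, but it does not repair the existence gap above; the existence of the initial fast plane still hinges on the boundary perturbation of the spanning disk.
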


Necessity is an easy computation, see Proposition~\ref{necessity}. The assumption of $S$ being non-degenerate is rather technical and will be removed in~\cite{hry}. The reader acquainted with the work of Hofer, Wysocki and Zehnder will notice that it can be removed by arguments found in~\cite{convex}. In~\cite{hry} we shall also prove that a periodic orbit associated to any fixed point of Poincar\'e's first return map to the global surface of section obtained from Theorem~\ref{sectionconvex} has self-linking number $-1$. As a consequence, it also bounds a disk-like global surface of section.

Sufficiency in Theorem~\ref{thmA} will follow from a more general result, which we now describe. Motivated by~\cite{convex}, we consider {\it systems of global surfaces of section} organized in the form of open book decompositions.

\begin{defn}\label{adapted}
An open book decomposition of an oriented $3$-manifold $M$ is a pair $(L,p)$ where $L$ is an oriented link in $M$, and $p:M\setminus L \rightarrow S^1$ is a fibration such that each fiber $p^{-1}(\theta)$ is the interior of an oriented compact embedded surface $S_\theta \hookrightarrow M$ satisfying $\partial S_\theta = L$ (including orientations). $L$ is called the binding and the fibers are called pages. It is said to be adapted to the dynamics of a vector field $X$ if $L$ consists of periodic orbits, $X$ orients $L$ positively, the pages are global surfaces of section and the orientation of $M$ together with $X$ induce the orientation of the pages.
\end{defn}

We are interested in constructing open book decompositions adapted to the dynamics of Reeb vector fields. The study of this problem was initiated by Hofer, Wysocki and Zehnder in~\cite{char1,char2}. Their proofs are based on the theory of pseudo-holomorphic curves in symplectizations, introduced by Hofer in~\cite{93}. In the constructions they use disk-filling methods, bubbling-off analysis and their own perturbation theory. This work is the first of three articles extending their results.

Consider a contact form $\lambda$ on a closed $3$-manifold $M$. A periodic Reeb orbit will be denoted by $P=(x,T)$, where $x:\R\rightarrow M$ solves $\dot x = R\circ x$ and $T>0$ is a period. It is called simply covered when $T$ is the minimal positive period of $x$, and unknotted if it is simply covered and $x(\R)$ is the unknot. When $\Sigma$ is a Seifert surface for the transverse knot $x(\R)$ we write $sl(P,\Sigma)$ instead of $sl(x(\R),\Sigma)$. Since the Reeb flow $\phi_t$ preserves $\lambda$ we have well-defined $d\lambda$-symplectic maps $d\phi_t(x(t_0)) : \xi_{x(t_0)} \rightarrow \xi|_{x(t_0+t)}$. The periodic orbit $P$ is non-degenerate if $1$ is not an eigenvalue of $d\phi_T(x(t_0))|_{\xi}$, $\forall t_0\in\R$. When every $P$ is non-degenerate one says $\lambda$ is non-degenerate. See Subsection~\ref{orbits} for a more precise discussion.

There is an important invariant of the linearized dynamics along a periodic orbit originally introduced in~\cite{conley}, called the Conley-Zehnder index, which we now describe. Let $\Sp(n)$ be the symplectic linear group in dimension $2n$, and denote
\[
 \Sigma^* = \left\{ \varphi \in C^\infty([0,1],Sp(1)) : \varphi(0) = I \text{ and } \det \left[\varphi(1) - I\right] \not= 0 \right\}.
\]
In~\cite{fols} we find the following axiomatic characterization of the Conley-Zehnder index in the case $n=1$. For other discussions on this topic see~\cite{rob} or~\cite{salzehn}.

\begin{theorem}\label{axiomscz}
There exists a unique surjective map $\mu : \Sigma^* \rightarrow \Z$ characterized by the following axioms:
\begin{enumerate}
 \item {\bf Homotopy:} If $\varphi_s$ is a homotopy of arcs in $\Sigma^*$ then $\mu\left(\varphi_s\right)$ is constant.
 \item {\bf Maslov index:} If $\psi:\left(\R/\Z,0\right) \rightarrow \left(Sp(1),I\right)$ is a loop  and $\varphi\in\Sigma^*$ then $\mu(\psi\varphi) = 2\text{Maslov}(\psi) + \mu(\varphi)$.
 \item {\bf Invertibility:} If $\varphi \in \Sigma^*$ and $\varphi^{-1}(t) := \varphi(t)^{-1}$ then $\mu\left(\varphi^{-1}\right) = - \mu(\varphi)$.
 \item {\bf Normalization:} $\mu\left(t\mapsto e^{i\pi t}\right) = 1$.
\end{enumerate}
\end{theorem}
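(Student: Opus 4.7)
The plan is to characterize $\pi_0(\Sigma^*)$ explicitly and then use the four axioms to pin down $\mu$ on each component. For any $A \in \Sp(1)$ one computes $\det(A-I) = 2 - \tr A$, so $\Sp(1)^* := \{A : \det(A-I) \neq 0\}$ splits into two path components $C_+ = \{\tr < 2\}$ (elliptic and negative hyperbolic) and $C_- = \{\tr > 2\}$ (positive hyperbolic). A polar-decomposition analysis, writing $A = R(\theta) P$ with $P$ positive definite symmetric symplectic so that $\tr A = (\tr P) \cos\theta$, shows $C_+$ is contractible while $C_-$ is homotopy equivalent to $S^1$ with $\pi_1$-generator null-homotopic in $\Sp(1)$. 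Since the path space of $\Sp(1)$ based at $I$ is contractible and fibers over $\Sp(1)$ with fiber the based loop space, $\Sigma^*$ is homotopy equivalent to the preimage of $\Sp(1)^*$ in the universal cover $\widetilde{\Sp(1)}$. Hence $\pi_0(\Sigma^*) \cong \Z \sqcup \Z$, with $\pi_1(\Sp(1)) \cong \Z$ acting freely and transitively on each copy by pointwise multiplication with based loops.

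For uniqueness, axiom 1 makes $\mu$ a function of $\pi_0(\Sigma^*)$, and axiom 2 makes it equivariant under the loop action up to the shift $2\maslov$; hence $\mu$ is determined on each $\Z$-orbit once pinned at one point. On the $C_+$-orbit, axiom 4 fixes $\mu(\varphi_1) = 1$ where $\varphi_1(t) = e^{i\pi t}$ ends at $-I \in C_+$. For odd $n > 0$, writing $\varphi_n = \psi_n \varphi_1$ where $\psi_n(t) := e^{i\pi(n-1)t}$ is a based loop of Maslov index $(n-1)/2$, axiom 2 yields $\mu(\varphi_n) = n$; axiom 3 together with $\varphi_{-n} = (\varphi_n)^{-1}$ handles odd $n < 0$. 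Equivariance then pins $\mu$ on the full $C_+$-orbit.

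For the $C_-$-orbit, consider the symmetric hyperbolic path
\[
\psi(t) := \exp(t J_0 D) = \begin{pmatrix} \cosh t & \sinh t \\ \sinh t & \cosh t \end{pmatrix}, \qquad D := \mathrm{diag}(1,-1),
\]
whose endpoint has eigenvalues $e^{\pm 1}$ and thus lies in $C_-$. The algebraic identity $R(\pi/2) D R(-\pi/2) = -D$ gives $R(\pi/2) \psi(t) R(-\pi/2) = \psi(-t) = \psi(t)^{-1}$, so the family $\psi_s(t) := R(s\pi/2) \psi(t) R(-s\pi/2)$, $s \in [0,1]$, is a homotopy in $\Sigma^*$ from $\psi$ to $\psi^{-1}$: we have $\psi_s(0) = I$, and each $\psi_s(1)$ is conjugate to $\psi(1)$ and hence remains in $C_-$. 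Axioms 1 and 3 then force $\mu(\psi) = -\mu(\psi) = 0$, and axiom 2 determines $\mu$ on the full $C_-$-orbit, realizing all values in $2\Z$.

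For existence, define $\mu$ on each component of $\Sigma^*$ to be the integer forced above. Axioms 1 and 4 are then built in, axiom 2 is the defining property of the labeling, and axiom 3 reduces on representatives to $\varphi_n^{-1} = \varphi_{-n}$ and $\psi^{-1} \sim \psi$, extending by equivariance since inversion of a loop negates its Maslov index. Surjectivity onto $\Z$ follows since the $\varphi_n$ realize every odd integer and Maslov-shifted copies of $\psi$ realize every even integer. The main obstacle in this strategy is the equation $\mu(\psi) = 0$: by itself, invertibility only constrains components homotopic to their own inverse, and the explicit conjugation homotopy $\psi_s$ built from $R(\pi/2) D R(-\pi/2) = -D$ is the non-trivial step that makes the $C_-$-orbit accessible to the axioms at all.
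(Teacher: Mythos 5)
The paper does not prove Theorem~\ref{axiomscz}; it is quoted from~\cite{fols} (with the geometric characterization deferred to~\cite{hkriener} in Appendix~\ref{geomindex}), so there is no in-paper argument to compare against. Your proof is correct and self-contained, and the route is, as far as I can tell, genuinely different from what the cited sources do: they construct the index by a formula (spectral data of the asymptotic operator, or the rotation interval $\hat\mu(I_\varphi)$) and verify the axioms, whereas you prove uniqueness and existence purely from the structure of $\pi_0(\Sigma^*)$.

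Your topological setup checks out: $\det(A-I)=2-\tr A$, so $\Sp(1)^*$ splits into $C_\pm$, the polar-decomposition identity $\tr(R(\theta)P)=(\tr P)\cos\theta$ is correct, $C_+$ retracts onto $\{\cos\theta<1\}$ (contractible) while $C_-$ retracts onto $\{\theta=0,\ P\neq I\}$ (a punctured plane, nullhomotopically included in $\Sp(1)\simeq S^1$), and since $\Sp(1)$ is a $K(\Z,1)$ the homotopy fiber of $\Sp(1)^*\hookrightarrow\Sp(1)$ is indeed weakly equivalent to the preimage of $\Sp(1)^*$ in $\widetilde{\Sp(1)}$, giving $\pi_0(\Sigma^*)\cong\Z\sqcup\Z$ with the claimed free transitive $\pi_1$-action on each copy. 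The conjugation homotopy $\psi_s(t)=R(s\pi/2)\psi(t)R(-s\pi/2)$ is valid as a path in $\Sigma^*$: each $\psi_s$ starts at $I$ and ends at a matrix conjugate to $\psi(1)$, hence with trace $2\cosh 1>2$, and the identity $J_0 D J_0^{-1}=-D$ gives $\psi_1=\psi^{-1}$. This is the genuine insight of the argument, and it cleanly replaces a direct computation of the index on the positive-hyperbolic side.

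One small point you glide over in the existence/invertibility step: $(\,L\varphi_1)^{-1}=\varphi_1^{-1}L^{-1}$ has the loop on the \emph{right}, whereas the Maslov axiom is stated for the loop on the left. You need the observation that pointwise left and right multiplication by a based loop induce the same action on $\pi_0(\Sigma^*)$; this holds because the deck group of $\widetilde{\Sp(1)}$ is central, so the lifts of $\psi\varphi$ and $\varphi\psi$ end at the same point of the universal cover. It would be worth stating this explicitly, since $\Sp(1)$ itself is not abelian. With that added, the argument is complete.
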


Let $P=(x,T)$ be a non-degenerate periodic Reeb orbit and denote by $\xi_P$ the bundle $x_T^*\xi\to\R/\Z$, where $x_T(t) = x(Tt)$. Consider the set $\mathcal{S}_P$ of homotopy classes of smooth $d\lambda$-symplectic trivializations of $\xi_P$, and fix $\beta \in \mathcal{S}_P$. Any trivialization in class $\beta$ can be used to represent the linear maps $d\phi_{Tt}:\xi_{x(0)} \rightarrow \xi_{x(Tt)}$ by some path $\varphi\in\Sigma^*$. We write $\mu_{CZ}(P,\beta) = \mu(\varphi)$. By axiom (1) above this is independent of the particular trivialization in class $\beta$. When $P$ is contractible we shall say $c_1(\xi)$ vanishes along $P$ if the $2$-sphere obtained by gluing any two disk-maps spanning the map $e^{i2\pi t} \mapsto x(Tt)$ lies in the kernel of $c_1(\xi)$. In this case, there exists a special class $\beta_P \in \mathcal{S}_P$ induced by some, and hence any, such disk-map. We can define $\mu_{CZ}(P) := \mu_{CZ}(P,\beta_P)$, see Subsection~\ref{orbits}.

In~\cite{93} Hofer introduced special almost complex structures in the symplectization $\R\times M$ and considered solutions of the associated Cauchy-Riemann equations in order to study the dynamics of Reeb vector fields. As a consequence of his ground-breaking work, the three-dimensional Weinstein Conjecture\footnote{In~\cite{taubes} C. Taubes confirmed the three-dimensional Weinstein Conjecture using Seiberg-Witten theory.} was confirmed in many cases, including all contact forms on $S^3$. We need to recall a few concepts from~\cite{93} in order to discuss our results.

A complex structure $J$ on $\xi$ is $d\lambda$-compatible if $d\lambda(\cdot,J\cdot)$ is a metric on $\xi$. We write $\jcal(\xi,d\lambda|_\xi)$ for the set of such complex structures. Following~\cite{93}, every $J \in \jcal(\xi,d\lambda|_\xi)$ induces an almost complex structure $\jtil$ on $\R\times M$ by
\begin{equation}\label{almcpxstr}
 \begin{array}{ccc}
   \jtil \cdot \partial_a=R & \text{ and } & \jtil|_\xi \equiv J,
 \end{array}
\end{equation}
where $a$ denotes the $\R$-component. A finite-energy plane is a $\jtil$-holomorphic map $\util : (\C,i) \to (\R\times M,\jtil)$ with positive and finite Hofer energy, see Subsection~\ref{fesurfaces} for the precise definitions. The following result is central in~\cite{char2}, see also~\cite{char1}.

\begin{theorem}[Hofer, Wysocki and Zehnder]\label{chars3}
Let $\lambda$ be a tight contact form on a closed $3$-manifold $M$ and assume $c_1\left(\xi\right)$ vanishes. Let $P_0 = (x_0,T_0)$ be an unknotted non-degenerate closed Reeb orbit satisfying $\mu_{CZ}(P_0) = 3$ and $sl(P_0) = -1$. Suppose every contractible orbit $P = (x,T)$ with $T\leq T_0$ is non-degenerate and satisfies $\mu_{CZ}(P)\geq 3$. Then, for generic choice of $J \in \jcal(\xi,d\lambda|_\xi)$, there exists an embedded finite-energy $\jtil$-holomorphic plane $\util_0$ in the sympelctization $\R\times M$ asymptotic to $P_0$ at $\infty$. Its projection onto $M$ does not intersect $x_0(\R)$ and is only one page of an open book decomposition adapted to the Reeb dynamics. The binding is $P_0$ and the pages are open disks. In particular, $M = S^3$ and $\xi$ is its unique (up to contactmorphism) tight contact structure.
\end{theorem}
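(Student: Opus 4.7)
The plan is to construct a $1$-parameter family of embedded finite-energy $\jtil$-holomorphic planes asymptotic to $P_0$ whose projections to $M$ foliate $M\setminus x_0(\R)$ by the pages of the desired open book. The argument breaks into three parts: (a) construction of an initial plane $\util_0$; (b) analysis of the moduli space in which $\util_0$ lives; (c) compactness of this moduli space and its passage to an open book adapted to $R$.

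\textbf{Initial plane via Bishop disks.} I would begin, following the strategy of \cite{93,char1}, by setting up a totally real surface in $\R\times M$ near $\{-\infty\}\times x_0(\R)$ obtained by pushing $x_0(\R)$ slightly off itself along a Reeb-transverse disk-like germ, to serve as a local boundary condition for small $\jtil$-holomorphic disks. This produces a Bishop family sprouting from the orbit at $-\infty$. I would then propagate this family along the maximal connected interval of parameters over which embeddedness, somewhere-injectivity, and a priori energy bounds persist, and analyze the possible degenerations at the boundary of this interval. Tightness of $\lambda$ excludes vanishing cycles; the hypothesis $\mu_{CZ}(P)\geq 3$ for every contractible closed Reeb orbit $P$ of action $\leq T_0$, combined with the additivity of the Fredholm index under breaking, rules out all limit configurations except the formation of a finite-energy plane asymptotic to a closed orbit of action $\leq T_0$. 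The normalization $\mu_{CZ}(P_0)=3$ and $sl(P_0)=-1$, together with the unknottedness of $P_0$ and positivity of intersections, then pin the limit orbit down to $P_0$, producing $\util_0$.

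\textbf{The moduli space and its regularity.} Next, I would consider the moduli space $\Mcal$ of equivalence classes of somewhere-injective finite-energy $\jtil$-holomorphic planes asymptotic to $P_0$ in the homotopy class of $\util_0$, modulo biholomorphisms of $\C$ fixing $\infty$ together with a chosen asymptotic marker. The Fredholm index formula, using $\mu_{CZ}(P_0)=3$ and $c_1(\xi)=0$, gives $\dim\Mcal=1$. For generic $J$, standard transversality applies; more importantly, the asymptotic winding analysis of \cite{char1}, together with $sl(P_0)=-1$, forces the normal Cauchy-Riemann operator to be surjective (automatic transversality in dimension four) and each element of $\Mcal$ to be embedded. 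Positivity of intersections then ensures that any two distinct elements of $\Mcal$ have disjoint images in $\R\times M$.

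\textbf{Compactness and the open book.} The principal obstacle is compactness of $\Mcal$. For a sequence $[\util_n]\in\Mcal$, the SFT-type compactness theorem yields a limit holomorphic building. Additivity of the Fredholm index under breaking, combined with $\mu_{CZ}(P)\geq 3$ for all contractible orbits of action at most $T_0$, forces every nontrivial component of such a limit to be a smooth plane asymptotic to $P_0$ and rules out nontrivial multi-level splittings, so $\Mcal$ is compact; hence $\Mcal$ is a closed $1$-manifold. The evaluation map $\Mcal\times\R\to(\R\times M)\setminus(\R\times x_0(\R))$, sending $([\util],c)$ to the image of the $c$-translate of $\util$, is a proper injective local diffeomorphism and therefore, by connectedness of the target, a diffeomorphism onto the entire complement. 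Projecting to $M$ descends this to a foliation of $M\setminus x_0(\R)$ by embedded open disks, each asymptotic to $P_0$. The defining identity $\jtil\partial_a=R$ ensures $R$ is positively transverse to every page, so the pages are global surfaces of section and we obtain the adapted open book. Finally, an open book with disk pages whose binding is the unknot forces $M\cong S^3$, and by Eliashberg's classification of tight contact structures on $S^3$, $\xi$ is the standard one.
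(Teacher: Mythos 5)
The theorem you are asked about is a cited result of Hofer--Wysocki--Zehnder; this paper does not reprove it, but Subsection~\ref{existence} and Sections~\ref{sectionexistence}--\ref{cons} lay out the machinery (for the generalization~\ref{openbook}) closely enough that a comparison makes sense. Your broad outline --- Bishop family, limit plane, moduli space, compactness, open book --- is the right outline. But there is a genuine gap in the central construction. The totally real boundary condition is \emph{not} a germ ``near $\{-\infty\}\times x_0(\R)$ obtained by pushing $x_0(\R)$ slightly off itself''; it is $\{0\}\times F^*$, where $F$ is a full embedded disk with $\partial F = x_0(\R)$, whose characteristic foliation has been arranged (Giroux--Hofer, using tightness, unknottedness, and $sl(P_0)=-1$) to have exactly one nicely elliptic positive singularity $e$, and $F^*=F\setminus\{e\}$. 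The Bishop family does not sprout from the orbit at $-\infty$; it emanates from the constant map at $(0,e)$ and grows \emph{outward} toward $\partial F$, and it is only at the end of this family that bubbling-off produces a plane whose positive asymptotic limit is $P_0$. Without the global spanning disk and the topology of its characteristic foliation (controlled by $sl(P_0)=-1$ via the counts $e^+-e^--h^++h^-=-sl$ and $e^++e^--h^+-h^-=\chi(F)$), the family never reaches the orbit and there is nothing to bubble. Your ``local boundary condition near $-\infty$'' does not provide the global topological input that drives the whole argument.

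There are also lower-order problems in your Step~(c). Your ``evaluation map'' $\Mcal\times\R\to(\R\times M)\setminus(\R\times x_0(\R))$ sending $([\util],c)$ to ``the image of the $c$-translate'' does not land at a point, and the dimensions do not match; what one actually constructs (cf.\ Section~\ref{cons} here, or \cite{char2}) is a $C^l$ map $S^1\times\C\to M\setminus x_0(\R)$ obtained by continuing a $1$-parameter family of planes normalized along a Reeb trajectory, and one needs Poincar\'e recurrence to close the family into an $S^1$. Moreover, the assertion that the pages are \emph{global} surfaces of section is not automatic from transversality to $R$: one must separately show that every forward and backward Reeb trajectory hits every page, which near $x_0(\R)$ requires the hypothesis $\mu_{CZ}(P_0)\geq 3$ via a twist estimate (Lemma~\ref{omegalimit}). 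Your proposal omits both of these points.
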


As explained before, Brouwer's translation theorem and results of J. Franks from~\cite{franks} provide an important corollary.

\begin{cor}[Hofer, Wysocki and Zehnder]\label{corchars3}
Under the assumptions of Theorem~\ref{chars3} the Reeb dynamics has either $2$ or $\infty$-many periodic orbits.
\end{cor}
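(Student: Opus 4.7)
The plan is to reduce the statement to a two-dimensional dynamical dichotomy on a disk-like global surface of section produced by Theorem~\ref{chars3}. First I would apply that theorem to obtain an open book decomposition of $M=S^3$ adapted to the Reeb flow, with binding $P_0=(x_0,T_0)$ and pages diffeomorphic to open disks, and fix one page $\Dcal$. Since $\interior{\Dcal}$ is a global surface of section, every trajectory of $R$ through $M\setminus x_0(\R)$ hits $\interior{\Dcal}$ infinitely often in both forward and backward time, so the first return map $\psi : \interior{\Dcal} \to \interior{\Dcal}$ is a well-defined smooth diffeomorphism, and geometrically distinct periodic orbits of $R$ other than $P_0$ are in bijection with $\psi$-orbits of periodic points.

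The essential input is that $\psi$ preserves a finite area. The Reeb field $R$ is transverse to $\interior{\Dcal}$ and satisfies $i_R d\lambda = 0$, so $d\lambda$ restricts to a smooth positive area form $\Omega$ on $\interior{\Dcal}$ with $\int_\Dcal \Omega < \infty$; invariance of $d\lambda$ under the Reeb flow then yields $\psi^*\Omega = \Omega$. A classical extension of Brouwer's translation theorem to area-preserving orientation-preserving homeomorphisms of the open disk with finite total area then produces a fixed point $x_1 \in \interior{\Dcal}$, corresponding to a periodic orbit $P_1$ of $R$ geometrically distinct from $P_0$.

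To close the proof I would argue by contradiction, assuming the Reeb flow carries some third geometrically distinct periodic orbit. Then $\psi$ admits a periodic point $y$ not in the $\psi$-orbit of $x_1$. Removing $x_1$ from $\interior{\Dcal}$ exhibits the complement as an open annulus $\mathbb{A}$, on which $\psi$ restricts to an area-preserving diffeomorphism isotopic to the identity, with finite total area and the periodic point $y$. J.~Franks' theorem~\cite{franks} then forces $\psi|_{\mathbb{A}}$ to have infinitely many periodic points, yielding infinitely many geometrically distinct periodic orbits of $R$. The main technical obstacle I anticipate is verifying that Franks' hypotheses apply to $\psi|_{\mathbb{A}}$ --- in particular controlling the rotation-number behavior of the return map near the puncture $x_1$ and near $\partial\Dcal$ --- which should be manageable along the lines of the argument carried out in~\cite{convex}.
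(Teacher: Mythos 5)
Your proposal follows the paper's own argument essentially verbatim: the paper itself only sketches this, pointing to the disk-like global surface of section from Theorem~\ref{chars3}, the $d\lambda$-area-preserving first return map, Brouwer's translation theorem for a fixed point, and Franks' theorem on the punctured disk (an open annulus) as carried out in~\cite{convex}. Your elaboration of each step, including the finiteness of $\int_{\Dcal} d\lambda$ via Stokes and the caveat about verifying Franks' hypotheses by the analysis in~\cite{convex}, is accurate and matches the intended route.
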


Here we shall define a class of pseudo-holomorphic curves suitable for this construction. They generalize the curves used in~\cite{char1}, \cite{char2} and~\cite{convex}. 

\begin{defn}{(Fast planes)}\label{fastplane}
A finite-energy plane $\util:\C\to\R\times M$ is said to be fast if $\infty$ is a non-degenerate puncture, $\wind_\infty (\util) = 1$ and $\cov(\util) = 1$.
\end{defn}

Let us briefly describe the invariants $\wind_\infty$ and $\cov$, originally introduced in~\cite{props2}. If we write $\util = (a,u) \in \R\times M$ and assume $\lambda$ is non-degenerate then the loops $t \mapsto u(\est)$ converge to $x(Tt+c)$ in $C^\infty(\R/\Z,M)$ as $s\to+\infty$, where $P=(x,T)$ is some closed Reeb orbit and $c\in\R$. This follows from Theorem~\ref{thm93} below and the definition of non-degenerate punctures given in Subsection~\ref{nondegasympbehavior}, see~\cite{props1}. In this case one says $\util$ is asymptotic to $P$. Condition $\cov(\util) = 1$ means that $P$ is simply covered. The identity $\wind_\infty (\util) = 1$ holds if, and only if, $u:\C\to M$ is an immersion transverse to the Reeb vector field, see Lemma~\ref{gauss}.

The term ``fast'' used above alludes to the following alternative interpretation of the identity $\wind_\infty(\util)=1$. The map $\util$ is $\jtil$-holomorphic. This means that $s\mapsto u(\est)$ can be thought as a gradient trajectory of the action functional converging to one of its critical points $P$. One has the corresponding Hessian $A_P$ for the action at $P$, which is a self-adjoint operator on a suitable Hilbert space of sections of $\xi|_P$. Its spectrum is real, discrete, accumulates only at $\pm\infty$ and consists of eigenvalues. It can be ordered according to the windings of the corresponding eigensections. All this is proved in~\cite{props2}, see Subsection~\ref{asympopprops}. Condition $\wind_\infty(\util)=1$ tells us that $s\mapsto u(\est)$ is a trajectory on the fastest piece of the stable manifold of $P$, keeping a ``maximally weighted'' Fredholm index of $\util$ $\geq 1$. This fast decay is the source of compactness properties of fast planes, as showed in Section~\ref{compactness}. We are ready for our second statement.

\begin{theorem}\label{openbook}
Let $\lambda$ be a tight contact form on a closed $3$-manifold $M$. Assume the following holds for every contractible periodic Reeb orbit $\hat P$:
\begin{itemize}
 \item[(i)] $\hat P$ is non-degenerate,
 \item[(ii)] $c_1(\xi)$ vanishes along $\hat P$ and $\mu_{CZ}(\hat P) \geq 3$.
\end{itemize}
Let $P = (x,T)$ be a simply covered periodic Reeb orbit. A necessary and sufficient condition for $P$ to be the binding of an open book decomposition adapted to the Reeb dynamics with disk-like pages is that $P$ is unknotted and $sl(P,disk) = -1$. When $P$ fulfills these conditions then $\forall \ l\geq 1$ there exists $J \in \mathcal J(\xi,d\lambda|_\xi)$ and a $C^l$-map
\[
 \util = (a,u) : S^1 \times \C \rightarrow \R \times M
\]
satisfying:
\begin{enumerate}
 \item Each $\util(\vartheta,\cdot)$ is an embedded fast finite-energy $\jtil$-holomorphic plane asymptotic to $P$.
 \item $u(\vartheta,\C) \cap x(\R) = \emptyset \ \forall \vartheta\in S^1$ and the map $u : S^1 \times \C \rightarrow M \setminus x(\R)$ is an orientation preserving $C^l$-diffeomorphism.
 \item Each $\cl{u(\vartheta,\C)}$ is a global surface of section for the Reeb flow.
\end{enumerate}
\end{theorem}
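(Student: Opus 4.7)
The plan is to establish necessity by citing Proposition~\ref{necessity} and then produce the open book for the sufficiency direction by foliating $\R\times(M\setminus x(\R))$ with a smooth one-parameter family of embedded fast finite-energy planes asymptotic to $P$. The overall architecture mirrors the Hofer--Wysocki--Zehnder program in~\cite{char2}, with the class of fast planes (Definition~\ref{fastplane}) replacing their specific disks, so that the construction no longer requires $P$ to have minimal action among contractible orbits.

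The first step is to produce \emph{one} fast plane $\util_{0}$ asymptotic to $P$. Because $P$ is unknotted with $sl(P,\text{disk})=-1$, a Bishop-type disk-filling argument (as in~\cite{convex,char2}) applied in a Martinet-type tubular neighborhood of $P$ gives a one-parameter family of embedded $\jtil$-holomorphic disks whose boundaries shrink onto $P$. Pulling back the family to a half-cylinder and applying the non-degenerate asymptotic analysis (Theorem~\ref{thm93} and the asymptotic operator machinery of Subsection~\ref{asympopprops}) shows that any bubbling-off limit of this family yields a finite-energy plane asymptotic to $P$; the condition $sl(P,\text{disk})=-1$ together with $\cov=1$ and the winding inequalities from~\cite{props2} forces $\wind_{\infty}(\util_{0})=1$, so the limit is actually fast. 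The hypotheses $\mu_{CZ}(\hat P)\geq 3$ and tightness rule out bubbling onto orbits of smaller action or the formation of overtwisted disks, so this limit exists and is embedded.

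Next, let $\M$ be the moduli space of embedded fast $\jtil$-holomorphic planes asymptotic to $P$, modulo biholomorphism of $\C$ and $\R$-translation. Standard Fredholm theory together with the weighted index formulas of~\cite{props2} applied at weights just below the asymptotic eigenvalue associated to $\wind_{\infty}=1$ shows that, for a generic $J\in\jcal(\xi,d\lambda|_{\xi})$, $\M$ is a smooth manifold of the correct dimension so that the unparametrized projections fill out a one-parameter family. Automatic transversality for embedded curves of the appropriate index in dimension four (Wendl) delivers genericity without destroying the embedded nature. Siefring-type positivity of intersections, applied to two nearby fast planes with the same asymptotic, together with the asymptotic-winding identity $\wind_{\infty}=1$, ensures that distinct elements of $\M$ have disjoint images in $\R\times M$ and that each element is disjoint from $\R\times x(\R)$; the same tool forces transversality of the projections $u(\vartheta,\cdot)$ to the Reeb vector field (Lemma~\ref{gauss}).

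The central task is to prove that $\M$ is compact and connected. Compactness will rely on the dedicated analysis of Section~\ref{compactness}: the fast condition controls asymptotic decay uniformly, while $\mu_{CZ}(\hat P)\geq 3$ for all contractible $\hat P$ prevents breaking into a broken building whose top component is a plane on an orbit of smaller action. Together with tightness (which forbids vanishing-cycle bubbles), this gives that any SFT-type limit of a sequence in $\M$ is again a single embedded fast plane asymptotic to $P$, so $\M$ is compact. Once compactness is in hand, the usual continuation argument propagates the foliation: starting from $\util_{0}$, the fibers of $\M$ sweep out an open and closed subset of $\R\times(M\setminus x(\R))$, necessarily all of it by connectedness of the complement of the binding. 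Parametrizing $\M\cong S^{1}$ yields the map $\util:S^{1}\times\C\to\R\times M$ satisfying (1)--(2); the $C^{l}$-regularity in $\vartheta$ follows from elliptic regularity and the parametric Sard--Smale theorem applied to a $C^{l+1}$-perturbation of $J$. Finally, the global surface of section property (3) is a consequence of the foliation together with a Schwarz-type area argument (as in~\cite{convex}): each forward and backward orbit of the Reeb flow must meet every page, since the complement of the pages would otherwise trap a piece of the flow inside a set of zero $d\lambda$-mass.

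The hardest ingredient will be the compactness of $\M$ with the fast condition preserved under limits: one must show that the asymptotic winding cannot drop in the limit. This is precisely the point at which the theory developed in Section~\ref{compactness} and the hypothesis $\mu_{CZ}(\hat P)\geq 3$ are essential, and is where this theorem genuinely goes beyond the situation treated in~\cite{char2}.
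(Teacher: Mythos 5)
Your outline correctly identifies the three analytical ingredients (existence of a fast plane via the Bishop family, Fredholm theory, compactness) and the broad strategy of foliating $M\setminus x(\R)$ by fast planes; this does match the paper's architecture (Theorem~\ref{existfast}, Theorem~\ref{lemmafredholm}, Theorem~\ref{main3}, assembled through Theorem~\ref{ob2}). However, there is a genuine gap at the step you yourself flag as the central task.

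You assert that the unparametrized moduli space $\M$ of embedded fast planes asymptotic to $P$, modulo biholomorphism and $\R$-translation, is compact, and then read off $\M\cong S^1$ from compactness plus connectedness. The compactness established in Section~\ref{compactness} is strictly conditional: Theorem~\ref{main3} gives $C^\infty_{loc}$-compactness of $\Theta^L(H,P)$ and $\Lambda^L(H,P)$, and of $\Theta(H,P)$ and $\Lambda(H,P)$ only when the compact constraint set $H$ is disjoint from $\R\times x(\R)$. Near the binding this hypothesis fails, and a sequence of normalized planes can indeed degenerate to a trivial cylinder over $P$ (this is precisely alternative (3) in Lemma~\ref{round1}: $A=-\infty$, $\pi\cdot dw\equiv 0$, $H\cap(\R\times x(\R))\not=\emptyset$). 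So the moduli space $\M$ is not compact in the unconditional sense, and your argument that $\M$ is a closed $1$-manifold does not go through. The paper avoids this by parametrizing the family along a Reeb trajectory $\hat x$ disjoint from $x(\R)$, applying the conditional compactness with $H=\{0\}\times\hat x([0,\tau])$ to extend the family (Lemma~\ref{continuation}), and invoking Poincar\'e recurrence of a point on $\hat x$ plus the uniqueness/intersection lemmas to close the family into an $S^1$-family. The fact that $\M$ ends up diffeomorphic to $S^1$ is a consequence of this continuation--recurrence--gluing argument, not a prior input to it.

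Two smaller remarks. First, your invocation of a ``Schwarz-type area argument'' for the global-surface-of-section property handles only trajectories whose $\omega$-limit avoids $x(\R)$; when the $\omega$-limit meets $x(\R)$ one needs the winding estimate of Lemma~\ref{omegalimit}, which is exactly where the hypothesis $\mu_{CZ}(P)\geq 3$ enters (via the geometric characterization of the index in Appendix~\ref{geomindex}) to force the linearized flow to rotate enough to pierce every page. Your sketch doesn't isolate this case. Second, you appeal to a generic $J$ and Sard--Smale for transversality, but the paper's Fredholm theory (Theorem~\ref{lemmafredholm} and the automatic transversality in Section~\ref{fredtheory}) is designed to work for any $J$; the paper explicitly emphasizes independence from transversality assumptions, and only the existence step (Theorem~\ref{existfast}) selects a particular $J$.
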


The self-linking number $sl(P,disk)$ is computed using any embedded disk for the unknot $x(\R)$, and is independent of this choice since $c_1(\xi)$ vanishes along $P$. Necessity is given by Proposition~\ref{necessity}. Sufficiency in Theorem~\ref{thmA} follows from the above statement, since $\ker \lambda_0|_S$ is tight. The proof of sufficiency in Theorem~\ref{openbook} can be found in Section~\ref{cons}, for a sketch see Subsection~\ref{sketchopenbook}. Following Hofer, Wysocki and Zehnder, there is an important corollary.

\begin{cor}\label{2infty}
Suppose $M$ and $\lambda$ satisfy the conditions of Theorem~\ref{openbook}. If there exists an unknotted periodic Reeb orbit $P$ with $sl(P,disk)=-1$ then $M\simeq S^3$, $\xi$ is contactomorphic to the positive tight contact structure on $S^3$ and the associated Reeb flow has either $2$ or $\infty$-many closed orbits.
\end{cor}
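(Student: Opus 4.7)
The plan is to apply Theorem~\ref{openbook} to the orbit $P$ and then extract the topological and dynamical consequences from the resulting open book decomposition. Sufficiency in Theorem~\ref{openbook} immediately produces an open book $(L,p)$ of $M$ adapted to the Reeb flow, with binding $L=x(\R)$ a single unknot and pages diffeomorphic to open disks. The first step is purely topological: since every page $\cl{S_\theta}$ is a $2$-disk whose boundary is the binding $L$, the complement of a tubular neighborhood $N$ of $L$ is the mapping torus of an orientation-preserving diffeomorphism of the disk; such a diffeomorphism is isotopic to the identity, so $M \setminus N \cong D^2 \times S^1$. Gluing the solid torus $N$ back in along the obvious identification of meridians and longitudes dictated by the open book yields $M \cong S^3$. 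The contact structure $\xi$ is tight by hypothesis, and Eliashberg's classification of tight contact structures on $S^3$ then forces $\xi$ to be contactomorphic to the standard positive tight contact structure.

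For the dynamical dichotomy I would follow the scheme indicated by Hofer--Wysocki--Zehnder in the discussion preceding Corollary~\ref{corchars3}. Fix one page $\Dcal = \cl{S_{\theta_0}}$ and consider Poincar\'e's first return map $\psi : \interior{\Dcal} \to \interior{\Dcal}$, which is well defined and smooth by condition (3) of Theorem~\ref{openbook}. Since $d\lambda$ is invariant under the Reeb flow and $\int_{\Dcal}d\lambda = T < \infty$, the map $\psi$ is an orientation and area preserving diffeomorphism of an open disk of finite total area. Brouwer's plane translation theorem, in its area-preserving incarnation, forbids a fixed-point free orientation preserving homeomorphism of $\R^2$ from leaving invariant a finite Borel measure of full support; hence $\psi$ admits at least one fixed point, which corresponds to a periodic Reeb orbit $P_1$ geometrically distinct from $P$.

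With the existence of one interior fixed point of $\psi$ secured, I would remove it and restrict $\psi$ to the resulting open annulus $\interior{\Dcal}\setminus\{\text{fixed point}\}$. The restriction is still an area preserving diffeomorphism of finite total area. At this point Franks' theorem~\cite{franks} on periodic points of area preserving homeomorphisms of the open annulus applies: the map has either no periodic points besides the ones already accounted for, or infinitely many. Translating back to the Reeb flow, the first alternative gives precisely two closed Reeb orbits, namely $P$ and $P_1$, while the second yields infinitely many geometrically distinct closed Reeb orbits.

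The main obstacle in this proof is not analytic but rather bookkeeping: verifying that the open book produced by Theorem~\ref{openbook} really determines $M$ as $S^3$ in a canonical way, and that the monodromy hypothesis required by Franks' theorem (area preservation with finite total area on an open annulus, no appeal to boundary behavior) applies to $\psi$. Both are by now standard once the open book structure of Theorem~\ref{openbook} is in hand, so the substance of Corollary~\ref{2infty} is entirely encoded in Theorem~\ref{openbook} combined with Eliashberg's tight classification and the Brouwer--Franks machinery.
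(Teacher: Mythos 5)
Your proof is correct and follows the same route the paper gestures at: Theorem~\ref{openbook} gives the adapted open book with disk pages, the monodromy argument with Alexander's lemma identifies $M$ with $S^3$ and Eliashberg's classification pins down $\xi$, and the Brouwer translation theorem plus Franks' annulus theorem applied to the area-preserving return map on $\interior{\Dcal}$ gives the $2$-or-$\infty$ dichotomy. The paper leaves precisely this chain of standard arguments implicit, citing only the discussion preceding Corollary~\ref{corchars3} and the final remarks of Section~\ref{cons}, where the return map is observed to be area-preserving with finite total area.
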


\noindent {\bf Organization of the article.} In Section~\ref{outline} we outline the proofs of Theorem~\ref{thmA} and Theorem~\ref{openbook}. In Subsection~\ref{necessity} we prove necessity in Theorem~\ref{openbook}. The proof of sufficiency requires three analytical tools: compactness, perturbation theory and existence for fast planes. They are explained in Subsections~\ref{outlinecompactness}, \ref{outlinefredholm} and~\ref{existence} respectively. In Subsection~\ref{sketchopenbook} we outline the proof of sufficiency in Theorem~\ref{openbook}, and explain how Theorem~\ref{thmA} follows from Theorem~\ref{openbook}. In Section~\ref{basicdefns} we recall the standard definitions from the theory of finite-energy surfaces in symplectizations. Section~\ref{compactness} is devoted to our compactness result, Theorem~\ref{main3}. In Section~\ref{sectionexistence} we prove our existence result for fast planes, Theorem~\ref{existfast}. In Section~\ref{fredholm} we describe the perturbation theory, Theorem~\ref{lemmafredholm}, where some technical lemmas are postponed to the Appendix. In Section~\ref{cons} we prove sufficiency in Theorem~\ref{openbook}. \\

\noindent {\bf Acknowledgements.} {\it We thank P. Albers, B. Bramham, D. Jane, J. Koiller, L. Macarini, A. Momin, C. Niche and C. Wendl for many helpful discussions, and P. Salom\~ao for his unconditional support and all the mathematical help. We are particularly grateful to the referee for many suggestions that improved the article significantly. We would especially like to thank Professor H. Hofer for proposing such beautiful problems, for all his generosity, and for his mathematical advice during the years the author spent at the Courant Institute as a student.}

\section{Outline of main arguments}\label{outline}

In this section we prove necessity in theorems~\ref{thmA} and~\ref{openbook}, and then outline the proof of
sufficiency.

\subsection{Proof of necessity in Theorem~\ref{openbook}}\label{necessity}

\begin{propn}\label{propnec}
Let $\lambda$ be a contact form on an oriented $3$-manifold $M$ satisfying $\lambda \wedge d\lambda
>0$, and let $R$ be the associated Reeb vector field. Suppose $P$ is an unknotted periodic Reeb
orbit. If there exists an embedded disk $\Dcal \subset M$ satisfying $\partial \Dcal = P$ and $\R R_p \cap T_p\Dcal
= \{0\}, \ \forall p \in \interior{\Dcal}$, then $sl(P,\Dcal) = -1$.
\end{propn}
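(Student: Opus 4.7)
The plan is to compute $sl(P,\Dcal)$ by interpreting it as a relative Euler number of the symplectic bundle $\xi|_\Dcal$ with a natural boundary framing coming from the tangent geometry of $\Dcal$, and then to evaluate this Euler number using Poincar\'e--Hopf on the disk. The crucial input of the transversality hypothesis $\R R_p \cap T_p\Dcal = \{0\}$ is that it provides a canonical fiberwise isomorphism between $T\Dcal$ and $\xi|_\Dcal$ on $\interior{\Dcal}$.

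First, orient $\Dcal$ so that $R$ is a positive transverse direction, consistent with $M$ oriented by $\lambda\wedge d\lambda$; then $\partial\Dcal = P$ inherits the orientation given by $R$. Consider the bundle map $\Phi : T\Dcal \to \xi|_\Dcal$ defined by $\Phi(v) = v - \lambda(v)R$. The hypothesis precisely means that $\Phi$ is a fiberwise isomorphism on $\interior{\Dcal}$; along $\partial\Dcal$ its kernel equals $\R R = TP$. A short computation shows that $d\lambda(\Phi(v),\Phi(w)) = d\lambda(v,w)$ for $v,w\in T\Dcal$, so $\Phi$ identifies the orientation on $\Dcal$ induced by $R$ with the $d\lambda$-orientation on $\xi$.

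Next, pick a vector field $V$ on $\partial\Dcal$ tangent to $\Dcal$ and pointing into $\Dcal$. Since $V$ is transverse to $P$ and $TP = \R R$, the section $V^\xi := \Phi(V) = V - \lambda(V)R$ is a non-vanishing section of $\xi|_P$. Extend $V$ to a tangent vector field $U$ on all of $\Dcal$. By Poincar\'e--Hopf for a surface with inward boundary condition, the signed zero count of $U$ equals $\chi(\Dcal) = 1$; one may in fact take $U$ with a single interior zero of index $+1$ (e.g.\ a radial inward model on the abstract disk). Set $W := \Phi(U)$, a section of $\xi|_\Dcal$ that restricts to $V^\xi$ on $\partial\Dcal$; by the isomorphism and orientation-preservation properties of $\Phi$, $W$ has exactly the same single zero of index $+1$ on the interior. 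This shows that the winding of $V^\xi$ relative to any non-vanishing section $Z$ of $\xi|_\Dcal$, computed on $\partial\Dcal$ in the $Z$-trivialization, equals $+1$.

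Finally, since $V$ and $V^\xi$ agree modulo $\R R = TP$ along $P$, the pushoffs $P_V$ and $P_{V^\xi}$ are homotopic in $M\setminus P$; but $P_V$ lies inside $\Dcal$, bounding a sub-disk of $\Dcal$, so $\mathrm{lk}(P,P_V) = 0$ and hence $\mathrm{lk}(P,P_{V^\xi}) = 0$. The standard dictionary between relative windings of framings and differences of linking numbers yields $\mathrm{lk}(P,P_{V^\xi}) - \mathrm{lk}(P,P_Z) = \wind(V^\xi,Z|_P) = +1$, so $sl(P,\Dcal) = \mathrm{lk}(P,P_Z) = -1$. The main obstacle will be consistent sign bookkeeping through the orientation conventions on $\Dcal$, $\xi$, and $M$, the Poincar\'e--Hopf convention for boundary vector fields, and the sign in the linking-winding translation; the result $-1$ can be cross-checked against the standard Hopf orbit in $(S^3,\xi_{\mathrm{std}})$, where $sl = -1$ is classical.
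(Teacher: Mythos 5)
Your argument mirrors the paper's proof of Proposition~\ref{propnec} in structure: the map $\Phi(v) = v - \lambda(v)R$ is precisely the projection $\pi : TM\to\xi$ along $\R R$ used there, and both proofs count zeros of the $\xi$-valued vector field obtained by projecting a tangent field on $\Dcal$, then close the argument by showing the boundary-tangent framing links $P$ trivially. The cosmetic differences (Poincar\'e--Hopf in place of the explicit degree computation with the pulled-back complex structure $\hat J$, an inward rather than outward boundary vector, linking-number bookkeeping instead of intersection numbers with $\Dcal$) do not change the substance.

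The one genuine gap is in your first sentence. You assert that once $\Dcal$ is oriented so that $R$ is positively transverse, the induced boundary orientation of $\partial\Dcal$ automatically agrees with the $R$-orientation of $P$ --- but this is exactly the point that requires proof. The self-linking number in Definition~\ref{selflink} is computed with $\Dcal$ given the Seifert orientation ($\lambda|_{\partial\Dcal}>0$), and nothing \emph{a priori} forces this to coincide with the $R$-transversal orientation; if the two disagreed, the index of $W = \Phi(U)$ relative to the Seifert orientation would be $-1$ and your computation would output $sl(P,\Dcal) = +1$. The paper closes this gap with a short Stokes argument: with the Seifert orientation, $\int_{\Dcal}d\lambda = \int_{\partial\Dcal}\lambda > 0$; since $i_R(\lambda\wedge d\lambda) = d\lambda$, the $2$-form $d\lambda|_{T_p\Dcal}$ vanishes exactly where $R$ is tangent to $\Dcal$, hence is nowhere zero on $\interior{\Dcal}$ by hypothesis, hence is everywhere positive. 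That is precisely the statement that the Seifert and $R$-transversal orientations coincide. Insert that step and your proof is complete.
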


\begin{proof}
Let $\varphi : \D \rightarrow M$ be a smooth embedding such that $\Dcal := \varphi(\D)$ satisfies $\partial \Dcal =
P$ and $\R R_p \cap T_p\Dcal = \{0\}, \ \forall p \in \interior{\Dcal}$. Here $\D \subset \R^2$ is the closed unit
disk, which we equip with euclidean coordinates $(x,y)$. We orient $\Dcal$ so that $\lambda|_{\partial \Dcal=P} >
0$, and assume $\varphi$ is orientation preserving when $\D$ is equipped with its standard orientation. Orient $\xi$
by $d\lambda|_{\xi}$ and $M$ by $\lambda\wedge d\lambda$. Let $\pi : TM \rightarrow \xi$ denote the projection along
$\R R$. Since $R$ is never tangent to $\interior{\Dcal}$, $\pi : T\interior{\Dcal} \rightarrow
\xi|_{\interior{\Dcal}}$ is an isomorphism. We claim it is orientation preserving. In fact, let $\sigma$ be a
positive smooth area form on $\Dcal$, and let $f:\Dcal \rightarrow \R$ be defined by $d\lambda|_{T\Dcal} = f\sigma$.
We know $f$ does not vanish on $\interior{\Dcal}$ since $f(p) = 0 \Leftrightarrow \R R_p \subset T_p\Dcal$. Our
choice of orientation of $\Dcal$ gives $$ \int_\Dcal d\lambda = \int_{\partial \Dcal} \lambda > 0, $$ implying $f>0$
on $\interior{\Dcal}$. Fix $p \in \interior{\Dcal}$ and $u,v \in T_p\Dcal$ such that $\sigma(u,v) > 0$. Then $$
d\lambda(\pi \cdot u,\pi\cdot v) = d\lambda (u,v) = f(p) \sigma(u,v) > 0, $$ proving our claim. The bundle map $\pi
\cdot d\varphi : T\interior{\D} \rightarrow \xi|_{\interior{\Dcal}}$ is orientation preserving since so are
$\varphi$ and $\pi : T\interior{\Dcal} \rightarrow \xi|_{\interior{\Dcal}}$. This will be important in what follows.
Consider the smooth section $W := \pi \cdot \left( x\partial_x\varphi + y\partial_y\varphi \right)$ of
$\xi|_{\Dcal}$. It does not vanish on $\partial \Dcal = P$. Let $Z$ be a non-vanishing section of $\xi|_{\Dcal}$.
Fix any exponential map $\exp$ on $M$ and consider the transverse unknots
\[
  \begin{array}{ccc}
    P^Z_\epsilon := \{ \exp_p (\epsilon Z_p) : p \in P \} & \text{ and } & P^W_\epsilon := \{ \exp_p (\epsilon W_p) : p \in P \}
  \end{array}
\]
where $0<\epsilon\ll1$. Let $J$ be any complex structure on the bundle $\xi|_{P}$ such that
$d\lambda(p)(\cdot,J_p\cdot)$ is a positive inner-product on $\xi|_p$, $\forall p\in P$. This defines a unique
non-vanishing smooth map $f = u+iv : P \rightarrow \C\setminus\{0\}$ by $W_p = u(p) Z_p + v(p) J_p Z_p$, $p\in P$.
Let $k := \deg f/|f|$. Standard degree theory tells us that $k$ is the algebraic count of zeros of $W$ on $\Dcal$
and that $P^W_\epsilon \cdot \Dcal = P^Z_\epsilon \cdot \Dcal + k$. Thus
\[
 sl(P,\Dcal) = \left( P^W_\epsilon \cdot \Dcal \right) - k.
\]
Since $\Dcal$ is transverse to $\R R$ on $\interior{\Dcal}$, the only zero of $W$ is at the point $\varphi(0)$. We
now claim $k=1$. In fact, define $\hat J := (\pi\cdot d\varphi)^*J$. Then $\hat J$ is an almost complex structure on
$T\interior{\D}$ satisfying $\det \hat J_p = 1, \ \forall p \in \interior{\D}$. One finds a smooth path $J_t$ of
almost complex structures on $T\interior{\D}$ satisfying $\det J_t|_q = 1, \ \forall (t,q) \in [0,1] \times
\interior{\D}$, $J_0 = i$ and $J_1 = \hat J$. Consider a disk $D_\delta$ of radius $0<\delta\ll1$ centered at the
origin. On $D_\delta$ there exists a non-vanishing section $Y := \pi \cdot \partial_x\varphi$ of
$\xi|_{\varphi(D_\delta)}$. When $A$ and $B$ are two non-vanishing sections of $\xi|_{\varphi(\partial D_\delta)}$
we write $A \sim B$ if they are homotopic through non-vanishing sections. We parametrize $\partial D_\delta$ by
$\theta \mapsto \delta e^{i2\pi\theta}$, $\theta \in [0,1]$, and note that
\[
 \begin{aligned}
  \left\{ \theta \mapsto W(\varphi(\delta e^{i2\pi\theta})) \right\} & \sim \left\{ \theta \mapsto \delta^{-1} W(\varphi(\delta e^{i2\pi\theta})) = \pi \cdot d\varphi(\delta e^{i2\pi\theta}) \cdot e^{i2\pi \theta} \right\} \\
  & \sim \left\{ \theta \mapsto \pi \cdot d\varphi(\delta e^{i2\pi\theta}) \cdot \exp(\hat J 2\pi \theta) \cdot \begin{pmatrix} 1 \\ 0 \end{pmatrix} \right\} \\
  & \sim \left\{ \theta \mapsto \exp(J|_{\varphi(\delta e^{i2\pi\theta})} 2\pi \theta) \cdot \pi \cdot d\varphi(\delta e^{i2\pi\theta}) \cdot \begin{pmatrix} 1 \\ 0 \end{pmatrix} \right\} \\
  & = \left\{ \theta \mapsto \exp(J|_{\varphi(\delta e^{i2\pi\theta})} 2\pi \theta) \cdot Y(\varphi(\delta e^{i2\pi\theta})) \right\}
 \end{aligned}
\]
Write $W(\varphi(\delta e^{i2\pi\theta})) = a(\theta) Y(\varphi(\delta e^{i2\pi\theta})) + b(\theta)
J|_{\varphi(\delta e^{i2\pi\theta})} Y(\varphi(\delta e^{i2\pi\theta}))$ and define $h = a + ib$. The above
calculation shows that $\deg h/|h| = 1$. It follows from standard degree theory that $k = 1$ since $W$ has no zeros
on $\D \setminus D_{\delta}$. Now consider the normal derivative $\theta \in \R/\Z \mapsto A(\theta) :=
d\varphi(e^{i2\pi\theta})\cdot e^{i2\pi\theta}$, and the map
\[
 (\theta,t) \in \R/\Z \times [0,1] \mapsto (1-t) \pi \cdot A + tA.
\]
It provides a smooth homotopy from the vector $\theta \mapsto W(\varphi(e^{i2\pi\theta}))$ to the vector $\theta
\mapsto A(\theta)$ through non-vanishing vectors. This shows $P^W_\epsilon \cdot \Dcal = 0$ and completes the proof
that $sl(P,\Dcal) = -1$.
\end{proof}


\subsection{Compactness}\label{outlinecompactness}

Since we deal with higher Conley-Zehnder indices, we need new compactness arguments replacing those given by Hofer,
Wysocki and Zehnder, see~\cite{props2},~\cite{char1} and~\cite{char2}. Loosely speaking, we shall prove that, under
convexity assumptions on $\lambda$, breaking of Morse trajectories for the action functional does not occur in
families of unparametrized fast planes through a compact $H \subset \R\times M$. As an example, this will be the
case for non-degenerate dynamically convex contact forms on $S^3$.

Let $\lambda$ be a contact form on a closed $3$-manifold $M$, and let $P = (x,T)$ be a simply covered periodic Reeb
orbit. Assume every contractible periodic Reeb orbit $\hat P = (\hat x,\hat T)$ with $\hat T\leq T$ is
non-degenerate. Consider the set $\mathcal{A}_c$ of positive periods of contractible periodic Reeb trajectories and
define $\mathcal{A}_c^k = \{\tau \in \mathcal{A}_c : \tau \leq k\}$. Following~\cite{fols}, we define
\[
 \begin{array}{cc}
   \gamma_1=\min \mathcal{A}_c^T,  & \gamma_2=\min \left\{ |\tau_1 - \tau_2| : \tau_1 \not= \tau_2;\ \tau_1,\tau_2 \in \mathcal{A}_c^T \right\}
 \end{array}
\]
and fix a number
\begin{equation}\label{gamma}
  0 < \gamma < \min\{\gamma_1,\gamma_2\}.
\end{equation}
Recall the almost complex structure $\jtil$~\eqref{almcpxstr} associated to some $J \in \jcal(\xi,d\lambda|_\xi)$. We fix a subset $H\subset\R\times M$ and define
\begin{equation}\label{setTheta}
 \Theta(H,P,\lambda,J) \subset C^{\infty}(\C,\R\times M)
\end{equation}
by requiring that $\util\in\Theta(H,P,\lambda,J)$ if, and only if, $\util$ is a fast finite-energy
$\jtil$-holomorphic plane asymptotic to $P$, $\util(0)\in H$ and $\int_{\C\setminus\D} u^*d\lambda = \gamma$. We
define
\begin{equation}\label{setLambda}
 \Lambda(H,P,\lambda,J) \subset \Theta(H,P,\lambda,J)
\end{equation}
by requiring that $\util \in \Lambda(H,P,\lambda,J)$ if, and only if, $\util \in \Theta(H,P,\lambda,J)$ is an
embedding. Consider
\begin{gather}\label{setsL}
 \Theta^L(H,P,\lambda,J) = \{ \util = (a,u) \in \Theta(H,P,\lambda,J) : \inf a(\C) \geq -L \} \\
 \Lambda^L(H,P,\lambda,J) = \{ \util = (a,u) \in \Lambda(H,P,\lambda,J) : \inf a(\C) \geq -L \}.
\end{gather}
for a given $L>0$. When $(\lambda,J)$ are fixed we write $\Theta(H,P)$, $\Lambda(H,P)$, $\Theta^L(H,P)$ and
$\Lambda^L(H,P)$ for simplicity. Note that if $P$ is not simply covered then all these sets of fast planes are
empty. Our compactness result is as follows.

\begin{theorem}\label{main3}
Let $\lambda$ be a contact form on a closed $3$-manifold $M$, $P = (x,T)$ be a periodic Reeb orbit and $H \subset \R\times  M$ be compact. Suppose the following properties hold for every contractible periodic Reeb orbit $\hat P = (\hat x,\hat T)$ with $\hat T\leq T$:
\begin{itemize}
 \item[(i)] $\hat P$ is non-degenerate,
 \item[(ii)] $c_1(\xi)$ vanishes along $\hat P$ and $\mu_{CZ}(\hat P) \geq 3$.
\end{itemize}
Then the following assertions are true:
\begin{enumerate}
 \item $\Theta^L(H,P)$ and $\Lambda^L(H,P)$ are compact in $C^\infty_{loc}(\C,\R\times M)$.
 \item $\Theta(H,P)$ and $\Lambda(H,P)$ are compact in $C^\infty_{loc}(\C,\R\times M)$ if $H\cap (\R\times x(\R)) = \emptyset$.
\end{enumerate}
\end{theorem}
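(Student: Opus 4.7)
The plan is to establish part~(1) by the standard combination of gradient-bound and asymptotic analysis for finite-energy planes in symplectizations, with the dynamical convexity hypothesis $\mu_{CZ}\geq 3$ playing the decisive role in ruling out bubbles, and then to reduce part~(2) to part~(1) by an SFT-style argument showing that the disjointness hypothesis prevents the $\R$-coordinate from drifting to $-\infty$.

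For part~(1), take a sequence $\util_n=(a_n,u_n)\in\Theta^L(H,P)$; their Hofer energies are uniformly bounded in terms of $T$. The first step is to rule out bubbling in $C^\infty_{loc}(\C,\R\times M)$. If $|d\util_n|$ blows up at some $z^*\in\C$, I rescale, translating in the $\R$-factor to keep $a$-coordinates bounded, and extract a non-constant finite-energy $\jtil$-holomorphic bubble $v:\C\to\R\times M$. The choice $\gamma<\gamma_1$ forces $z^*\in\interior{\D}$, since any bubble carries $d\lambda$-area at least $\gamma_1$ while only $\gamma$ is available on $\C\setminus\D$. So $v$ is asymptotic to a contractible orbit $P'=(x',T')$ with $T'\leq T$, and by hypotheses (i)-(ii), $\mu_{CZ}(P')\geq 3$. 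The asymptotic analysis from~\cite{props2} then gives $\wind_\infty(v)\geq 2$, hence $\wind_\pi(v)\geq 1$, so $\pi\cdot dv$ has a zero. A similarity/Carleman-type argument propagates this to a zero of $\pi\cdot du_n$ for large $n$, contradicting the fast condition $\wind_\pi(\util_n)=\wind_\infty(\util_n)-1=0$. Hence $\util_n\to\util$ in $C^\infty_{loc}(\C,\R\times M)$.

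Next I verify $\util\in\Theta^L(H,P)$. Absence of bubbling means all $d\lambda$-area is preserved: $\int_\D u^*d\lambda=T-\gamma$ and $\int_{\C\setminus\D}u^*d\lambda=\gamma$, so $\int_\C u^*d\lambda=T$. The action gap $\gamma<\gamma_2$ then prevents any action escape to the puncture at infinity, so $\util$ is asymptotic to a closed Reeb orbit of period exactly $T$; continuity of the asymptotic trace along the sequence identifies this orbit as $P$ itself. The fastness indices $\wind_\infty=1$ and $\cov=1$ survive in the limit by semicontinuity, any jump being ruled out again by the $\pi\cdot du$-argument (for $\wind_\infty$) and by non-degeneracy preventing a multiply covered orbit of period $T$ from emerging (for $\cov$). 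The conditions $\util(0)\in H$ and $\inf a(\C)\geq -L$ are immediate. For $\Lambda^L(H,P)$: any self-intersection of the limit would be isolated and transverse by the fast property and would persist under small perturbation, contradicting embeddedness of $\util_n$ for large $n$.

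For part~(2), I reduce to part~(1) by showing a uniform lower bound $\inf a_n(\C)\geq -L_0$ under the hypothesis $H\cap(\R\times x(\R))=\emptyset$. Suppose for contradiction $\inf a_n\to-\infty$ along a subsequence; SFT-style compactness, valid here by non-degeneracy of all contractible orbits up to period $T$, produces a broken $\jtil$-holomorphic building whose top storey contains the marked point $0$ and whose lower storeys have $a$-coordinates escaping to $-\infty$. By the fastness argument of the previous paragraph, no lower-storey component can be a plane; a Fredholm count using $\mu_{CZ}\geq 3$ and the preservation of $\wind_\pi=0$ through the building forces the whole configuration to consist of cylinders connecting copies of $P$. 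Then $u_n(\C)$ must accumulate $C^0$-close to $x(\R)$ on enlarging subsets of $\C$, forcing $u_n(0)$ into arbitrary neighbourhoods of $x(\R)$ and contradicting $H\cap(\R\times x(\R))=\emptyset$. The hard part is precisely this SFT-compactness step: identifying the broken building and showing it must consist of cylinders over $P$, which requires propagating the fast-plane exclusion argument through each storey and carefully using the gaps $\gamma<\gamma_1$, $\gamma<\gamma_2$ to rule out side-branches asymptotic to other orbits; the secondary difficulty is the uniform spectral control of the asymptotic operators needed to pass the fastness indices to the limit.
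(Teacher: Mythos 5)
The key step of your argument for part (1) contains a genuine error. You claim that the bubble plane $v$, being asymptotic to a contractible orbit $P'$ with $\mu_{CZ}(P')\geq 3$, satisfies $\wind_\infty(v)\geq 2$. This is false, and in fact the inequality runs the other way. At a \emph{positive} puncture of a plane, Theorem~\ref{p12} gives an asymptotic eigenvalue $\nu\leq\nu^{neg}_0<0$ of $A_{P'}$, so $\wind_\infty(v)\leq(\nu^{neg}_0,\beta_{P'})$. When $\mu_{CZ}(P')=3$ one computes $(\nu^{neg}_0,\beta_{P'})=1$, and since planes always have $\wind_\infty\geq 1$ (by Lemma~\ref{gauss}), we get $\wind_\infty(v)=1$ exactly. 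Indeed the fast planes studied throughout the paper are precisely planes asymptotic to dynamically convex orbits with $\wind_\infty=1$, so $\mu_{CZ}\geq3$ manifestly does not force $\wind_\infty\geq 2$. Consequently $\wind_\pi(v)=0$ is consistent and no zero of $\pi\cdot dv$ is produced, and your bubbling contradiction evaporates. The same error propagates to your part (2): the claim that ``no lower-storey component can be a plane'' is stated without justification and, if based on the same estimate, is equally unfounded.

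The paper's argument (Lemma~\ref{notdyn} fed into STEP 4 of Lemma~\ref{round1}) uses $\mu_{CZ}\geq 3$ in the opposite geometric configuration. One extracts, \emph{before} rescaling at bubble points, the $C^\infty_{loc}$ limit $\wtil:\C\setminus\Gamma\to\R\times M$ of (translated) $\util_n$'s; this $\wtil$ has a positive puncture at $\infty$ and \emph{negative} punctures at $\Gamma$. At a negative puncture the asymptotic eigenvalue satisfies $\nu\geq\nu^{pos}_0>0$, so Lemma~\ref{dynconvex} gives $\wind_\infty(\wtil,\delta,z_j)\geq 2$ for each $z_j\in\Gamma$ when $\mu_{CZ}\geq 3$ — this is the inequality you want, but applied at the negative punctures of the limit, not at the positive puncture of a bubble. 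The fast hypothesis on $\util_n$ controls the positive puncture of $\wtil$ ($\wind_\infty(\wtil,\delta,\infty)\leq1$), and then the Gauss--Bonnet identity $\wind_\pi=\wind_\infty+\#\Gamma-1\geq 0$ forces $\Gamma=\emptyset$. The remaining trichotomy (either $\int w^*d\lambda>0$, or $\pi\cdot dw\equiv0$ and $\wtil$ is a half-cylinder over $P$) then yields both parts of the theorem. You would need to restructure your proof around this negative-puncture winding estimate to obtain a correct argument.
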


The above theorem can be rephrased in the terminology of {\bf Symplectic Field Theory} (SFT) originally introduced
by Eliashberg, Givental and Hofer in~\cite{sft}. A plane in $\Theta(H,P)$ is a stable connected smooth holomorphic
curve, in the sense of~\cite{sftcomp}. The {\bf SFT Compactness Theorem}~\cite{sftcomp} describes the
compactification of the set of such curves with \emph{a priori} bounds on energy and genus. It generalizes
the notion of Gromov convergence of pseudo-holomorphic curves in closed symplectic manifolds~\cite{gromov} to, for
example, the non-compact setting of symplectizations. The notion of a stable curve, adapted in~\cite{sftcomp} to
symplectic cobordisms, was first introduced by Kontsevich in~\cite{kontsevich}. More general than a stable connected
smooth holomorphic curve is a holomorphic building of height 1, which is a finite energy map defined on the
components of a (not necessarily stable) nodal Riemann surface, plus compatibility conditions. These are not enough
to compactify the set of smooth curves and one needs to introduce higher buildings.

In our situation, the possible limiting holomorphic buildings of a sequence of fast planes can be described as a
rooted graph, the vertices of which represent stable connected smooth holomorphic curves. An edge represents a
closed Reeb orbit which is a common limit of the curves at the corresponding vertices. This is always the case when
dealing with curves of genus 0 with one positive puncture. The edges can be oriented as going away from the root,
and this divides the graph into levels, these being the levels of the holomorphic building as described
in~\cite{sftcomp}. The proof of Theorem~\ref{main3} consists of showing this graph has exactly one vertex. In order
to accomplish this we only need to analyze the root, more precisely, we shall prove that the convexity assumptions
on $\lambda$ will discard outgoing edges.

As a final remark, assertions (1) and (2) of Theorem~\ref{main3} about the sets $\Lambda^L(H,P)$ and $\Lambda(H,P)$ follow independently from Theorem 4 of~\cite{wendl}, we explain. The elegant analysis of Wendl can be applied to \textbf{embedded} fast planes. In view of Theorem~\ref{lemmafredholm}, these are examples of ``nicely embedded'' curves in the sense of~\cite{wendl} which, in the symplectization $\R\times M$, are holomorphic curves with embedded projections onto $M$. With the appropriate asymptotic constraint on the orbit $P$, a fast plane has a vanishing ``constrained'' normal first Chern number, so that the limiting holomorphic building of a sequence of embedded fast planes is either smooth or one of its levels contains a plane with $\mu$-index equal to $2$. But these do not exist under our assumptions. However, the results of~\cite{wendl} do not cover the corresponding statements made in Theorem~\ref{main3} about the sets $\Theta^L(H,P)$ and $\Theta(H,P)$, even when $\lambda$ is dynamically convex.

A proof using the SFT-Compactness Theorem or the results of~\cite{wendl} directly would make the exposition non-elementary and highly not self-contained, forcing the introduction of a large amount of notation, and making this work less accessible to a wider public.

Also, we would like to emphasize that our arguments are independent of any transversality assumptions.

\subsection{Fredholm theory}\label{outlinefredholm}

The second analytical tool for proving Theorem~\ref{openbook} is a perturbation theory. Embedded fast planes are
always regular in a suitably defined index-$2$ Fredholm theory with exponential weights.

\begin{theorem}\label{lemmafredholm}
Let $\lambda$ be any contact form on a $3$-manifold $M$ and $\xi = \ker \lambda$ be the associated contact
structure. Fix any $J \in \mathcal J(\xi,d\lambda|_\xi)$
and suppose $\util = (a,u)$ is an embedded fast finite-energy $\jtil$-holomorphic plane asymptotic to a periodic Reeb orbit $P = (x_0,T_0)$ at $\infty$. If $\mu = \mu(\util) \geq 3$ then $u(\C) \cap x_0(\R) = \emptyset$ and $u : \C \rightarrow M \setminus x_0(\R)$ is a smooth proper embedding. Moreover, for any $l\geq1$ there exists an open ball $B_r(0)\subset\R^2$ and a $C^l$ embedding $f : \C \times B_r(0) \rightarrow \R\times M$ satisfying:
\begin{enumerate}
 \item $f(z,0)=\util(z)$.
 \item If $|\tau|<r$ then $f(\cdot,\tau)$ is an embedded fast finite-energy plane in $\R\times M$ asymptotic to
     $P$ satisfying $\mu(f(\cdot,\tau)) = \mu$.
 \item Fix $\tau_0 \in B_r(0)$ and let $\{\util_n\}$ be a sequence of embedded fast finite-energy planes
     asymptotic to $P$ satisfying $\util_n \to f(\cdot,\tau_0)$ in $C^\infty_{loc}$ and $\mu(\util_n) = \mu \
     \forall n$. Then there exist sequences $\tau_n \rightarrow \tau_0$, $A_n \rightarrow 1$ and $B_n
     \rightarrow 0$ such that $$ f(A_n z + B_n,\tau_n) = \util_n(z) \ \forall z \in \C, \ n\gg 1. $$ 
\end{enumerate}
\end{theorem}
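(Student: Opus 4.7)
The plan decomposes into four main steps: establishing embeddedness of $u$ into $M \setminus x_0(\R)$, setting up a weighted Fredholm theory, proving automatic transversality, and constructing the chart $f$ together with the uniqueness statement (3).

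First I would establish that $u(\C) \cap x_0(\R) = \emptyset$ and that $u$ is a proper embedding. The condition $\wind_\infty(\util) = 1$ together with Lemma~\ref{gauss} already forces $u:\C\to M$ to be an immersion transverse to $R$. The ``fast'' condition controls the leading asymptotic term: by the asymptotic formula of Hofer–Wysocki–Zehnder, $u(\est)$ approaches $x_0(T\cdot+c)$ tangentially to a non-vanishing eigensection of the asymptotic operator $A_P$ of winding $1$, which wraps once around $x_0(\R)$ at infinity without meeting it. Any intersection of $u(\C)$ with $x_0(\R)$ in a compact region would then produce a positive algebraic intersection of $\util$ with the orbit cylinder $\R\times x_0(\R)$, contradicting the combination of embeddedness of $\util$ and the asymptotic disjointness (a Siefring-type positivity of intersections argument, which in this three-dimensional setting reduces to a winding count in $\xi$ along compact slices). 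Properness on $M\setminus x_0(\R)$ follows because $a(z_n)\to+\infty$ forces $u(z_n)$ into an arbitrarily small tubular neighborhood of $x_0(\R)$.

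Next I would linearize $\bar\partial_{\jtil}$ at $\util$ on exponentially weighted Sobolev sections of $\util^*T(\R\times M)$, taking the weight $e^{\delta s}$ at the puncture $\infty$ with $0<\delta$ smaller than the distance from $0$ to the spectrum of $A_P$ but larger than the smallest positive eigenvalue. Because $\wind_\infty(\util)=1$, this weight pushes the linearized operator past precisely the eigenspaces of winding $1$. Combining the standard index formula (essentially $\mathrm{ind}=\mu-1$ unweighted) with the shift produced by crossing these eigenvalues, and then dividing by the $2$-dimensional symmetry group of affine reparametrizations $z\mapsto Az+B$ of $\C$, leaves a weighted Fredholm operator $D_{\util}$ of index $2$ whenever $\mu\geq 3$. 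The precise bookkeeping of eigenvalues and their windings is exactly what the results stated in Section~\ref{fredholm} (and the technical lemmas of the Appendix) are designed to handle.

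For transversality I would invoke an automatic regularity argument in the spirit of~\cite{wendl}: since $\util$ is embedded and immersed, the linearized problem splits off a normal Cauchy–Riemann operator whose Conley–Zehnder index, computed with the $\wind_\infty=1$ constraint, satisfies the sign condition that forces surjectivity in dimension $4$. Thus $D_{\util}$ is surjective, and the Banach implicit function theorem produces a $C^l$ map $f:\C\times B_r(0)\to \R\times M$ with $f(\cdot,0)=\util$ and $B_r(0)\subset\R^2$ parametrising $\ker D_{\util}$ transversally to the reparametrization action. Elliptic bootstrapping gives smoothness in the $\C$-variable, and embeddedness/the fast condition are open properties in the weighted topology, hence persist on a smaller ball, giving assertions (1) and (2).

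The delicate point, and the step I expect to be the main obstacle, is the uniqueness statement (3). Given $\util_n\to f(\cdot,\tau_0)$ in $C^\infty_{loc}$ with $\mu(\util_n)=\mu$ and common asymptotic orbit $P$, I must promote $C^\infty_{loc}$-convergence to convergence in the exponentially weighted space, after allowing affine reparametrizations $z\mapsto A_n z+B_n$. The mechanism is the exponential decay estimates of~\cite{props2}: since $\wind_\infty(\util_n)=1$ is locked along the sequence, the leading asymptotic eigensections of $\util_n$ cannot jump to a lower-winding eigenvalue, so the tails of $\util_n$ and $f(\cdot,\tau_0)$ are uniformly controlled in the weighted norm. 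Once weighted convergence is achieved, the local chart from the implicit function theorem identifies $\util_n$ with $f(A_nz+B_n,\tau_n)$ for unique sequences $\tau_n\to\tau_0$, $A_n\to 1$, $B_n\to 0$. This asymptotic upgrade, which forbids the ``drifting'' of eigensections along the sequence, is precisely what the fast condition $\wind_\infty(\util)=1$ was designed to enforce, and it is where the technical lemmas postponed to the appendix play the decisive role.
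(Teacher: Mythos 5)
Your overall architecture --- exponentially weighted Fredholm theory, automatic transversality via nonvanishing of kernel elements, and the implicit function theorem producing an index-$2$ chart --- matches the paper's, but you diverge at the two hardest sub-steps, and in both cases your route is substantially harder to complete than the paper's. For the disjointness $u(\C)\cap x_0(\R)=\emptyset$, you invoke a Siefring-type intersection count with the trivial cylinder $\R\times x_0(\R)$ and claim a contradiction from ``embeddedness of $\util$ plus asymptotic disjointness,'' but embeddedness of $\util$ does not by itself forbid intersections with the cylinder; what is needed is that $\wind_\infty(\util)=1$ forces the total (actual plus asymptotic) intersection number to vanish, and verifying this carefully is real work, especially when $\mu_{CZ}(P)\geq 4$ so that the extremal negative winding $(\nu^{neg}_0,\beta_{\util})$ is $\geq 2$. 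The paper deliberately avoids Siefring/Wendl intersection theory: it proves the Fredholm theory \emph{first} and then (Lemma~\ref{int4}) derives disjointness from an $\R$-translation argument --- an intersection $u(\C)\cap x_0(\R)$ would force $\util$ to meet $c\cdot\util$ for every $c>0$ by positivity of intersections together with a homotopy-invariance argument, while Lemma~\ref{rtranslations} shows that for small $c$ the translate $c\cdot\util$ lies in the image of the embedding $f$ and hence is disjoint from $\util$. Your proposed ordering (disjointness before Fredholm) is therefore the opposite of the paper's and cannot be swapped for the paper's mechanism; it is viable only if your Siefring argument is fully carried out. You also omit the somewhere-injectivity step (Lemma~\ref{int2}, via $\wind_\pi(\util)=0$) needed to conclude that $u$ is a proper \emph{embedding}, not just proper.

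For the uniqueness statement (3), your plan to promote $C^\infty_{loc}$-convergence to weighted convergence is plausible but technically demanding: the paper itself deploys such an argument only for the very special case of pure $\R$-translations (Lemma~\ref{rtranslations}), calling the relevant step ``long and technical'' even there, and does \emph{not} use it for (3). Instead, the paper reduces (3) to finite-dimensional bookkeeping via the rigidity Lemma~\ref{rep}: two embedded fast planes asymptotic to the same simply covered orbit $P$, disjoint from $x_0(\R)$, in the same class $\beta$, either have disjoint or identical $M$-images, and in the latter case differ by a biholomorphism $z\mapsto Az+B$ of $\C$ composed with an $\R$-translation. Once $\vtil_n(0)$ is matched with some $f(B_n,\tau_n)$ via the inverse function theorem, Lemma~\ref{rep} directly produces the affine reparametrization and forces $c_n=0$. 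Minor issues: your description of the weight is internally inconsistent ($\delta$ cannot simultaneously be smaller than the distance from $0$ to $\sigma(A_P)$ and larger than the smallest positive eigenvalue; in the paper's convention the weight $\delta$ is \emph{negative}, chosen so that $(\nu^{pos}_\delta,\beta_{\util})=2$ and $(\nu^{neg}_\delta,\beta_{\util})=1$), and you omit the reparametrization step --- the triples $(C_\tau,D_\tau,\phi_\tau)$ from the appendix of~\cite{props3} --- needed to convert the graphs $v(\tau)$ produced by the implicit function theorem into genuine $\jtil$-holomorphic planes.
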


In view of definitions~\ref{fastplane} and~\ref{behavior} the map $u:\C\to M$ provides a capping disk for $P_0$, that singles out a homotopy class $\beta_{\util} \in \mathcal{S}_{P_0}$ defined by the following property: a $d\lambda$-symplectic trivialization $\Psi$ of $\xi_{P_0}$ extends to $u^*\xi$ if, and only if, it is in class $\beta_{\util}$. In the above statement $\mu(\util) = \mu_{CZ}(P_0,\beta_{\util})$.

Note that $P_0$ is not assumed non-degenerate, instead we assume the planes have non-degenerate asymptotic behavior
in the sense of Definition~\ref{behavior}. This allows us to handle arbitrary contact forms on $S^3$,
see~\cite{hry}. We also emphasize that no assumptions on $\lambda$, like being Morse-Bott, are made. This justifies
Theorem~\ref{lemmafredholm}.

A degenerate Fredholm theory as described above was only hinted at in~\cite{convex}. The above statement does not
follow from the results of~\cite{props3} but, of course, its proof follows their arguments closely. We refer the
reader to Section~\ref{fredholm} for the proof.

\subsection{Existence of fast finite-energy planes}\label{existence}

We shall prove the following existence result of fast planes. It partially generalizes the existence statement in Theorem~\ref{chars3} since it deals with higher Conley-Zehnder indices. 

\begin{theorem}\label{existfast}
Let $\lambda$ be a tight contact form on a closed $3$-manifold $M$ such that the following properties hold for every
contractible Reeb orbit $\hat P$:
\begin{itemize}
 \item[(i)] $\hat P$ is non-degenerate,
 \item[(ii)] $c_1(\xi)$ vanishes along $\hat P$ and $\mu_{CZ}(\hat P) \geq 3$.
\end{itemize}
Suppose $P$ is an unknotted periodic Reeb orbit satisfying $sl(P,disk) = -1$. Then, for a suitable
$d\lambda$-compatible complex structure $J: \xi \rightarrow \xi$, there exists an embedded fast finite-energy
$\jtil$-holomorphic plane asymptotic to $P$ at $\infty$.
\end{theorem}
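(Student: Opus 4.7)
The plan is to construct $\util$ by a disk-filling argument in the spirit of Hofer-Wysocki-Zehnder \cite{convex,char1,char2}. Since $P$ is unknotted with $sl(P,\text{disk})=-1$ and $\xi$ is tight, results on characteristic foliations of spanning disks (elimination of singularities in tight manifolds) furnish an embedded disk $\mathcal{D}_0 \hookrightarrow M$ with $\partial \mathcal{D}_0 = x(\R)$ whose characteristic foliation has a single interior elliptic singularity $e$, with all other leaves flowing radially from $e$ to $\partial \mathcal{D}_0$. I would then pick $J\in\jcal(\xi,d\lambda|_\xi)$ such that $\{0\}\times\mathcal{D}_0\subset \R\times M$ is totally real for $\jtil$ and $(0,e)$ supports a standard Bishop family of small embedded $\jtil$-holomorphic disks with boundary on $\{0\}\times\mathcal{D}_0$.

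Next, I would extend this Bishop family along the maximal connected component $\mathcal{M}$ of the moduli space of embedded $\jtil$-holomorphic disks with boundary on $\{0\}\times\mathcal{D}_0$ in the prescribed relative homotopy class, after fixing three boundary marked points to quotient out the conformal reparametrisations. A boundary-condition variant of the Fredholm theory underlying Theorem~\ref{lemmafredholm}, combined with automatic transversality for embedded holomorphic disks, should show that $\mathcal{M}$ is a smooth $1$-manifold. Parametrising it by the symplectic area enclosed by the boundary loop, I would push the family towards $\partial \mathcal{D}_0 = x(\R)$.

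The decisive step is the compactness/bubbling analysis at the end of this family. Applying Theorem~\ref{main3} together with the SFT compactness theorem of \cite{sftcomp}, the only possible degenerations are: bubbling off a finite-energy plane asymptotic to a contractible Reeb orbit $\hat P$, multi-level breakings along such orbits, or disk bubbles attached to the boundary. Sphere bubbles are excluded because $c_1(\xi)$ vanishes along every contractible orbit. Assumption (ii), $\mu_{CZ}(\hat P)\geq 3$, combined with the winding estimates of \cite{props2}, forces every plane appearing in the limit to satisfy $\wind_\infty = 1$, hence to be fast in the sense of Definition~\ref{fastplane}; tightness and $sl(P,\text{disk}) = -1$ then identify the single surviving bubble with a fast plane asymptotic to $P$ and exclude further breaking. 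Embeddedness of $\util$ is preserved by positivity of intersections propagated across the family.

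The main obstacle will be precisely this bubbling analysis: ruling out multiply-covered plane bubbles, nested breaking levels, and boundary disk bubbles. The role of the hypothesis $\mu_{CZ}\geq 3$ on contractible orbits is exactly to exclude low-index planes that could otherwise break off the family, while tightness and $sl(P,\text{disk})=-1$ are needed to match the relative topology so that the unique surviving bubble lands on $P$ rather than on some other contractible orbit. If any of these steps proves too delicate to carry out directly, a fallback is to run the whole argument along a 1-parameter deformation from a model contact form on $S^3$ where the plane is explicit, transporting it through the deformation by the same compactness theorem.
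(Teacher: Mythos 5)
Your proposal correctly identifies the overall strategy---Giroux elimination of singularities to get a spanning disk with a single nicely elliptic singularity, the Bishop family with boundary on $\{0\}\times\Dcal_0$, and a bubbling analysis in which $\mu_{CZ}\geq 3$ for contractible orbits forces $\wind_\infty\geq 2$ at all negative punctures---and this does match the skeleton of the paper's argument. However, you have left out the single step that constitutes the paper's new contribution, and the step you have in its place is a gap.

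The issue is your claim that ``$\mu_{CZ}(\hat P)\geq 3$, combined with the winding estimates of \cite{props2}, forces every plane appearing in the limit to satisfy $\wind_\infty = 1$.'' This is exactly where the~\cite{char2} argument breaks down once one allows $\mu_{CZ}(P)\geq 3$ rather than $\mu_{CZ}(P)=3$. The spectral inequalities of~\cite{props2} bound $\wind_\infty$ at the positive puncture by something like $\lfloor \mu_{CZ}(P)/2\rfloor$; when $\mu_{CZ}(P)=3$ this is automatically $\leq 1$ (so the root bubble is fast for free), but when $\mu_{CZ}(P)\geq 5$ the bound is $\geq 2$ and the argument gives no contradiction via $\wind_\pi\geq 0$. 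You cannot conclude fastness from the index hypothesis alone. The paper circumvents this by Proposition~\ref{perturbdisk}: it perturbs the spanning disk so that, in a collar near $\partial\Dcal_1 = x(\R)$, the Reeb vector is everywhere transverse to the disk. This geometric condition, combined with the strong maximum principle applied to the Bishop disks near the boundary, is what yields Lemma~\ref{pidunnotvanish} (the sections $\pi\cdot du_n$ never vanish on $\D$ for $n\gg1$), and from this a degree-theoretic computation (Lemma~\ref{winfty}) shows $\wind_\infty(\vtil,\delta,\infty)=1$ at the positive puncture of the root bubble. Only then does Lemma~\ref{notdyn} ($\wind_\pi\geq 0$ versus $\wind_\infty\leq 1$ at the top and $\geq 2$ at the bottom) kill all negative punctures. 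Without the boundary perturbation and Lemma~\ref{pidunnotvanish}, the bubbling analysis is stuck.

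Two smaller remarks. First, invoking Theorem~\ref{main3} here is a misapplication: that theorem is a compactness statement for families of fast planes in $\R\times M$, not for the Bishop family of disks with totally real boundary; the compactness machinery needed is the one from~\cite{char1,char2}, supplemented by the argument above. Second, you omit the Baire-category perturbation (Subsection~\ref{lastpert}) ensuring the disk $\Dcal_1$ meets no closed Reeb orbit other than $x(\R)$; this is a minor but necessary technical point before running the Bishop filling.
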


As before, the integer $sl(P,disk)$ denotes the self-linking number computed with respect to any embedded disk
spanning the unknot $x(\R)$. It is independent of this disk since $c_1(\xi)$ vanishes along $P$.

The general idea of the proof is standard, see~\cite{char2}. Since $P$ is unknotted and $sl(P)=-1$ we can find,
using arguments of Giroux \cite{giroux} and Hofer \cite{93}, an embedded disk $F \hookrightarrow M$ spanning $P$
(including orientations) such that its characteristic foliation has exactly one positive elliptic singularity $e \in
F$ with real eigenvalues. Denote $F^* = F \setminus\{e\}$. The surface $\{0\}\times F^* \hookrightarrow \R \times M$
is totally real with respect to $\jtil$ and there exists a so-called Bishop family emanating from $(0,e)$. It is a
one-dimensional family of unparametrized embedded $\jtil$-holomorphic disks with boundary on $\{0\} \times F^*$.
This family was discovered by E. Bishop in~\cite{bishop} and used by Hofer in~\cite{93} in order to establish the
Weinstein Conjecture in $S^3$ and in many other closed $3$-manifolds. It should be noted that disk-filling methods
were also used in~\cite{gromov} and in~\cite{filling} in order to show that symplectically fillable contact
structures are tight.

Each connected component of the Bishop family is an open interval. At one end the family converges to the constant
$(0,e)$. At the other end bubbling-off occurs and one can prove, using the convexity assumptions on $\lambda$, that
bubbling-off does not happen before the disks reach the boundary $\partial F = P$. As a result of this bubbling-off
analysis we have, in the language of Symplectic Field Theory (see~\cite{sftcomp}), a holomorphic building with
height $k\geq2$. Each level is a collection of smooth finite-energy surfaces in $\R \times M$. The first level is a
half trivial cylinder over $P$ and the curves on other levels have no boundary. All this is proved in~\cite{char2}.

Now we need to introduce new arguments. In~\cite{char2} the authors use the fact that the $\mu$-index of the orbit
in question is $3$. They heavily rely on the compactness argument explained in~\cite{props2} to conclude that the
second level of the stable curve consists of a single plane, hence there are no more levels. This is in great
contrast with our situation since we allow $\mu_{CZ}(P) \geq 3$. We overcome this difficulty by slightly perturbing
the boundary condition $F$ in order to ensure there are no Reeb tangencies near its boundary (of course the Reeb
vector is tangent at the boundary since it is a Reeb orbit). This allows us to reach the same conclusions as
in~\cite{char2}: $k=2$ and the second level is a single plane asymptotic to $P$. Also, this plane is embedded and
fast.

\subsection{Proofs of Theorems~\ref{thmA} and~\ref{openbook}}\label{sketchopenbook}

Necessity in Theorem~\ref{openbook} follows from Proposition~\ref{necessity}. We now turn to sufficiency. We
construct the desired open book decomposition as a consequence of compactness properties of families of fast planes.
Recall the families $\Lambda(H,P)$ in (\ref{setLambda}) and define
\begin{equation}\label{setLambdak}
 \Lambda_k(H,P) := \left\{ \vtil \in \Lambda(H,P) : \mu(\vtil) = k \right\}.
\end{equation}
One easily checks that $\Lambda_k(H,P)$ is $C^\infty_{loc}$-closed, $\forall k$. Thus, each $\Lambda_k(H,P)$ is
$C^\infty_{loc}$-compact whenever $\Lambda(H,P)$ is $C^\infty_{loc}$-compact. In Section~\ref{cons} we prove

\begin{theorem}\label{ob2}
Let $\lambda$ be a contact form on the closed $3$-manifold $M$ and let $\jtil$ be the almost complex
structure on $\R\times M$ defined by equations (\ref{almcpxstr}). Suppose there exists an embedded fast
$\jtil$-holomorphic finite-energy plane $\util_0$ asymptotic to $P=(x,T)$ with $\mu(\util_0) = k \geq 3$, and that every contractible orbit is non-degenerate. We also suppose that the set of planes $\Lambda_k(H,P)$ is $C^\infty_{loc}$-compact for every compact subset $H \subset
\R\times M$ satisfying $H \cap (\R\times x(\R)) = \emptyset$. Then for every $l\geq1$ there exists a $C^l$ map
$\util = (a,u) : S^1 \times \C \rightarrow \R \times M$ with the following properties:
\begin{enumerate}
 \item $\util(\vartheta,\cdot)$ is an embedded fast finite-energy plane asymptotic to $P$ at (the positive
     puncture) $\infty$ satisfying $\mu(\util(\vartheta,\cdot)) = k,\ \forall \vartheta \in S^1$.
 \item $u(\vartheta,\C) \cap x(\R) = \emptyset \ \forall \vartheta\in S^1$ and the map $u : S^1 \times \C
     \rightarrow M \setminus x(\R)$ is an orientation preserving $C^l$-diffeomorphism.
 \item Each $\cl{u(\vartheta,\C)}$ is a smooth global surface of section for the Reeb dynamics.
\end{enumerate}
\end{theorem}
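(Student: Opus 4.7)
The plan is to promote the single embedded fast plane $\util_0$ to a smooth $S^1$-family via a moduli-space argument, combining the local Fredholm theory of Theorem~\ref{lemmafredholm} with the global compactness hypothesis on $\Lambda_k(H,P)$, and to deduce the foliation property from positivity of intersections of $\jtil$-holomorphic curves in $\R\times M$.

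First I would define the moduli space $\M$ of equivalence classes of embedded fast $\jtil$-holomorphic finite-energy planes asymptotic to $P$ with $\mu$-index equal to $k$, modulo the combined action of the holomorphic reparametrizations $z\mapsto Az+B$ and of the $\R$-translation of the target. Theorem~\ref{lemmafredholm} supplies a smooth $B_r(0)\subset\R^2$ model of nearby planes modulo $z\mapsto Az+B$; quotienting further by the free $\R$-action makes $\M$ a smooth $1$-manifold. Next I would fix the connected component $\mathcal{C}$ of $\M$ containing $[\util_0]$ and show it is compact. Given a sequence $[\util_n]\in\mathcal{C}$, select points $p_n\in u_n(\C)$, pass to a subsequence with $p_n\to p_\infty$, and use the reparametrization group to normalize each representative $\util_n$ so that $\util_n(0)$ lies in a fixed compact set $H$. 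If $p_\infty\notin x(\R)$ then $H$ can be arranged disjoint from $\R\times x(\R)$ and the hypothesized compactness of $\Lambda_k(H,P)$ yields a $C^\infty_{loc}$-convergent subsequence. The case $p_\infty\in x(\R)$ is excluded by positivity of intersections of the $\jtil$-holomorphic plane $\util_n$ with the trivial cylinder $\R\times x(\R)$, together with the conclusion of Theorem~\ref{lemmafredholm} that $u_n(\C)\cap x(\R)=\emptyset$. Hence $\mathcal{C}\simeq S^1$.

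I would then show that distinct classes $[\util],[\vtil]\in\mathcal{C}$ have disjoint $M$-projections. If not, an $\R$-translation of $\vtil$ would have isolated intersections with $\util$, each counted positively by positivity of intersections. Continuously rotating $[\vtil]$ around $\mathcal{C}$ back to $[\util]$ preserves this algebraic count, but in a small neighborhood of $[\util]$ in $\mathcal{C}$ the local Fredholm model of Theorem~\ref{lemmafredholm} shows nearby planes have disjoint images, forcing the count to be zero, a contradiction. Using a smooth normalization of the $\R$-action and of the remaining reparametrization freedom — e.g.\ requiring $u(\vartheta,e^{2\pi(s+it)})\to x(Tt+\vartheta)$ as $s\to+\infty$ — I extract a $C^l$ map $\util:S^1\times\C\to\R\times M$ fulfilling (1). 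For (2), Theorem~\ref{lemmafredholm} gives that each $u(\vartheta,\cdot)$ is a proper embedding missing $x(\R)$, and the disjointness above gives injectivity across $\vartheta$. The image $u(S^1\times\C)\subset M\setminus x(\R)$ is open by the local moduli theory, closed by compactness of $\mathcal{C}$ together with the asymptotic behavior near $x(\R)$, nonempty, and hence equals the connected open manifold $M\setminus x(\R)$; orientation preservation is inherited from the complex orientation on each leaf together with the $S^1$-orientation.

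Property (3) then follows: transversality of $R$ to $u(\vartheta,\C)$ on the interior is equivalent to $\wind_\infty(\util(\vartheta,\cdot))=1$ via Lemma~\ref{gauss}, and each $\cl{u(\vartheta,\C)}$ has boundary $P$ by the asymptotic normalization. The return property uses invariance of $d\lambda$ under the Reeb flow combined with the finite $d\lambda$-area of each leaf: a trajectory in $M\setminus x(\R)$ that fails to recur to every page would trap infinite $d\lambda$-mass in a strip of pages, a contradiction. The main obstacle I anticipate is the compactness step, specifically cleanly ruling out sequences whose projections accumulate on $x(\R)$ (which would correspond, in the SFT language, to height-$2$ buildings containing a trivial cylinder over $P$) without invoking the full SFT compactness machinery; here the positivity-of-intersections argument against $\R\times x(\R)$, together with a careful use of the index constraint $\mu=k\geq 3$, is the delicate point.
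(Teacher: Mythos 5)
Your outline is in the right spirit and close in outcome to what the paper does (an $S^1$-family of embedded fast planes foliating $M\setminus x(\R)$, built from local Fredholm theory plus compactness), but two of the key steps are not correct as stated.

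\textbf{Compactness of the moduli space.} You normalize a sequence $[\util_n]$ by choosing $p_n\in u_n(\C)$ and passing to $p_n\to p_\infty$, and you propose to exclude $p_\infty\in x(\R)$ via ``positivity of intersections of $\util_n$ with the trivial cylinder $\R\times x(\R)$.'' This cannot work: by Lemma~\ref{int4} one has $\util_n(\C)\cap(\R\times x(\R))=\emptyset$ for every $n$, so there are \emph{no} intersections to count and positivity of intersections is vacuous. Nothing prevents a sequence of points on $u_n(\C)$ from accumulating on $x(\R)$ (indeed this is exactly what happens when the sequence degenerates to a building whose top level is the trivial cylinder over $P$). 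The hypothesis you have is compactness of $\Lambda_k(H,P)$ only for $H$ \emph{disjoint} from $\R\times x(\R)$; to use it you must produce a uniform normalization whose base points stay away from $x(\R)$, and that is precisely the content you still need to supply. The paper circumvents this by running the family along a fixed Reeb trajectory $\hat x$ through a Poincar\'e-recurrent point, noting that $\hat x(\R)\cap x(\R)=\emptyset$ (a consequence of Lemma~\ref{int4}) so the relevant marker set $H=\{0\}\times\hat x([0,\tau])$ automatically avoids $\R\times x(\R)$; see Lemma~\ref{continuation} and the set $B$ in (\ref{setB}). If you want to keep the abstract ``$\mathcal C\simeq S^1$'' picture you must replace the intersection argument by a genuine normalization step.

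\textbf{The global-surface-of-section property.} Your $d\lambda$-mass argument handles trajectories that stay away from $x(\R)$, but not trajectories whose $\omega$- or $\alpha$-limit set meets $x(\R)$, and this is exactly the hard case. A trajectory accumulating on $x(\R)$ can spend unbounded time near the binding, and whether it still cuts every page depends on the \emph{rotation rate} of the linearized flow along $P$ relative to the winding $\wind_\infty=1$ of the pages; this is where $\mu_{CZ}(P)\geq 3$ enters and where a finite-area bookkeeping alone cannot conclude. The paper isolates precisely this in Lemma~\ref{omegalimit}, whose proof uses the geometric characterization of the Conley--Zehnder index from Appendix~\ref{geomindex} together with the asymptotic formula of Corollary~\ref{oops}. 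Your sketch never invokes the index hypothesis in this step, and I do not see how to avoid it.

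Two smaller remarks. Your disjointness argument (rotating $[\vtil]$ around $\mathcal C$ and appealing to homotopy invariance of intersection numbers) is more delicate than needed; the paper's Lemma~\ref{int3}/Lemma~\ref{rep} gives directly that two embedded fast planes with the same asymptotic orbit and class $\beta$ either coincide or have disjoint $M$-projections, with no homotopy required. Also, the hypothesis that all contractible orbits are non-degenerate is not mentioned in your argument; it is used (via Lemma~\ref{longcyl}) in the closedness part of the surjectivity argument, to get the uniform exit from a neighborhood of $x(\R)$ that you gesture at with ``asymptotic behavior near $x(\R)$.''
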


The purpose of the above statement is to isolate the compactness properties of fast planes which allow us to
construct the desired open book decompositions. Here these compactness properties follow from convexity assumptions
on $\lambda$, see Theorem~\ref{main3}. In~\cite{pedro} we shall prove that the assumptions of Theorem~\ref{ob2} hold
under much less restrictive assumptions on $\lambda$, allowing us to investigate general Reeb flows on the tight
$3$-sphere.

Let us assume the hypotheses of Theorem~\ref{openbook}. If $P$ is an unknotted, simply covered, periodic Reeb orbit
satisfying $sl(P) = -1$ then Theorem~\ref{existfast} provides an embedded fast finite-energy plane $\util_0$
asymptotic to $P$ at $\infty$. Theorem~\ref{main3} now shows that the hypotheses of Theorem~\ref{ob2} hold.
Theorem~\ref{openbook} follows immediately.

Before proving Theorem~\ref{thmA} we briefly outline the proof of Theorem~\ref{ob2} for convenience of the reader.
Suppose $M$, $\lambda$ and $\util_0$ satisfy the hypotheses of Theorem~\ref{ob2}. If we write $\util_0 = (a_0,u_0)
\in \R \times M$ then it follows from lemmas~\ref{int2}, \ref{int3} and~\ref{int4} below that $u_0$ is a proper
embedding into $M \setminus x(\R)$. The identity $\wind_\infty(\util_0) = 1$ proves $u_0(\C)$ is transverse to the
Reeb vector field. By Theorem~\ref{lemmafredholm}, $\util_0$ is only one embedded fast plane in a small
$2$-parameter family. Let $p_0 = u_0(0) \in M$ and assume, without loss of generality, that $a_0(0) = 0$. Denoting
by $\phi_t$ the Reeb flow, we can single out a one-dimensional subfamily $\{\util^t = (a^t,u^t)\}$ by requiring
\[
 \begin{array}{cccc}
   u^t(0) = \phi_t(p_0), & a^t(0) = 0 & \text{ and } & \util^0 = \util_0.
 \end{array}
\]
The family of embedded planes $\{u^t(\C)\}$ inside $M \setminus x(\R)$ provides a smooth foliation of a neighborhood
of $u_0(\C)$. Using the compactness assumptions we continue the family $\util_t$ for all values $t\in\R$, satisfying
the above normalization conditions. By Poincar\'e recurrence we could have assumed, without loss of generality, that
$p_0 \in \omega\text{-limit}(p_0)$. Then the trajectory $\phi_t(p_0)$ will eventually come close to $p_0$. The
completeness statement in Theorem~\ref{lemmafredholm} can be used to show that the family $\util^t$ can be glued to
provide a $S^1$-family. It foliates the whole of $M\setminus x(\R)$. This $S^1$-family can be made minimal if we
require $$ (t,z) \in S^1 \times \C \mapsto u^t(z) \in M \setminus x(\R) $$ is a diffeomorphism. This provides an
open book decomposition with disk-like pages that are transverse to the Reeb field. To prove the pages are global
surfaces of section, fix $q_0 \in M \setminus x(\R)$. If $x(\R) \cap \omega\text{-limit}(q_0) = \emptyset$ then
$\phi_t(q_0)$ hits every page in forward time, by an easy compactness argument. If $x(\R) \cap
\omega\text{-limit}(q_0) \not= \emptyset$ then the condition $\mu_{CZ}(P) \geq 3$ makes the flow wind around $x(\R)$
for long enough times, forcing it to hit every page. This is proved in Section 5 of~\cite{convex}, see
Lemma~\ref{omegalimit} below. The argument is the same for negative times. This concludes the proof of Theorem~\ref{ob2}.

Theorem~\ref{thmA} follows easily from Theorem~\ref{openbook} and from the following result from~\cite{convex}.

\begin{theorem}[Hofer, Wysocki and Zehnder]
If $S\subset \R^4$ is the boundary of a bounded, smooth, strictly convex domain containing $0$ then $\lambda_0|_S$
is dynamically convex, that is, $\mu_{CZ}(P) \geq 3$ for every periodic orbit of the Reeb vector field associated to
the contact form $\lambda_0|_S$.
\end{theorem}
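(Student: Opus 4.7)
The plan is to exploit the Hamiltonian nature of the Reeb flow together with strict convexity of $K$, following the classical strategy of Hofer, Wysocki and Zehnder. First I would replace $H$ by a smooth positively $2$-homogeneous Hamiltonian $G:\R^4\setminus\{0\}\to\R$ with $G^{-1}(1)=S$; this is possible because $0\in\interior{K}$ and $S$ is star-shaped. Because Reeb dynamics on $S$ is determined by $S$ only up to time-reparametrization, it suffices to compute Conley-Zehnder indices of the periodic orbits of $X_G$. Strict convexity of $K$ combined with degree-$2$ homogeneity of $G$ forces the Hessian $A(z):=D^2G(z)$ to be positive definite on $\R^4\setminus\{0\}$; this is the only place where convexity enters, and it is the hypothesis driving the whole argument.

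Next I would linearize along a periodic $X_G$-orbit $x$ with period $T'$: the fundamental solution $\Phi(t)\in\Sp(2)$ of $\dot\Phi=J_0A(x(t))\Phi$, $\Phi(0)=I$, encodes the transverse dynamics. Euler's identity shows $\Phi(t)X_G(x(0))=X_G(x(t))$ and $\Phi(t)x(0)-x(t)$ is a multiple of $X_G(x(0))$, so after a symplectic splitting along $x$ that singles out the characteristic $2$-plane $\R X_G\oplus\R\nu$ ($\nu$ radial) and the contact plane $\xi_{x(0)}$, the path $\Phi$ descends to a path $\Phi^\xi$ in $\Sp(1)$ representing the linearized Poincar\'e map on $\xi$. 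By construction $\mu_{CZ}(P)=\mu(\Phi^\xi)$ in the sense of Theorem~\ref{axiomscz}.

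The heart of the argument is a crossing-form computation. At every time $t^*$ for which $\Phi^\xi(t^*)$ admits a fixed vector $v\in\xi_{x(0)}$, a direct calculation using $\dot\Phi=J_0A\Phi$ identifies the Robbin-Salamon crossing form at $t^*$ with $\langle A(x(t^*))\widetilde v,\widetilde v\rangle$ evaluated on a suitable lift $\widetilde v\in T_{x(0)}\R^4$; since $A>0$ this is strictly positive. Hence every Maslov-cycle crossing of $\Phi^\xi$ is positive, so by the axiomatic characterization of $\mu$ (homotopy and normalization) one obtains the bound $\mu_{CZ}(P)\geq 2$ with equality only if there is a single ``half'' crossing.

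The main obstacle, and the reason the result is not cheap, is upgrading the bound from $2$ to $3$. For this one must extract an extra $+1$ from the characteristic direction that was quotiented out. My plan is to set up a Maslov-index decomposition adapted to the product structure $\R^4=(\R X_G\oplus\R\nu)\oplus\xi$ along $x$: because $A>0$ also on the characteristic $2$-plane and $X_G(x(0))\in\ker(\Phi(T')-I)$, the path in $\Sp(2)$ necessarily undergoes at least a half rotation in the characteristic factor, which contributes an additional unit when reassembled with $\Phi^\xi$. Equivalently, one can argue via a Bott-type rotation-number estimate showing that the mean rotation of $\Phi^\xi$ strictly exceeds $\pi$ once the characteristic contribution is accounted for. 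Either route relies on the positivity of $A$ in an essential way, and once the extra unit is secured one concludes $\mu_{CZ}(P)\geq 3$ for every closed Reeb orbit of $\lambda_0|_S$, which is precisely dynamical convexity.
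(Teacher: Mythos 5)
The paper states this theorem as a result from \cite{convex} and does not include a proof, so there is no proof in the paper to compare against; I will assess your sketch on its own terms. Your setup matches the classical Hofer--Wysocki--Zehnder strategy: replace $H$ by a positively $2$-homogeneous $G$ with $G^{-1}(1)=S$, and note that strict convexity forces $D^2 G>0$ away from the origin. Both observations are correct, and this is indeed where convexity enters.

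The index estimate, however, has a real gap, concentrated on the choice of trivialization of $\xi$ along the orbit. The index $\mu_{CZ}(P)$ appearing in the definition of dynamical convexity is $\mu_{CZ}(P,\beta_P)$, computed in the class $\beta_P$ induced by a capping disk (Subsection~\ref{orbits}). Your crossing-form identity, which says the crossing form on $\Phi^\xi$ equals $\langle A\tilde v,\tilde v\rangle>0$, holds when $\xi$ is trivialized along the orbit by a frame with vanishing derivative; that frame is in general not in class $\beta_P$, and the Maslov shift between the two is nonzero. The irrational ellipsoid $E(a_1,a_2)$ with $a_1<a_2$ makes this concrete: on the short simple orbit the contact plane is the constant $z_2$-plane, the linearized return map rotates it by $2\pi a_1/a_2\in(0,2\pi)$, so in the constant trivialization the geometric characterization of Appendix~\ref{geomindex} gives $\hat\mu=1$, while $\mu_{CZ}(P)=3$. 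If instead you work in a frame in class $\beta_P$, the crossing form acquires derivative-of-frame terms and positivity of $A$ no longer forces positivity of the crossing form. There are two further problems: even in a fixed trivialization, positivity of all crossings only gives $\hat\mu\geq1$ (the rotation interval lies in $(0,\infty)$), not the $\geq 2$ you claim; and the pointwise splitting $\R X_G\oplus\R\nu\oplus\xi_{x(0)}$ is not preserved by $\Phi(t)$ for $t>0$, so the ``path in the characteristic factor'' is not defined without first fixing a moving frame --- the same difficulty again. The actual proof in \cite{convex} obtains a quantitative rotation bound for the full four-dimensional path generated by the positive-definite Hessian, and accounts explicitly for both the degenerate characteristic direction and the correct trivialization; supplying that quantitative estimate is the real content of the theorem.
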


The arguments are immediate in view of a famous result of Bennequin asserting that $\xi_0 = \ker \lambda_0|_S
\subset TS$ is a tight contact structure.

\section{Basic definitions and facts}\label{basicdefns}

Unless otherwise stated, $M$ denotes a closed $3$-manifold, $\lambda \in \Omega^1(M)$ is a contact form and $\xi =
\ker \lambda$ is the induced contact structure.

\subsection{Periodic Reeb orbits and winding numbers}\label{orbits}

We shall identify a periodic Reeb orbit $P=(x,T)$ with the class in $C^\infty(S^1,M)/S^1$ of the loop $$ t\in \R/\Z
\simeq S^1 \mapsto x_T(t) := x(Tt). $$ Here we let $S^1$ act on the loop space by rotations on the domain. Hence, we
view the collection $\p$ of periodic Reeb orbits as a subset of $C^\infty(S^1,M)/S^1$. The geometrical image of
$P=(x,T)\in\p$ is the set $x(\R)$ and $P'=(x',T')$ is geometrically distinct of $P$ if $x(\R) \cap x'(\R) =
\emptyset$. We shall agree with the following convention: for every periodic Reeb orbit we select a point in its
geometrical image, and it will be implicit from the notation $P=(x,T)$ that $x(0)$ is the chosen point.

\begin{defn}
Consider a contractible periodic orbit $P=(x,T)$ and two continuous disk-maps $f_1,f_2:\D\rightarrow M$ spanning
$x_T$, that is, $f_j(e^{i2\pi t}) = x(Tt)$, $j=1,2$. We can define the map $f_1\#\bar f_2 : S^2 =
\C\sqcup\{\infty\}\to M$ by
\[
 z\mapsto \left\{ \begin{aligned} & f_1(z) \text{ if } |z|\leq 1 \\ & f_2(1/\bar z) \text{ if } 1\leq |z| < \infty \\ & f_2(0) \text{ if } z=\infty \end{aligned} \right.
\]
We say that $c_1(\xi)$ vanishes along $P$ if $c_1((f_1\#\bar f_2)^*\xi)=0$ for every pair $f_1,f_2$ as above. We
denote by $\p^*$ the set of contractible orbits with this property.
\end{defn}

Clearly all contractible periodic Reeb orbits belong to $\p^*$ if $c_1(\xi)$ vanishes.

\begin{notation}[Winding Numbers]
Let $(E,J)$ be a complex line bundle over $S^1$. If $A$ and $B$ are two non-vanishing sections then $A=fB$ for
unique $f:S^1 \rightarrow\C \setminus \{0\}$, identifying $i$ with $J$. We denote $\wind(A,B,J) = \deg f/|f| \in
\Z$. More loosely, we write $\wind(f)=\wind(f,1,i)=\deg f/|f|$ if $f:S^1 \rightarrow\C \setminus \{0\}$ is
continuous. This winding count does not depend on the homotopy classes (of non-vanishing sections) of $A$ or $B$,
nor does it depend on the homotopy class (of complex multiplications) of $J$. If we fix classes $\alpha$ and $\beta$
and sections $A\in\alpha$ and $B\in\beta$ then expressions like $\wind(\alpha,\beta,J)$ or $\wind(A,\beta,J)$ have
obvious meanings.
\end{notation}

\begin{remark}\label{abc}
Given $P\in\p$, recall the set $\mathcal{S}_P$ of homotopy classes of $d\lambda$-symplectic trivializations of
$\xi_P$. One can identify $\mathcal{S}_P$ with the set of homotopy classes of non-vanishing sections of $\xi_P$ in a
straightforward way. In the following we shall always assume this is done. Consequently, if $P=(x,T)\in \p^*$ then a
section $Z$ is in the special class $\beta_P$ discussed in the introduction if, and only if, for some (and hence
any) continuous map $f:\D\to M$ satisfying $f(e^{i2\pi t}) = x(Tt)$ the section $Z$ extends to a non-vanishing
section of $f^*\xi$.
\end{remark}

The following lemma, which is a trivial consequence of standard degree theory, will be stated without proof.

\begin{lemma}\label{localclass}
Suppose $P = (x,T) \in \p^*$ and $U$ is a small tubular neighborhood of $x(\R)$ in $M$. Let $Z$ be a non-vanishing
section of $\xi|_U$ such that $x_T^*Z \in \beta_P$. If $f:\D\rightarrow M$ is a continuous map such that $f(\partial
\D) \subset U$ and $t\mapsto f(e^{i2\pi t})$ is homotopic to $x_T$ in $U$ then $(f|_{\partial \D})^*Z$ extends to a
non-vanishing section of $f^*\xi$.
\end{lemma}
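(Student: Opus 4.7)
The plan is to reduce the statement to the defining property of the class $\beta_P$ recalled in Remark~\ref{abc}, by assembling a capping disk for $x_T$ out of the given $f$ together with the homotopy inside $U$. Set $\gamma(t):=f(e^{i2\pi t})$, let $H:S^1\times[0,1]\to U$ be a homotopy with $H(\cdot,0)=\gamma$ and $H(\cdot,1)=x_T$, and write $\D_{1/2}=\{|z|\leq 1/2\}\subset\D$ and $A=\{1/2\leq|z|\leq 1\}$. I would define a capping disk $g:\D\to M$ of $x_T$ by
\[
 g(re^{i2\pi t}) = \begin{cases} f(2re^{i2\pi t}), & 0\leq r\leq 1/2, \\ H(t,2r-1), & 1/2\leq r\leq 1. \end{cases}
\]
By construction $g(\partial\D)=x(\R)$ and $g(A)\subset U$, so the pulled-back section $g^*Z$ is a well-defined non-vanishing section of $g^*\xi|_A$, whose restriction to $\partial\D_{1/2}$ corresponds to $\gamma^*Z$ under the rescaling $w\mapsto w/2$, and whose restriction to $\partial\D$ equals $x_T^*Z$.

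Since $x_T^*Z\in\beta_P$, Remark~\ref{abc} applied to $g$ provides a non-vanishing section $\widetilde Z$ of $g^*\xi$ over all of $\D$ extending $x_T^*Z$. The next step is to compare the two non-vanishing sections $\widetilde Z|_A$ and $g^*Z|_A$ of the (trivial) bundle $g^*\xi|_A$: in any trivialization, their ratio $\phi:A\to\C\setminus\{0\}$ satisfies $\phi|_{\partial\D}\equiv 1$, so its winding around $\partial\D$ is zero. By homotopy invariance of the winding within the annulus, the winding around $\partial\D_{1/2}$ is also zero, so $\phi$ admits a continuous logarithm $\psi:A\to\C$ vanishing on $\partial\D$. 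Then $(s,z)\mapsto e^{s\psi(z)}\cdot g^*Z$ is a homotopy rel $\partial\D$, through non-vanishing sections, from $\widetilde Z|_A$ to $g^*Z|_A$.

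Finally I would use this homotopy to modify $\widetilde Z$ inside a thin collar of $\partial\D_{1/2}\subset\D_{1/2}$: replace $\widetilde Z$ there by the interpolation from $\widetilde Z$ at the inner edge of the collar to $g^*Z$ at $\partial\D_{1/2}$. This produces a non-vanishing section $W$ of $g^*\xi|_{\D_{1/2}}$ agreeing with $g^*Z$ on $\partial\D_{1/2}$. Pulling $W$ back via the rescaling $w\mapsto w/2$, which identifies $(\D,f)$ with $(\D_{1/2},g|_{\D_{1/2}})$, yields a non-vanishing section of $f^*\xi$ with boundary trace $(f|_{\partial\D})^*Z$, proving the lemma.

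The main obstacle is not conceptual but notational: the construction requires tracking the identifications between $\D$ and $\D_{1/2}$ together with the rescaling of $f$ inside $g$, and ensuring that the final modification in the collar remains continuous and non-vanishing. Once the homotopy $\phi\sim 1$ rel $\partial\D$ is in hand the remaining bookkeeping is routine, but one must be careful to invoke Remark~\ref{abc} with the correct capping disk $g$.
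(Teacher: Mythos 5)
Your argument is correct and is precisely the standard degree-theory proof the paper alludes to when it says the lemma "will be stated without proof." Gluing $f$ to the homotopy $H$ to form a capping disk $g$ for $x_T$, invoking Remark~\ref{abc} to get the extension $\widetilde Z$, and then observing that the relative winding of $\widetilde Z$ and $g^*Z$ over the annulus vanishes (since it vanishes on $\partial\D$ and the two boundary circles are homotopic in $A$) is exactly the intended computation.
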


\subsection{Special coordinates}\label{specialcoord}

Consider a periodic Reeb orbit $P = (x_0,T_0) \in \p$ with minimal period $0<T_{min}\leq T_0$. Set $k := T_0/T_{min}
\in \Z^+$.

\begin{defn}[Martinet Tube]\label{martinettube}
Let $\R/\Z \times \R^2$ be equipped with coordinates $(\theta,x,y)$ and set $\lambda_0 := d\theta + xdy$. A Martinet
Tube around $P$ is an open neighborhood $U$ of $x_0(\R)$ and a
diffeomorphism
\begin{equation}\label{phi}
 \Psi : U \rightarrow \R/\Z \times B
\end{equation}
where $B\subset\R^2$ is an open ball centered at $0$, satisfying the following properties:
\begin{enumerate}
 \item $\Psi_*\lambda = f\lambda_0$ where $f|_{\R/\Z \times \{0\}} \equiv T_{min}$ and $df|_{\R/\Z \times \{0\}}
     \equiv 0$.
 \item $\Psi( x_0(T_{min} t) ) = (t,0,0) \ \forall t\in \R$.
\end{enumerate}
\end{defn}

\begin{remark} \label{homtriv}
There always exists a Martinet tube around any $P$, as noted in~\cite{props1}. The bundle $\xi|_{U}$ is framed by
$\partial_x$ and $-x\partial_\theta + \partial_y$. Setting $e_1 = f^{-1/2}\partial_x$ and $e_2 =
f^{-1/2}(-x\partial_\theta + \partial_y)$ then $\{e_1,e_2\}$ is $d\lambda$-symplectic. The homotopy class (of
non-vanishing sections of $\xi|_{P_{min}}$) induced by $t\in\R/\Z \mapsto \partial_x|_{(t,0)}$ can be arbitrarily
chosen. Note that $\Psi^{-1}(kt,0,0) = {x_0}_{T_0}(t) \ \forall t\in S^1$.
\end{remark}

\subsection{Dynamical Convexity}\label{cz}

Here we shall modify slightly an important definition from~\cite{convex}.

\begin{defn}
A contact form $\lambda$ is dynamically convex if $\mu_{CZ}(P)\geq3 \ \forall P\in \p^*$.
\end{defn}

\subsection{Tight contact structures}\label{tightstr}

The contact structure $\xi$ is said to be {\it tight} if there are no {\it overtwisted disks} in $M$. An embedded disk $F \subset M$ is {\it overtwisted} if $\partial F$ is a Legendrian knot and $T_xF \not= \xi|_x$, $\forall x\in \partial F$. 

\subsection{Finite-energy surfaces in symplectizations}\label{fesurfaces}

In 1985 pseudo-holomorphic curves were introduced in symplectic geometry by M. Gromov~\cite{gromov}. In 1993 they
were used by H. Hofer to study Reeb flows on contact manifolds. Let $(\Sigma,j)$ be a Riemann surface, possibly with
non-empty boundary and not necessarily compact, and $\Gamma \subset \Sigma \setminus \partial \Sigma$ be a finite
subset. The notion of finite-energy surfaces was introduced by H. Hofer in~\cite{93}.

\begin{defn}{(Finite-energy surfaces)}\label{defsymp}
A map $\tilde{u}:(\Sigma\setminus\Gamma,j)\rightarrow(\R\times {M},\tilde{J})$ is called a finite energy surface if it is pseudo-holomorphic, that is, it satisfies the non-linear Cauchy-Riemann
equations
\begin{equation}\label{cr}
d\tilde{u}\circ j=\tilde{J}\circ d\tilde{u}
\end{equation}
and also the energy condition $0<E(\tilde{u})<+\infty$. The energy $E(\tilde{u})$ is defined as follows. Set
$\Lambda := \{\phi\in C^\infty(\R,[0,1]):\phi^{\prime}\geq0\}$ and $\omega_\phi=d\lambda_\phi$ where
$\lambda_\phi\in\Omega^1(\R\times {M})$ is given by $\lambda_\phi(a,p)=\phi(a)\lambda(p)$. Finally define
\[
E(\tilde{u})=\sup_{\phi\in\Lambda}\int_{\Sigma\setminus\Gamma}\tilde{u}^*\omega_\phi.
\]
It follows from (\ref{cr}) that each integral above is non-negative. When $\Sigma = S^2$ and $\#\Gamma=1$ we call
$\util$ a finite-energy plane.
\end{defn}

Let us write $\util = (a,u) \in \R \times M$. The points of $\Gamma$ are called punctures. Let us fix a puncture
$z\in\Gamma$ and let $\varphi : (U,0) \rightarrow (\varphi(U),z)$ be a holomorphic chart of $(\Sigma,j)$ centered at
$z$. Write $\util (s,t) = \util \circ \varphi \left( e^{-2\pi(s+it)} \right)$. It follows easily from $E(\util) <
\infty$ that the limit
\begin{equation}\label{naturepuncture}
 m = \lim_{s\rightarrow+\infty} \int_{\{s\}\times S^1} u^*\lambda
\end{equation}
exists. The puncture $z$ is removable if $m=0$, positive if $m>0$ and negative if $m<0$. A removable singularity can
actually be removed, meaning that $\tilde{u}$ can be smoothly continued across the singularity, see~\cite{93}. If
$\Sigma$ is closed then a finite-energy surface must have non-removable punctures because the forms $\omega_\phi$
are exact.

Finite-energy surfaces are closely related to periodic Reeb orbits. This is the content of the following fundamental
result from~\cite{93}.

\begin{theorem}[H. Hofer]\label{thm93}
In the notation explained above, suppose $z$ is non-removable and let $\epsilon=\pm1$ be the sign of $m$ in
(\ref{naturepuncture}). Then every sequence $s_n \rightarrow +\infty$ has a subsequence $s_{n_k}$ such that the
following holds: there exists a real number $c$ and a periodic Reeb orbit $P = (x,T)$ such that $u(s_{n_k},t)
\rightarrow x(\epsilon Tt+c)$ in $C^\infty(S^1,M)$ as $k \rightarrow +\infty$.
\end{theorem}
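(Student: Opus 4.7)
The plan is to work in holomorphic cylindrical coordinates near $z$ and combine a bubbling-off analysis with elliptic compactness to extract the limiting Reeb orbit. Choose a holomorphic chart $\varphi:(U,0)\to(\varphi(U),z)$ on $(\Sigma,j)$ whose closure avoids $\Gamma\setminus\{z\}$, and define
\[
\util(s,t) := \util\circ\varphi(e^{-2\pi(s+it)}), \qquad (s,t)\in[0,\infty)\times S^1.
\]
Writing $\util=(a,u)$, the Cauchy--Riemann equation $d\util\circ j=\jtil\circ d\util$ together with~\eqref{almcpxstr} becomes the system
\begin{align*}
\pi\partial_s u + J(u)\pi\partial_t u &= 0, \\
\partial_s a &= \lambda(u)\partial_t u, \\
\partial_t a &= -\lambda(u)\partial_s u,
\end{align*}
where $\pi:TM\to\xi$ is the projection along $R$. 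The integrand $\util^*\omega_\phi$ decomposes in these coordinates into two nonnegative summands, so $E(\util)<\infty$ forces $\int_{[0,\infty)\times S^1} u^*d\lambda<\infty$, and in combination with~\eqref{naturepuncture} yields $\int_{\{s\}\times S^1}u^*\lambda\to m$ as $s\to\infty$.

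First I would establish a uniform gradient bound $\sup_{s\geq 1,\,t\in S^1}|\nabla\util(s,t)|<\infty$ with respect to an $\R$-translation invariant metric on $\R\times M$. This is the classical rescaling argument: were the bound to fail, one could select $(s_n,t_n)$ with $R_n:=|\nabla\util(s_n,t_n)|\to\infty$ and apply a Hofer-lemma recentering to obtain radii $\epsilon_n\to 0$ with $\epsilon_n R_n\to\infty$ on which $|\nabla\util|\leq 2R_n$. Reparametrizing $\vtil_n(\zeta):=\util\bigl((s_n,t_n)+\zeta/R_n\bigr)-\bigl(a(s_n,t_n),0\bigr)$ gives $\jtil$-holomorphic maps on disks of radius $\epsilon_n R_n\to\infty$ with a uniform gradient bound and with $d\lambda$-energy bounded by $\int_{[s_n-\epsilon_n,\infty)\times S^1}u^*d\lambda\to 0$. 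Elliptic bootstrapping extracts a $C^\infty_{loc}$ subsequential limit $\vtil_\infty:\C\to\R\times M$ which is a non-constant finite-energy plane with $\int_\C\vtil_\infty^*d\lambda=0$; positivity of $d\lambda(\cdot,J\cdot)|_\xi$ forces the projection to $M$ to be constant, whence $\vtil_\infty$ is constant, contradicting $|\nabla\vtil_\infty(0)|=1$.

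Once the gradient bound is in place, for any sequence $s_n\to+\infty$ consider the translated maps
\[
\wtil_n(s,t) := \bigl(a(s+s_n,t)-a(s_n,0),\ u(s+s_n,t)\bigr),
\]
which are still $\jtil$-holomorphic by the $\R$-translation invariance of $\jtil$. They are $C^0$-bounded (using compactness of $M$ and the chosen normalization of the $\R$-component, together with the Cauchy-Riemann equations which control $\partial_s a$ and $\partial_t a$ by $|\nabla u|$) and $C^1$-bounded by the previous step, so Arzel\`a--Ascoli plus elliptic regularity produces a $C^\infty_{loc}$ subsequential limit $\wtil_\infty=(b,w):\R\times S^1\to\R\times M$ that is $\jtil$-holomorphic. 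Fatou's lemma applied to $\int_{[-L,L]\times S^1}\wtil_n^*d\lambda$, together with $\int_{[s,\infty)\times S^1}u^*d\lambda\to 0$, yields $\int_{\R\times S^1}w^*d\lambda=0$. Positivity of $d\lambda(\cdot,J\cdot)|_\xi$ forces $\pi\circ dw\equiv 0$, so $dw$ is everywhere proportional to $R(w)$; the remaining two Cauchy-Riemann equations then imply $b$ is affine in $s$ with slope $m$, that $w$ is independent of $s$, and that $t\mapsto w(0,t)$ solves $\dot w = mR(w)$. Hence $w(0,t)=x(\epsilon Tt+c)$ for a periodic Reeb orbit $P=(x,T)$ with $T=|m|$, and some $c\in\R$.

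The main obstacle is the uniform gradient bound: extracting the bubble is standard, but ruling it out requires the non-existence of non-constant $\jtil$-holomorphic spheres and planes with vanishing $d\lambda$-energy, a fact that rests on the precise form of $\jtil$ given in~\eqref{almcpxstr}. Everything after the gradient bound is routine elliptic compactness coupled with the rigidity of $\jtil$-holomorphic cylinders of zero $d\lambda$-area.
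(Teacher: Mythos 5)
The paper does not prove this theorem: it is quoted from Hofer's 1993 paper~\cite{93}. Your sketch is a reconstruction of Hofer's original argument and gets the structure right: a uniform gradient bound via rescaling and bubbling, followed by $C^\infty_{loc}$-compactness of the $s$-translates $\wtil_n$ using the translation-invariance of $\jtil$, and finally an identification of the limit cylinder as a trivial cylinder over a closed Reeb orbit. The reduction of the $C^0$-bound for the $\R$-component to the Cauchy--Riemann system and the Fatou argument for $\int_{\R\times S^1}w^*d\lambda=0$ are both as in the original.

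One step as written is a logical jump. In the bubbling paragraph you say that for the extracted plane $\vtil_\infty=(b_\infty,w_\infty)$ with $\int_\C w_\infty^*d\lambda=0$, ``positivity of $d\lambda(\cdot,J\cdot)|_\xi$ forces the projection to $M$ to be constant.'' Positivity only gives $\pi\circ dw_\infty\equiv 0$, i.e., $dw_\infty$ takes values in $\R R$; $w_\infty$ could a priori still slide along a Reeb trajectory. To rule out a non-constant bubble one needs the additional observation that a finite-energy plane with $\int_\C w_\infty^*d\lambda=0$ has, by Stokes, $\lim_{R\to\infty}\int_{|\zeta|=R}w_\infty^*\lambda=0$; the puncture at $\infty$ is therefore removable, $\vtil_\infty$ extends to a $\jtil$-holomorphic sphere, and such a sphere is constant because all forms $\omega_\phi$ are exact. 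Only then do you contradict $|\nabla\vtil_\infty(0)|=1$. The same kind of ``zero $d\lambda$-area plus finite Hofer energy'' classification (cf.\ Lemma~\ref{zerodlambda}) is also what underlies the final identification of $\wtil_\infty$: from $\pi\circ dw\equiv 0$ and the Cauchy--Riemann system alone you only get that $b+i\tau$ is holomorphic for the local flow parameter $\tau$, and one still needs the energy bound together with the constancy $\int_{\{s\}\times S^1}w^*\lambda\equiv m$ to force $b(s,t)=ms+\text{const}$ and $w(s,t)=x(\epsilon Tt+c)$ with $T=|m|>0$. You flag both gaps as routine, and indeed they are, but the phrase quoted above overstates what the positivity argument by itself yields.
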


In his seminal work~\cite{93} H. Hofer is able to partially solve the three-dimensional Weinstein conjecture using
techniques of pseudo-holomorphic curves.

\begin{remark}\label{realaction}
$\R \times M$ carries a $\R$-action given by translating the first coordinate and $\jtil$ is $\R$-invariant. If
$\util = (a,u)$ is a finite-energy surface then so is $c \cdot \util := (a+c,u)$.
\end{remark}

\subsection{Asymptotic behavior near the punctures}\label{nondegasympbehavior}

Let $\pi : TM\rightarrow \xi$ denote the projection along the Reeb direction.

\begin{defn}\label{behavior}
Let $(S,j)$, $\Gamma$ and $\util$ be as in Definition~\ref{defsymp}. Fix a non-removable puncture $z \in \Gamma$,
choose a holomorphic chart $\varphi : (U,0) \rightarrow (\varphi(U),z)$ centered at $z$ and write $\util (s,t) =
(a(s,t),u(s,t)) = \util \circ \varphi ( e^{-2\pi(s+it)} )$ for $s\gg1$. Define $m$ by (\ref{naturepuncture}) and let
$\epsilon = \pm1$ be its sign. We say that $z$ is a \textbf{non-degenerate puncture of $\util$} if there exists a
periodic Reeb orbit $P = (x,T)$ and constants $c,d\in\R$ such that
\begin{enumerate}
 \item $\sup_{t\in S^1} \norma{a(s,t) - \epsilon Ts - d} \rightarrow 0$ as $s\rightarrow +\infty$.
 \item $u(s,t) \rightarrow x(\epsilon Tt + c)$ in $C^0(S^1,M)$ as $s\rightarrow +\infty$.
 \item If $\pi \cdot du$ does not vanish identically over $S\setminus\Gamma$ then $\pi \cdot du (s,t) \not=0$
     when $s$ is large enough.
 \item If we define $\zeta(s,t)$ by $u(s,t) = \exp_{x(\epsilon Tt+c)} \zeta(s,t)$ then $\exists b>0$ such that
     $\sup_{t\in S^1} e^{bs}\norma{\zeta(s,t)} \rightarrow 0$ as $s\rightarrow +\infty$.
\end{enumerate}
In this case we say $\util$ is asymptotic to $P$ at $z$. The puncture $z$ is positive or negative according to the
sign $\epsilon$. This definition is independent of $\varphi$ and of the exponential map $\exp$.
\end{defn}

\begin{defn}\label{nondegpunc}
We say that $\util$ has non-degenerate asymptotic behavior at $z$ if $z$ is a non-degenerate puncture, and that $\util$ has non-degenerate asymptotics if this holds for every puncture.
\end{defn}

The behavior of $\util$ near a puncture is studied in~\cite{props1}. Here is a partial result.

\begin{theorem}[Hofer, Wysocki and Zehnder]\label{p1}
Let $\util$, $z$ and $\varphi$ be as in Theorem~\ref{thm93}. If a Reeb orbit $P$ obtained by
Theorem~\ref{thm93} is non-degenerate then $\util$ has non-degenerate asymptotic behavior at $z$. In particular,
$\util$ is asymptotic to $P$ at $z$.
\end{theorem}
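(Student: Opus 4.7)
The plan is to reduce to a local analysis in a Martinet tube around $P=(x,T)$ and to exploit the spectral theory of the asymptotic operator $A_P$ at the orbit.

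First, I would use the subsequential convergence supplied by Theorem~\ref{thm93} together with the finite energy hypothesis to show that for $s$ large enough the entire loop $t\mapsto u(s,t)$ sits inside a Martinet tube $U$ around $x(\R)$ as in Definition~\ref{martinettube}. In such coordinates $u$ is represented as $(\theta(s,t),z(s,t))$ with $\theta:[s_0,\infty)\times S^1\to\R/\Z$ of degree $\epsilon$ in $t$ and $z:[s_0,\infty)\times S^1\to\R^2$ a small section. The first task is to upgrade the subsequential $C^\infty$-convergence of Theorem~\ref{thm93} to true convergence of the whole family $\{u(s,\cdot)\}_{s\to\infty}$; this is standard once one has a mean-value/gradient bound coming from the finiteness of $E(\util)$, combined with interior elliptic estimates for (\ref{cr}) and the fact that the set of accumulation orbits would otherwise be a continuous family, contradicting isolation of $P$ by non-degeneracy.

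Second, I would write the Cauchy-Riemann equation (\ref{cr}) in the Martinet tube as a perturbation of a linear equation of the form
\[
 z_s + J_0(t)z_t + S(t)z = r(s,t),
\]
where $J_0$ and $S$ are determined by the $1$-jet of $(\lambda,J)$ along $x(\R)$ and $r$ is at least quadratic in $(z,dz)$. The associated asymptotic operator $A_P\eta=-J_0\partial_t\eta-S\eta$ is self-adjoint on $L^2(S^1,\R^2)$, with real discrete spectrum accumulating only at $\pm\infty$; non-degeneracy of $P$ is exactly the condition $0\notin \sigma(A_P)$, producing a positive spectral gap $\delta>0$.

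Third --- this is the main technical obstacle --- I would prove exponential decay of $z$. Setting $g(s)=\tfrac{1}{2}\int_{S^1}|z(s,t)|^2\,dt$, the CR equation and integration by parts give, modulo terms absorbed by the spectral gap once $\|z(s,\cdot)\|_{C^1}$ is small,
\[
 g''(s)\geq \delta^2 g(s).
\]
Together with $g(s)\to 0$ (from the upgraded convergence in step one) this forces $g(s)\leq Ce^{-2\delta s}$, which then bootstraps via interior Schauder estimates applied to (\ref{cr}) into the pointwise bound $\sup_t|z(s,t)|\leq C'e^{-bs}$ for any $b<\delta$, establishing condition (4) of Definition~\ref{behavior}. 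The delicate point is closing the loop: the differential inequality requires the remainder $r$ to be controlled, which in turn requires $z$ to be small in a stronger norm than the one furnishing the energy bound; this is handled by shrinking $s_0$ progressively and feeding the improved decay back into the estimates on $r$.

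Fourth, the remaining conditions are derived from exponential decay of $z$. Integrating the $\theta$- and $a$-components of the CR equation gives $\theta(s,t)\to \epsilon Tt+c$ and $a(s,t)-\epsilon Ts-d\to 0$ uniformly in $t$, yielding (1) and (2). Finally, condition (3) follows from the similarity principle applied to the section $\pi\cdot du$, which satisfies a Cauchy-Riemann-type equation with bounded zeroth-order perturbation, so that its zeros are either isolated or it vanishes identically; the exponential decay of $z$ excludes accumulation of zeros at the puncture, giving (3) for all large $s$.
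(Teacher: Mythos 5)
Your overall strategy — localize in a Martinet tube, pass to the linearized CR equation, and extract exponential decay from the spectral gap of the (non-degenerate) asymptotic operator $A_P$ — is the strategy of [props1], which the paper cites but does not reproduce for this statement. The appendix does reproduce the surrounding technical machinery (Theorem~\ref{asympHWZ}, Lemmas~\ref{megatech} and~\ref{megatech2}, Lemma~\ref{generaldecay}), and your Step~3 is a shorter differential-inequality route to the same decay that Theorem~\ref{asympHWZ} obtains via tracking $\alpha(s)=\escp{-J_0\xi_t-S\xi}{\xi}_2 \to \mu \in \sigma(L_N)\cap(-\infty,0)$ and then using $\norma{\zeta(s,\cdot)}_2 = e^{\int_1^s\alpha}\norma{\zeta(1,\cdot)}_2$; both exploit the same spectral gap, and either can be made to work here once non-degeneracy is used to exclude $\mu=0$.

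However, there are several points where your sketch is imprecise in ways that matter. (i) In Step~2 you describe the remainder $r$ as ``at least quadratic in $(z,dz)$''; this is not accurate. Looking at~(\ref{crz}) and~(\ref{matrixS}), the coefficient error $S(s,t)-N(t)$ (and $J\circ w - J\circ x_0$) depends on the deviations of $\theta$ and $a$ from their asymptotic linear behavior, and these are \emph{not} a priori controlled by powers of $|z|$. What controls them is Lemma~\ref{importantdecay}, which gives $C^\infty$ (but not exponential) convergence of the full tuple $\wtil(s,t)-(T_0s+\sigma,kt+k\hat d,0,0)\to 0$; this $o(1)$ coefficient error is all you need to get $g''\ge(\delta^2-o(1))g$, so the feedback loop you describe in Step~3 is not really the delicate point — the decay of $z$ can be obtained in one pass, and the bootstrap only enters when you then feed the exponential decay of $z$ back into the $(a,\theta)$ equations via Lemma~\ref{megatech2}. (ii) In Step~1, ``continuous family contradicting isolation'' is a heuristic, not an argument: what is actually needed is that the $\omega$-limit of $s\mapsto u(s,\cdot)$ in $C^\infty(S^1,M)/S^1$ is connected, that each point is a period-$|m|$ Reeb loop, that non-degeneracy isolates $P$ among these, and then a separate argument (using the vanishing of the $d\lambda$-energy on tails) to show the \emph{phase} $c$ also settles rather than circulating around the $S^1$-orbit. (iii) In Step~4, the similarity principle only gives that zeros of $\pi\cdot du$ are isolated in $\C$; it does not by itself rule out zeros accumulating at the puncture. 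The correct argument, which is what Theorem~\ref{asympHWZ} / Corollary~\ref{oops} deliver, is that $\pi\cdot\partial_s u$ converges after normalization to a non-vanishing eigenvector $e(t)$, whence $\pi\cdot du\neq 0$ for large $s$.
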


\subsection{Algebraic invariants}\label{alginvs}

In~\cite{props2} a number of algebraic invariants of finite-energy surfaces were introduced. In the next two definitions we fix a finite-energy plane $\util : \C = S^2\setminus\{\infty\} \rightarrow \R\times M$ with non-degenerate asymptotics and consider its asymptotic limit $P=(x,T)$ at $\infty$.

\begin{defn}[Covering Number]
We define $\cov(\util) := T/T_{min} \in \Z^+$ where
$T_{min}>0$ is the minimal period of $x$.
\end{defn}

\begin{defn}[$\mu$-index for planes]
Writing $\util = (a,u)$ then $u$ provides a capping disk for $P$ and induces a class $\beta_{\util} \in \mathcal{S}_P$. Define $\mu(\util) = \mu_{CZ}(P,\beta_{\util})$.
\end{defn}

Let $\util$ and $\vtil$ be finite-energy planes with the same asymptotic limit $P$. The identity $\mu(\util) =
\mu(\vtil) + 2\wind(\beta_{\vtil},\beta_{\util},J)$ proves that $\beta_{\util} = \beta_{\vtil} \Leftrightarrow
\mu(\util) = \mu(\vtil)$.

\begin{remark}\label{locallycr}
If $\util = (a,u)$ defined on $\left(S,j\right)$ is a $\jtil$-holomorphic map then $\pi\cdot du$ satisfies the
``perturbed'' Cauchy-Riemann equation $\pi\cdot du \circ j = \jtil \circ \pi\cdot du$. By the similarity principle,
see~\cite{mcdsal}, $\pi \cdot du \equiv 0$ on connected components of $S$ where the zero set of $\pi \cdot du$ has a
limit point.
\end{remark}

For the next two definitions we fix a closed Riemann surface $(\Sigma,j)$, a finite set $\Gamma \subset \Sigma$ and
a finite-energy surface $\util = (a,u) : (\Sigma\setminus\Gamma,j) \rightarrow (\R\times M,\jtil)$ with
non-degenerate asymptotics. We assume that $\pi\cdot du$ does not vanish identically and that $\Gamma$ consists of
non-removable punctures.

\begin{defn}[$\wind_\infty$]
Split $\Gamma = \Gamma^+ \sqcup \Gamma^-$ where $\Gamma^+$ is the set of positive punctures and $\Gamma^-$ is the
set of negative punctures. The bundle $u^*\xi$ is trivializable since $\Gamma\not=\emptyset$. Let $\alpha$ be a
homotopy class of non-vanishing sections of $u^*\xi$. Choose a non-vanishing section $\sigma$ in class $\alpha$ and
write $\util(s,t)$ around a puncture $z\in\Gamma$ as in Definition~\ref{behavior}. If $\epsilon=\pm1$ is the sign of
$z$ then define
\[
 \text{wind}_\infty(\util,\alpha,z) = \lim_{s\rightarrow+\infty} \text{wind}(t\mapsto \pi u_s(s,\epsilon t),t\mapsto \sigma(s,\epsilon t),J)
\]
where $\sigma(s,t)=\sigma(\varphi(e^{-2\pi(s+it)}))$. The invariant wind$_\infty(\util)$ is defined in~\cite{props2}
by
\[
 \text{wind}_\infty(\util)=\sum_{z\in\Gamma^+}\text{wind}_\infty(\util,\alpha,z)-\sum_{z\in\Gamma^-}\text{wind}_\infty(\util,\alpha,z).
\]
Each $\text{wind}_\infty(\util,\alpha,z)$ depends on the choice of $\alpha$ but is independent of $\varphi$. By standard degree theory $\text{wind}_\infty(\util)$ is independent of $\alpha$.
\end{defn}

\begin{defn}[$\wind_\pi$]
The bundle $\mathcal{E} = Hom_\C (T(\Sigma \setminus \Gamma), u^*\xi)$ is a complex line bundle and the section
$\pi\cdot du$ satisfies a perturbed Cauchy-Riemann equation, see Remark~\ref{locallycr}. Thus its zeros are isolated
and count positively when computing the intersection number with the zero section of $\mathcal{E}$. As a consequence
of Definition~\ref{behavior} the number of zeros is finite. Following~\cite{props2} we define
\begin{equation}
 \text{wind}_\pi(\util) = \text{algebraic count of zeros of }\pi\cdot du
\end{equation}
where the zeros are counted with multiplicities. The inequality $\wind_\pi(\util)\geq 0$ can be seen as a linearized
version of positivity of self-intersections.
\end{defn}

The Gauss-Bonnet formula proves the following lemma, as shown in~\cite{props2}.

\begin{lemma}\label{gauss}
$\text{wind}_\infty(\util)=\text{wind}_\pi(\util)-\#\Gamma+\chi(\Sigma)$.
\end{lemma}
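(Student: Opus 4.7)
The plan is to view $\pi\cdot du$ as a section of the complex line bundle $\mathcal{E}=T^*(\Sigma\setminus\Gamma)\otimes_{\C} u^*\xi$ and to count its zeros by a boundary-winding argument on a cut-off surface. Excise small disks $D_z$ around each puncture $z\in\Gamma$, chosen inside the holomorphic charts $\varphi_z$ of Definition~\ref{behavior}, and set $\Sigma_R:=\Sigma\setminus\bigsqcup_{z\in\Gamma}D_z$. Condition~(3) of Definition~\ref{behavior} forces $\pi\cdot du$ to be nonzero near each puncture; combined with the similarity principle (Remark~\ref{locallycr}), this makes $\wind_\pi(\util)$ a finite algebraic count of isolated, positively-weighted zeros lying in $\mathrm{int}(\Sigma_R)$. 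Because $\Sigma_R$ has nonempty boundary, both $T\Sigma|_{\Sigma_R}$ and $u^*\xi$ admit nowhere-vanishing global sections; pick $V$ for the former and $\sigma$ (in class $\alpha$) for the latter, and let $\tau:=V^*\otimes\sigma$ trivialize $\mathcal{E}|_{\Sigma_R}$. By standard degree theory,
\[
\wind_\pi(\util)=\sum_{z\in\Gamma}\wind\bigl((\pi\cdot du)/\tau,\,\partial D_z\bigr),
\]
each $\partial D_z$ carrying its orientation as a component of $\partial\Sigma_R$.

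Next I would split each boundary winding into two pieces. In the cylindrical coordinates $(s,t)$ near $z$ one has $\pi\cdot du=\partial_s^{*}\otimes\pi u_s$ as a section of $\mathcal{E}$. Writing $V=g_V\partial_s$ on $\partial D_z$ (so that $\C$-duality gives $\partial_s^{*}=g_V V^{*}$) and $\pi u_s=(\pi u_s/\sigma)\sigma$, I get $(\pi\cdot du)/\tau=g_V\cdot(\pi u_s/\sigma)$, so the winding of the ratio is the sum of the windings of $g_V$ and of $\pi u_s/\sigma$.

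The $\sigma$-piece recovers $\wind_\infty(\util)$: the sign $\epsilon=\pm1$ in the definition of $\wind_\infty(\util,\alpha,z)$ is precisely what compensates for the fact that the $t$-parametrization of $\partial D_z$ agrees with the boundary-of-$\Sigma_R$ orientation at a positive puncture and is opposite to it at a negative one, so that summing yields $\sum_{\Gamma^+}\wind_\infty(\util,\alpha,z)-\sum_{\Gamma^-}\wind_\infty(\util,\alpha,z)=\wind_\infty(\util)$. For the $g_V$-piece I would extend $V$ to a smooth vector field $\widetilde V$ on all of $\Sigma$; its zeros can all be arranged to lie in $\bigsqcup_z D_z$ and have total index $\chi(\Sigma)$ by Poincar\'e–Hopf. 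In the local coordinate $\varphi=x+iy$, a direct calculation gives $\partial_s=-2\pi\varphi\,\partial_x$ as $\C$-valued vectors; hence $\partial_s$ extends across $z$ with a simple zero of index $+1$, and $\widetilde V/\partial_x=-2\pi\varphi\cdot g_V$ on $D_z\setminus\{z\}$. Counting zeros of $\widetilde V$ inside $D_z$ by the winding of $\widetilde V/\partial_x$ on $\partial D_z$ and using $\wind(-2\pi\varphi,\partial D_z)_{\partial D_z}=+1$, I get that the boundary-of-$D_z$ winding of $g_V$ is (index sum of $\widetilde V$ in $D_z$)$-1$; summing over $z$ and reversing to the boundary-of-$\Sigma_R$ orientation yields $\sum_z\wind(g_V,\partial D_z)_{\partial\Sigma_R}=\#\Gamma-\chi(\Sigma)$. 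Combining the two pieces gives $\wind_\pi(\util)=\wind_\infty(\util)+\#\Gamma-\chi(\Sigma)$, which is the claimed identity.

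The main obstacle is orientation and dualization bookkeeping. Three sign conventions must be tracked simultaneously and kept consistent: the asymmetric treatment of positive versus negative punctures in the definition of $\wind_\infty$, the opposite orientations of $\partial D_z$ as $\partial D_z$ versus as $\partial\Sigma_R$, and the $\C$-linear dual identity $V^*=g_V^{-1}\partial_s^*$ that is needed to correctly re-express $\pi\cdot du$ in the global frame $\tau$. Getting any one of these signs wrong flips the formula to the incorrect $\wind_\infty=\wind_\pi+\#\Gamma-\chi(\Sigma)$.
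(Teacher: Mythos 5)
Your proposal is essentially the argument that the paper is pointing to: the paper itself offers no proof of this lemma but simply cites~\cite{props2}, where the identity is established by exactly this kind of boundary-winding (relative first Chern number / Poincar\'e--Hopf) computation. Your decomposition of the boundary winding of $(\pi\cdot du)/\tau$ into the $g_V$-piece (giving $\#\Gamma-\chi(\Sigma)$) and the $\sigma$-piece (giving $\wind_\infty(\util)$), together with the observation that $\partial_s=-2\pi w\,\partial_x$ has a simple zero of index $+1$ at the puncture, is correct and matches the cited argument.

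One sign-bookkeeping remark in your write-up is, however, misstated. The orientation of $\partial D_z$ as the boundary of $D_z$ is opposite to its orientation as a component of $\partial\Sigma_R$ for \emph{every} puncture; this mismatch is not puncture-sign dependent, since it only depends on which side of $\partial D_z$ the surface $\Sigma_R$ lies. With $w=e^{-2\pi(s+it)}$, the boundary-of-$\Sigma_R$ orientation is always ``$t$ increasing''. What actually happens is the following: the reparametrization $t\mapsto\epsilon t$ inside $\wind_\infty(\util,\alpha,z)$ reverses the winding for negative punctures, and this reversal is then cancelled by the minus sign in front of $\sum_{z\in\Gamma^-}$ in the definition of $\wind_\infty(\util)$, so that $\wind_\infty(\util)=\sum_{z\in\Gamma}\lim_{s\to+\infty}\wind\bigl(t\mapsto\pi u_s(s,t),\,t\mapsto\sigma(s,t),\,J\bigr)$, each term taken with $t$ increasing, i.e.\ the $\partial\Sigma_R$ orientation. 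The final equality $\sum_z\wind\bigl(\pi u_s/\sigma,\,(\partial D_z)_{\partial\Sigma_R}\bigr)=\wind_\infty(\util)$ that you need is therefore correct, but the claim that ``the $t$-parametrization of $\partial D_z$ agrees with the boundary-of-$\Sigma_R$ orientation at a positive puncture and is opposite to it at a negative one'' is not. Since you yourself flag orientation bookkeeping as the place where the proof could go wrong, it is worth getting this cancellation straight, even though your conclusion is unaffected.
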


\begin{remark}
$\pi\cdot du$ does not vanish identically if $\util$ is a finite-energy plane with non-degenerate asymptotics, this
follows from results of~\cite{props2} (see Lemma~\ref{zerodlambda} below). In this case $\wind_\infty(\util)\geq1$.
\end{remark}


\section{Compactness}\label{compactness}

This section is devoted to the proof of Theorem~\ref{main3}. We fix the Riemannian metric $g^0$ on $\R\times M$
given by
\begin{equation}\label{norm0}
 g^0 = da\otimes da + \lambda \otimes \lambda + d\lambda(\cdot,J\cdot)
\end{equation}
where $J:\xi\to\xi$ is $d\lambda$-compatible. All norms of maps or objects in $\R\times M$ are taken with respect to
the metric $g^0$.


\subsection{Asymptotic operators and their spectral properties}\label{asympopprops}

We endow $\R^2$ with its standard euclidean structure $\left< \cdot,\cdot \right>$, inducing a Hilbert space
structure on $L^2(S^1,\R^2)$. If $\varphi : [0,1] \rightarrow \Sp(1)$ is a smooth path and
$S:=-J_0\varphi^\prime\varphi^{-1}$ then $S^T=S$. We identify $S^1 = \R/\Z$ and consider the unbounded self-adjoint
operator
\[
\begin{array}{ccc}
  L_S : W^{1,2} \subset L^2 \rightarrow L^2, &  & L_S(e) = -J_0 \dot e - Se.
\end{array}
\]
$L_S$ has compact resolvent and discrete real spectrum $\sigma(L_S)$ accumulating only at $\pm\infty$. Each point of
the spectrum is an eigenvalue with the same (finite) algebraic and geometric multiplicities, see~\cite{kato}. This
is so because $L_S$ is homotopic to $-J_0\partial_t$ through compact symmetric perturbations. See~\cite{props2} for
more details.

If $\varphi(0) = I$ then $\varphi \in \Sigma^*$ if, and only if, $0\not\in\sigma(L_S)$. For any $\delta\in\R$ denote
by $\nu^{neg}_\delta<\delta$ and $\nu^{pos}_\delta>\delta$ the special eigenvalues
\[
  \begin{array}{ccc}
    \nu^{pos}_\delta = \min \{ \nu \in \sigma(L_S) :  \nu > \delta \}, &  & \nu^{neg}_\delta = \max \{ \nu \in \sigma(L_S) :  \nu < \delta \}.
  \end{array}
\]
To each non-zero eigenvector $e$ of $L_S$ one can consider the winding number $\wind(e)$. It is proved
in~\cite{props2} that:
\begin{enumerate}
 \item If $e_1,e_2$ are two non-zero eigenvectors of $L_S$ such that $L_Se_j=\nu e_j$ for $j=1,2$ then
     $\wind(e_1)=\wind(e_2)$.
 \item If $L_Se_j=\nu_je_j$ for $j=1,2$ and $\nu_1\leq\nu_2$ then $\wind(e_1)\leq\wind(e_2)$.
\end{enumerate}
Thus one has a well-defined winding $\wind(\nu)$ associated to an eigenvalue $\nu$ of $L_S$ satisfying
$\nu_1\leq\nu_2 \Rightarrow \wind(\nu_1)\leq\wind(\nu_2)$. It is also proved in~\cite{props2} that for every
$k\in\Z$ there are exactly two eigenvalues (counting multiplicities) with winding equal to $k$.
Following~\cite{props2}, we have a well-defined integer
\begin{equation}\label{genmuindex}
 \tilde \mu^\delta\left(L_S\right) = 2\wind\left(\nu^{neg}_\delta\right) + \frac{1}{2} \left( 1 + (-1)^{b_\delta} \right)
\end{equation}
where $b_\delta$ is the number of eigenvalues $\nu < \delta$ such that $\wind(\nu) = \wind(\nu^{neg}_\delta)$,
counting multiplicities.

\begin{lemma}[Hofer, Wysocki and Zehnder]
$\tilde \mu^0 : \varphi \in \Sigma^* \mapsto \tilde \mu^0 (\varphi) \in \Z$ satisfies the axioms of
Theorem~\ref{axiomscz}.
\end{lemma}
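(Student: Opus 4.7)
The plan is to verify each of the four axioms of Theorem~\ref{axiomscz} for the integer $\tilde\mu^0$ defined by formula~(\ref{genmuindex}) with $\delta = 0$. The backbone of the argument is the spectral description of $L_S$ already recorded: the winding function is monotone on eigenvalues, and for each $k \in \Z$ there are exactly two eigenvalues (with multiplicity) of winding $k$. I would verify the axioms in the order Normalization, Homotopy, Maslov, Invertibility.

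\emph{Normalization.} For $\varphi(t) = e^{i\pi t}$ one computes $\varphi' = \pi J_0 \varphi$, hence $S = -J_0 \varphi' \varphi^{-1} = \pi I$ and $L_S = -J_0 \partial_t - \pi I$. The eigenvalue equation $L_Se = \nu e$ becomes $\dot e = J_0(\nu+\pi)e$, whose non-zero $1$-periodic solutions force $\nu + \pi \in 2\pi\Z$. Thus $\sigma(L_S) = \{\pi(2k-1): k\in\Z\}$, each eigenvalue of multiplicity $2$ and winding $k$. Hence $\nu^{neg}_0 = -\pi$ with winding $0$, $b_0 = 2$, and $\tilde\mu^0(\varphi) = 2\cdot 0 + \tfrac{1}{2}(1+1) = 1$.

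\emph{Homotopy.} If $s \mapsto \varphi_s \in \Sigma^*$ is continuous, then so is $s \mapsto S_s$, and the operators $L_{S_s}$ form a continuous family of compact symmetric perturbations of $-J_0\partial_t$. Consequently the eigenvalues vary continuously, counted with multiplicity. The requirement $\varphi_s \in \Sigma^*$ forbids any eigenvalue from crossing $0$, so $\nu^{neg}_0(s)$ varies continuously. Its winding is integer-valued and hence locally constant, and $b_0(s)$ can only change when an eigenvalue crosses $0$ or enters the winding class from below, neither of which can occur by continuity and the monotonicity of $\wind$ in $\sigma(L_S)$. Therefore $\tilde\mu^0(\varphi_s)$ is constant.

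\emph{Maslov.} By homotopy invariance it suffices to check the axiom for the model loop $\psi(t) = e^{i2\pi kt} = \exp(2\pi kt J_0)$, which has $\maslov(\psi) = k$. For this $\psi$, the identity $J_0\psi = \psi J_0$ gives $S' = -J_0(\psi\varphi)'(\psi\varphi)^{-1} = 2\pi k I + \psi S \psi^{-1}$. A direct substitution shows that if $L_S e = \nu e$ then $L_{S'}(\psi e) = \nu (\psi e)$, so $\sigma(L_S) = \sigma(L_{S'})$ and the correspondence $e \mapsto \psi e$ is a bijection of eigenspaces at each $\nu$. Since $\psi$ is a rotation by total angle $2\pi k$, $\wind(\psi e) = \wind(e) + k$. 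Consequently $\wind(\nu^{neg}_0)$ increases by $k$, the parity $b_0$ is unchanged, and $\tilde\mu^0(\psi\varphi) = \tilde\mu^0(\varphi) + 2k$.

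\emph{Invertibility.} Setting $\tilde\varphi := \varphi^{-1}$ and using symplecticity $\varphi^{-1} = -J_0 \varphi^T J_0$, a calculation yields $\tilde S = -J_0\dot{\tilde\varphi}\tilde\varphi^{-1} = \varphi^T S \varphi$. The idea is to exhibit an explicit bijection between eigenvectors of $L_S$ at $\nu$ and eigenvectors of $L_{\tilde S}$ at $-\nu$ which reverses winding. Concretely, one tries $e \mapsto \tilde e(t) := \varphi(t)^T J_0 e(t)$ (or a variant), and checks via the symplectic relation that this intertwines $L_S$ with $-L_{\tilde S}$ up to sign, simultaneously negating winding. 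Granting this, $\nu^{neg}_0(\tilde S) = -\nu^{pos}_0(S)$, $\wind(\nu^{neg}_0(\tilde S)) = -\wind(\nu^{pos}_0(S))$, and the exact-two-eigenvalues-per-winding fact converts the correction term $\tfrac{1}{2}(1+(-1)^{b_0})$ into its complement, yielding $\tilde\mu^0(\varphi^{-1}) = -\tilde\mu^0(\varphi)$.

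The main obstacle is Invertibility: unlike the Maslov case, $\varphi \mapsto \varphi^{-1}$ is not a rotation by a scalar loop, and the symmetry of the spectrum under $\nu \leftrightarrow -\nu$ (with winding negated) has to be extracted from symplecticity rather than from a clean commutation relation. Careful bookkeeping of which winding class the eigenvalue $\nu^{neg}_0$ lies in, and how $b_0$ transforms, is needed to convert the spectral inversion into the claimed sign change of $\tilde\mu^0$.
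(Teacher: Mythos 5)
The paper itself gives no proof of this lemma; it is cited to~\cite{props2}, so the comparison is really against whether your verification of the four axioms goes through. Your treatment of Normalization, Homotopy, and Maslov is correct (in particular the reduction of Maslov to the model loop $\psi(t)=e^{i2\pi kt}$ using $J_0\psi=\psi J_0$ is exactly the clean intertwiner one wants, and the identity $\tilde\mu^0 = \wind(\nu^{neg}_0)+\wind(\nu^{pos}_0)$, implicit in the remark following the lemma, streamlines the bookkeeping in the $b_0$ term throughout).

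Invertibility, however, is where your proposal has a genuine gap, and not only the one you flag. First, there is a sign error: from $\varphi' = J_0 S\varphi$ and $\varphi^{-1}J_0 = J_0\varphi^T$ one gets
\[
 \tilde S \;=\; J_0\,\varphi^{-1}\varphi' \;=\; J_0\,\varphi^{-1}J_0\,S\,\varphi \;=\; J_0^2\,\varphi^T S\,\varphi \;=\; -\,\varphi^T S\,\varphi,
\]
not $+\varphi^TS\varphi$ (check against $\varphi(t)=e^{i\pi t}$, $S=\pi I$, $\tilde S=-\pi I$). Second, the proposed intertwiner $e\mapsto\tilde e:=\varphi^TJ_0 e$ does not satisfy $L_{\tilde S}\tilde e=-\nu\tilde e$: a direct computation gives $\dot{\tilde e} = -\nu\,\varphi^T e$, and hence $L_{\tilde S}\tilde e = \nu J_0\varphi^Te + \varphi^TS\varphi\,\varphi^TJ_0 e$, which is not a scalar multiple of $\tilde e$ unless $\varphi^T\varphi=I$. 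The structural reason is that, unlike the Maslov case where the scalar loop $\psi$ commutes with $J_0$, the conjugation by $\varphi$ mixes the frame with the flow, so no pointwise linear substitution $e\mapsto A(t)e$ can convert $L_S$ into $-L_{\tilde S}$. Establishing the spectral duality $\nu\leftrightarrow-\nu$ with $\wind\leftrightarrow-\wind$ requires a genuinely different ingredient — e.g.\ the anti-linear conjugation $C$ (which gives $CL_SC=-L_{-CSC}$ and flips winding, but then one must argue $\varphi^{-1}$ and $C\varphi C$ are homotopic in $\Sigma^*$), or a reduction to normal forms for $\varphi(1)$ via the Homotopy and Maslov axioms, or an appeal to the rotation-number characterization recalled in Appendix~\ref{geomindex}. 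Your sketch, as written, leaves this step open, and the intermediate claims are incorrect as stated.
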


\begin{remark}
The above lemma provides an extension of the index to symplectic paths $\varphi$ that are not in $\Sigma^*$. By the
spectral properties of $L_S$, if $\delta$ is not an eigenvalue then the term $\frac{1}{2} \left( 1 + (-1)^{b_\delta}
\right)$ is equal to $\wind\left(\nu^{pos}_\delta\right) - \wind\left(\nu^{neg}_\delta\right)$.
\end{remark}

\begin{defn}[Asymptotic operators]\label{defnasympop}
Fix a $d\lambda$-compatible complex structure $J$ on $\xi$ and let $P=(x,T)$ be a periodic Reeb orbit. Then
the metric $d\lambda(\cdot,J\cdot)$ induces a Hilbert space structure on $L^2\left(\xi|_P\right)$. Choose a
symmetric connection $\nabla$ on $TM$. The unbounded self-adjoint operator
\begin{gather*}
 A_P : W^{1,2}\left(\xi|_P\right) \subset L^2\left(\xi|_P\right) \rightarrow L^2\left(\xi|_P\right) \\
 \eta \mapsto -J\nabla_t\eta + TJ\nabla_\eta R
\end{gather*}
is independent of $\nabla$ ($\nabla_t$ denotes the covariant derivative along the curve $t\mapsto x_T(t)$). $A_P$ is
the so-called asymptotic operator at $P$.
\end{defn}

The linear flow generated by $\nabla_t\eta = T\nabla_\eta R$ is $d\phi_{Tt}|_{x(0)}$ restricted to $\xi$. Choose a
$d\lambda$-symplectic frame $\sigma = \{e_1,e_2\}$ for $\xi|_P$, represent the linear maps $d\phi_{Tt}|_{x(0)}$ by a
smooth path $\varphi:[0,1] \rightarrow \Sp(1)$, $\varphi(0) = I$, and the multiplication $x_T^*J$ by a smooth path
$t \mapsto J(t)$. Then $J(t) \in \Sp(1)$ and $-J_0J(t)$ is a positive symmetric matrix. The matrix $S =
-J(t)\varphi^\prime\varphi^{-1}$ is symmetric with respect to the inner-product $\left< \cdot,-J_0J(t)\cdot \right>$
on $\R^2$ and $1$-periodic (since so is $\varphi^\prime\varphi^{-1}$). The operator
\begin{equation}\label{localrepasympop}
 L_S : e(t) \mapsto -J(t) e^\prime(t) - S(t)e(t)
\end{equation}
represents $A_P$ in the frame $\sigma$. If $\sigma$ is $(d\lambda,J)$-unitary ($d\lambda(e_1,e_2) \equiv 1$ and
$Je_1=e_2$), then $A_P$ is presented as $L_S = -J_0\partial_t - S(t)$ with $S^T=S$. Thus $A_P$ has all the spectral
properties explained before.

\begin{notation}
With respect to a symplectic frame $\sigma = \{e_1,e_2\}$ for $\xi|_P$ the eigenvectors and eigenvalues of $A_P$
have well-defined winding numbers. These, of course, depend on the homotopy class $\beta \in \mathcal{S}_P$ of the
section $t \mapsto e_1(t)$ and will be denoted by $(\nu,\beta) \in \Z$. They are comparable via the formula
$(\nu,\beta_1) = (\nu,\beta_0) + \wind(\beta_0,\beta_1,J)$. For any $\delta\in\R$ we define
\[
 \mu_{CZ}^\delta(P,\beta) = 2\left(\nu^{neg}_\delta,\beta\right) + \frac{1}{2} \left( 1 + (-1)^{b_\delta} \right)
\]
where $b_\delta$ is the number of eigenvalues $\nu < \delta$ such that $(\nu,\beta) = (\nu^{neg}_\delta,\beta)$,
counting multiplicities. If $P$ is non-degenerate then
\[
 \mu_{CZ}(P,\beta) = 2\left(\nu^{neg}_0,\beta\right) + (\nu^{pos}_0,\beta) - (\nu^{neg}_0,\beta).
\]
\end{notation}

The following lemma is an easy consequence of the definitions.

\begin{lemma}\label{dynconvex}
Suppose $P\in\p^*$ satisfies $\mu_{CZ}(P) \geq 3$. Then $(\nu,\beta_P)\geq2$ for every eigenvalue $\nu\geq0$ of
$A_P$.
\end{lemma}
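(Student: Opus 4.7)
The plan is to do a short case analysis using the two explicit formulas recorded just above, combined with the spectral properties of $A_P$ that were summarized in Subsection~\ref{asympopprops}. The key observation is that the quantity $\mu_{CZ}(P,\beta_P)$ can be bounded above by twice the winding of the smallest non-negative eigenvalue, so the hypothesis $\mu_{CZ}(P) \geq 3$ forces that winding to be at least $2$, and then monotonicity propagates the inequality to all larger eigenvalues.

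In more detail, since $\mu_{CZ}(P)$ is defined via the non-degenerate formula, the orbit $P$ is non-degenerate, so $0 \notin \sigma(A_P)$. The smallest eigenvalue $\nu \geq 0$ is therefore $\nu^{pos}_0$. Set $w^+ := (\nu^{pos}_0, \beta_P)$ and $w^- := (\nu^{neg}_0, \beta_P)$. From the monotonicity of windings in eigenvalues and the fact that for each $k \in \Z$ there are exactly two eigenvalues of winding $k$, one has $w^- \leq w^+$ and $w^+ - w^- \in \{0,1\}$.

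Plugging into the non-degenerate formula
\[
 \mu_{CZ}(P,\beta_P) = 2w^- + (w^+ - w^-) = w^+ + w^-,
\]
I would then observe that in both subcases $w^+ = w^-$ and $w^+ = w^- + 1$ one has $\mu_{CZ}(P) \leq 2w^+$. Combining with $\mu_{CZ}(P) \geq 3$ and $w^+ \in \Z$ yields $w^+ \geq 2$. Finally, the monotonicity property $\nu_1 \leq \nu_2 \Rightarrow (\nu_1,\beta_P) \leq (\nu_2,\beta_P)$ recalled from~\cite{props2} gives $(\nu,\beta_P) \geq w^+ \geq 2$ for every eigenvalue $\nu \geq \nu^{pos}_0$, which is all eigenvalues $\nu \geq 0$.

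There is no real obstacle here; the statement is essentially a bookkeeping exercise reading off the definitions. The only subtle point to be careful about is that the two spectral inequalities $w^- \leq w^+$ and $w^+ - w^- \leq 1$ both rely crucially on the fact recorded in Subsection~\ref{asympopprops} that each integer arises as the winding of exactly two eigenvalues (counted with multiplicity), so one should cite this fact explicitly rather than take it for granted.
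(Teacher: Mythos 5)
Your proof is correct and fills in the details the paper omits (it is stated there merely as ``an easy consequence of the definitions''). The three ingredients you invoke --- the non-degenerate formula $\mu_{CZ}(P,\beta_P)=2w^-+(w^+-w^-)$, monotonicity of windings in the eigenvalue, and the fact that each integer is the winding of exactly two eigenvalues (which forces $w^+-w^-\in\{0,1\}$ since $\nu^{neg}_0$ and $\nu^{pos}_0$ are consecutive in the spectrum) --- are exactly what is recorded in Subsection~\ref{asympopprops}, and the resulting inequality $3\le\mu_{CZ}(P)=w^++w^-\le 2w^+$ together with integrality and monotonicity gives the claim.
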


\subsection{Finite-energy spheres with vanishing $d\lambda$-energy}

\begin{lemma}[Hofer, Wysocki and Zehnder]\label{zerodlambda}
Suppose $\vtil=(d,v):\C\setminus\Gamma \rightarrow \R\times M$ is a finite-energy sphere satisfying $\pi\cdot dv
\equiv 0$. Suppose further that $\infty$ is its unique positive puncture. There exists a non-constant polynomial
$p:\C\rightarrow\C$ and a periodic orbit $\hat P=(\hat x,\hat T)$ such that $p^{-1}(0)=\Gamma$ and $\vtil=f_{(\hat
x,\hat T)}\circ p$ where
\[
  \begin{array}{ccc}
    f_{(\hat x,\hat T)}:\C\setminus\{0\}\rightarrow\R\times M, &  & f_{(\hat x,\hat T)}(\est) = (\hat Ts,x(\hat Tt)).
  \end{array}
\]
\end{lemma}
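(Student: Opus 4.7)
My plan is to lift $\vtil$ to a single-valued holomorphic map $p:\C\setminus\Gamma\to\C^*$ and then extend it to a polynomial on $\C$.

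First I would prove that $v$ takes values in the image of a single closed Reeb orbit. Since $\pi\cdot dv\equiv 0$, $dv$ is parallel to $R$, so by connectedness $v(\C\setminus\Gamma)$ is contained in one Reeb trajectory $\gamma(\R)$. Theorem~\ref{thm93} applied at the positive puncture yields a closed orbit $\hat P=(\hat x,\hat T)$ and a sequence $s_n\to\infty$ along which $v(s_n,\cdot)\to \hat x(\hat T\cdot+c)$ in $C^\infty(S^1,M)$. If $\gamma\ne\hat x$ as orbits, then $\gamma$ is injective, so for a tubular neighborhood $U\simeq S^1\times D^2$ of $\hat x(\R)$ the intersection $\gamma(\R)\cap U$ decomposes as a disjoint union of contractible arcs. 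For $n$ large the loop $v(s_n,\cdot)\subset \gamma(\R)\cap U$ would then lie in a single arc and hence be null-homotopic in $U$, contradicting the fact that its $C^0$-limit represents the class $k=\hat T/T_{min}\ne 0$ in $\pi_1(U)=\Z$. Thus $v(\C\setminus\Gamma)\subset\hat x(\R)$.

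Let $T_{min}$ be the minimal period of $\hat x$ and write $v=\hat x\circ\psi$ with $\psi:\C\setminus\Gamma\to\R/T_{min}\Z$ smooth, so that $v^*\lambda=d\psi$ globally. The Cauchy-Riemann equation for $\vtil$ together with the identities $\lambda\circ\jtil=da$ and $da\circ\jtil=-\lambda$ yields $v^*\lambda\circ j=da$, equivalently $a_s=\psi_t$ and $a_t=-\psi_s$ in any local conformal coordinates $(s,t)$. Hence $a+i\psi$ is locally holomorphic, so $a$ is harmonic, and
\[
 p(z):=\exp\bigl(2\pi(a(z)+i\psi(z))/T_{min}\bigr)
\]
is globally well-defined on $\C\setminus\Gamma$ (since $\psi$ is defined modulo $T_{min}$), holomorphic, and nowhere zero.

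It remains to extend $p$ to a polynomial. At a negative puncture $z_j\in\Gamma$ with asymptotic period $\hat T_j=k_jT_{min}\in T_{min}\Z^+$, the standard $C^\infty_{loc}$-refinement of Theorem~\ref{thm93} produces a sequence $s=s_n\to\infty$ on which the $t$-oscillation of $a$ tends to zero, while the mean $\int_0^1 a(s,t)\,dt=-k_jT_{min}s+C$ tends to $-\infty$ linearly. Harmonicity of $a$ and the maximum principle on the annuli between consecutive such circles then force $a\to-\infty$ uniformly as $z\to z_j$, hence $|p|\to 0$ uniformly; Riemann's theorem extends $p$ holomorphically across $z_j$ with $p(z_j)=0$, and the winding of $p$ around a small loop at $z_j$, governed by the change of $\psi$ along the loop, shows the zero has order exactly $k_j$. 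The analogous analysis at $\infty$ yields $|p|\to\infty$ uniformly and hence a pole there, so $p$ is a polynomial of degree $k=\hat T/T_{min}=\sum_j k_j\geq 1$ with $p^{-1}(0)=\Gamma$. A direct computation confirms $\vtil=f_{(\hat x,T_{min})}\circ p$, proving the lemma with $\hat P=(\hat x,T_{min})$.

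The main obstacle I anticipate is the first step, where a topological argument in a tubular neighborhood of $\hat x(\R)$ is needed to rule out that $\gamma$ is a non-closed orbit; everything else combines the Cauchy-Riemann equations with standard one-dimensional complex analysis.
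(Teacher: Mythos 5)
The paper cites this lemma from Hofer--Wysocki--Zehnder (\cite{props2}) without reproducing a proof, so there is no in-paper argument to compare against; your proposal is the standard argument for it and is correct in outline. The one place I would tighten the wording is the claim that ``the $t$-oscillation of $a$ tends to zero'' along a subsequence: that stronger statement, read naively, appeals to precisely the sort of asymptotic result about vertical cylinders that the lemma itself is meant to establish, so there is a risk of circularity. What you actually need (and what is available without circularity) is only that the oscillation $\sup_t a(s,t)-\inf_t a(s,t)$ is \emph{bounded} for $s$ large; this follows directly from the uniform gradient bound $|\nabla\vtil|\le C$ on a half-cylinder near the puncture, which comes from the bubbling-off analysis and requires no non-degeneracy (and here any would-be bubble would be a non-constant finite-energy plane with vanishing $d\lambda$-energy, which is impossible because then $\int_{|z|=r}w^*\lambda\equiv 0$). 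Boundedness of the oscillation together with $\int_0^1 a(s,t)\,dt = m_j s + C$ with $m_j<0$ already gives $a\to-\infty$ uniformly, and the maximum-principle interpolation step becomes unnecessary. The rest of the argument — the topological exclusion of a non-closed trajectory via $\pi_1$ of a tubular neighborhood of $\hat x(\R)$, the holomorphicity of $a+i\psi$ and the definition of $p$, the identification of the order of the zero at $z_j$ with $k_j=|m_j|/T_{\min}$, and the pole at $\infty$ — is all correct.
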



\subsection{Bubbling-off points}

The basic tool for the bubbling-off analysis is the following lemma. In the statement below norms are taken with
respect to $g^0$ (\ref{norm0}) and the euclidean metric on $\C$.

\begin{lemma}\label{beforeclaim}
Let $\Gamma \subset \C$ be finite and $U_n \subset \C\setminus\Gamma$ be an increasing sequence of open sets such
that $\cup_n U_n = \C\setminus\Gamma$. Let $\util_n = (a_n,u_n) : (U_n,i) \rightarrow (\R \times M,\jtil)$ be a
sequence of $\jtil$-holomorphic maps satisfying $\sup_n E(\util_n) = C < \infty$, and $z_n \in U_n$ be a sequence
such that $\norma{d\util_n(z_n)} \rightarrow +\infty$. If $z_n$ stays bounded away from $\Gamma \sqcup \{\infty\}$,
or if there exist $m$ and $\rho>0$ such that $\C \setminus B_\rho(0) \subset U_m$ and $z_n$ stays bounded away from
$\Gamma$, then the following holds: $\forall0<s<1$ there exist subsequences $\{\util_{n_j}\}$ and $\{z_{n_j}\}$,
sequences $z^\prime_j \in \C$ and $r_j(s) \in\R$, and a contractible periodic Reeb orbit $\hat P = (\hat x , \hat
T)$ such that $\norma{z_{n_j}-z_j^\prime} \rightarrow 0$, $r_j(s) \rightarrow 0^+$, $\hat T \leq C$ and
\[
 \limsup_{j\rightarrow+\infty} \int_{\norma{z-z_j^\prime}\leq r_j(s)} u_{n_j}^*d\lambda \geq s\hat T.
\]
\end{lemma}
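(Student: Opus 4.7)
The statement is a standard bubbling-off/rescaling lemma, and I would follow Hofer's original strategy. First, note that in either of the two hypothesized cases, one can find radii $\delta_n \to 0^+$ such that the closed disks $\overline{B_{\delta_n}(z_n)}$ are eventually contained in $U_n$ and disjoint from $\Gamma$ (in the second case the hypothesis about $U_m$ and $\C\setminus B_\rho$ is exactly what is needed when $z_n\to\infty$). Setting $R_n := \norma{d\util_n(z_n)} \to \infty$, Hofer's topological lemma produces sequences $z_n' \in \overline{B_{\delta_n}(z_n)}$ with $\norma{z_n-z_n'}\to 0$, numbers $\epsilon_n \to 0^+$ with $\epsilon_n \leq \delta_n$, and scales $R_n' := \norma{d\util_n(z_n')} \geq R_n$ such that $\epsilon_n R_n' \to \infty$ and $\sup_{B_{\epsilon_n}(z_n')} \norma{d\util_n} \leq 2R_n'$.

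Next I rescale. Define $\vtil_n(z) = (a_n(z_n' + z/R_n') - a_n(z_n'),\, u_n(z_n' + z/R_n'))$ on $B_{\epsilon_n R_n'}(0)$. Each $\vtil_n$ is $\jtil$-holomorphic (using $\R$-invariance of $\jtil$), satisfies $\norma{d\vtil_n(0)}=1$, has $\sup_{B_r(0)}\norma{d\vtil_n}\leq 2$ for every fixed $r$ and $n$ large, and obeys $E(\vtil_n) = E(\util_n) \leq C$ by conformal and translation invariance. Elliptic regularity together with a standard diagonal argument provides a subsequence converging in $C^\infty_{loc}(\C,\R\times M)$ to a non-constant $\jtil$-holomorphic finite-energy plane $\vtil = (d,v) : \C \to \R\times M$ with $E(\vtil) \leq C$. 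Since $\vtil$ is non-constant, exactness of the forms $\omega_\phi$ (Stokes applied on $S^2$) rules out $\infty$ being a removable puncture; positivity of the puncture follows from the sign convention applied to the nondegeneracy forced by Hofer's energy identity. Theorem~\ref{thm93} then yields a periodic Reeb orbit $\hat P = (\hat x,\hat T)$ with $\hat T = \lim_{s\to+\infty}\int_{\{s\}\times S^1} v^*\lambda \leq C$ and, since $\vtil$ is a plane, $\hat P$ is contractible.

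Finally, given $0<s<1$, choose $S_s>0$ large enough that $\int_{B_{S_s}(0)} v^*d\lambda \geq s\hat T$; this is possible because $\int_{\partial B_\rho(0)} v^*\lambda \to \hat T$ by Theorem~\ref{thm93} and $\int_{B_\rho(0)} v^*d\lambda = \int_{\partial B_\rho(0)} v^*\lambda$ by Stokes. By $C^\infty_{loc}$-convergence along the subsequence $n_j$,
\[
 \int_{B_{S_s}(0)} v_{n_j}^*d\lambda \to \int_{B_{S_s}(0)} v^*d\lambda \geq s\hat T.
\]
Pulling back through the rescaling, $\int_{B_{S_s}(0)} v_{n_j}^*d\lambda = \int_{B_{S_s/R_{n_j}'}(z_{n_j}')} u_{n_j}^*d\lambda$, so setting $r_j(s) := S_s/R_{n_j}' \to 0^+$ yields the required inequality.

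\textbf{Main obstacles.} The most delicate points are (a) handling the case where $z_n$ may escape to $\infty$, which is precisely what the second clause of the hypothesis (on some $U_m$ containing $\C\setminus B_\rho(0)$) is designed to cover, ensuring that after the translation $w\mapsto z_n'+w/R_n'$ the rescaled domains $B_{\epsilon_nR_n'}(0)$ exhaust $\C$; and (b) verifying that the bubble $\vtil$ inherits a \emph{non-removable positive} puncture at $\infty$, which uses the lower bound $\norma{d\vtil(0)}=1$ together with exactness of $\omega_\phi$ to rule out the removable case, and the convention in~\eqref{naturepuncture} to determine the sign. The rest is conformal invariance of the energy, Hofer's topological lemma, elliptic bootstrapping, and Theorem~\ref{thm93}.
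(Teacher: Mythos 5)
Your proof is correct and is precisely the standard rescaling/bubbling-off argument the paper alludes to when it states ``We do not include a proof here since it is standard,'' so you are filling an intentional gap rather than departing from a proof the paper contains. Two small points worth tightening: (a) when you pick $\delta_n \to 0^+$ you should also record $\delta_n R_n \to \infty$ (achievable, e.g., by $\delta_n = \min\{\delta, R_n^{-1/2}\}$ where $\delta>0$ is a uniform lower bound on $\operatorname{dist}(z_n,\Gamma)$ valid in either hypothesized case), since this is exactly what Hofer's topological lemma requires in order to output $\epsilon_n R_n' \to \infty$; and (b) the positivity of the puncture of $\vtil$ at $\infty$ is most cleanly seen not from a vague ``energy identity'' but from Stokes: $\int_{\partial B_R(0)} v^*\lambda = \int_{B_R(0)} v^*d\lambda \geq 0$ (the integrand is pointwise non-negative for $\jtil$-holomorphic maps), so $m=\lim_R \int_{\partial B_R(0)} v^*\lambda \geq 0$, and non-removability ($E(\vtil)>0$ since $|d\vtil(0)|=1$, combined with exactness of $\omega_\phi$ on $S^2$) rules out $m=0$; this also gives $\hat T = m \leq E(\vtil) \leq C$ directly. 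With these minor clarifications the argument is complete.
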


We do not include a proof here since it is standard.

\begin{cor}\label{claim}
Assume $\lambda$ and $P$ satisfy the hypotheses of Theorem~\ref{main3}. Suppose
$\{\util_n=(a_n,u_n)\}\subset\Theta(H,P)$ and $\{z_n\}\subset\C$ are sequences such that
$\norma{d\util_n(z_n)}\rightarrow+\infty$. Then $\limsup|z_n|\leq1$ and for any
$1<s<\gamma^{-1}\min\{\gamma_1,\gamma_2\}$ there exist subsequences $\{\util_{n_j}\}$ and $\{z_{n_j}\}$, a sequence
$r_j(s)\rightarrow0^+$ and a sequence $z_j^\prime$ such that $\norma{z_{n_j}-z_j^\prime} \rightarrow 0$ and
\[
 \limsup_{j\rightarrow+\infty} \int_{\norma{z-z_j^\prime}\leq r_j(s)}u_{n_j}^*d\lambda \geq s\gamma.
\]
\end{cor}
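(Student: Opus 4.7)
The plan is to apply Lemma~\ref{beforeclaim} directly, using two quantitative features of the class $\Theta(H,P)$: every $\util_n = (a_n,u_n)$ has Hofer energy equal to the asymptotic period $T$ of $P$, and $\int_{\C\setminus\D}u_n^*d\lambda = \gamma$. The crucial observation, underlying both assertions, is that any contractible periodic orbit $\hat P = (\hat x,\hat T)$ produced as a bubble limit satisfies $\hat T \leq C = T$ by the energy bound, so $\hat T \in \mathcal{A}_c^T$ and hence $\hat T \geq \gamma_1$. Note that the planes are defined on all of $\C$, so we may take $\Gamma = \emptyset$ and $U_n = \C$ in Lemma~\ref{beforeclaim}; the second clause of that lemma is then trivially applicable, irrespective of whether $\{z_n\}$ is bounded or not.

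To establish $\limsup|z_n| \leq 1$, suppose for contradiction that, along a subsequence, $|z_n| \geq 1 + \epsilon$ for some $\epsilon > 0$. Fix any $s' \in (\gamma/\gamma_1,1)$; this is possible since $\gamma < \gamma_1$. Applying Lemma~\ref{beforeclaim} we obtain a subsequence, points $z_j^\prime$ with $|z_{n_j} - z_j^\prime|\to 0$, radii $r_j(s') \to 0^+$ and a contractible orbit $\hat P$ with $\hat T \leq T$ satisfying
\[
\limsup_{j\to+\infty}\int_{|z-z_j^\prime|\leq r_j(s')} u_{n_j}^*d\lambda \geq s'\hat T \geq s'\gamma_1 > \gamma.
\]
Because $r_j(s')\to 0^+$ and $|z_{n_j}| \geq 1 + \epsilon$, the discs $\{|z - z_j^\prime| \leq r_j(s')\}$ are eventually contained in $\C\setminus\D$, whence each of the above integrals is bounded above by $\int_{\C\setminus\D} u_{n_j}^*d\lambda = \gamma$, contradicting the strict inequality.

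For the bubble mass estimate, fix $s \in \bigl(1,\gamma^{-1}\min\{\gamma_1,\gamma_2\}\bigr)$, so $s\gamma < \gamma_1$. Having established $\limsup|z_n| \leq 1$, after passing to a subsequence $\{z_n\}$ is bounded. Pick $s' \in (s\gamma/\gamma_1, 1)$. Lemma~\ref{beforeclaim} yields subsequences $\util_{n_j}, z_{n_j}$, points $z_j^\prime$ with $|z_{n_j}-z_j^\prime|\to 0$, numbers $r_j(s) := r_j(s') \to 0^+$, and a contractible orbit $\hat P$ with $\hat T \leq T$, hence $\hat T \geq \gamma_1$. Consequently
\[
\limsup_{j\to+\infty} \int_{|z-z_j^\prime|\leq r_j(s)} u_{n_j}^*d\lambda \geq s'\hat T \geq s'\gamma_1 \geq s\gamma,
\]
as required.

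The main obstacle is only bookkeeping: in the first step one must verify that the shrinking bubble discs sit inside $\C\setminus\D$ for $j$ large, which follows from $r_j(s')\to 0^+$ and $|z_{n_j}|\geq 1+\epsilon$. Beyond this, the entire argument amounts to choosing the auxiliary parameter $s'<1$ close enough to $1$ so that the universal lower bound $\hat T\geq\gamma_1$ (which is forced by contractibility of the bubble together with the energy cap $\hat T\leq T$) yields a bubble mass exceeding either $\gamma$ or $s\gamma$, as required by each of the two assertions.
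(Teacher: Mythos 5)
Your proof is correct and is exactly the argument the paper implicitly intends, namely a direct application of Lemma~\ref{beforeclaim} combined with the two quantitative constraints built into $\Theta(H,P)$ (energy $= T$ and tail mass $\int_{\C\setminus\D}u_n^*d\lambda = \gamma$) and the lower bound $\hat T \geq \gamma_1$ on contractible bubble periods. The paper gives no separate proof for this corollary, but the key observations you isolate — that $\Gamma=\emptyset$, $U_n=\C$ makes the second clause of Lemma~\ref{beforeclaim} vacuously applicable even for unbounded $\{z_n\}$; that nonnegativity of $u_n^*d\lambda$ yields the monotonicity $\int_{B}\leq\int_{\C\setminus\D}=\gamma$ for small bubble discs outside $\D$; and that an auxiliary $s'<1$ can always be chosen close enough to $1$ since $s\gamma<\gamma_1$ — constitute a complete and clean argument.
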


\begin{cor}
Assume $\lambda$ and $P$ satisfy the hypotheses of Theorem~\ref{main3}. If $\{\util_n\}\subset\Theta(H,P)$ and
$z^1,\dots,z^J$ are distinct points of $\C$ satisfying
\[
 \forall \ 1\leq l\leq J \ \exists \{z^l_n\} \text{ such that } z^l_n\rightarrow z^l \text{ and } |d\util_n(z^l_n)|\rightarrow+\infty
\]
then $\{z^1,\dots,z^J\}\subset\D$ and $J\leq T/\gamma$.
\end{cor}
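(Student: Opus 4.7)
The plan is to combine the two ingredients already provided: the localization statement from Corollary~\ref{claim} that forces each bubbling point into $\cl\D$, and the quantitative lower bound on bubbling mass together with the fact that each $\util_n$ carries total $d\lambda$-mass exactly $T$. The containment $\{z^1,\dots,z^J\}\subset\D$ is immediate: apply Corollary~\ref{claim} to each bubbling sequence $\{z^l_n\}$ separately; it yields $\limsup|z^l_n|\leq 1$, and since $z^l_n\to z^l$ this gives $|z^l|\leq 1$, i.e.\ $z^l\in\D$.

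For the bound on $J$, the key \textit{a priori} fact I would use is that for any finite-energy plane $\util=(a,u)$ asymptotic to $P=(x,T)$ at $\infty$ one has
\[
 \int_\C u^*d\lambda = T,
\]
obtained from Stokes' theorem applied on disks $B_R(0)$ together with (\ref{naturepuncture}); the integrand is pointwise nonnegative because $\util$ is $\jtil$-holomorphic and $J$ is $d\lambda$-compatible. Hence each $\util_n\in\Theta(H,P)$ has $\int_\C u_n^*d\lambda = T$.

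Fix $s$ with $1<s<\gamma^{-1}\min\{\gamma_1,\gamma_2\}$. For each $l=1,\dots,J$, Corollary~\ref{claim} applied to $\{\util_n\}$ and $\{z^l_n\}$ supplies, after passage to a subsequence, centers $z'^l_j$ with $\norma{z^l_{n_j}-z'^l_j}\to 0$ and radii $r^l_j(s)\to 0^+$ such that
\[
 \limsup_{j\to\infty}\int_{\norma{z-z'^l_j}\leq r^l_j(s)} u_{n_j}^*d\lambda \geq s\gamma.
\]
A standard diagonal extraction produces a single subsequence (still indexed by $j$) for which this holds simultaneously for all $l=1,\dots,J$. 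Because the points $z^1,\dots,z^J$ are distinct and both $z'^l_j\to z^l$ and $r^l_j(s)\to 0$, the $J$ disks $\{\norma{z-z'^l_j}\leq r^l_j(s)\}$ are pairwise disjoint for $j\gg1$. Summing over $l$ and using $\int_\C u_{n_j}^*d\lambda=T$ gives $\sum_{l=1}^J \int_{\norma{z-z'^l_j}\leq r^l_j(s)} u_{n_j}^*d\lambda \leq T$, so taking $\limsup$ yields $Js\gamma\leq T$. Letting $s\to 1^+$ concludes $J\leq T/\gamma$.

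The argument is essentially bookkeeping once Corollary~\ref{claim} is in hand; the only mildly delicate points are the diagonal extraction of a common subsequence and verifying disjointness of the small bubbling disks for large $j$. Neither requires new analysis, so I do not anticipate a serious obstacle.
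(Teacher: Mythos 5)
Your argument is correct and reproduces the paper's own (very terse) proof: the paper simply remarks that the conclusion "follows easily from the previous lemma since $\int_\C u_n^*d\lambda = T$ for all $n$," and your fleshed-out version is exactly what is meant — localize each bubbling point in $\D$ via Corollary~\ref{claim}, use the mass lower bound $s\gamma$ on small disjoint disks, sum against the total $d\lambda$-energy $T$, and send $s\to1^+$. One small point to tighten in the write-up: since $\limsup$ is only subadditive, the final step "taking $\limsup$ yields $Js\gamma\leq T$" requires that the diagonal extraction be carried out so that each of the finitely many quantities $\int_{\norma{z-z'^l_j}\leq r^l_j(s)}u_{n_j}^*d\lambda$ actually \emph{converges} (to a limit $\geq s\gamma$) along the common subsequence, which one arranges by passing to a further convergent subsequence at each of the $J$ stages.
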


\begin{proof}
Write $\util_n=(a_n,u_n)$. The conclusion follows easily from the previous lemma since $\int_\C u_n^*d\lambda = T$
for all $n$.
\end{proof}

\begin{cor}\label{corclaim}
If $\{\util_n\}\subset\Theta(H,P)$ then one can find a subsequence $\{\util_{n_j}\}$ and a finite set
$\Gamma\subset\D$ such that $\{|d\util_{n_j}|\}$ is uniformly bounded on compact subsets of $\C\setminus\Gamma$.
\end{cor}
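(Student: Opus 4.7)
The plan is to directly identify the (automatically finite) bubbling-off set and then deduce uniform gradient bounds away from it by contradiction, invoking only the preceding corollary.

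First, define
\[
 \Gamma := \bigl\{ z\in\C : \exists \text{ a subsequence } \util_{n_k} \text{ and } z_k\to z \text{ with } |d\util_{n_k}(z_k)|\to+\infty \bigr\}.
\]
By the corollary preceding the statement, any finite collection of distinct points $\{z^1,\dots,z^J\}\subset\Gamma$ must satisfy $J\leq T/\gamma$ and $\{z^1,\dots,z^J\}\subset\D$. Hence $\Gamma$ is finite, $|\Gamma|\leq T/\gamma$, and $\Gamma\subset\D$.

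Next, I claim that for every compact $K\subset\C\setminus\Gamma$ the quantity $\sup_K|d\util_n|$ remains bounded in $n$. Suppose not. Then, possibly after passing to a subsequence, $\sup_K|d\util_n|\to+\infty$, so one can choose $z_n\in K$ with $|d\util_n(z_n)|\to+\infty$. Compactness of $K$ yields a further subsequence with $z_n\to z_*\in K$. By the very definition of $\Gamma$ this forces $z_*\in\Gamma$, contradicting $K\cap\Gamma=\emptyset$. This already proves the boundedness claim for the entire sequence $\{\util_n\}$.

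Finally, to match the subsequence formulation in the statement, fix an exhaustion $K_1\subset K_2\subset\cdots$ of $\C\setminus\Gamma$ by compact subsets. The argument above gives, for each $j$, a uniform bound $\sup_n\sup_{K_j}|d\util_n|<\infty$, so extracting a diagonal subsequence $\{\util_{n_j}\}$ (or simply keeping the entire sequence) yields the desired conclusion, since every compact $K\subset\C\setminus\Gamma$ is contained in some $K_j$. No subtle obstacle arises; the whole argument is a direct bookkeeping consequence of the previous corollary, which carries all the analytic content (via Lemma~\ref{beforeclaim}).
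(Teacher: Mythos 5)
There is a genuine gap. You define
\[
 \Gamma := \bigl\{ z\in\C : \exists \text{ a subsequence } \util_{n_k} \text{ and } z_k\to z \text{ with } |d\util_{n_k}(z_k)|\to+\infty \bigr\},
\]
and then assert that any $J$ distinct points of $\Gamma$ satisfy $J\leq T/\gamma$ by ``directly'' invoking the preceding corollary. But the hypothesis of that corollary is that for \emph{each} $l$ there is a sequence $\{z^l_n\}$ indexed by the \emph{same} $n$ as $\util_n$ with $|d\util_n(z^l_n)|\to+\infty$ along the \emph{full} sequence. Your set $\Gamma$ allows different points to bubble along mutually disjoint subsequences, in which case the preceding corollary simply does not apply, and in fact $\Gamma$ may be infinite. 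A concrete obstruction: a single ``traveling bubble,'' where $\util_n$ concentrates all of its $d\lambda$-energy near a point $z^{(n)}$ and the $z^{(n)}$ are dense in $\D$, makes your $\Gamma$ equal to $\overline{\D}$, while the hypothesis of the preceding corollary is violated for every single point (so that corollary gives no restriction). This is exactly the situation that the subsequence in the statement is there to fix, and your argument never performs that extraction before bounding $\Gamma$; the final ``diagonal subsequence'' paragraph is cosmetic and does not repair the earlier finiteness claim.

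The correct route is an iterated extraction. Start with the full sequence. If gradients are locally uniformly bounded you are done with $\Gamma=\emptyset$. Otherwise pick a bubbling point $z^1$ and pass to a subsequence along which blow-up occurs near $z^1$. Repeat: if along the current subsequence gradients are locally bounded off the currently chosen points, stop; otherwise find a new bubbling point and refine the subsequence further, keeping the previously chosen points as bubbling points for the new, nested subsequence (this is automatic since the witnessing sequences $z^l_n$ remain valid when restricted to a further subsequence). After $k$ steps the nested subsequence satisfies the hypothesis of the preceding corollary for $z^1,\dots,z^k$, giving $k\leq T/\gamma$, so the process terminates. The subsequence produced at termination and the finite set $\Gamma=\{z^1,\dots,z^J\}$ satisfy the conclusion. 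In short, the preceding corollary only controls \emph{simultaneous} bubbling along a fixed sequence, so you must fix the subsequence before invoking it, not after.
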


\subsection{Special cylinders with small energy}\label{specialcylinderssmallenergy}

Fix any contact form $\lambda$ on the closed $3$-manifold $M$. Suppose $Q \subset \p$ satisfies the following
condition: if $\hat P = (\hat x,\hat T) \in Q$, $\tilde P = (\tilde x,\tilde T) \in \p$ and ${\hat x}_{\hat T}$ is
$C^0$-homotopic to ${\tilde x}_{\tilde T}$ then $\tilde P \in Q$. We denote $Q^C = \{ \hat P = (\hat x,\hat T) \in Q
: \hat T \leq C \}$ for some fixed $C>0$. We assume every $P \in Q^C$ is non-degenerate, so that $Q^C$ is finite. We follow~\cite{fols} and fix a number
\begin{equation}\label{gammaagain}
 \begin{aligned}
  &0 < e < \min\{a_1,a_2\}, \text{ where } a_1 = \min \{\hat T : \hat P=(\hat x,\hat T) \in Q^C \} \text{ and} \\
  &a_2 = \min \{|\hat T - \tilde T| : \hat P=(\hat x,\hat T),\tilde P=(\tilde x,\tilde T) \in Q^C \text{ and } \hat T \not= \tilde T \}.
 \end{aligned}
\end{equation}
Choose also an arbitrary $S^1$-invariant open neighborhood $\W$ of $Q^C$ in the loop space $C^\infty(S^1,M)$. The
proof of the following lemma is found all over the literature, however its statement is not. We shall not give the
arguments here since they are almost identical to the ones given to prove Lemma 4.9 from~\cite{fols}, see
also~\cite{long}.

\begin{lemma}\label{longcyl}
Let $\eta > 0$ be fixed. Suppose that, in addition to the assumptions made above, every contractible $\tilde P =
(\tilde x,\tilde T)$ with $\tilde T \leq C$ belongs to $Q$. Then $\exists h>0$ with the following significance. If $\util = (a,u) : [r,+\infty) \times S^1 \rightarrow \R \times M$ is a finite-energy cylinder satisfying
\begin{enumerate}
 \item $E(\util) \leq C$ and $\int_{[r,+\infty) \times S^1} u^*d\lambda \leq e$.
 \item $\int_{\{s\}\times S^1} u^*\lambda \geq \eta \ \forall s \geq r$.
 \item $\exists\hat P \in Q^C$ and $y \in \hat P$ such that $\lim_{s\rightarrow+\infty} u(s,\cdot) = y \text{ in
     } C^\infty(S^1,M)$.
\end{enumerate}
then $s \geq r+h \Rightarrow u(s,\cdot) \in \W$.
\end{lemma}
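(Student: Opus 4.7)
I would argue by contradiction. Suppose the lemma fails; then for each positive integer $n$ there is a finite-energy cylinder $\util_n = (a_n,u_n) : [r_n,+\infty) \times S^1 \to \R \times M$ satisfying the three listed hypotheses, and a parameter $s_n \geq r_n + n$ such that $u_n(s_n,\cdot) \notin \W$. I then translate both coordinates and form
\[
 \vtil_n(s,t) := \bigl( a_n(s+s_n,t) - a_n(s_n,0),\ u_n(s+s_n,t) \bigr),
\]
which is $\jtil$-holomorphic on domains $D_n \supset [-n,+\infty) \times S^1$ that exhaust $\R \times S^1$. Each $\vtil_n$ inherits a uniform energy bound $E(\vtil_n) \leq C$, and on any compact subset of $\R \times S^1$ the $d\lambda$-integral is at most $e$.

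The next step is to establish uniform $C^1_{loc}$ bounds on the sequence $\vtil_n$. If such bounds failed, then after rescaling one would produce a non-constant finite-energy plane or sphere in $\R \times M$; by Theorem~\ref{thm93} its asymptotic limit would be a contractible periodic Reeb orbit $\hat P'=(\hat x',\hat T')$ whose action equals the $d\lambda$-mass absorbed by the bubble. This mass is at most $e < a_1$, while by hypothesis every contractible orbit of period $\leq C$ lies in $Q$, hence in $Q^C$, and therefore has action at least $a_1$: contradiction. Hence no bubbling occurs and, combining standard elliptic estimates with the translation normalization $a_n(s_n,0)=0$, I pass to a subsequence to obtain $\vtil_n \to \vtil_\infty=(a_\infty,v_\infty)$ in $C^\infty_{loc}(\R \times S^1, \R \times M)$, with $\vtil_\infty$ a finite-energy $\jtil$-holomorphic cylinder.

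Now I analyze $\vtil_\infty$. The monotone function $\sigma(s):=\int_{\{s\}\times S^1} v_\infty^*\lambda$ is non-decreasing (by Stokes and $v_\infty^*d\lambda \geq 0$), bounded below by $\eta$ (from (2)) and bounded above by $\hat T \leq C$ (from (3), since $\sigma_n(s) \leq \hat T_n \leq C$ for each $n$). Hence both limits $T_\pm := \lim_{s\to\pm\infty}\sigma(s)$ exist, lie in $(0,C]$, and satisfy $T_+ - T_- = \int_{\R \times S^1} v_\infty^* d\lambda \leq e$. By Theorem~\ref{thm93} the limits of $v_\infty(s,\cdot)$ as $s\to\pm\infty$ are (along subsequences) loops parametrizing periodic Reeb orbits $\tilde P_\pm = (\tilde x_\pm,T_\pm)$. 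Since every loop $t\mapsto v_n(s,t)$ is $C^0$-homotopic to loops parametrizing $\hat P \in Q^C$ (use condition (3) and the fact that $[r_n,+\infty)\times S^1$ is connected), passing to the $C^0_{loc}$-limit and then to the asymptotics gives that $\tilde P_\pm$ are $C^0$-homotopic to loops in $\hat P$. The invariance hypothesis on $Q$ then forces $\tilde P_\pm \in Q$, and since $T_\pm \leq C$ they lie in $Q^C$. The inequality $|T_+ - T_-|\leq e < a_2$ combined with the definition of $a_2$ forces $T_+ = T_-$, and therefore $\int v_\infty^* d\lambda = 0$, i.e.\ $\pi \cdot dv_\infty \equiv 0$.

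A cylinder with $\pi \cdot dv_\infty \equiv 0$ and two positive punctures of the same action must be (a reparametrization of) a trivial cylinder over a single orbit $\tilde P \in Q^C$; in particular $v_\infty(0,\cdot)$ parametrizes $\tilde P$ and so belongs to $\W$. But $v_n(0,\cdot) = u_n(s_n,\cdot) \notin \W$, and $v_n(0,\cdot) \to v_\infty(0,\cdot)$ in $C^\infty(S^1,M)$, so $v_n(0,\cdot) \in \W$ for all large $n$: contradiction. The main technical point is the gradient bound in the second step, since that is where the choice $e < a_1$ and the blanket assumption that all contractible orbits of period $\leq C$ lie in $Q$ are used to rule out bubbling; every other step is essentially a careful application of Stokes and of the action-gap encoded in the definition~\eqref{gammaagain} of $e$.
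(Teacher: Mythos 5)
Your proof is essentially correct and reproduces the standard ``long cylinder'' compactness argument (this is the argument in Lemma 4.9 of the paper~\cite{fols} that the author references in lieu of a proof): argue by contradiction, recenter the cylinders at the offending parameter $s_n$, rule out bubbling using $e < a_1$ together with the blanket assumption that contractible orbits of period $\leq C$ lie in $Q$, take a $C^\infty_{loc}$ limit, show that the limit cylinder has $T_+ = T_-$ via $e < a_2$ and hence $\pi\cdot dv_\infty \equiv 0$, so that Lemma~\ref{zerodlambda} makes it a (possibly multiply covered) trivial cylinder over an orbit in $Q^C$, and finally contradict $u_n(s_n,\cdot) \notin \W$ by openness and $S^1$-invariance of $\W$. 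One small slip: the two ends of $\tilde v_\infty$ are a positive and a negative puncture, not ``two positive punctures'' --- with the identification $\R\times S^1 \simeq \C\setminus\{0\}$, the end $s\to+\infty$ is the unique positive puncture and $s\to-\infty$ is negative --- but this is harmless, as the hypothesis of Lemma~\ref{zerodlambda} (one positive puncture at $\infty$, remaining punctures negative) is exactly what is satisfied. It would also be cleaner to note explicitly that, since $Q^C$ is finite, one passes to a subsequence along which $\hat P_n \equiv \hat P$ before invoking the $C^0$-homotopy argument.
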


It is not hard to check that, under the assumptions of Theorem~\ref{main3}, $Q = \p^*$ and $C=T$ satisfy the
hypotheses of Lemma~\ref{longcyl}.

\subsection{An auxiliary lemma}\label{auxlemma}

This subsection is independent of the previous discussion. Our goal is to prove Lemma~\ref{notdyn} below. Let
\[
 \wtil = (d,w) : \C\setminus\hat{\Gamma}\rightarrow \R\times M
\]
be a finite-energy sphere with non-degenerate asymptotics, where $\hat\Gamma\subset\C$ is finite. Suppose $\Gamma =
\hat\Gamma\cup\{\infty\}$ consists of non-removable punctures. Denote $z_1 = \infty$ and write $\hat\Gamma =
\{z_2,\dots,z_N\}$. We find periodic orbits $\{P_j = (x_j,T_j)\}_{j=1\dots N}$ such that $\wtil$ is asymptotic to
$P_j$ at $z_j$, according to Definition~\ref{behavior}.

Assume $P_j \in \p^* \ \forall j$ and that $z_2,\dots,z_N$ are negative punctures. There are distinguished homotopy classes $\beta_j = \beta_{P_j} \in \mathcal{S}_{P_j}$ induced by capping disks for the maps ${x_j}_{T_j}$, as explained in Remark~\ref{abc}, and we choose $Z_j$ in class $\beta_j$. By Theorem~\ref{p1} there exist sections of $w^*\xi$ defined near the punctures $z_j$, still denoted $Z_j$, such that the following holds: if $\psi$ is a holomorphic chart satisfying $\psi(0) = z_j$ then $Z_j(\psi(e^{2\pi(s+it)})) \to Z_j(x_j(T_jt+c))$ uniformly in $t$ as $s\to-\infty$, for some $c\in\R$.


\begin{lemma}\label{uniqueextension}
The sections $Z_j$ (defined only near the punctures $z_j$) extend to a non-vanishing section $B$ of $w^*\xi$. 
\end{lemma}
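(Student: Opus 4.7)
The plan is to reduce the extension problem to a topological obstruction on $\C\setminus\hat\Gamma$, then verify the obstruction vanishes using the capping data afforded by the hypothesis $P_j\in\p^*$.

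First, since $\C\setminus\hat\Gamma$ is a non-compact $2$-manifold with $H^2=0$, the complex line bundle $w^*\xi$ is topologically trivial. Fix a smooth non-vanishing section $\sigma$ and write $Z_j=f_j\sigma$ on a small punctured neighborhood of each $z_j$, where $f_j$ is a non-vanishing complex-valued function with some winding number $\nu_j\in\Z$ about $z_j$. Producing the desired $B$ is equivalent to finding a non-vanishing complex-valued function $F$ on $\C\setminus\hat\Gamma$ agreeing with $f_j$ near each $z_j$, since $B:=F\sigma$ then works after a standard cutoff-and-interpolation on a thin annulus around each puncture (two non-vanishing complex-valued functions with matching winding on a circle are homotopic through non-vanishing functions on a collar). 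Applying Stokes' theorem to $d\log F/(2\pi i)$ on the compact planar region obtained by removing small disks around $z_2,\ldots,z_N$ from a large disk, the only obstruction to the existence of such an $F$ is the vanishing of a single signed sum of the $\nu_j$, with signs dictated by $z_1=\infty$ being the positive puncture and $z_2,\ldots,z_N$ being negative.

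To verify this balance, I use capping. Each $P_j$ is contractible, so choose smooth capping disks $g_j:\D\to M$ with $g_j(e^{i2\pi t})=x_j(T_jt)$. The hypothesis $P_j\in\p^*$ together with $Z_j\in\beta_{P_j}$ ensures via Remark~\ref{abc} that each $Z_j$ extends to a non-vanishing section $\tilde Z_j$ of $g_j^*\xi$ on $\D$. Gluing $w$ to the $g_j$'s via collars produces a continuous map $\bar w:S^2\to M$; combining $\sigma$ on the middle region with the $\tilde Z_j$'s on the capping regions yields a section of $\bar w^*\xi$ that is non-vanishing off $N$ thin collars along the seams. The winding mismatches of $\sigma$ versus $\tilde Z_j$ on the $j$-th seam recover $\nu_j$ (up to the appropriate sign), and their signed sum therefore computes exactly the first Chern number $c_1(\bar w^*\xi)$. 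The $\p^*$-hypothesis ensures this integer is independent of the chosen capping disks, and selecting the $g_j$'s inside Martinet tubes around each $P_j$ (Definition~\ref{martinettube}) with the Martinet framing adapted to the class $\beta_{P_j}$ as in Remark~\ref{homtriv} identifies this value with zero.

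The main obstacle is precisely the last step: genuinely pinning $c_1(\bar w^*\xi)$ down to zero. The $\p^*$-hypothesis by itself only yields invariance of this Chern number under capping choice; the heart of the argument is to exploit the canonical nature of the classes $\beta_{P_j}$ and the fact that the asymptotic data arise from a single finite-energy sphere $\wtil$ (so the orbit limits and capping framings are mutually compatible) to force the value to vanish. Once this is verified, the extension $B$ is obtained by modifying $\sigma$ near each puncture so that its winding matches $f_j$ (possible thanks to the balance) and interpolating on the intermediate annuli.
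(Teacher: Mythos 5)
Your overall reduction — trivialize $w^*\xi$, record the winding numbers $\nu_j$ of the $Z_j$, identify the obstruction to extension with a signed sum of the $\nu_j$, and then identify that sum with $c_1(\bar w^*\xi)$ after capping — is correct and tracks the paper's strategy up to the final step. The final step, however, is where you leave a genuine gap, and you acknowledge it yourself: you never establish that $c_1(\bar w^*\xi)=0$. The attempted justification via ``selecting the $g_j$'s inside Martinet tubes'' does not make sense, since a capping disk for $P_j$ can never lie inside a tubular neighborhood of $x_j(\R)$, and the Martinet framing is merely a trivialization of $\xi$ near the orbit, not a statement about the Chern number of a capped sphere. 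Your subsequent remark that the $\p^*$-hypothesis ``only yields invariance of this Chern number under capping choice'' is also a misreading: the hypothesis yields outright vanishing, and recognizing this is exactly the missing idea.

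Here is what fills the gap. Since $z_1=\infty$ is the unique positive puncture of $\wtil$ and every negative puncture $z_2,\dots,z_N$ has been capped, the composite map $D_1' := g_2\#\dots\#g_N\#w : \D \to M$ is itself a capping disk for $P_1$. Your sphere $\bar w$ is then exactly the gluing of two capping disks for $P_1$, namely $\bar w = g_1\#\overline{D_1'}$. The defining property of $P_1\in\p^*$ is that $c_1((f_1\#\bar f_2)^*\xi)=0$ for every pair of capping disks $f_1,f_2$ for $P_1$; applying this with $f_1=g_1$ and $f_2=D_1'$ gives $c_1(\bar w^*\xi)=0$ directly, with no further invariance argument required. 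This is precisely the paper's argument in different clothing: the paper forms the capping disk $D_1'$, extends the $Z_j$ (for $j\geq2$) over the pieces $D_j$ using $Z_j\in\beta_{P_j}$, and then observes that the resulting non-vanishing section over $D_1'$ does not wind against $Z_1$ near $\partial D_1'=P_1$ because $Z_1\in\beta_{P_1}$ and $D_1'$ is a capping disk for $P_1$; that ``does not wind'' assertion is exactly the vanishing of the Chern number you set out to compute.
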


The proof follows from standard degree theory, we only sketch it here.

\begin{proof}[Sketch of proof]
We can glue capping disks $D_j$ for $P_j$ along the punctures $z_j$, for $j=2\dots,N$, with the surface $w$ to
obtain a capping disk
\[
 D_1 = D_2 \# \dots \# D_N \# w
\]
for $P_1$. This follows from the asymptotic behavior described in Definition~\ref{behavior}. The sections
$\{Z_j\}_{j\geq2}$ extend to a section $\sigma$ of $\xi|_{D_1}$. We used that $Z_j|_{P_j} \in \beta_{P_j} \ \forall
j\geq 2$ and that the class $\beta_{P_j}$ has special properties described in Subsection~\ref{orbits}. Since
$Z_1|_{P_1} \in \beta_{P_1}$ then $\sigma$ does not wind with respect to $Z_1$ near $\partial D_1$ and,
consequently, can be patched with $Z_1$.
\end{proof}

\begin{lemma}\label{negpun}
Fix $1\leq j\leq N$. Suppose the $P_j$ are non-degenerate. If $z_j$ is a negative puncture of $\wtil$,
$\mu_{CZ}(P_j) \geq 3$ and $\int_{\C\setminus\hat\Gamma} w^*d\lambda > 0$ then $\wind_\infty(\wtil,\delta,z_j) \geq
2$  where $\delta$ is the homotopy class of the section $B$ given by Lemma~\ref{uniqueextension}.
\end{lemma}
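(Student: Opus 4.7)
The plan is to combine the standard asymptotic eigenvector analysis of $\pi\cdot dw$ near a non-degenerate puncture (from~\cite{props1},~\cite{props2}) with the dynamical convexity estimate of Lemma~\ref{dynconvex}. The rough shape is: the hypothesis $\int w^*d\lambda>0$ forces $\pi\cdot dw\not\equiv 0$, then the asymptotic formula identifies $\wind_\infty(\wtil,\delta,z_j)$ with the $\beta_{P_j}$-winding of an eigenvector of $A_{P_j}$ whose eigenvalue is positive (because $z_j$ is a \emph{negative} puncture), and then Lemma~\ref{dynconvex} upgrades this winding to $\geq 2$.

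First I would rule out $\pi\cdot dw\equiv 0$. If it were identically zero, then since $\infty=z_1$ is the unique positive puncture of $\wtil$, Lemma~\ref{zerodlambda} would yield $\wtil=f_{(\hat x,\hat T)}\circ p$ for some non-constant polynomial $p$ and some orbit $(\hat x,\hat T)$. But $f_{(\hat x,\hat T)}^*d\lambda=0$, so $\int_{\C\setminus\hat\Gamma}w^*d\lambda=0$, contradicting our hypothesis. Thus $\pi\cdot dw\not\equiv 0$, and by Remark~\ref{locallycr} it is nowhere locally identically zero.

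Next I would pick a holomorphic chart $\psi$ centered at $z_j$ and write $\wtil(s,t)=\wtil(\psi(e^{-2\pi(s+it)}))$ for $s\gg 1$. By Theorem~\ref{p1}, since $P_j$ is non-degenerate, $\wtil$ has non-degenerate asymptotic behavior at $z_j$, and the exponential-decay analysis of~\cite{props1,props2} produces a $d\lambda$-unitary trivialization $\Phi$ of $x_{j,T_j}^*\xi$ along the asymptotic loop in which
\[
 \pi\cdot w_s(s,t) = e^{\mu s}\bigl(e(t) + r(s,t)\bigr), \qquad \|r(s,\cdot)\|_{C^k}\to 0,
\]
with $\mu<0$ and $e$ a nowhere vanishing eigenvector, in the frame $\Phi$, of the operator governing the linearization of the negative puncture. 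After unravelling the $\epsilon=-1$ reparametrization built into Definition~\ref{behavior}, $e$ corresponds to an eigenvector of $A_{P_j}$ with eigenvalue $\nu$, and the negativity of the puncture sign forces $\nu>0$. By Lemma~\ref{uniqueextension}, $B$ lies in the class $\beta_{P_j}$ along the asymptotic loop at $z_j$, so a direct evaluation of the winding in Definition of $\wind_\infty$ gives
\[
 \wind_\infty(\wtil,\delta,z_j) = \wind(e,B|_{P_j},J) = (\nu,\beta_{P_j}).
\]
Since $P_j\in\p^*$ and $\mu_{CZ}(P_j)\geq 3$, Lemma~\ref{dynconvex} yields $(\nu,\beta_{P_j})\geq 2$ for every eigenvalue $\nu\geq 0$; applying this to our $\nu>0$ finishes the argument.

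The main obstacle is step two: pinning down the sign conventions so that the asymptotic eigenvalue at a negative puncture ends up strictly positive in $\sigma(A_{P_j})$, which is precisely the regime where dynamical convexity (Lemma~\ref{dynconvex}) gives the $\geq 2$ bound. This is pure bookkeeping with the reparametrization $t\mapsto \epsilon t$ in Definition~\ref{behavior} and the corresponding sign flip of the asymptotic operator, but it has to be carried out carefully; once done, the rest of the proof is a one-line application of Lemma~\ref{dynconvex}.
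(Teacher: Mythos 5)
Your argument is correct and essentially matches the paper's: apply Theorem~\ref{p12} at the negative puncture $z_j$ to identify $\wind_\infty(\wtil,\delta,z_j)=(\nu,\beta_{P_j})$ for a positive eigenvalue $\nu$ of $A_{P_j}$, then conclude by Lemma~\ref{dynconvex}. Your preliminary step ruling out $\pi\cdot dw\equiv 0$ via Lemma~\ref{zerodlambda} is sound, though it is more direct to observe that $w^*d\lambda$ is pointwise nonnegative and vanishes exactly where $\pi\cdot dw$ does, so $\int w^*d\lambda>0$ immediately forces $\pi\cdot dw\not\equiv 0$.
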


The proof requires the following very non-trivial theorem proved in~\cite{props1}.

\begin{theorem}[Hofer, Wysocki and Zehnder]\label{p12}
Suppose $P_j$ is non-degenerate and choose a holomorphic chart $\varphi:(V,0)\rightarrow(\varphi(V),z_j)$ centered
at $z_j$. Write $w(s,t) = w\circ\varphi(e^{-2\pi(s+it)})$ if $z_j$ is a positive puncture or $w(s,t) = w\circ
\varphi(e^{2\pi(s+it)})$ if $z_j$ is a negative puncture. By rotating the chart $\varphi$ we can assume that
$w(s,t)\rightarrow x_j(T_jt)$ as $|s|\rightarrow+\infty$ in $C^\infty$. Then either $\pi\cdot dw$ vanishes
identically or
\begin{enumerate}
 \item If $z_j$ is positive then $\exists f(s,t)\in\R\setminus\{0\}$ smooth such that
 \[
  \lim_{s\rightarrow+\infty} f(s,t) \pi\cdot\partial_sw = e(t)\text{ in } C^\infty(S^1,\xi)
 \]
 where $e$ is an eigenvector of $A_{P_j}$ associated to an eigenvalue $\nu\leq\nu^{neg}$.
 \item If $z_j$ is negative then $\exists f(s,t)\in\R\setminus\{0\}$ smooth such that
 \[
  \lim_{s\rightarrow-\infty} f(s,t) \pi\cdot\partial_sw = e(t)\text{ in }C^\infty(S^1,\xi)
 \]
 where $e$ is an eigenvector of $A_{P_j}$ associated to an eigenvalue $\nu\geq\nu^{pos}$.
\end{enumerate}
\end{theorem}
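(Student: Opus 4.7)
\medskip

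The plan is to apply Theorem~\ref{p12}(2) at the negative puncture $z_j$ and read off the winding from the spectral data of the asymptotic operator $A_{P_j}$, then invoke Lemma~\ref{dynconvex} to conclude. The first step is to rule out that $\pi\cdot dw$ vanishes identically on $\C\setminus\hat\Gamma$: if it did, Lemma~\ref{zerodlambda} would force $\wtil$ to be a reparametrization of a trivial cylinder over some periodic orbit, and in particular $\int_{\C\setminus\hat\Gamma} w^*d\lambda = 0$, contradicting the hypothesis. Hence $\pi\cdot dw\not\equiv 0$ and Remark~\ref{locallycr} (the similarity principle) guarantees that $\pi\cdot dw$ has only isolated zeros, so in particular $\pi\cdot\partial_s w\not\equiv 0$ on any half-cylindrical end.

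Next I would choose a holomorphic chart $\varphi:(V,0)\to(\varphi(V),z_j)$ as in Theorem~\ref{p12}, write $w(s,t)=w\circ\varphi(e^{2\pi(s+it)})$, and apply part (2) of that theorem: there is a smooth nowhere-vanishing real function $f(s,t)$ and an eigenvector $e(t)$ of $A_{P_j}$ with eigenvalue $\nu\geq \nu^{pos}_0$ such that $f(s,t)\pi\cdot\partial_s w(s,t)\to e(t)$ in $C^\infty(S^1,\xi)$ as $s\to-\infty$. Because $P_j$ is non-degenerate, $0\notin\sigma(A_{P_j})$, so $\nu^{pos}_0>0$ and $\nu>0$.

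I would then translate this asymptotic formula into a winding computation in the class $\delta$. By construction of $B$ in Lemma~\ref{uniqueextension}, the section $B$ restricted to a small neighborhood of $z_j$ agrees with $Z_j$, so that the trivialization class $\delta$ of $B|_{P_j}$ coincides with the distinguished class $\beta_{P_j}\in\mathcal S_{P_j}$. Since $f$ is real-valued and nowhere zero, multiplying $\pi\cdot\partial_s w$ by $f$ does not change the limiting winding with respect to $B$, and the limit $e(t)$ is an eigenvector of $A_{P_j}$ with eigenvalue $\nu>0$. By the spectral description in Subsection~\ref{asympopprops},
\[
 \wind_\infty(\wtil,\delta,z_j) = \wind(e,B|_{P_j},J) = (\nu,\beta_{P_j}).
\]

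Finally, since $\mu_{CZ}(P_j)\geq 3$ and $\nu>0$, Lemma~\ref{dynconvex} (applied with $\beta=\beta_{P_j}$) gives $(\nu,\beta_{P_j})\geq 2$, so $\wind_\infty(\wtil,\delta,z_j)\geq 2$, as required. The only subtle point, and where I expect the bookkeeping to require the most care, is matching the sign conventions: the parametrization used in Theorem~\ref{p12} at a negative puncture uses $w\circ\varphi(e^{2\pi(s+it)})$ with $s\to-\infty$, while the definition of $\wind_\infty$ at a negative puncture uses $\epsilon=-1$ in $u(s,\epsilon t)$ with $s\to+\infty$; one must verify that both conventions yield the same winding of the asymptotic eigenvector relative to the trivialization $B$, which follows because the two parametrizations differ by an orientation-preserving biholomorphism of a punctured disk onto a half-cylinder.
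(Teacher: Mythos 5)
Your proposal does not prove the statement in question. The statement to be established is Theorem~\ref{p12} itself: the existence, near a non-degenerate puncture, of a nowhere-vanishing real normalizing factor $f(s,t)$ and an eigenvector $e$ of the asymptotic operator $A_{P_j}$ (with eigenvalue $\nu\leq\nu^{neg}$ at a positive puncture, $\nu\geq\nu^{pos}$ at a negative one) such that $f\,\pi\cdot\partial_s w\to e$ in $C^\infty$. Your argument instead \emph{invokes} Theorem~\ref{p12}(2) as its first substantive step and then derives the inequality $\wind_\infty(\wtil,\delta,z_j)\geq 2$; that is the content of Lemma~\ref{negpun}, whose proof in the paper indeed proceeds exactly as you describe (non-vanishing of $\pi\cdot dw$ via Lemma~\ref{zerodlambda}, identification of the limiting winding with $(\nu,\beta_{P_j})$ for a positive eigenvalue $\nu$, and Lemma~\ref{dynconvex}). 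As a proof of Theorem~\ref{p12} the argument is circular, and its conclusion is not the conclusion of the theorem.

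What is actually required is the hard asymptotic analysis that the paper cites from~\cite{props1} and reproves in generalized form in Section~\ref{fredholm} and the appendix. Concretely: one works in a Martinet tube around $P_j$, writes $\wtil$ in the coordinates $(a,\theta,z)$, and shows that the transversal component $z(s,t)$ satisfies a perturbed Cauchy--Riemann equation $z_s+(J\circ w)z_t+Sz=0$ (equations (\ref{crz})--(\ref{matrixS})) in which $S(s,t)$ converges exponentially to the symmetric potential $N(t)$ representing $A_{P_j}$ in the local frame. One then needs the spectral convergence result Theorem~\ref{asympHWZ} (proved in the appendix by the $\alpha(s)=\langle L_S\xi,\xi\rangle$ monotonicity and elliptic bootstrapping arguments) to obtain the representation $z(s,t)=e^{\int_{s_0}^s\alpha(\tau)d\tau}(e(t)+R(s,t))$ with $\alpha\to\nu\in\sigma(A_{P_j})$, where the sign constraint on $\nu$ (hence $\nu\leq\nu^{neg}$ or $\nu\geq\nu^{pos}$, using non-degeneracy so that $0\notin\sigma(A_{P_j})$) comes from the boundedness and decay of $z$. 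Finally one passes from $z$ to $\pi\cdot\partial_s w$ via the identity (\ref{functionsUV}), using that the correction term $\Delta$ is $o(|z(s,t)|)$, to produce the factor $f$ and the convergence $f\,\pi\cdot\partial_s w\to e$. None of these steps appears in your proposal, so the gap is not a matter of bookkeeping but of the entire analytic content of the theorem.
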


In Section~\ref{fredholm} we will generalize the above theorem, replacing the assumption that $P_j$ is
non-degenerate by the assumption that $\wtil$ has non-degenerate asymptotic behavior at $z_j$.

\begin{proof}[Proof of Lemma~\ref{negpun}]
Let $\varphi$ be as in the statement of Theorem~\ref{p12}. Write $Z_j(s,t) = Z_j \circ \varphi\left(\est\right)$ and
$B(s,t) = B\circ \varphi\left(\est\right)$, where $B$ is the non-vanishing section given by
Lemma~\ref{uniqueextension}. We compute
\[
 \begin{aligned}
  \wind_\infty(\wtil,\delta,z) &= \lim_{s\rightarrow-\infty} \wind(\pi\cdot\partial_sw(s,t),B(s,t),J) \\
  &=\lim_{s\rightarrow-\infty} \wind(\pi\cdot\partial_sw(s,t),Z_j(s,t),J) \\
  &=\wind(e(t+c),Z_j(x(Tt+c)),J) =(\nu,\beta_{P_j})
 \end{aligned}
\]
where $\nu$ is a positive eigenvalue of $A_{P_j}$ and $c\in\R$. The inequality $\mu_{CZ}(P_j) \geq 3$ implies
$(\nu,\beta_{P_j}) \geq 2$ in view of Lemma~\ref{dynconvex}.
\end{proof}

\begin{lemma}\label{notdyn}
Let $\lambda$ be a contact form on $M$, inducing the contact structure $\xi = \ker \lambda$, and let $J : \xi
\rightarrow \xi$ be a $d\lambda$-compatible complex structure. Suppose
\[
 \wtil = (d,w) : \C\setminus\hat{\Gamma}\rightarrow \R\times M
\]
is a finite-energy sphere with non-degenerate asymptotics, where $\emptyset\not=\hat\Gamma\subset\C$ is finite.
Suppose also that $\Gamma = \hat\Gamma\cup\{\infty\}$ consists of non-removable punctures. Write $z_1 = \infty$ and
$\hat\Gamma = \{z_2,\dots,z_N\}$. Assume $\wtil$ is asymptotic to $P_j$ at $z_j$ according to
Definition~\ref{behavior}, where $\{P_j = (x_j,T_j)\}_{j=1\dots N}$ are periodic Reeb orbits. Assume also
\begin{enumerate}
 \item $P_j \in \p^*$ and $P_j$ is non-degenerate $\forall j$.
 \item $z_1$ is a positive puncture and $z_2,\dots,z_N$ are negative punctures.
\end{enumerate}
Let $\delta$ be the homotopy class of the section $B$ given by Lemma~\ref{uniqueextension}. If $\wtil$ satisfies
$\int_{\C\setminus\hat{\Gamma}} w^*d\lambda>0$ and $\wind_\infty(\wtil,\delta,z_1) \leq 1$ then $\mu_{CZ}(P_j) < 3$
for some $j\geq 2$.
\end{lemma}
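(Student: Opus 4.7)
The plan is to argue by contradiction using a Gauss--Bonnet bookkeeping: suppose all negative asymptotic limits $P_j$ ($j\geq 2$) satisfy $\mu_{CZ}(P_j)\geq 3$, and deduce $\wind_\infty(\wtil,\delta,z_1)\geq 2$, contradicting the hypothesis.

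First I would verify that $\pi\cdot dw$ does not vanish identically. If it did, Lemma~\ref{zerodlambda} would force $\wtil$ to factor as $f_{(\hat x,\hat T)}\circ p$ for a polynomial $p$, which makes $\int_{\C\setminus\hat\Gamma}w^*d\lambda=0$ and contradicts the standing hypothesis $\int w^*d\lambda>0$. Once $\pi\cdot dw\not\equiv 0$, Remark~\ref{locallycr} and Definition~\ref{behavior} ensure that $\wind_\pi(\wtil)$ is well-defined, finite, and $\geq 0$.

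Next I would apply Lemma~\ref{gauss} with $\Sigma=S^2$ and $\#\Gamma=N$ to get
\[
\wind_\infty(\wtil) = \wind_\pi(\wtil) - N + 2,
\]
and decompose the left-hand side through the positive/negative puncture split:
\[
\wind_\infty(\wtil,\delta,z_1) - \sum_{j=2}^{N}\wind_\infty(\wtil,\delta,z_j) = \wind_\pi(\wtil) - N + 2.
\]
Since the hypothesis $\int w^*d\lambda>0$ persists on every punctured neighborhood of a negative puncture (otherwise that neighborhood would collapse to a cylinder with $\pi\cdot dw\equiv 0$ there, which by Remark~\ref{locallycr} spreads to all of $\C\setminus\hat\Gamma$), Lemma~\ref{negpun} applies at each $z_j$, $j\geq 2$, and under the contradiction hypothesis gives $\wind_\infty(\wtil,\delta,z_j)\geq 2$.

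Substituting these bounds and $\wind_\pi(\wtil)\geq 0$ yields
\[
\wind_\infty(\wtil,\delta,z_1) \;\geq\; 0 - N + 2 + 2(N-1) \;=\; N \;\geq\; 2,
\]
where $N\geq 2$ because $\hat\Gamma\neq\emptyset$. This contradicts the assumption $\wind_\infty(\wtil,\delta,z_1)\leq 1$, completing the proof. The only mildly delicate point I foresee is justifying that the positive $d\lambda$-area persists near each negative puncture so that Lemma~\ref{negpun} is legitimately applicable at each $z_j$; everything else is a direct bookkeeping combining Lemma~\ref{gauss}, Lemma~\ref{negpun}, and Lemma~\ref{zerodlambda}.
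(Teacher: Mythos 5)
Your proof is correct and follows essentially the same route as the paper's: combine $\wind_\pi\geq 0$, the Gauss--Bonnet identity (Lemma~\ref{gauss}), and the bound $\wind_\infty(\wtil,\delta,z_j)\geq 2$ at negative punctures from Lemma~\ref{negpun} to reach the arithmetic contradiction, the only cosmetic difference being that you isolate $\wind_\infty(\wtil,\delta,z_1)\geq N\geq 2$ while the paper isolates $\wind_\pi(\wtil)\leq -\#\hat\Gamma<0$. One small remark: the ``delicate point'' you flag about local positivity of $d\lambda$-area near each $z_j$ is not actually needed, since Lemma~\ref{negpun} only requires the global hypothesis $\int_{\C\setminus\hat\Gamma}w^*d\lambda>0$, which is assumed outright in Lemma~\ref{notdyn}.
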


\begin{proof}
Let us assume that $\mu_{CZ}(P_j) \geq 3 \ \forall j\geq 2$. Using Lemma~\ref{negpun} we have the following chain of
inequalities
\[
 \begin{aligned}
  0 \leq\text{wind}_\pi&(\wtil) = \text{wind}_\infty(\wtil)+\#\hat{\Gamma}+1-\chi(S^2) \\
  &\leq 1 - \sum_{j\geq2} \text{wind}_\infty(\wtil,\delta,z_j) + 1 + \#\hat{\Gamma} - 2 \\
  &\leq-2\#\hat{\Gamma}+\#\hat{\Gamma}=-\#\hat{\Gamma}
 \end{aligned}
\]
proving that $\hat{\Gamma}=\emptyset$. This contradiction concludes the argument.
\end{proof}

As the proof demonstrates, the above lemma follows essentially from the inequality $\wind_\pi\geq 0$. This should be
seen as some kind of linearized version of positivity of self-intersections, and is violated when the asymptotic
winding is $\leq 1$ at the positive puncture and $\geq 2$ at the negative punctures. One should also note that this
very simple argument is independent of any transversality results.


\subsection{Bubbling-off analysis}

We start with a technical lemma.

\begin{lemma}\label{1round}
Assume $\lambda$ and $P=(x,T)$ satisfy the hypotheses of Theorem~\ref{main3} and recall the number $\gamma$ in
(\ref{gamma}). Suppose $r_n\rightarrow0^+$, $e>0$ and $\{\vtil_n = (d_n,v_n):\C\rightarrow \R \times M\}$ is a
sequence of finite-energy planes. Suppose further that
\begin{enumerate}
 \item $E(\vtil_n)\leq T$ and $\int_{\{\norma{z}>r_n\}} v_n^*d\lambda \leq \gamma$.
 \item $\int_{|z|=\rho} v_n^*\lambda \geq e \ \forall \rho>r_n$.
 \item $\exists N>0$ such that $\vtil_n(\C) \subset \left[ -N , +\infty \right) \times M \ \forall n$.
\end{enumerate}
Then $\lim_{n\rightarrow+\infty} \inf_K d_n = +\infty$ for every compact set $K\subset\C\setminus\{0\}$. 
\end{lemma}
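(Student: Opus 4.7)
The plan is to argue by contradiction: suppose some compact $K \subset \C\setminus\{0\}$ admits a subsequence (unrelabeled) and points $z_n \in K$, $z_n \to z_* \neq 0$, with $d_n(z_n)$ bounded from above by some constant $C$. Combined with the a priori lower bound $d_n \geq -N$, this gives two-sided control of $d_n$ at a marked point, and the strategy is to use this to extract a $C^\infty_{loc}$ limit on $\C\setminus\{0\}$ and then contradict the lower bound $d_n \geq -N$.

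First I would establish uniform gradient bounds for $\{\vtil_n\}$ on compacts of $\C\setminus\{0\}$, after subsequence. If bubbling off occurred at some point $z_{**}\in\C\setminus\{0\}$, Lemma~\ref{beforeclaim} applied with $s$ close to $1$ would produce small disks $B_{r_j(s)}(z_j')$ with $z_j'\to z_{**}$ accumulating $d\lambda$-mass $\geq s\hat T$ for a contractible Reeb orbit $\hat P=(\hat x,\hat T)$ with $\hat T\leq T$. Under the assumption that $\hat P$ is non-degenerate (as in Theorem~\ref{main3}), $\hat T\in\mathcal{A}_c^T$, so $\hat T\geq\gamma_1>\gamma$ and $s\hat T>\gamma$ for $s$ close to $1$; but for large $j$ the disk $B_{r_j(s)}(z_j')$ lies in $\{|z|>r_{n_j}\}$, so its $d\lambda$-integral is bounded by $\gamma$, contradiction. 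Then elliptic bootstrapping together with the pointwise control of $d_n$ at $z_n$ yields $C^\infty_{loc}$ convergence of a subsequence to a $\jtil$-holomorphic cylinder $\vtil=(d,v):\C\setminus\{0\}\to\R\times M$.

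Next I would analyze the asymptotic behavior of $\vtil$ at its two punctures. Passing to the limit in conditions (1) and (2) gives $\int_{\C\setminus\{0\}} v^*d\lambda \leq \gamma$ and $\int_{|z|=\rho} v^*\lambda \geq e>0$ for all $\rho>0$. Monotonicity of $\rho\mapsto\int_{|z|=\rho} v^*\lambda$ provides limits $T_0:=\lim_{\rho\to 0^+}$ and $T_\infty:=\lim_{\rho\to+\infty}$ satisfying $e\leq T_0\leq T_\infty$ and $T_\infty-T_0\leq\gamma$; monotonicity also bounds $T_\infty\leq\liminf_n \int_\C v_n^*d\lambda\leq T$. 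The puncture at $0$ is negative and that at $\infty$ is positive, and by Theorem~\ref{thm93} they are both non-removable and asymptotic to contractible periodic orbits with periods $T_0,T_\infty\in\mathcal{A}_c^T$. Since $T_\infty-T_0\leq\gamma<\gamma_2$, by definition of $\gamma_2$ we must have $T_\infty=T_0$, so $\int v^*d\lambda=0$ and hence $\pi\cdot dv\equiv 0$. Lemma~\ref{zerodlambda} then forces $\vtil=f_{(x,T_P)}\circ p$ for a polynomial $p$ with $p^{-1}(0)=\{0\}$, so $p(z)=cz^k$ with $c\neq 0$, $k\geq 1$.

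Writing out $\vtil$ explicitly, $d(z)=\tfrac{T_P}{2\pi}(\log|c|+k\log|z|)\to-\infty$ as $z\to 0$. Choosing $z_0\in\C\setminus\{0\}$ with $d(z_0)<-N-1$ and using $C^\infty_{loc}$-convergence gives $d_n(z_0)<-N$ for large $n$, contradicting condition (3). I expect the main obstacle to be the bubbling analysis in the first step—carefully choosing the parameter $s$ in Lemma~\ref{beforeclaim} and verifying that the bubble-off disks eventually lie in the low-$d\lambda$-energy region $\{|z|>r_n\}$—and, to a lesser extent, verifying that the limiting periods $T_0,T_\infty$ genuinely lie in $\mathcal{A}_c^T$ so that the bound $\gamma<\gamma_2$ can be invoked.
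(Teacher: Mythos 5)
Your argument is correct but takes a genuinely different route from the paper's, and the comparison is instructive. Once the limit cylinder $f=(h,g)$ on $\C\setminus\{0\}$ is extracted (both proofs do this in the same way, with the same bubbling analysis via Lemma~\ref{beforeclaim}), the paper's proof concludes much more directly: since the circle integrals $\int_{|z|=\rho}g^*\lambda$ are $\geq e>0$ for all $\rho>0$, the puncture at $0$ is non-removable, and the inherited bound $h\geq -N$ then forces $0$ to be a \emph{positive} puncture; applying Theorem~\ref{thm93} at this positive puncture and accounting for the orientation reversal of the parametrization $t\mapsto e^{-2\pi(s+it)}$ gives $\int_{|z|=e^{-2\pi s_k}}g^*\lambda\to -\tilde T<0$, which directly contradicts the lower bound $\geq e$. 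Your route instead combines the $d\lambda$-energy bound $\leq\gamma$ with the gap $\gamma_2>0$ (hence with the non-degeneracy hypothesis on contractible orbits of period $\leq T$) to force the two asymptotic periods $T_0$ and $T_\infty$ to coincide, concludes $\pi\cdot dv\equiv 0$, invokes Lemma~\ref{zerodlambda} to identify the limit as a reparametrized trivial cylinder whose $\R$-component diverges to $-\infty$ at $0$, and contradicts the inherited bound $d\geq -N$. Both approaches are valid and both use condition (3) to close the contradiction, but the paper's is more economical: it avoids Lemma~\ref{zerodlambda} entirely, does not need the gap $\gamma_2$ at this stage, and uses condition (3) immediately to pin down the sign of the puncture at $0$ rather than only at the very end. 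A minor point: you assert that the puncture at $0$ is negative and that at $\infty$ is positive without spelling out why; this does follow readily from the monotonicity and positivity ($\geq e$) of the circle integrals and the clockwise orientation of the cylindrical chart at $0$, but it would be worth stating since it is precisely the orientation bookkeeping that the paper exploits to finish one step earlier.
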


\begin{proof}
We claim $\norma{d\vtil_n}$ is bounded on compact subsets of $\C\setminus\{0\}$. If not we may assume $\exists \zeta_n \rightarrow \zeta^* \not= 0$ satisfying $\norma{d\vtil_n(\zeta_n)} \rightarrow +\infty$. By Lemma~\ref{beforeclaim} we can further assume that for every $0<s<1$ there exists $\rho_n \rightarrow 0^+$ and a contractible Reeb orbit $\tilde P = (\tilde x,\tilde T)$ such that $\tilde T \leq T$ and $\limsup_{n\rightarrow+\infty} \int_{B_{\rho_n}(\zeta_n)} v_n^*d\lambda \geq s\tilde T$. Choosing $s>\gamma/\gamma_1$ we obtain a contradiction to (1).

We proceed indirectly. Suppose $\exists\{z_n\}\subset\C$ such that $z_n\rightarrow z^*\not=0$ and $\{d_n(z_n)\}$ is
bounded. Hence the sequence $\vtil_n(z_n)$ is compactly contained in $\R\times M$. We proved above that
$\{\norma{d\vtil_n}\}$ is $C^0_{loc}$-bounded on $\C\setminus\{0\}$. Thus $\{\vtil_n\}$ is $C^1_{loc}$-bounded on
$\C\setminus\{0\}$. Elliptic estimates provide $C^\infty_{loc}$-bounds and a subsequence $\{\vtil_{n_j}\}$
converging to a $\jtil$-holomorphic cylinder $f:\C\setminus\{0\} \rightarrow \R\times M$ in $C^\infty_{loc}$. It
satisfies $E(f)\leq T$ and $\int_{\C\setminus\{0\}} g^*d\lambda \leq \gamma$. We write $f=(h,g)$ and estimate
\begin{equation}\label{elocal}
 \int_{|z|=\rho} g^*\lambda = \lim_{j\rightarrow+\infty} \int_{|z|=\rho} v_{n_j}^*\lambda \geq e > 0 \ \forall \rho > 0.
\end{equation}
Hence $E(f)>0$ and $0$ is not a removable puncture. Moreover, $h\geq -N$ on $\C\setminus\{0\}$ since $\vtil_n(\C)
\subset \left[-N,+\infty\right)\times M \ \forall n$. Consequently $0$ is a positive puncture. Let us write
$g\left(e^{-2\pi(s+it)}\right) = g(s,t)$. By Theorem~\ref{thm93} there exists a periodic orbit $\tilde P=(\tilde
x,\tilde T)$ and a sequence $s_k\rightarrow +\infty$ such that
\[
 \lim_{k\rightarrow+\infty} g(s_k,t) = \tilde x(\tilde Tt+c) \text{ in } C^\infty(S^1,M).
\]
for some $c\in \R$. Thus $\lim_{k\rightarrow+\infty} \int_{\norma{z}=e^{-2\pi s_k}} g^*\lambda = -\tilde T < 0$
contradicting (\ref{elocal}).
\end{proof}

\begin{lemma}\label{round1}
Assume $\lambda$ and $P=(x,T)$ satisfy the hypotheses of Theorem~\ref{main3} and let $H\subset \R\times M$ be a
compact set. Suppose $\{\util_n = (a_n,u_n)\}\subset\Theta(H,P)$. Then one can find a subsequence of
$\{\util_{n_j}\}$, a finite set $\Gamma\subset\D$ and a smooth $\jtil$-holomorphic map
$\wtil:\C\setminus\Gamma\rightarrow \R\times M$ satisfying the following properties.
\begin{enumerate}
 \item Defining $\wtil_n:\C\rightarrow \R\times M$ by $\wtil_n(z) = (a_n(z)-a_n(2),u_n(z))$ then $\wtil_{n_j}
     \rightarrow \wtil$ in $C^\infty_{loc}(\C\setminus\Gamma,\R\times M)$ and $0<E(\wtil)\leq T$.
 \item $\exists A \in [-\infty,+\infty)$ such that $\inf_\C a_{n_j} \to A$.
 \item If $A = -\infty$ then $\#\Gamma = 1$, $\pi\cdot dw \equiv0$ and $H\cap\R\times x(\R)\not=\emptyset$.
 \item If $A > -\infty$ then $\Gamma = \emptyset$, $a_n(2) \rightarrow c\in\R$ and $\util_n\rightarrow \util :=
     c\cdot\wtil$ in $C^\infty_{loc}$. Moreover, $\util \in \Theta^A(H,P)$.
\end{enumerate}
\end{lemma}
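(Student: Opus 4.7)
The proof has two phases: first establish $C^\infty_{loc}$-subsequential convergence of the normalized sequence $\wtil_n$ on $\C\setminus\Gamma$ for a finite $\Gamma\subset\D$, then perform a dichotomy driven by the behavior of $\inf_\C a_n$.

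For phase one, I apply Corollary~\ref{corclaim} to pass to a subsequence with finite bubbling set $\Gamma\subset\D$; without loss of generality $2\notin\Gamma$. Since translation preserves derivatives, $\{|d\wtil_{n_j}|\}$ is $C^0_{loc}$-bounded on $\C\setminus\Gamma$, and $\wtil_n(2)=(0,u_n(2))$ is precompact in $\{0\}\times M$. Integrating the gradient bound along paths in $\C\setminus\Gamma$ emanating from $2$ yields $C^0_{loc}$-boundedness of $\wtil_n$, and elliptic bootstrapping for $\jtil$-holomorphic maps promotes this to $C^\infty_{loc}$-boundedness. Extract $\wtil_{n_j}\to\wtil$ in $C^\infty_{loc}(\C\setminus\Gamma)$. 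The bound $E(\wtil)\leq T$ passes to the limit; positivity $E(\wtil)>0$ follows from $\int_{|z|=1}w^*\lambda=\lim\int_{|z|=1}u_{n_j}^*\lambda=T-\gamma>0$. Moreover $a_n(0)$ is bounded above by compactness of $H$, so $A:=\lim\inf_\C a_{n_j}\in[-\infty,+\infty)$ along a further subsequence.

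In the case $A>-\infty$, the uniform lower bound on $a_n$, combined with $a_n(0)$ bounded above and the gradient bounds, gives $C^0_{loc}$-boundedness of $\util_{n_j}$ itself on $\C\setminus\Gamma$, hence $\util_{n_j}\to\util=(a,u)$ in $C^\infty_{loc}$ and $a_n(2)\to c:=a(2)\in\R$ with $\util=c\cdot\wtil$. To show $\Gamma=\emptyset$: if $\pi\cdot du\equiv 0$, Lemma~\ref{zerodlambda} writes $\util=f_{\hat P}\circ p$, whose $a$-coordinate is unbounded below at zeros of $p$, contradicting $a\geq A$; if $\pi\cdot du\not\equiv 0$, then each $z\in\Gamma$ is a negative puncture of $\util$ asymptotic to a contractible Reeb orbit, non-degenerate with $\mu_{CZ}\geq 3$ by the standing hypotheses, and upper-semicontinuity of asymptotic windings under $C^\infty_{loc}$-convergence (a consequence of the exponential decay in Definition~\ref{behavior}) gives $\wind_\infty(\util,\delta,\infty)\leq 1$, so Lemma~\ref{notdyn} yields a contradiction. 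The remaining claims $\util(0)\in H$, $\int_{\C\setminus\D}u^*d\lambda=\gamma$, and asymptotic to $P$ pass through the limit, yielding $\util\in\Theta^A(H,P)$.

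In the case $A=-\infty$, if $\pi\cdot dw\not\equiv 0$ the same Lemma~\ref{notdyn} argument applied to $\wtil$ gives a contradiction, so $\pi\cdot dw\equiv 0$. By Lemma~\ref{zerodlambda}, $\wtil=f_{\hat P}\circ p$ with $p$ a non-constant polynomial, $p^{-1}(0)=\Gamma$ and $\hat P=(\hat x,\hat T)$ a Reeb orbit. Matching asymptotic $a$-slopes at $\infty$ yields $\hat T\deg p=T$. The asymptotic orbit of $\wtil$ at $\infty$ must have geometric image inside $x(\R)$ (a limit argument exploiting that each $\wtil_n$ is asymptotic to $P$ at $\infty$), and since $T$ is the minimal period of $x$ by simple coveredness of $P$, $\hat T\geq T$, forcing $\deg p=1$ and $\#\Gamma=1$. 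For any $z_0\notin\Gamma$, $u_n(z_0)\to w(z_0)\in\hat x(\R)=x(\R)$; taking $z_0=0$ when $0\notin\Gamma$ (otherwise using that any accumulation point of $u_n(0)\in\mathrm{proj}(H)$ lies in $x(\R)$) gives $H\cap(\R\times x(\R))\neq\emptyset$. The main obstacle is this last case, especially identifying the asymptotic geometric orbit of $\wtil$ at $\infty$ with $x(\R)$ in order to conclude $\deg p=1$; everything else follows standard elliptic and bubbling patterns.
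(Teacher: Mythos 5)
You follow the same overall architecture as the paper's proof --- extract $C^\infty_{loc}$-subsequential convergence of the translated maps $\wtil_n$ away from a finite bubbling set $\Gamma\subset\D$, then analyze the limit $\wtil$ --- but you organize the case analysis around the dichotomy $A>-\infty$ versus $A=-\infty$ rather than first pinning down the structure of $\wtil$ (the paper's STEPS 1--5). That reorganization is legitimate, but two pieces of groundwork that the paper carries out explicitly are only gestured at, and they are exactly the delicate points.

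First, to invoke Lemma~\ref{notdyn} you must produce the trivialization class $\delta$ of Lemma~\ref{uniqueextension}, which requires identifying the asymptotic orbit of $\wtil$ at $\infty$ with $P$ itself (not merely some contractible orbit of period $\leq T$). The paper does this in STEPS 2--3, and the argument is not soft: it uses the quantitative cylinder Lemma~\ref{longcyl} to show that the loops $t\mapsto u_{n_j}(re^{i2\pi t})$ are confined to a fixed connected component $\W_1$ of a neighborhood of the relevant orbits, \emph{uniformly} in $j$ and in $r$ large. Plain $C^\infty_{loc}$-convergence together with ``each $\util_n$ is asymptotic to $P$'' does not give this, because the asymptotic limits of $\wtil$ could in principle drift along a sequence $r_k\to\infty$. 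Your ``upper-semicontinuity of asymptotic windings'' step, and your identification of the geometric image at $\infty$ with $x(\R)$ in the $A=-\infty$ case, both presuppose this missing uniform localization.

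Second, in the case $A=-\infty$, $\pi\cdot dw\equiv0$, the conclusion $H\cap(\R\times x(\R))\neq\emptyset$ needs $0\notin\Gamma$, so that $\util(0)=\lim_j\util_{n_j}(0)\in H$ makes sense and has $M$-component $w(0)\in x(\R)$. The paper obtains $0\notin\Gamma$ from the stronger conclusion $\Gamma\subset\partial\D$ (STEP 5), via the contradiction $T=\int_{|z|=1}w^*\lambda=\lim_j\int_{|z|=1}u_{n_j}^*\lambda=T-\gamma$ should $\Gamma\subset\interior{\D}$. Your parenthetical fallback for $0\in\Gamma$ --- that any accumulation point of $u_n(0)$ lies in $x(\R)$ --- is unjustified: bubbling at $0$ is precisely the mechanism by which $u_n(0)$ can be carried far from the geometric image of $\wtil$, so without an argument excluding $0$ from $\Gamma$ this case is open. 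The normalization $\int_{\C\setminus\D}u_n^*d\lambda=\gamma$ in the definition of $\Theta(H,P)$ is exactly what is needed here; your proof should use it.
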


\begin{proof}
By Corollary~\ref{corclaim} we find a subsequence $\{\util_{n_j}\} \subset \{\util_n\}$ and a finite set
$\Gamma\subset\D$ such that
\begin{itemize}
 \item[(i)] $|d\util_{n_j}|$ is $C^0_{loc}$-bounded on $\C\setminus\Gamma$.
 \item[(ii)] $\forall z^* \in \Gamma \ \exists z_j \rightarrow z^*$ such that $\norma{d\util_{n_j}(z_j)}
     \rightarrow +\infty$.
\end{itemize}
Define $\wtil_n(z)=(a_n(z)-a_n(2),u_n(z))$ and write $\wtil_n=(b_n,w_n)$. Note that
$|d\wtil_n(z)|=|d\util_n(z)|\forall z\in \C$ since the metric $g^0$ is $\R$-invariant. This proves that
$\wtil_{n_j}$ is $C^1_{loc}$-bounded on $\C\setminus\Gamma$ since $\wtil_{n_j}(2)\subset\{0\}\times M$. Elliptic
estimates provide $C^\infty_{loc}$-bounds. Thus we can assume, without loss of generality, that we can find a smooth
$\jtil$-holomorphic map
\[
 \wtil = (b,w) : \C\setminus\Gamma\rightarrow \R\times M
\]
and a subsequence $\{\wtil_{n_j}\}$ such that $\wtil_{n_j}\rightarrow\wtil$ in $C^\infty_{loc}(\C\setminus\Gamma)$. Clearly $E(\wtil)\leq \sup_j E(\wtil_{n_j}) = T$. We split the remaining arguments into a few steps. \\

\noindent \textbf{STEP 1:} $\wtil$ is not constant, all punctures in $\Gamma$ are negative and $\infty$ is the
unique positive puncture.

\begin{proof}[Proof of STEP 1]
If $\Gamma=\emptyset$ then $\int_\D w^*d\lambda = \lim_j \int_\D u_{n_j}^*d\lambda = T - \gamma > 0$, proving that
$\wtil$ is not constant. Suppose $\wtil$ is constant and $\Gamma\not=\emptyset$. Fix $z^*\in\Gamma\subset\D$ and
$1<s<\gamma^{-1}\min\{\gamma_1,\gamma_2\}$. By Corollary~\ref{claim} we assume, without loss of generality, that we
can find sequences $z_{n_j}\rightarrow z^*$ and $r_j(s)\rightarrow 0^+$ such that
\[
 \limsup_{j\rightarrow+\infty} \int_{|z-z_{n_j}|\leq r_j(s)} u_{n_j}^*d\lambda \geq s\gamma.
\]
Set $B_j = B_{r_j(s)}(z_{n_j})$. If $\wtil$ is constant we can estimate
\[
 0 = \int_{|z|=2} w^*\lambda = \lim_j \int_{|z|\leq2} w_{n_j}^*d\lambda \geq \limsup_j \int_{B_j} u_{n_j}^*d\lambda = s\gamma > 0.
\]
This contradiction shows $\wtil$ is not constant. Assume again $\Gamma\not=\emptyset$, fix $z^*\in\Gamma$ and
suppose, by contradiction, that it is a positive puncture. By Theorem~\ref{thm93} we find $r>0$ small and a periodic
Reeb orbit $P^*=(x^*,T^*)$ such that
\[
 -\int_{|z-z^*|=r }w^*\lambda \geq \frac{T^*}{2} > 0.
\]
However
\[
 -\int_{|z-z^*|=r }w^*\lambda = -\lim_j \int_{|z-z^*|=r} w_{n_j}^*\lambda = -\lim_j \int_{|z-z^*|\leq r}u_{n_j}^*d\lambda \leq 0.
\]
This contradiction shows that $z^*$ is a negative puncture. If $\infty$ is also a negative puncture then
$E(\wtil)<0$, which is impossible.
\end{proof}

%

We fix a $S^1$-invariant neighborhood $\W$ of the (discrete) set of $S^1$-orbits
\[
 \{ y \in C^\infty (S^1,M) : y \in \tilde P = (\tilde x, \tilde T) \in \p^* \text{ with } \tilde T \leq T \}
\]
in $C^\infty(S^1,M)$ with the following property: whenever $\hat P = (\hat{x},\hat{T}),\tilde{P} = (\tilde{x},\tilde{T})$ are two contractible Reeb orbits with $\max\{\hat T,\tilde T\} \leq T$ and $\hat T \not= \tilde T$ then no connected component of $\W$ contains the two loops $\hat{x}_{\hat{T}}$ and $\tilde{x}_{\tilde{T}}$ simultaneously. This can be done by our assumptions on $\lambda$ and $P$. Let $\W_1$ denote the component containing $x_T$. \\

\noindent \textbf{STEP 2:} There exist $h>0$ and $j_0\in\Z^+$ such that $\{t \mapsto u_{n_j}(re^{i2\pi t})\} \in \W_1$ for every $j\geq j_0$ and $r\geq 2e^h$.

\begin{proof}[Proof of STEP 2]
We can estimate
\begin{enumerate}
 \item $\limsup_j \int_{2\leq|z|\leq R} u_{n_j}^*d\lambda \leq \gamma\ \forall R\geq2$.
 \item $\int_{|z|=2} u_{n_j}^*\lambda = T - \gamma \ \forall j$.
\end{enumerate}
Applying Lemma~\ref{longcyl} to $\W$, $e=\gamma$, $\eta = T - \gamma$ and the cylinders
\[
 (s,t) \in [(2\pi)^{-1}\log 2,+\infty)\times S^1 \mapsto \util_{n_j} \left( \est \right)
\]
we obtain $h>0$ and $j_0$ such that the loop $t \mapsto u_{n_j}(re^{i2\pi t})$ is in $\W$ whenever $j\geq j_0$ and
$r\geq 2e^h$. By the path-connectedness of $\W_1$ these loops can not leave $\W_1$.
\end{proof}

\noindent \textbf{STEP 3:} The curve $\wtil$ has non-degenerate asymptotics and is asymptotic to $P$ at the puncture
$\infty$.

\begin{proof}[Proof of STEP 3]
We only deal with $\infty$, the other punctures are subject to analogous arguments. Use Theorem~\ref{thm93} to find
$c_+ \in \R$, $r_k\rightarrow +\infty$ and an orbit $P^+ = (x^+,T^+)$ such that
\[
 w \left( r_k e^{i2\pi t} \right) \rightarrow x^+(T^+t+c_+) \text{ in } C^\infty(S^1,M).
\]
Fix $k$ large. Then for $j$ large the loop $u_{n_j} \left( r_k e^{i2\pi t} \right)$ is $C^\infty$ close to $w \left(
r_k e^{i2\pi t} \right)$ and homotopic to $x(Tt)$. It follows that $P^+$ is contractible. Clearly $T^+ \leq E(\wtil)
\leq T$. Thus, by the assumptions of Theorem~\ref{main3}, $P^+$ is non-degenerate and belongs to $\p^*$. By
Theorem~\ref{p1} $\wtil$ has non-degenerate asymptotic behavior at the puncture $\infty$ and the associated
asymptotic Reeb orbit is $P^+$. It follows from STEP 2 that $P^+ = P$.
\end{proof}

By our assumptions on $\lambda$ and $P$, the asymptotic limits at the punctures $z\in \Gamma$ are non-degenerate orbits in $\p^*$, with periods $\leq T$ and indices $\mu_{CZ} \geq 3$. \\



\noindent \textbf{STEP 4:} If $\int_{\C\setminus\Gamma} w^*d\lambda>0$ then $\Gamma = \emptyset$ and
$\wind_\infty(\wtil) = 1$.

\begin{proof}[Proof of STEP 4]
Write $z_1 = \infty$ and $\Gamma = \{z_2,\dots,z_N\}$. Suppose $\wtil$ is asymptotic to $P_j=(x_j,T_j)$ at the
puncture $z_j$. As remarked above, $P_1 = P$, $\{P_1,\dots,P_N\}\subset \p^*$ and $P_j$ is non-degenerate $\forall
j$. Moreover, $\max_j T_j \leq T$ and $\mu_{CZ}(P_j)\geq 3 \ \forall j$. Let $U_1$ be an open neighborhood of
$x(\R)$ and $Z_1$ be a non-vanishing section of $\xi|_{U_1}$ satisfying $Z_1|_{P} \in \beta_{P}$. Let $B$ be
the non-vanishing section of $w^*\xi$ given by Lemma~\ref{uniqueextension} and $\delta$ be its homotopy class. By
Theorem~\ref{p1} we can find $R_0\gg2$ so that $r\geq R_0$ implies $\pi\cdot \partial_r w(re^{i2\pi t})\not=0$ and
$w(re^{i2\pi t}) \in U_1$. Let $h>0$ be given by STEP 2 and suppose $\W_1$ is small enough so that
$c\in\W_1\Rightarrow c(S^1)\subset U_1$. If we assume $R_0 \geq 2e^h$ then $w_{n_j}(re^{i2\pi t}) \in U_1$ whenever
$r\geq R_0$ and $j$ is large. Perhaps after making $j$ larger, we can also assume
$\pi\cdot\partial_rw_{n_j}(R_0e^{i2\pi t})\not=0\ \forall t\in S^1$ because $w_{n_j}\rightarrow w$ in the sense of
$C^\infty$ on the set $\{\norma{z}=R_0\}$. Define $l_j$ and $l$ by
\[
 \begin{aligned}
  l_j &= \wind(\pi\cdot\partial_rw_{n_j}(R_0e^{i2\pi t}),Z_1\circ w_{n_j}(R_0e^{i2\pi t}),J) \\
  l &= \wind(\pi\cdot\partial_rw(R_0e^{i2\pi t}),Z_1\circ w(R_0e^{i2\pi t}),J).
 \end{aligned}
\]
Then $l=\wind_\infty(\wtil,\delta,\infty)$. Note that $Z_1\circ w_{n_j}$, only defined on $\{|z|\geq R_0\}$, extends
to a non-vanishing section of $w_{n_j}^*\xi$ by the properties of the class $\beta_{P_1} = \beta_P$. Consequently,
$\forall j \ \exists R_j \gg R_0$ such that
\[
 1 = \wind_\infty(\util_{n_j}) = \wind(\pi\cdot\partial_rw_{n_j}(R_{j}e^{i2\pi t}),Z_1\circ w_{n_j}(R_{j}e^{i2\pi t}),J).
\]
The Gauss-Bonnet formula proves
\[
 1-l_j = \wind_\infty(\util_{n_j})-l_j=\#\{\text{zeros of }\pi\cdot dw_{n_j}\text{ in }\{R_0\leq|z|\leq R_{j}\}\}\geq0.
\]
It follows that $l_j\leq1$. We know that $l_j\rightarrow l$, proving $l\leq1$. If $\Gamma \not= \emptyset$ we can
apply Lemma~\ref{notdyn} to obtain a contradiction.
\end{proof}

\noindent \textbf{STEP 5:} If $\pi\cdot dw \equiv0$ then $\#\Gamma = 1$, $\Gamma \subset
\partial\D$ and $\wtil(\C\setminus\Gamma) \subset \R\times x(\R)$.

\begin{proof}[Proof of STEP 5]
We use Lemma~\ref{zerodlambda}. Let $p$ be a polynomial of degree $k\geq 1$ satisfying $p^{-1}(0)=\Gamma$ and let
$\hat P = (\hat x,\hat T)$ be a periodic orbit such that $\wtil=f_{(\hat x,\hat T)}\circ p$. It follows from STEP 3
that $(x,T) = (\hat x,k\hat T)$. Thus $x = \hat x$ and $k=1$. In fact, if $k\geq2$ then $T$ is not the minimal
period of $x$, contradicting the fact that $P$ is simply covered. Consequently $p(z)=Az+D$ and
$\Gamma=\{-D/A\}\subset\D$ for some $A\in\C$, $A\not=0$. It follows from Lemma~\ref{zerodlambda} and STEP 3 that
$\wtil(\C\setminus\Gamma) \subset \R\times x(\R)$. If $|-D/A|<1$ then we obtain the following contradiction
\begin{equation}\label{ineqzerodw}
 T = \int_{|z|=1} w^*\lambda = \lim_{j\rightarrow+\infty} \int_{|z|=1} u_{n_j}^*\lambda = T-\gamma.
\end{equation}
\end{proof}

\noindent \textbf{STEP 6:} $A=-\infty \Leftrightarrow \Gamma \not=\emptyset$.

\begin{proof}[Proof of STEP 6]
Let $\{z_j\}$ be so that $a_{n_j}(z_j) = \inf_\C a_{n_j} \rightarrow -\infty$. Suppose, by contradiction, that
$\Gamma = \emptyset$. Then $|d\util_{n_j}|$ is bounded on compact subsets of $\C$. We claim that $|z_j| \rightarrow
+\infty$. If not we can assume, after selecting a subsequence, that we have an uniform bound
$|a_{n_j}(z_j)-a_{n_j}(0)|\leq c$ for some $c>0$. This proves $a_{n_j}(0) \rightarrow -\infty$, contradicting
$\{\util_{n_j}(0)\} \subset H$ because $H$ is compact. Now define $\vtil_j:\C\rightarrow \R\times M$ by
\[
 \vtil_j(z) = (d_j(z),v_j(z)) = (a_{n_j}(z_jz)-a_{n_j}(z_j),u_{n_j}(z_jz)).
\]
Then the planes $\{\vtil_n\}$ satisfy the hypotheses of Lemma~\ref{1round} with $C=T$, $e=T-\gamma$,
$r_j=2|z_{n_j}|^{-1}$ and $N=0$, in view of the properties of the set $\Theta(H,P)$. However,
$\{\vtil_n(1)\}\subset\{0\}\times M$ and this is in contradiction to Lemma~\ref{1round}.

Assume $\Gamma\not=\emptyset$ and suppose, by contradiction, that $A>-\infty$. First we claim that $a_{n_j}(2) \to
+\infty$. If not we can assume, after selecting a subsequence, that $\exists N<+\infty$ satisfying $\sup_j
a_{n_j}(2)<N$. Choose $z\in\Gamma$. By STEP 1 $z$ is a negative puncture. In view of the asymptotic behavior
described in Theorem~\ref{p1} and of Definition~\ref{behavior}, $\exists\zeta\in\C\setminus\Gamma$ close to $z$ such
that $b(\zeta)< A-N-1$. Since $b_{n_j}(\zeta)\rightarrow b(\zeta)$ as $j\rightarrow+\infty$ then we can estimate
$a_{n_j}(\zeta) = b_{n_j}(\zeta) + a_{n_j}(2) < A-1$ for $j$ large. This contradicts the definition of $A$, proving
$a_{n_j}(2) \rightarrow +\infty$. We now claim that $0\in \Gamma$. If not then we find a smooth curve
$c:[0,1]\rightarrow\C$ such that $c(0)=0$, $c(1)=2$ and $c\left([0,1]\right)\cap\Gamma=\emptyset$. We estimate
$|a_{n_j}(2)-a_{n_j}(0)| \leq C \times \text{length}(c) \ \forall j$ for some $C>0$,
since the metric $g^0$ is $\R$-invariant. This is a contradiction since $\{a_n(0)\}$ is a bounded sequence ($H$ is
assumed compact) and $a_{n_j}(2) \rightarrow +\infty$. Hence $0\in\Gamma$. By STEP 4 we must have $\pi\cdot dw
\equiv 0$. By STEP 5 we conclude that $0\in S^1$, absurd.
\end{proof}

In view of STEP 4 and STEP 5 the points $0$ and $2$ do not belong to $\Gamma$. This allow us to conclude that
$a_{n_j}(2)$ is bounded since so is $a_{n_j}(0)$ ($H$ is compact). Up to selection of a subsequence, we can assume
$a_{n_j}(2) \to c \in \R$. If $A>-\infty$ then $\Gamma = \emptyset$ by STEP 6, and the plane $\util=c\cdot\wtil$
must belong to $\Theta^A(H,P)$ by STEP 3 and STEP 4. If $A=-\infty$ then $\Gamma \not=\emptyset$ by STEP 6,
$\pi\cdot dw \equiv0$ by STEP 4 and $\wtil(0) = (b(0),w(0)) \in \R \times x(\R)$ by STEP 5. We know $\util(0) =
\lim_j \util_{n_j}(0) \in H$. The conclusion follows since $\util(0) = (b(0)+c,w(0))$.
%
%
\end{proof}

\subsection{End of the proof of Theorem~\ref{main3}}\label{endmain3}

Suppose $\lambda$ and $P$ satisfy the assumptions of Theorem~\ref{main3} and that $H\subset \R\times M$ is compact.
Take $\{\util_n\} \subset \Theta^L(H,P)$ and set $A_n = \inf_\C a_n$. We can assume, after selecting a subsequence,
that $A_n \rightarrow A \in [L,+\infty)$. By Lemma~\ref{round1} we find a subsequence $\{\util_{n_j}\}$ and some
$\util \in \Theta^A(H,P) \subset \Theta^L(H,P)$ such that $\util_{n_j} \rightarrow \util$ in $C^\infty_{loc}$. This
proves $\Theta^L(H,P)$ is $C^\infty_{loc}$-compact.

Assume $H \cap \R\times x(\R) = \emptyset$, consider $\{\util_n\} \subset \Theta(H,P)$ and set $A_n = \inf_\C a_n$.
If $\inf_n A_n = -\infty$ then we can assume $A_n \rightarrow -\infty$. By Lemma~\ref{round1} we conclude $H \cap
\R\times x(\R) \not= \emptyset$, a contradiction. Thus we find $L>0$ such that $\{\util_n\} \subset \Theta^L(H,P)$.
This proves $\Theta(H,P)$ is $C^\infty_{loc}$-compact.

Now suppose $\{\util_n\} \subset \Lambda^L(H,P) \subset \Theta^L(H,P)$. We already know $\exists \util \in \Theta^L(H,P)$ and a subsequence $\{\util_{n_j}\}$ such that $\util_{n_j} \rightarrow \util$ in $C^\infty_{loc}$. We must show that $\util$ is an embedding. It must be an immersion since $\wind_\pi(\util) = \wind_\infty(\util) - 1 = 0$. 
Let $\Delta$ be the diagonal in $\C \times \C$ and consider the set
\[
 D = \{ (z_1,z_2) \in \C \times \C \setminus \Delta: \util(z_1) = \util(z_2) \}.
\]
If $D$ has a limit point in $\C\times\C \setminus \Delta$ then we find, using the similarity principle as
in~\cite{mcdsal}, a polynomial $p:\C\rightarrow\C$ of degree $\geq 2$ and a $\jtil$-holomorphic map $f:\C\rightarrow
\R\times M$ such that $\util = f\circ p$. This forces zeros of $d\util$, a contradiction. Thus $D$ is closed and
discrete in $\C\times\C \setminus \Delta$. By stability and positivity of intersections of pseudo-holomorphic
immersions we find self-intersections of the maps $\util_{n_j}$ for large values of $j$ if $D \not= \emptyset$. This
is a contradiction since each $\util_n$ is an embedding. We proved $D=\emptyset$ and $\Lambda^L(H,P)$ is
$C^\infty_{loc}$-compact. The same reasoning as above shows $\Lambda(H,P)$ is $C^\infty_{loc}$-compact if $H \cap
\R\times x(\R) = \emptyset$. The proof of Theorem~\ref{main3} is complete.


\section{Existence of fast planes}\label{sectionexistence}

In this section we prove Theorem~\ref{existfast}.

\subsection{Special boundary conditions for the Bishop Family}

We will need special totally real boundary conditions for our Bishop family of $\jtil$-holomorphic disks described
in Subsection~\ref{existence}.

\begin{propn}\label{perturbdisk}
Let $P=(x,T)$ be a non-degenerate, unknotted and simply covered periodic Reeb orbit on a $3$-manifold $M$ equipped
with a contact form $\lambda$ and Reeb vector field $R$. Let $\Dcal_0$ be a smooth embedded disk satisfying
$\partial \Dcal_0 = x(\R)$. For any open neighborhood $U$ of $x(\R)$ in $M$ there exists a smooth embedded disk
$\Dcal_1$ satisfying the following properties.
\begin{enumerate}
 \item $\partial \Dcal_1 = x(\R)$, $\Dcal_1 \setminus U = \Dcal_0 \setminus U$.
 \item There exists a neighborhood $\OO$ of $\partial \Dcal_1$ in $\Dcal_1$ such that
 \begin{equation}
  R_p \not\in T_p\Dcal_1, \ \forall p \in \OO \setminus \partial \Dcal_1.
 \end{equation}
\end{enumerate}
\end{propn}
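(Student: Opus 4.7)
The plan is to reduce the claim to a first-order jet condition on a collar of $\partial\Dcal_0$ inside a Martinet tube, and then to achieve that condition by an explicit modification of $\Dcal_0$ supported in $U$.

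First I would recast the desired property in a more tractable form. Since $R$ spans $\ker d\lambda$ and $d\lambda$ has constant rank $2$ on $TM$, a $2$-plane $\Pi \subset T_pM$ contains $R_p$ if and only if $d\lambda|_\Pi \equiv 0$: if $R \in \Pi$ then $d\lambda(R,\cdot) = 0$ gives a non-trivial kernel of $d\lambda|_\Pi$, which therefore vanishes on the $2$-dimensional $\Pi$; conversely, a Lagrangian $2$-plane in a $3$-dimensional space with a rank-$2$ skew form must contain the kernel. Hence the sought conclusion is equivalent to $d\lambda|_{T\Dcal_1}$ being non-degenerate on $\OO \setminus \partial\Dcal_1$.

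Next, using that $P$ is simply covered (so $T = T_{\min}$), I would pick a Martinet tube $V \subset U$ with coordinates $(\Theta,X,Y) \in \R/\Z \times B$ in which $\lambda = f(d\Theta + X\,dY)$, $f \equiv T$ and $df \equiv 0$ along the core $\R/\Z \times \{0\} = x(\R)$. Parametrize a collar of $\partial\Dcal_0$ in $\Dcal_0$ by an embedding $\varphi_0(\theta,s) = \bigl(\theta + s\alpha_0(\theta) + O(s^2),\, s\beta_0(\theta) + O(s^2),\, s\gamma_0(\theta) + O(s^2)\bigr)$, with $(\beta_0,\gamma_0) \neq (0,0)$ by embeddedness. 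A direct Taylor expansion of $g(\theta,s) := d\lambda(\partial_\theta\varphi_0,\partial_s\varphi_0)$, using the explicit form of $d\lambda$ in the tube together with $df|_{\text{core}} = 0$, yields $g(\theta,0) = 0$ and
\[
\partial_s g(\theta,0) = T\bigl(\beta_0'\gamma_0 - \gamma_0'\beta_0\bigr)(\theta) - Q(\theta),
\]
where $Q(\theta)$ depends polynomially on $(\alpha_0,\beta_0,\gamma_0)(\theta)$ with coefficients given by the second partials of $f$ at $(\theta,0,0)$. Thus the sought condition reduces to $\partial_s g(\theta,0) \neq 0$ for every $\theta$ after the modification.

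To achieve this I would modify $\Dcal_0$ inside a smaller sub-tube $V' \Subset V$: replace the $(X,Y)$-components of the collar parametrization near $s = 0$ by the high-winding model $s \mapsto s(\cos 2\pi N\theta,\sin 2\pi N\theta)$, and interpolate back to $\varphi_0$ via a smooth cutoff $\rho(s)$ over a thin annulus $\{\epsilon_1 \leq s \leq \epsilon_2\}$. For the model $(\alpha_1,\beta_1,\gamma_1) = (0,\cos 2\pi N\theta,\sin 2\pi N\theta)$ one has $\beta_1'\gamma_1 - \gamma_1'\beta_1 \equiv -2\pi N$, while $Q$ stays bounded by a constant depending only on $f$; taking $|N|$ large forces $\partial_s g(\theta,0)$ to be uniformly non-zero, giving the required transversality on any sufficiently thin collar of $\partial\Dcal_1$ inside the pure-model region. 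The main obstacle is to verify embeddedness of the interpolated map: the pure model is manifestly injective for any $N$ because the first coordinate recovers $\theta$ and $(\cos 2\pi N\theta,\sin 2\pi N\theta)$ never vanishes, and over the interpolation annulus embeddedness reduces to injectivity of each $\theta$-slice $s \mapsto (u(\theta,s),v(\theta,s))$ joining the model value $\epsilon_1(\cos 2\pi N\theta,\sin 2\pi N\theta)$ to the original value $(X_0(\theta,\epsilon_2),Y_0(\theta,\epsilon_2))$, which is secured for $\epsilon_1 < \epsilon_2$ small by a straightforward convexity argument. Since $\Dcal_1$ is only required to coincide with $\Dcal_0$ outside $U$, any winding mismatch between $(\beta_0,\gamma_0)$ and the high-winding model is absorbed inside the interpolation annulus within $V$, so no global topological obstruction arises.
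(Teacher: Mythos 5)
Your reduction of the transversality requirement to a first-order jet condition along the boundary is correct, and the Taylor computation $\partial_s g(\theta,0)=T(\beta_0'\gamma_0-\gamma_0'\beta_0)-Q(\theta)$, with $Q$ a quadratic form in $(\beta_0,\gamma_0)$ built from the Hessian of $f$ on the core, is exactly the right reduction. But the step where you take $|N|$ large is where the argument breaks, and it breaks irreparably: the winding number of the radial direction at $\partial\Dcal_1$ is a topological invariant of the pair (embedded spanning disk, Martinet tube), not a free parameter. If $(X(\theta,s),Y(\theta,s))\neq(0,0)$ for $s>0$ --- which is forced because the interior of an embedded disk with boundary $x(\R)$ must be disjoint from $x(\R)$ --- then for each small $s>0$ the loop $\theta\mapsto(X(\theta,s),Y(\theta,s))\in\R^2\setminus\{0\}$ has a well-defined winding, and this integer is locally constant, hence constant, in $s$ over the entire collar, interpolation annulus included. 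So the winding of your pure model \emph{must} equal the winding $w_0$ inherited from $\Dcal_0\cap V$ at $s=\epsilon_2$. A ``winding mismatch'' cannot be ``absorbed inside the interpolation annulus'': any attempt to change the winding over $[\epsilon_1,\epsilon_2]$ forces the angular map to pass through the origin, i.e.\ forces an interior point of $\Dcal_1$ onto $x(\R)$. (Your toy interpolation already exhibits this: for $\theta$ with $\cos 2\pi N\theta=-1$, a straight-line interpolation from $(-\epsilon_1,0)$ to $(\epsilon_2\beta_0,\epsilon_2\gamma_0)$ crosses the origin whenever $\beta_0>0$.) With $N$ pinned to $w_0$, the inequality $|2\pi N T|>\sup_\theta|Q(\theta)|$ is no longer something you can arrange, and the argument collapses.

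This is precisely the difficulty the paper's Lemma~\ref{nicestrip} is designed to resolve, and it is where the hypothesis of non-degeneracy enters --- a hypothesis your proposal never invokes, which is itself a warning sign. The paper first performs a time-dependent change of gauge $G$, built from a logarithm $Y$ of $\psi(1)$ (possible because $P$ is non-degenerate, so $1\notin\sigma(\psi(1))$), which makes the linearized Reeb field autonomous with constant derivative $Y$. The transversality condition then becomes $\vartheta'(t)-g(\vartheta(t))\neq 0$ where $g(\vartheta)=(e^{-i\vartheta}Ye^{i\vartheta})_{21}$ is an \emph{autonomous} function (your $Q$, by contrast, depends on $\theta$ as well as on the angle, making the analogous ODE much harder). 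The paper then splits on $\det Y$: when $\det Y>0$ (elliptic), $g$ never vanishes and the constant-angle model already works; when $\det Y<0$ (hyperbolic), $g$ changes sign, and one must use a non-constant $\vartheta(t)$ satisfying \emph{simultaneously} the pointwise inequality $\vartheta'-g(\vartheta)\neq 0$ and the integral/topological constraint $\vartheta(1)-\vartheta(0)=2\pi k$ with $k=\text{Maslov}(M)$ --- exactly the winding constraint that your proposal ignores. The balance between the pointwise inequality and the winding constraint is the substantive analytic content of the lemma, and there is no shortcut around it by making the winding large. (The ``convexity argument'' for embeddedness over the interpolation annulus also needs to be a polar, not linear, interpolation of the $(X,Y)$-component, but that is a secondary issue dominated by the winding obstruction.)
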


The goal of this subsection is to prove the above statement. We start with some technical lemmas. Let $sp(1)$ denote
the Lie algebra of $\Sp(1)$.

\begin{lemma}\label{techlog}
Let $\psi\in C^\infty([0,1],\Sp(1))$ satisfy $\det [I-\psi(1)] \not= 0$, suppose that $Y \in sp(1)$ satisfies $e^Y = \psi(1)$ and $t \mapsto \psi^\prime(t)\psi^{-1}(t)$ extends to a smooth $1$-periodic function on $\R$. Then $M(t) = e^{tY}\psi^{-1}(t)$ also extends to a smooth $1$-periodic function on $\R$. Moreover, if $\det Y > 0$ then $\exists \ \tilde Y \in sp(1)$ satisfying $e^{\tilde Y} = \psi(1)$, such that the smooth map $\tilde M : t\in \R/\Z \rightarrow e^{t\tilde Y}\psi^{-1}(t) \in \Sp(1)$ satisfies $\maslov(\tilde M) = 0$.
\end{lemma}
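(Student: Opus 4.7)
The plan is to prove the two assertions separately. Implicit in the context is $\psi(0) = I$ (otherwise $M(0) = \psi(0)^{-1}$ while $M(1) = e^Y\psi(1)^{-1} = I$, so periodicity would already fail at $t=0$). For the first assertion, I would extend $\psi$ to all of $\R$ by solving the linear ODE $\psi'(t) = S(t)\psi(t)$ with $\psi(0) = I$, where $S$ is the smooth $1$-periodic extension of $\psi'\psi^{-1}$. The auxiliary path $\tilde\psi(t) := \psi(t+1)\psi(1)^{-1}$ then solves the same IVP as $\psi$, by the $1$-periodicity of $S$, so uniqueness forces the Floquet identity $\psi(t+1) = \psi(t)\psi(1)$. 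Using that $e^{tY}$ commutes with $e^Y = \psi(1)$, a one-line computation yields
\[
 M(t+1) = e^{(t+1)Y}\psi(1)^{-1}\psi^{-1}(t) = e^{tY}e^Y\psi(1)^{-1}\psi^{-1}(t) = e^{tY}\psi^{-1}(t) = M(t),
\]
establishing smooth $1$-periodicity.

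For the second assertion, I would put $Y$ in elliptic normal form. Since $Y \in sp(1) = sl(2,\R)$ is traceless, Cayley-Hamilton gives $Y^2 = -(\det Y)I$; with $\det Y > 0$, setting $\omega = \sqrt{\det Y} > 0$ and $J_Y := \omega^{-1} Y$, we obtain $J_Y \in sp(1)$ with $J_Y^2 = -I$ and $Y = \omega J_Y$. Consequently $e^{sJ_Y} = \cos(s)I + \sin(s)J_Y$ for all $s \in \R$, and the modified elements
\[
 \tilde Y_k := Y + 2\pi k J_Y = (\omega + 2\pi k) J_Y, \qquad k \in \Z,
\]
all satisfy $e^{\tilde Y_k} = e^Y = \psi(1)$.

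It then remains to show that the Maslov index of the corresponding loop $\tilde M_k$ can be prescribed by choice of $k$. Since $Y$ and $J_Y$ lie in the same one-parameter subgroup they commute, so the loop splits as $\tilde M_k(t) = e^{t\tilde Y_k}\psi^{-1}(t) = e^{2\pi k t J_Y}\cdot M(t)$, i.e.\ the pointwise product in $\Sp(1)$ of the based loop $L_k : t \mapsto e^{2\pi k t J_Y}$ and the loop $M$ from the first part. By additivity of the Maslov index under pointwise multiplication of based loops in the topological group $\Sp(1)$ (a consequence of $\pi_1(\Sp(1)) \simeq \Z$ being abelian and of the Eckmann--Hilton type identification of concatenation with pointwise product), one gets $\maslov(\tilde M_k) = \maslov(L_k) + \maslov(M) = \varepsilon k + \maslov(M)$, where $\varepsilon = \pm 1$ records the orientation of $t \mapsto e^{2\pi t J_Y}$ (equivalently, whether $J_Y$ is $SL(2,\R)$-conjugate to $J_0$ or to $-J_0$). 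Taking $k = -\varepsilon\,\maslov(M)$ finishes the proof. The main obstacle is essentially bookkeeping: verifying additivity of Maslov and pinning down the sign $\varepsilon$, which are routine once the conventions of Theorem~\ref{axiomscz} are fixed.
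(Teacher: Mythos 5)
Your proof is correct and follows essentially the same route as the paper's. For periodicity you invoke the Floquet identity $\psi(t+1)=\psi(t)\psi(1)$, whereas the paper checks directly that all derivatives of $M$ match at the endpoints of $[0,1]$ using $M'=YM-M\psi'\psi^{-1}$; for the Maslov correction your $\tilde Y_k=(\omega+2\pi k)J_Y$ with $J_Y=\omega^{-1}Y$ is precisely the paper's $\tilde Y=T^{-1}i(\gamma+2\pi k)T$ (since $J_Y=T^{-1}iT$), and both arguments then rest on additivity of the Maslov index under pointwise products of loops, with the sign $\varepsilon$ encoded by $\det T$ in the paper's notation.
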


\begin{proof}
Clearly $M$ extends to a continuous $1$-periodic function. The formula $M^\prime = YM -
e^{tY}\psi^{-1}\psi^\prime\psi^{-1} = YM - M\psi^\prime\psi^{-1}$ shows that so does $M'$. An induction argument
proves that all derivatives of $M$ extend to continuous $1$-periodic functions on $\R$. Now assume $\det Y > 0$. The
eigenvalues of $Y$ are $\pm i\sqrt{\det Y}$ since $\tr Y = 0$. In this case the Jordan form of $Y$ is the matrix
$J=i\gamma$ for $\gamma = \sqrt{\det Y} \not\in 2\pi\Z$. There exists $T\in GL(2,\R)$ such that $Y=T^{-1}i\gamma T$.
Thus $\psi(1)=T^{-1}e^{i\gamma}T$. Denote by $C= \text{diagonal}(1,-1)$ the conjugation matrix and let
$k=\text{Maslov}(M)$. Define a loop of symplectic matrices by $N(t) := T^{-1}e^{-i2\pi kt}T$, $t\in \R/\Z$.

We claim that $\maslov(N) = -k$ if $\det T>0$ and $\maslov(N) = k$ if $\det T<0$. In fact, suppose $\det T<0$. Then
there exists a smooth path $s\in [0,1] \mapsto T(s) \in GL(2,\R)$ with $T(0) = T$ and $T(1) = C$. Using the homotopy
invariance of the Maslov index we compute
\[
 \maslov(N) = \maslov(t \mapsto Ce^{-i2\pi kt}C) = \maslov(t \mapsto e^{i2\pi kt}) = k.
\]
The case $\det T>0$ is similar and the claim is proved.

We continue the proof of the lemma considering the case $\det T<0$. Then $\maslov(N) = k$. Using $e^{tY} =
T^{-1}e^{it\gamma}T$ we  compute
\[
 \begin{aligned}
  0 & = \text{Maslov}(M(t))-k = \maslov (N^{-1}(t)M(t)) \\
  & = \text{Maslov}(T^{-1}e^{it(\gamma+2\pi k)}T\psi^{-1}(t))
 \end{aligned}
\]
The conclusion follows by noting that $\tilde Y := T^{-1}i(\gamma+2\pi k)T$ is another logarithm of $\psi(1)$. 
The case $\det T>0$ is treated similarly. The eigenvalues of $\tilde Y$ are $\pm i(\sqrt{\det Y} + 2\pi k) \not\in
i2\pi \Z$, and we still have $\det \tilde Y > 0$.
\end{proof}

Before starting with the proof, we fix the notation and make some initial constructions. Let $U$, $\Dcal_0$ and $P$
be as in Proposition~\ref{perturbdisk}. Perhaps after making $U$ smaller, we can find a Martinet tube
$$ \Psi : U \rightarrow \R/\Z \times B $$ as explained in Definition~\ref{martinettube}. Here $B \subset
\R^2$ is an open ball centered at the origin. In the coordinates $(\theta,x,y) \in \R/\Z \times \R^2$ the contact
form is $\Psi_*\lambda = f(d\theta + xdy)$ where the smooth function $f>0$ satisfies $f(\theta,0,0) \equiv T$ and
$df(\theta,0,0) \equiv 0$. Note that $T$ is the minimal positive period of $x$ by assumption. The map $t \mapsto
x(Tt)$ is represented by $t \mapsto (t,0,0)$. We still denote by $R$ the Reeb vector in the local coordinates
$(\theta,x,y)$. The proof of Proposition~\ref{perturbdisk} is based on the following lemma.

\begin{lemma}\label{nicestrip}
Let $\pi_0:\R/\Z\times \R^2\rightarrow\R^2$ be the projection onto the second factor. There exists $\epsilon>0$ and
a smooth embedding $h:(1-\epsilon,1]\times \R/\Z \to \R/\Z \times B$ satisfying
\begin{enumerate}
 \item $h(1,t)=(t,0,0)$.
 \item $\{h_r(r,t), h_t(r,t), R\circ h(r,t)\}$ is linearly independent if $1-\epsilon<r<1$.
 \item $\wind \left( t\mapsto d\pi_0\cdot h_r(1,t) \right) = 0$.
\end{enumerate}
\end{lemma}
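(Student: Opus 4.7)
The plan is to work in Martinet-tube coordinates $(\theta, x, y) \in \R/\Z \times B$ around $P$. Since $\lambda = f(d\theta + x\, dy)$ with $f \equiv T$ and $df \equiv 0$ on the orbit, the Reeb field satisfies $R|_{(\theta, 0, 0)} = T^{-1}\partial_\theta$, and its first-order expansion in the normal $(x,y)$-directions is governed by a smooth $1$-periodic $A : \R/\Z \to \operatorname{End}(\R^2)$; the fundamental solution $\varphi \in C^\infty([0,1], \Sp(1))$ of $\varphi' = TA\varphi$, $\varphi(0) = I$, satisfies $\det(I - \varphi(1)) \neq 0$ by non-degeneracy of $P$. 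I will use the Ansatz $h(r, t) := (t, (1-r)\, V(t))$ for a smooth loop $V : \R/\Z \to \R^2 \setminus \{0\}$ to be chosen: condition~(1) is automatic; since $d\pi_0 \cdot h_r(1, t) = -V(t)$, condition~(3) reduces to $\wind(V) = 0$; and a direct expansion gives
\[
\det(h_r, h_t, R \circ h)(r, t) = -\tfrac{1-r}{T}\, V(t)^\perp \cdot \bigl(V'(t) - T A(t)\, V(t)\bigr) + O\bigl((1-r)^2\bigr),
\]
so condition~(2) at first order is equivalent to $V^\perp \cdot (V' - T A V)$ being nowhere zero on $\R/\Z$, which by compactness of $\R/\Z$ upgrades to the full linear-independence statement for $r$ close enough to $1$.

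The next step is to substitute $U(t) := \varphi(t)^{-1} V(t)$. Using $\varphi' = TA\varphi$ and the fact that $\varphi \in \Sp(1)$ preserves the area form, one obtains $V' - TAV = \varphi U'$ and hence $V^\perp \cdot (V' - TAV) = U^\perp \cdot U' = \rho^2 \alpha'$ in polar coordinates $U = \rho\, e^{i\alpha}$; the transversality condition is thus equivalent to $\alpha$ being strictly monotone on $[0,1]$, while periodicity $V(1) = V(0)$ becomes $U(1) = \varphi(1)^{-1}U(0)$. I then apply Lemma~\ref{techlog} with $\psi := \varphi$ (its hypothesis that $\psi'\psi^{-1} = TA$ extends $1$-periodically holds by construction): in the elliptic case one obtains $\tilde Y \in sp(1)$ with $e^{\tilde Y} = \varphi(1)$ and $\det \tilde Y > 0$, such that $\tilde M(t) := e^{t\tilde Y}\varphi(t)^{-1}$ is a smooth loop in $\Sp(1)$ with $\maslov(\tilde M) = 0$. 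Setting $V(t) := \tilde M(t)^{-1} v_0 = \varphi(t)\, e^{-t\tilde Y}\, v_0$ for any fixed $v_0 \in \R^2 \setminus \{0\}$, one computes $U(t) = e^{-t\tilde Y} v_0$ and therefore $U^\perp \cdot U' = -\det(U, \tilde Y U)$; since $\tilde Y$ has purely imaginary eigenvalues $\pm i\sqrt{\det \tilde Y}$ and is thus conjugate over $\R$ to $\sqrt{\det\tilde Y}\, J_0$, this determinant is of constant sign and nowhere vanishing. Finally $\wind(V) = \maslov(\tilde M^{-1}) = -\maslov(\tilde M) = 0$, as required.

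The hard part will be handling the hyperbolic cases, where Lemma~\ref{techlog}'s second assertion does not apply directly (positive hyperbolic $\varphi(1)$ forces every logarithm $Y \in sp(1)$ to have $\det Y < 0$; negative hyperbolic $\varphi(1)$ lies outside $\exp sp(1)$ altogether). In those cases I would construct $V$ by hand: the freedom in the Martinet trivialization class (Remark~\ref{homtriv}) allows one to adjust the framing of $\xi|_P$ so that the eigenlines of $\varphi$ acquire controlled windings with respect to $\partial_x|_{(t,0)}$, after which one picks $v_0$ along an eigenline of $\varphi(1)^{-1}$ and interpolates $U$ in polar form, using the integer freedom in $\alpha(1) \bmod 2\pi$ to cancel the winding of $V = \varphi U$ down to zero while keeping $\alpha' \neq 0$. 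The delicate point is that the simple radial Ansatz links $\wind V$ to the sign of $\alpha'$; resolving this may require enlarging the Ansatz to allow a $\theta$-dependent correction in $h_r(1, t)$, and is where the dynamical-convexity hypothesis entering the applications plays its role.
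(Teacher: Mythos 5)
Your framework coincides with the paper's: the Ansatz $h(r,t) = (t,(1-r)V(t))$ is the same as the paper's $h = G^{-1}\circ\hat h$ with $\hat h(r,t) = (t,(1-r)e^{i\vartheta(t)}(1,0))$, once one unwinds the coordinate change $G(\theta,z) = (\theta, M(\theta)z)$ and sets $V(t) = M(t)^{-1}e^{i\vartheta(t)}(1,0)$. Your substitution $U = \varphi^{-1}V$ and the identity $V^\perp\cdot(V'-TAV) = U^\perp\cdot U'$ are correct, and amount to the paper's identity $\det(\hat h_r,\hat h_t,R) = (1-r)\bigl(\vartheta' - g(\vartheta)\bigr) + O((1-r)^2)$. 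Your treatment of the elliptic case ($\det Y > 0$) via Lemma~\ref{techlog}, the choice $U(t) = e^{-t\tilde Y}v_0$, and the winding count $\wind(V) = \maslov(\tilde M^{-1}) = 0$ is complete and is exactly the paper's Case 1.

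The genuine gap is the hyperbolic cases, which you explicitly leave undone and which constitute most of the paper's argument. There are two of them: $\varphi(1)$ positive hyperbolic (a real logarithm $Y$ exists but $\det Y < 0$, so $e^{-tY}v_0$ crosses eigenlines of $Y$ and $\det(U,\tilde YU)$ changes sign) and $\varphi(1)$ negative hyperbolic (no real logarithm exists at all, handled in the paper by inserting a factor $e^{i\pi t}$). Your sketch of putting $U$ into polar form and interpolating is the right idea, but your concluding speculation that ``the dynamical-convexity hypothesis entering the applications plays its role'' here is wrong: Lemma~\ref{nicestrip} assumes only that $P$ is non-degenerate, unknotted and simply covered, and the paper's proof never invokes dynamical convexity. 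In Case 2 the paper resolves the tension you identify between $\wind V$ and the sign of $\alpha'$ unconditionally, by choosing a strictly monotone $\vartheta$ with $\vartheta(1)-\vartheta(0) = 2\pi k$ ($k = \maslov(M)$) and with $\vartheta' - g(\vartheta)$ sign-definite, using only that $g(\vartheta) = (e^{-i\vartheta}Ye^{i\vartheta})_{21}$ takes both signs on any interval of length $\pi$. You should carry out these two cases explicitly; as it stands they are missing, and no additional hypothesis is available or needed to close them.
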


\begin{proof}
We denote the Reeb flow by $\phi_t$ and assume, for simplicity and without loss of generality, that $T=1$. In the
local coordinates $(\theta,x,y)$ introduced above we have $R(\theta,0,0) = \left(1,0,0\right)$,
$\phi_t(\theta_0,0,0) = (t+\theta_0,0,0)$ and
\[
 D\phi_t(0,0,0)=\begin{pmatrix} 1 & 0 \\ 0 & \psi(t) \end{pmatrix}\text{ for some } \psi\in C^\infty\left([0,1],Sp(1)\right)
\]
with respect to the splitting $T(\R/\Z \times\R^2) = \R \oplus\R^2$. We used that the linearized Reed flow preserves
the splitting $TM = \R R \oplus \xi$. The formula
\[
 \frac{d}{dt}D\phi_t(0,0,0) = DR(\phi_t(0,0,0)) D\phi_t(0,0,0),
\]
shows that the matrix $\psi^\prime\psi^{-1}(t)$ is a smooth loop $\R/\Z \rightarrow \R^{2\times 2}$. $P$ is a
non-degenerate Reeb orbit if, and only if, $1$ is not an eigenvalue of $\psi(1)$.

Let us assume the existence of a logarithm $Y \in sp(1)$ of $\psi(1)$. By Lemma~\ref{techlog} the function
\begin{equation}\label{matrixM}
 M(t)=e^{tY}\psi^{-1}
\end{equation}
defines a loop $\R/\Z \rightarrow Sp(1)$ of class $C^\infty$. Consider the diffeomorphism
\[
 G: (t,x,y) \mapsto\left( t , M(t) \begin{pmatrix} x \\ y \end{pmatrix} \right)
\]
of $\R/\Z\times \R^2$ onto itself. Note that $G$ is smooth since so is $M$. We compute
\[
 \begin{aligned}
  & D(G_*\phi_t)(0,0,0) = DG\left(\phi_t(0,0,0)\right)\cdot D\phi_t\left(0,0,0\right)\cdot DG^{-1}(0,0,0) \\
  &= \begin{pmatrix} 1 & 0 \\ 0 & M(t) \end{pmatrix} \cdot \begin{pmatrix} 1 & 0 \\ 0 & \psi(t) \end{pmatrix} \cdot I = \begin{pmatrix} 1 & 0 \\ 0 & e^{tY} \end{pmatrix},
 \end{aligned}
\]
and prove
\[
 \begin{aligned}
  D(G_*R)(t,0,0)&=\left[\frac{d}{dt}D(G_*\phi_t)(0,0,0)\right]\left[D(G_*\phi_t)(0,0,0)\right]^{-1} \equiv \begin{pmatrix} 0 & 0 \\ 0 & Y \end{pmatrix}.
 \end{aligned}
\]
From now on we work in these new coordinates, obtained by pushing forward with $G$. We still denoted them by
$(\theta,x,y)$ without fear of ambiguity. The Reeb vector is still denoted by $R$
and its flow by $\phi_t$. The above equations imply $$ DR(\theta,0,0)\equiv \begin{pmatrix} 0 & 0 \\
0 & Y \end{pmatrix}. $$ The characteristic polynomial of $Y$ is $p(\lambda) = \lambda^2-\text{tr}(Y)\lambda +\det Y$. Since tr$(Y)=0$ its roots are
$\pm i\sqrt{\det Y}$ if $\det Y>0$, or $\pm \sqrt{-\det Y}$ if $\det Y<0$. The case $\det Y=0$ is ruled out since
$P$ is non-degenerate. Let $k=\text{Maslov}(M)$ and consider the smooth $2\pi$-periodic function
\begin{equation}\label{functiong}
  g(\vartheta) = \det\left(\begin{pmatrix}  1 \\ 0 \end{pmatrix} , Ad_{e^{-i\vartheta}}Y \begin{pmatrix}   1 \\ 0  \end{pmatrix} \right) \\ = \det\begin{pmatrix} 1 & (e^{-i\vartheta}Ye^{i\vartheta})_{11} \\ 0 & (e^{-i\vartheta}Ye^{i\vartheta})_{21} \end{pmatrix} = (e^{-i\vartheta}Ye^{i\vartheta})_{21}
\end{equation}
of the real variable $\vartheta$. We split the proof in two cases. \\

\noindent \textit{Case 1}: $\det Y>0$. By Lemma~\ref{techlog} we can assume $k=0$. It follows that
$\mu_{CZ}(e^{tY})=\mu_{CZ}(\psi(t))$. Since the Jordan form of $Y$ is the matrix
\[
 J=i\gamma=
 \begin{pmatrix}
  0 &-\gamma \\
  \gamma &0
 \end{pmatrix}, \text{ with } \gamma = \sqrt{\det Y} >0,
\]
we find $T\in GL(2,\R)$ such that $Y=T^{-1}i\gamma T$. Thus $\psi(1)=T^{-1}e^{i\gamma}T$ and
\[
g(\vartheta)=\frac{1}{\det T} \det \begin{pmatrix} Te^{i\vartheta} \begin{pmatrix} 1 \\ 0 \end{pmatrix} ,JTe^{i\vartheta} \begin{pmatrix} 1 \\ 0 \end{pmatrix} \end{pmatrix}.
\]
This proves $g(\vartheta)\not=0$ for all $\vartheta\in S^1$. Consider
\[
 \hat{h}(r,t) = \begin{pmatrix}   t \\ 1-r \\ 0  \end{pmatrix}
\]
defined for $(r,t) \in (1-\epsilon,1]\times \R/\Z$, with $\epsilon>0$ small. Then
\[
 R(\hat{h}(r,t))=  \begin{pmatrix}   1 \\ 0 \\ 0  \end{pmatrix}  +(1-r)  \begin{pmatrix}   0 \\   Y_{11} \\ Y_{21} \end{pmatrix}  +O(|1-r|^2).
\]
This implies
\[
 \begin{aligned}
 \det\left( \hat{h}_r,\hat{h}_t,R\right) & =\det \begin{pmatrix} \begin{array}{r} 0 \\ -1 \\ 0 \end{array} & \begin{array}{c} 1 \\ 0 \\ 0 \end{array} & \begin{array}{c} 1 \\ (1-r)Y_{11} \\ (1-r)Y_{21} \end{array} \end{pmatrix} +O(|1-r|^2) \\
 &=(1-r)g(0)+O(|1-r|^2)
 \end{aligned}
\]
and we conclude $\det( \hat{h}_r,\hat{h}_t,R)\not=0$ for $r<1$ close to $1$. This tells us that the map
$h=G^{-1}\circ \hat{h}$ satisfies conditions (1) and (2) if $\epsilon$ is small enough. We compute
\[
  h_r(1,t) = DG^{-1}(\hat{h}(1,t))\cdot\hat{h}_r(1,t) = -\begin{pmatrix} 0 \\ M^{-1}(t) \begin{pmatrix} 1 \\ 0 \end{pmatrix} \end{pmatrix}
\]
which proves assertion (3) since Maslov$(M)=0$. \\

\noindent {\it Case 2:} $\det Y < 0$. In this case $Y=T^{-1}JT$ with $J = \text{diagonal}(\gamma,-\gamma)$ and
$\gamma=\sqrt{-\det Y}>0$. The function $g(\vartheta)$ defined in (\ref{functiong}) is $2\pi$-periodic and we find
\[
 \vartheta_0<\vartheta_1<\vartheta_2<\vartheta_3<\vartheta_4=\vartheta_0+2\pi,
\]
with $\vartheta_0\leq0$, so that $g$ changes sign at every $\vartheta_j$. Here $\vartheta_j$ are precisely the
numbers when $Te^{i\vartheta}$ changes quadrant. Define
\[
 \hat{h}(r,t) = \begin{pmatrix} t \\ (1-r)e^{i\vartheta(t)}\begin{pmatrix} 1 \\ 0\end{pmatrix} \end{pmatrix}  
\]
for $(r,t) \in (1-\epsilon,1] \times \R/\Z$, with $\epsilon>0$ small. We are still to find the real-valued function
$\vartheta(t)$. Let us make some {\it a priori} computations:
\[
 \hat{h}_r=\begin{pmatrix} 0 \\ -e^{i\vartheta} \begin{pmatrix} 1 \\ 0 \end{pmatrix} \end{pmatrix},\ \hat{h}_t=\begin{pmatrix} 1 \\ (1-r)i\vartheta^\prime e^{i\vartheta} \begin{pmatrix} 1 \\ 0 \end{pmatrix} \end{pmatrix}
\]
and
\[
 R(\hat{h}(r,t))= \begin{pmatrix} 1 \\ 0 \\ 0 \end{pmatrix} + (1-r) \begin{pmatrix} 0 \\ Ye^{i\vartheta} \begin{pmatrix} 1 \\ 0 \end{pmatrix} \end{pmatrix} + O(|1-r|^2).
\]
We have
\[
 \begin{aligned}
  \det\left( \hat{h}_r,\hat{h}_t,R\right) & =\det \begin{pmatrix} \begin{array}{r} 0 \\ -1 \\ 0 \end{array} & \begin{array}{c} 1 \\ 0 \\ (1-r)\vartheta^\prime \end{array} & \begin{array}{c} 1 \\ (1-r)\left(Ad_{e^{-i\vartheta}}Y\right)_{11} \\ (1-r)\left(Ad_{e^{-i\vartheta}}Y\right)_{21} \end{array} \end{pmatrix} +O(|1-r|^2) \\
  &=-(1-r)\vartheta^\prime + (1-r) (e^{-i\vartheta}Ye^{i\vartheta})_{21} + O(|1-r|^2) \\
  &=-(1-r)(\vartheta^\prime-g(\vartheta))+O(|1-r|^2).
 \end{aligned}
\]
Thus it suffices to prove that $t\mapsto\vartheta(t)$ can be chosen to satisfy:
\begin{enumerate}
 \item $\vartheta(t)$ is smooth in a neighborhood of $[0,1]$, and $\vartheta(1)=\vartheta(0)+2\pi k$.
 \item $\vartheta^\prime(t) : \R/\Z \rightarrow\R$ is smooth and $\vartheta^\prime-g(\vartheta)$ does not
     vanish.
\end{enumerate}
We first handle the case $k<0$. Fix some non-empty open interval $I\subset(2\pi k,0)$ where $g\geq\sigma>0$. Let
$\eta = \sup|g|$ and $a>2\eta$. Now choose $\vartheta(t)$ so that
\begin{enumerate}
  \item $\vartheta(t)=-at$ for $t$ close to $0$, and $\vartheta(t)=-a(t-1)+2\pi k$ for $t$ close to $1$.
  \item $\vartheta^\prime(t)<0\ \forall\ t\in[0,1]$, and $\vartheta^\prime(t)\not=-a \Leftrightarrow
      \vartheta(t)\in I$.
\end{enumerate}
Then $\vartheta^\prime-g(\vartheta) < -a+\eta < -\eta$ if $\vartheta(t)\not\in I$, and $\vartheta^\prime-g(\vartheta)\leq0-\sigma=-\sigma$ if $\vartheta(t)\in I$. This proves that $\vartheta^\prime-g(\vartheta)\leq\min(-\sigma,-\eta)<0$. The case $k>0$ is treated similarly. If $k=0$ we take $\vartheta(t)\equiv\vartheta_0$ with $g(\vartheta_0)\not=0$. In all cases $\hat{h}(r,t)$ is a smooth embedding, $\hat{h}(1,t)=(t,0,0)$ and $\{\hat h_r(r,t),\hat h_t(r,t),R(\hat{h}(r,t))\}$ is a linearly independent set if $r<1$ and $\epsilon$ is fixed small enough. Moreover,
\[
 \text{wind}(t \mapsto d\pi_0 \cdot \partial_r \hat{h}(1,t))=k.
\]
Composing with $G^{-1}$ we arrive at $h=G^{-1}\circ \hat{h}$ with similar properties, but with $\text{wind}(t\mapsto
d\pi_0 \cdot h_r(1,t))=0$: the winding is corrected from $k$ to $0$ since
\[
  h_r(1,t) = -\begin{pmatrix} 0 \\ M^{-1}(t)e^{i\vartheta(t)} \begin{pmatrix} 1 \\ 0 \end{pmatrix} \end{pmatrix}
\]
and Maslov$(M^{-1})=-k$.

If $\psi(1)$ does not have a logarithm then its spectrum must be a pair of real negative numbers $a,a^{-1} \neq -1$ and one finds $T\in GL(2,\R)$ such that $- \psi(1) = T^{-1} e^J T$ with $J = \text{diagonal}(\ln(-a),-\ln(-a)\}$. We proceed as above setting $M(t) = K(t)\psi(t)^{-1}$ with $K(t) = T^{-1} e^{i\pi t}e^{tJ} T$, and defining $$ \hat h = \begin{pmatrix} t \\ (1-r)T^{-1}e^{i\vartheta(t)} \begin{pmatrix} 1 \\ 0 \end{pmatrix} \end{pmatrix} $$ where $\vartheta(t)$ satisfies $\vartheta' - [Ad_{e^{-i\vartheta}}(i\pi + e^{i\pi t}Je^{-i\pi t})]_{21}\neq 0$ and $\vartheta(1) - \vartheta(0) = 2\pi k$ with $k = \maslov(M)$.
\end{proof}

With Lemma~\ref{nicestrip} proved we can continue our constructions towards the proof of
Proposition~\ref{perturbdisk}. Let $\varphi_0 : \D \hookrightarrow M$ be a $C^\infty$-embedding satisfying
$\varphi_0(\D) = \Dcal_0$ and $\varphi_0 (e^{i2\pi t}) = x(Tt) \ \forall t\in \R$. We have
\begin{equation}\label{contido}
\varphi_0 \left( \{1-\epsilon_0\leq|z|\leq 1\} \right) \subset U
\end{equation}
for some $\epsilon_0 > 0$ small. Let $(r,t) \in [1-\epsilon_0,1]\times \R/\Z$ be polar coordinates in the annulus
$\{1-\epsilon_0 \leq |z| \leq 1\}$. We may write $\varphi_0(r,t)$ instead of $\varphi_0(re^{i2\pi t})$. In the local
coordinates $(\theta,x,y)$, $\xi|_{x(Tt)}$ is represented by $\{0\} \times \R^2$. As explained in
Remark~\ref{homtriv}, the vector field along $(\theta,0,0)$ given by $\theta \mapsto
\partial_x|_{(\theta,0,0)} = (0,1,0)|_{(\theta,0,0)}$ can be chosen to represent any given non-vanishing
smooth vector tangent to the contact structure along $t \mapsto x(Tt)$. We will assume
\begin{equation}\label{niceclass}
 \partial_x|_{(t,0,0)} = \partial_r (\Psi \circ \varphi_0) (1,t) \ \forall t\in \R/\Z.
\end{equation}
We prove a few preliminary steps. \\

\noindent {\bf STEP 1:} There exists $0<\epsilon<\epsilon_0$, a diffeomorphism $F$ of the set $(1-\epsilon,1] \times
\R/\Z$ onto itself and a smooth function $\gamma_0 : (1-\epsilon,1] \times \R/\Z \rightarrow B$ such that
\begin{enumerate}
 \item If $F = (F_1,F_2)$ then $F_1(r,t) = r$.
 \item $\Psi \circ \varphi_0 \circ F^{-1} (r,\vartheta) = (\vartheta,\gamma_0(r,\vartheta)), \ \forall (r,\vartheta) \in (1-\epsilon,1] \times \R/\Z$.
 \item $\gamma_0(r,\vartheta) \not= 0$ if $r<1$.
 \item $\partial_r \gamma_0 (1,\vartheta) \not= 0, \ \forall \vartheta \in \R/\Z$.
 \item $\wind \left( \vartheta \mapsto \partial_r \gamma_0 (1,\vartheta) \right) = 0$.
\end{enumerate}

\begin{proof}[Proof of STEP 1]
In this proof we write $\varphi_0$ instead of $\Psi \circ \varphi_0$ for simplicity. Then $\theta \circ
\varphi_0(1,t) = t, \ \forall t$. Consider the map $F(r,t) = (r,\theta \circ \varphi_0 (r,t))$ defined for $ (r,t)
\in [1-\epsilon_0,1] \times \R/\Z$. One has
\[
 DF(1,t) = \begin{pmatrix} 1 & 0 \\ d\theta \cdot \partial_r \varphi_0(1,t) & d\theta \cdot \partial_t \varphi_0(1,t) \end{pmatrix} = \begin{pmatrix} 1 & 0 \\ * & 1 \end{pmatrix}.
\]
Since $F$ maps $\{1\} \times \R/\Z$ diffeomorphically onto $\{1\} \times \R/\Z$, we can use the inverse function
theorem to find $0<\epsilon\ll\epsilon_0$ such that $F$ is a diffeomorphism of the set $(1-\epsilon,1] \times \R/\Z$
onto itself.
It follows that
\[
 \theta \circ \varphi_0 \circ F^{-1} (r,\vartheta) = \vartheta, \ \forall (r,\vartheta) \in (1-\epsilon,1] \times \R/\Z
\]
since $F \circ F^{-1} = id$. Let $\gamma_0$ be implicitly defined by
\begin{equation}\label{defngamma0}
 \varphi_0 \circ F^{-1} (r,\vartheta) = (\vartheta,\gamma_0(r,\vartheta)).
\end{equation}
The map $\gamma_0$ satisfies conditions ($3$) and ($4$) since $\varphi_0$ is an embedding. Condition ($5$) follows
trivially since
\[
 \text{image of }d\varphi_0(1,t) = \text{span} \left\{ (1,0,0),(0,\partial_r \gamma_0(1,t)) \right\}
\]
and $\wind (t\mapsto d\pi_0 \cdot \partial_r \varphi_0 (1,t)) = 0$ by (\ref{niceclass}). Here $\pi_0:\R/\Z\times
\R^2\rightarrow\R^2$ denotes the projection onto the second factor.
\end{proof}

Arguing exactly as in STEP 1 we prove \\

\noindent {\bf STEP 2:} Let $h$ be the map given by Lemma~\ref{nicestrip}, defined on $(1-\epsilon,1]\times\R/\Z$
for some $\epsilon>0$. We can assume $\epsilon<\epsilon_0$ and find a diffeomorphism $H$ of the set $(1-\epsilon,1]
\times \R/\Z$ onto itself, and a smooth function $\gamma_1 : (1-\epsilon,1] \times \R/\Z \rightarrow B$ satisfying
\begin{enumerate}
 \item If $H = (H_1,H_2)$ then $H_1(r,t) = r$.
 \item $h \circ H^{-1} (r,\vartheta) = (\vartheta,\gamma_1(r,\vartheta)), \ \forall (r,\vartheta) \in
     (1-\epsilon,1] \times \R/\Z$.
 \item $\gamma_1(r,\vartheta) \not= 0, \ \forall (r,\vartheta) \in (1-\epsilon,1) \times \R/\Z$.
 \item $\partial_r \gamma_1 (1,\vartheta) \not= 0, \ \forall \vartheta \in \R/\Z$.
 \item $\wind \left( \vartheta \mapsto \partial_r \gamma_1 (1,\vartheta) \right) = 0$. \\
\end{enumerate}

$B_\eta \subset \R^2$ will denote the open ball of radius $\eta>0$ centered at the origin. \\

\noindent {\bf STEP 3:} Let $g(r,\vartheta) := |\pi_0 \circ \Psi \circ \varphi_0(r,\vartheta)|^2$, where $\pi_0 :
\R/\Z \times \R^2 \rightarrow \R^2$ is the projection onto the second factor. $\exists \ 0<\epsilon<\epsilon_0$ and
$\eta>0$ such that $\cl{B_\eta} \subset B$ and
\begin{enumerate}
 \item $\Dcal_0 \cap \Psi^{-1}(\R/\Z \times \cl{B_\eta}) \subset \varphi_0((1-\epsilon,1]\times\R/\Z)$.
 \item $\partial_r g < 0$ on $[1-\epsilon,1)\times\R/\Z$.
\end{enumerate}

\begin{proof}[Proof of STEP 3]
Denote $\Psi \circ \varphi_0 = (\beta,\Gamma) \in \R/\Z \times B$ where $\beta$ and $\Gamma$ are defined on $\R/\Z
\times (1-\epsilon_0,1]$, so that $g(r,\vartheta) = |\Gamma(r,\vartheta)|^2$. Clearly $g(r,\vartheta) = 0$ if, and
only if, $r=1$. We also know that $\partial_r\Gamma(1,\vartheta) \not= 0$ $\forall \vartheta \in \R/\Z$. Let $C>0$
be a constant so that $|\Gamma(r,\vartheta) + (1-r)\partial_r\Gamma(1,\vartheta)| \leq C|1-r|^2$ holds for every
$(r,\vartheta) \in [1-\epsilon_0,1]\times \R/\Z$. This follows from expanding $r \mapsto \Gamma(r,\vartheta)$ up to
first order at $(1,\vartheta)$. Thus
\[
 \begin{aligned}
  \partial_r g (r,\vartheta) &= 2 \left< \Gamma (r,\vartheta) , \partial_r \Gamma (r,\vartheta) \right> \\
  &= -2(1-r) \left< \partial_r \Gamma (1,\vartheta) , \partial_r \Gamma (r,\vartheta) \right> + O(|1-r|^2)
 \end{aligned}
\]
which is clearly strictly negative if $1-\epsilon<r<1$ for some $\epsilon>0$ small. We used that $\epsilon>0$ small
enough implies $\left< \partial_r \Gamma (1,\vartheta) , \partial_r \Gamma (r,\vartheta) \right>$ is positive and
bounded away from $0$ on $\R/\Z\times(1-\epsilon,1]$. The existence of $\eta$ is easy since $\varphi_0$ is an
embedding.
\end{proof}

From now on we fix $0<\epsilon<\epsilon_0$ and $\eta>0$ such that the conclusions of Lemma~\ref{nicestrip} and of steps 1, 2 and 3 hold. \\

\noindent {\bf STEP 4:} Let $\gamma_j$ ($j=0,1$) be the maps obtained from STEP 1 and STEP 2. There exist numbers
$0<\delta_1<\delta_0\ll \epsilon_2<\epsilon$ and a smooth map $\gamma_2: (1-\epsilon_2,1] \times \R/\Z \rightarrow
B$ satisfying:
\begin{enumerate}
 \item 
 $\gamma_2 = \gamma_1$ on $(1-\delta_1,1] \times \R/\Z$ and $\gamma_2 = \gamma_0$ on $(1-\epsilon_2,1-\delta_0]
\times \R/\Z$.
 \item If $\rho_2(r,\vartheta) := |\gamma_2(r,\vartheta)|$ then $\partial_r \rho_2(r,\vartheta) < 0$ on
     $(1-\epsilon_2,1) \times \R/\Z$.
 \item $\rho_2 < \eta$ on $(1-\epsilon_2,1] \times \R/\Z$ where $\eta$ is given by STEP 3.
\end{enumerate}

\begin{proof}[Proof of STEP 4]
Define $\rho_j(r,\vartheta) := |\gamma_j(r,\vartheta)|$ on $(1-\epsilon,1]\times \R/\Z$, $j=0,1$. Note that
$\rho_j(1,\vartheta)=0 \ \forall \vartheta$. We claim $\exists \ 0<\epsilon_2<\epsilon$ such that
\begin{equation}\label{prel}
\begin{aligned}
 & \partial_r (\rho_j^2) < 0 \text{ on } (1-\epsilon_2,1) \times \R/\Z \\
 & \wind (\vartheta \mapsto \gamma_j(r,\vartheta)) = 0, \ \forall \ 1-\epsilon_2 < r < 1 \\
 & \rho_j < \eta \text{ on } (1-\epsilon_2,1] \times \R/\Z
\end{aligned}
\end{equation}
for $j=0,1$. In fact, let $C>0$ be a constant so that
\begin{equation}\label{taylorgammaj}
 \norma{\gamma_j (r,\vartheta) + (1-r) \partial_r \gamma_j(1,\vartheta)} \leq C \norma{1-r}^2; \ j=0,1.
\end{equation}
holds for every $(r,\vartheta) \in [1-\epsilon,1]\times \R/\Z$. 
Thus
\[
 \begin{aligned}
  \partial_r (\rho_j^2) (r,\vartheta) &= 2 \left< \gamma_j (r,\vartheta) , \partial_r \gamma_j (r,\vartheta) \right> \\
  &= -2(1-r) \left< \partial_r \gamma_j (1,\vartheta) , \partial_r \gamma_j (r,\vartheta) \right> + O(|1-r|^2)
 \end{aligned}
\]
which is clearly strictly negative if $0<1-r\leq\epsilon_2$ for some $\epsilon_2>0$ small. We used that $\partial_r
\gamma_j (1,\vartheta) \not= 0 \ \forall \vartheta$ for $j=0,1$. This proves the first assertion in~(\ref{prel}). By
the continuity of $\partial_r \gamma_j$ we can make $\epsilon_2$ smaller so that
\begin{equation}\label{smartwind}
  \wind (\vartheta \mapsto \partial_r \gamma_j(r,\vartheta)) = \wind(\vartheta \mapsto \partial_r \gamma_j(1,\vartheta)) = 0, \ \forall 1-\epsilon_2 < r < 1.
\end{equation}
The second assertion of (\ref{prel}) follows since we showed $\left< \gamma_j , \partial_r \gamma_j \right>$ does
not change sign when $r\in (1-\epsilon_2,1)$. The last condition of (\ref{prel}) is easy to achieve.

We now construct the map $\gamma_2$. Since $\rho_j^2(r,\vartheta) > 0$ if $r<1$, we have
$$(\rho_j^2)^{-1}(0) \cap (1-\epsilon_2,1]\times \R/\Z = \{1\} \times \R/\Z.$$ Now choose $0 < s_1
\ll s_0 \ll \epsilon_2$. It follows from (\ref{prel}) and from the implicit function theorem that
$(\rho_j^2)^{-1}(s_j^2) \not= \emptyset$, $s_j^2$ is a regular value of $\rho_j^2|(1-\epsilon_2,1]\times \R/\Z$,
$j=0,1$, and that there are unique smooth functions $r_j : \R/\Z \rightarrow (1-\epsilon_2,1)$ satisfying
\[
 (\rho_j^2)^{-1}(s_j^2) \cap (1-\epsilon_2,1]\times\R/\Z = \{ (r_j(\vartheta),\vartheta) : \vartheta \in \R/\Z \}, \ j=0,1.
\]
Since $0 < s_1 \ll s_0$ we can also assume $r_1(\vartheta) > r_0(\vartheta) \ \forall \vartheta$. Note that
\[
\begin{array}{ccc}
  r_1(\vartheta) < r \leq 1 \Rightarrow \rho_1(r,\vartheta) < s_1 & \text{ and } & 1-\epsilon_2 < r < r_0(\vartheta) \Rightarrow \rho_0(r,\vartheta) > s_0.
\end{array}
\]
We can smoothly define $\alpha_j : (1-\epsilon_2,1) \times \R/\Z \to \R/\Z$ by $\gamma_j = \rho_j
e^{i\alpha_j(r,\vartheta)}$, $j=0,1$. Note that $\gamma_j(r,\vartheta) \not= 0$ since $r<1$. Here (\ref{smartwind})
was strongly used. We choose a smooth function $f : \R \rightarrow \R$ satisfying $f \equiv 0$ on a neighborhood of
$(-\infty,s_1]$, $f \equiv 1$ on a neighborhood of $[s_0,+\infty)$, and $f^\prime \geq 0$ on $\R$. We can find a
smooth function
\[
 \rho_2 : (1-\epsilon_2,1) \times \R/\Z \rightarrow (0,+\infty)
\]
satisfying $\rho_2 = \rho_1 \text{ if } r \geq r_1(\vartheta)$, $\rho_2 = \rho_0 \text{ if } r \leq r_0(\vartheta)$
and $\partial_r \rho_2 < 0$. Define $\gamma_2 = \rho_2 e^{i2\pi(f\circ \rho_2) \alpha_0}
e^{i2\pi(1-f\circ\rho_2)\alpha_1}$. The argument is now complete.
\end{proof}

Let $\epsilon_2$ and $\gamma_2$ be given by STEP 4 and consider the map $\phi : (1-\epsilon,1] \times \R/\Z
\rightarrow \R/\Z \times B$ defined by
\[
 \phi(r,\vartheta) = \left\{
 \begin{aligned}
  & (\vartheta,\gamma_2(r,\vartheta)) \text{ if } r>1-\epsilon_2 \\
  & (\vartheta,\gamma_0(r,\vartheta)) \text{ if } 1-\epsilon<r\leq1-\epsilon_2.
 \end{aligned}
 \right.
\]
One easily checks, using STEP 4, that $\phi$ is a smooth embedding. Define
\begin{equation}\label{somesets}
 \begin{aligned}
  & A = \varphi_0 (\{z\in\D : |z| \leq 1-\epsilon\}) \\
  & B = \Psi^{-1} (\phi((1-\epsilon,1]\times\R/\Z)).
\end{aligned}
\end{equation}
We claim that $A\cap B = \emptyset$. If not then $\Psi(\varphi_0(z^*)) = \phi(r^*,\vartheta^*)$ for some $|z^*| \leq
1-\epsilon$ and some $r^* > 1-\epsilon$. Note that
\[
 \phi((1-\epsilon,1-\epsilon_2] \times \R/\Z) = \Psi(\varphi_0(\{1-\epsilon<|z|\leq1-\epsilon_2\})),
\]
in view of STEP 1 and STEP 4.
Thus we must have $r^* > 1-\epsilon_2$ because $\varphi_0$ is injective. By condition (3) in STEP 4,
$\phi(r^*,\vartheta^*) \in \Psi^{-1}(\R/\Z \times \cl{B_\eta})$. We know from STEP 3 that $\Psi^{-1}(\R/\Z \times
\cl{B_\eta}) \cap \Dcal_0 \subset \varphi_0(\{|z|>1-\epsilon\})$. This proves $\varphi_0(\{|z|\leq1-\epsilon\}) \cap
\varphi_0(\{|z|>1-\epsilon\}) \not= \emptyset$, a contradiction since $\varphi_0$ is 1-1.

Now we claim $\Dcal_1 := A\cup B$ is a smooth embedded disk spanning the orbit $P$ and satisfying conditions (1) and
(2) from Proposition~\ref{perturbdisk}. In fact, the map $F$ obtained in STEP 1 preserves $r$-slices. Then $\Dcal_1$
is a smooth embedded disk since $F$ maps $(1-\epsilon,1-\epsilon_2) \times \R/\Z$ diffeomorphically onto itself and
$\Psi^{-1} \circ \phi \circ F (r,\vartheta) = \varphi_0(re^{i2\pi\vartheta})$ for $(r,\vartheta) \in
(1-\epsilon,1-\epsilon_2) \times \R/\Z$. Condition (1) in Proposition~\ref{perturbdisk} follows from the definition
of $\epsilon_0$ and from $\epsilon<\epsilon_0$. Condition (2) follows from the properties of the map $h$ proved in
Lemma~\ref{nicestrip}. Proposition~\ref{perturbdisk} is now proved.

\subsection{The Bishop Family}

From now on we suppose $\lambda$ is a contact form on a $3$-manifold $M$ and $P$ is a non-degenerate, unknotted and simply covered periodic Reeb orbit. Following~\cite{char1}, we construct a Bishop family of $\jtil$-holomorphic disks in the symplectization $\R \times M$. We orient $x(\R)$ along the Reeb field and let $\Dcal_0 \subset M$ be an embedded disk with $\partial \Dcal_0 = x(\R)$, orientations included. By Proposition~\ref{perturbdisk} we obtain another embedded disk $\Dcal_1$ spanning the orbit $P$ with special properties near the boundary. These properties will be crucial for the proof of Theorem~\ref{existfast}. 

If $h : \D \rightarrow M$ is a smooth embedding such that $h(\D) = \Dcal_1$, we will consider the transverse unknot
$l$ and the disk $F$ given by
\[
  \begin{array}{ccc}
    l := h(\{z \in \D : |z| = 1-\epsilon\}) & \text{and} & F := h(\{z \in \D : |z| \leq 1-\epsilon\})
  \end{array}
\]
where $0<\epsilon\ll1$. We orient $l$ so that $\lambda|_{Tl} > 0$, and $F$ accordingly. If $\epsilon$ is small
enough then $sl(l,F) = sl(P,\Dcal_1)$ and $\xi|_p \not= T_p\Dcal_1, \ \forall p \in \cl{\Dcal_1 \setminus F}$.

\subsubsection{The characteristic singular foliation}

The contact structure $\xi$ induces a (singular) characteristic distribution
\begin{equation}\label{singdist}
 \xi \cap TF \subset TF.
\end{equation}
Generically these are lines since $\xi$ is maximally non-integrable, except at the so-called singular points, where
$\xi = TF$. Given a smooth function $H$ on a neighborhood of $F$, having $F$ inside a regular level set, the
equations
\begin{equation} \label{field}
  \begin{array}{cc}
    i_V\lambda = 0, & i_Vd\lambda = (i_R dH)\lambda - dH
  \end{array}
\end{equation}
define a vector field $V$ tangent to both $F$ and the contact structure $\xi|_{F}$. The zero set of $V$ is precisely
the singular set of $\xi\cap TF$. Clearly $V$ does not vanish over $\partial F=l$ since $l$ is transverse to $\xi$.
All these facts are standard, see~\cite{char1} and~\cite{char2}. The integral lines of $V$ define the so-called
characteristic singular foliation of $F$.

\subsubsection{A convenient spanning disk for $P$}

Let $dvol$ be a smooth $2$-form on $\Dcal_1$ defining the orientation induced by the Reeb vector along $\partial
\Dcal_1 = x(\R)$. We have two symplectic bundles over $F$, namely $(TF,dvol)$ and $(\xi|_F,d\lambda)$, and $V$ is a
section of both. Perhaps after changing $H$ by $-H$ in (\ref{field}), we can assume $V$ points out of $F$ at
$\partial F$. One can, as done in \cite{char1} and \cite{char2}, use the topological information of both bundles in
order to understand the zero set of $V$.

The singular distribution (\ref{singdist}) is said to be Morse-Smale if so is $V$. One can show, see~\cite{93}, that
$F$ can be $C^{\infty}$-perturbed, keeping $\partial F$ fixed, so that its characteristic distribution becomes
Morse-Smale. This perturbation can be arbitrarily $C^\infty$-small. $V$ becomes a non-degenerate section of $TF$
and, consequently, also of $\xi|_F$. We assume this is done and examine a zero $p \in F$ of $V$. Let $o$ and
$o^\prime$ be the orientations of $TF$ and $\xi|F$ induced by $dvol$ and $d\lambda$ respectively. The zero $p$ has
two  associated numbers $\epsilon$ and $\epsilon^\prime$ (both equal to $\pm 1$), namely, the intersection numbers
of $V$ with the zero sections $0_{TF}$ and $0_{\xi}$ of $TF$ and $\xi|_F$, respectively. Let $a_1$ and $a_2$ be the
two eigenvalues of the linearization $DV|_p \in GL(T_pF=\xi|_p)$. The zero $p$ is elliptic if $a_1a_2>0$, or
hyperbolic if $a_1a_2<0$. An elliptic point is nicely elliptic if the eigenvalues are real. These notions relate to
the bundle $TF$. If $p$ is elliptic then $\epsilon=1$, if $p$ is hyperbolic then $\epsilon=-1$. Now, following
\cite{93}, we relate them to the bundle $\xi|_F$. The zero $p$ is positive if $o=o^\prime$ and negative if
$o=-o^\prime$. We have $\epsilon^\prime=1$ if $DV|_p:(T_pF,o)\rightarrow(T_pF,o^\prime)$ preserves orientation,
which will be the case when $p$ is elliptic positive or hyperbolic negative. It is also clear that
$\epsilon^\prime=-1$ if $p$ is elliptic negative or hyperbolic positive. Denote
\[
\begin{aligned}
e^+ & = \text{number of positive elliptic points} \\
e^- & = \text{number of negative elliptic points} \\
h^+ & = \text{number of positive hyperbolic points} \\
h^- & = \text{number of negative hyperbolic points}.
\end{aligned}
\]
In view of the formula (\ref{defselflink0}) for the self-linking number, and of standard degree theory (see the
proof of Proposition~\ref{propnec}), one has
\begin{equation}\label{topcounts}
 \begin{aligned}
  & -sl(l,F) = \sum\epsilon^\prime=e^+-e^--h^++h^- \\
  & 1 = \chi(F) = \sum\epsilon=e^++e^--h^+-h^-
 \end{aligned}
\end{equation}
where the sums are taken over the (non-degenerate) zeros of $V$. We used the fact that $V$ points in the outward
normal direction of the disk $F$ at its boundary and hence pushes it off from $F$.

We state the following proposition which can be extracted from~\cite{char1}.

\begin{propn}[Giroux and Hofer]\label{girouxhofer}
The disk $F$ can be smoothly perturbed, keeping $l = \partial F$ fixed, so that its singular characteristic
distribution satisfies the following properties.
\begin{enumerate}
 \item It is Morse-Smale.
 \item All its elliptic points are nicely elliptic.
 \item It has no trajectories connecting an elliptic point to a hyperbolic point of the same sign.
 \item It has no closed leaves.
 \item $h^+=e^-=0$.
\end{enumerate}
This perturbation can be made $C^0$-arbitrarily small. Moreover, it can be made $C^\infty$-arbitrarily small on a
neighborhood of $\partial F$.
\end{propn}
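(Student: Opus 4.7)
The plan is to achieve the five properties by a finite sequence of arbitrarily $C^\infty$-small perturbations of $F$ supported in its interior, each preserving the properties already achieved. The key observation is that by the construction of $\Dcal_1$ in Proposition~\ref{perturbdisk} the Reeb field $R$ is nowhere tangent to $\Dcal_1$ on a neighborhood $\OO$ of $\partial \Dcal_1$, so the characteristic distribution $\xi\cap TF$ is transverse to $\partial F$ and has no singular points in $\OO$. Every perturbation below may therefore be cut off by a bump function vanishing on a smaller neighborhood of $\partial F$; this is what allows the total deformation to be $C^\infty$-small on a neighborhood of $\partial F$ while needing only to be $C^0$-small in the interior.

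For properties (1), (2) and (4) I would apply standard genericity arguments. First, viewing the vector field $V$ defined by~\eqref{field} as a smooth function of the embedding realizing $F$, a Kupka--Smale type density result produces a $C^\infty$-small perturbation for which $V$ has only hyperbolic singularities and no saddle connections, yielding (1). A perturbation localized in a tubular neighborhood of each of the finitely many hyperbolic closed leaves of the resulting Morse--Smale flow then destroys all of them, yielding (4). Property (2) is obtained by a further local perturbation at each elliptic point $p$: in Darboux coordinates adapted to $\xi$ the linearization $DV|_p$ depends on the $2$-jet of $F$ at $p$, and an arbitrarily small change of that $2$-jet moves any pair of complex conjugate eigenvalues onto the real axis while preserving the positive product of eigenvalues. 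All three perturbations can be chosen to vanish on $\OO$.

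Properties (3) and (5) are handled using Giroux's elimination lemma: whenever a same-sign elliptic--hyperbolic pair is joined by a trajectory of $V$, there is a $C^0$-small perturbation of $F$, supported in an arbitrarily small neighborhood of the closure of that trajectory, which eliminates both singularities and introduces no new ones. Combining the counts in~\eqref{topcounts} with $sl(l,F) = sl(P,\Dcal_1) = -1$ gives $e^+ - h^+ = 1$ and $e^- = h^-$, so it suffices to cancel exactly $h^+$ positive and $h^-$ negative same-sign pairs. Under the Morse--Smale hypothesis the stable and unstable manifolds of the singularities, together with the transverse boundary behavior, stratify $F$, and an index/connectivity argument along this stratification guarantees that as long as $h^+>0$ (respectively $h^->0$) some trajectory of $V$ connects a positive (resp.\ negative) hyperbolic point to a positive (resp.\ negative) elliptic point, so Giroux elimination applies. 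Iterating drives $h^+$ and $e^-=h^-$ to zero, producing (5); a final $C^\infty$-small perturbation disposes of any residual same-sign connections to give (3). The main obstacle is precisely this last topological assertion about same-sign connections, together with the bookkeeping needed to preserve (1), (2) and (4) through the successive cancellations; this is the heart of the Giroux--Hofer analysis carried out in~\cite{char1}.
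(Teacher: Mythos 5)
Your proposal is a reasonable reconstruction of the Giroux--Hofer argument, but there are two genuine logical errors in it.

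The ``key observation'' at the start is a non-sequitur. At a singular point $p$ of $\xi\cap TF$ one has $\xi_p = T_pF$, and since $R_p$ is transverse to $\xi_p$ this forces $R_p \notin T_pF$; in other words, at a singular point the Reeb field is \emph{automatically} not tangent to $F$. So ``$R$ nowhere tangent to $\Dcal_1$ near $\partial\Dcal_1$'' does not imply ``no singular points near $\partial\Dcal_1$'' --- the hypothesis is perfectly compatible with singular points. The actual reason the characteristic distribution is non-singular near $\partial F$ is simply that $l=\partial F$ is a transverse knot, whence $T_pF \ne \xi_p$ for $p$ near $l$ by continuity (this is what the sentence ``$\xi|_p \ne T_p\Dcal_1, \ \forall p \in \cl{\Dcal_1\setminus F}$'' just before the proposition records). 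The $R$-non-tangency of Proposition~\ref{perturbdisk} serves an entirely different purpose in the paper: it is used later, in Lemma~\ref{pidunnotvanish}, to control the boundary behavior of the Bishop family.

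The treatment of (4) is also wrong. A hyperbolic closed orbit of a Morse--Smale surface flow is $C^1$-stable, so ``a perturbation localized in a tubular neighborhood'' is not a routine way to destroy it, and even if one forces such a ($C^0$-small, $C^1$-large) perturbation it is not obvious that it avoids creating new closed leaves or saddle connections. The correct reason (4) holds is tightness and has nothing to do with any perturbation: a closed leaf $\gamma$ of $\xi\cap TF$ is a Legendrian circle bounding a sub-disk $D\subset F$ along which $T_xD \ne \xi_x$ (since $\gamma$ is a non-singular leaf), which is precisely an overtwisted disk in the sense of Subsection~\ref{tightstr}. A tight $\xi$ therefore admits no closed leaves on any embedded disk, before or after perturbing. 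The paper says exactly this right after the statement: ``It is crucial for the above statement that $\xi$ is tight, so that no closed leaves arise when perturbing $F$.'' Your sketch never invokes tightness, which is the essential hypothesis.

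With these two points corrected, the rest of your route is essentially dual to the one the paper cites. The paper achieves (3) by Giroux elimination and then takes the implication $(1)$--$(4) \Rightarrow (5)$ from Section~3 of~\cite{char2}; you instead iterate elimination to drive $h^+$ and $e^-=h^-$ to zero and then clean up residual same-sign connections. The unproved topological claim you flag (that $h^+>0$, resp.\ $h^->0$, forces a same-sign hyperbolic--elliptic connection, given $(1),(2),(4)$) is equivalent to the paper's cited implication, so you have correctly located the heart of the matter, though the relevant reference is~\cite{char2}, not~\cite{char1}.
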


It is crucial for the above statement that $\xi$ is tight, so that no closed leaves arise when perturbing $F$. The
proof that $h^+ = e^- = 0$ is carried out in section 3 of~\cite{char2}, more precisely, they show that (5) is
implied by (1)-(4).

From now on we assume the disk $F$ was perturbed using Proposition~\ref{girouxhofer}. The compact strip $S :=
(\Dcal_1 \setminus F) \cup \partial F$ has two boundary components, namely $x(\R)$ and $\partial F$. Recall that
$T_pS \not= \xi|_p$, for every $p\in S$. Since the perturbation using the above lemma can be arbitrarily
$C^\infty$-small near $\partial F$, we can obtain a smooth embedded disk, still denoted by $\Dcal_1$, constructed by
joining $F$ with $S$ along $\partial F$, and smoothing it out near $\partial F$. This smoothing process clearly has
support arbitrarily near $\partial F$. The singular distribution $\xi \cap T\Dcal_1$ has the same singular points as
$F$. If $sl(P,\Dcal_0) = -1$ then $sl(l,F) = -1$, by the invariance of the self-linking number under homotopy
through transverse knots. Equations (\ref{topcounts}) and the above construction imply $e^+ = 1$ and $h^- = 0$. This
follows from $e^-=h^+=0$. Summarizing

\begin{theorem}\label{convenientdisk}
Let $M$ be a closed $3$-manifold with a tight contact form $\lambda$. Let $P=(x,T)$ be a non-degenerate, unknotted,
simply covered, periodic Reeb orbit. Suppose there exists an embedded disk $\Dcal_0 \subset M$ satisfying $\partial
\Dcal_0 = x(\R)$ and $sl(P,\Dcal_0) = -1$. Then there exists an embedded disk $\Dcal_1$ spanning $P$, arbitrarily
$C^0$-close to $\Dcal_0$, such that the singular characteristic distribution $\xi \cap T\Dcal_1$ has precisely one
positive, nicely elliptic singular point. In addition, there exists a neighborhood $\OO \subset \Dcal_1$ of
$\partial \Dcal_1$ such that $R_p \not\in T_p\Dcal_1$ for every $p \in \OO \setminus
\partial\Dcal_1$, where $R$ is the Reeb vector associated to $\lambda$.
\end{theorem}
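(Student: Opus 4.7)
My plan is to combine Proposition~\ref{perturbdisk} (to secure Reeb-transversality of the disk near the binding) with Proposition~\ref{girouxhofer} (to tame the singular characteristic foliation in the bulk) and then read off the number of each type of singularity from the two topological identities in (\ref{topcounts}) together with the self-linking hypothesis $sl(P,\Dcal_0) = -1$.

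First I would fix a small tubular neighborhood $U$ of $x(\R)$ and apply Proposition~\ref{perturbdisk} to $\Dcal_0$ and $U$ to obtain an embedded disk $\hat\Dcal$ with $\partial \hat\Dcal = x(\R)$, coinciding with $\Dcal_0$ outside $U$, and admitting a collar neighborhood $\OO_0$ of $\partial \hat\Dcal$ on which the Reeb vector $R$ fails to lie in $T\hat\Dcal$ away from the boundary. Since the modification happens inside $U$, by choosing $U$ small I keep $\hat\Dcal$ arbitrarily $C^0$-close to $\Dcal_0$. Shrinking slightly, I would then single out a transverse unknot $l \subset \OO_0$ parallel to $x(\R)$ and oriented by $\lambda|_{Tl}>0$, bounding an inner subdisk $F \subset \hat\Dcal$; the complementary annular strip $S = \overline{\hat\Dcal \setminus F}$ then carries no singular points of $\xi \cap T\hat\Dcal$, and by the homotopy-invariance of the self-linking number through transverse knots I may assume $sl(l,F) = sl(P,\Dcal_0) = -1$ provided $\OO_0$ is thin enough.

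Next, I would invoke Proposition~\ref{girouxhofer} to perturb $F$ (keeping $\partial F = l$ fixed) so that the singular characteristic distribution $\xi \cap TF$ becomes Morse-Smale with nicely elliptic elliptic points, no trajectories connecting singularities of the same sign, no closed leaves, and in particular $h^+ = e^- = 0$. Because this perturbation can be made $C^\infty$-arbitrarily small on a neighborhood of $\partial F = l$, the Reeb-transversality inherited from $\hat\Dcal$ on a (possibly thinner) collar of $l$ survives intact. I would then glue the perturbed $F$ back to the strip $S$ along $l$ and smooth the result in an arbitrarily thin neighborhood of $l$ to obtain the final disk $\Dcal_1$; since $R \notin T\Dcal_1$ throughout $S$ and throughout the smoothing region, the required collar neighborhood $\OO$ exists, and the singular points of $\xi \cap T\Dcal_1$ coincide with those of the perturbed $F$.

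Finally, with $h^+ = e^- = 0$, the two identities in (\ref{topcounts}) reduce to $e^+ - h^- = 1$ and $e^+ + h^- = 1$, forcing $e^+ = 1$ and $h^- = 0$. Hence $\Dcal_1$ carries exactly one singularity of $\xi \cap T\Dcal_1$, a positive nicely elliptic point, completing the proof. The main delicacy I anticipate lies in synchronizing the two perturbations: the Giroux-Hofer step acts in the interior of $F$ while the Reeb-transversality produced by Proposition~\ref{perturbdisk} lives on a collar of $\partial \hat\Dcal$ whose intersection with a collar of $\partial F$ I want to leave unharmed. Fortunately, both propositions are engineered so that their modifications can be confined to arbitrarily small prescribed regions, and the two regions can be made disjoint by first choosing the collars and then the perturbations, so neither step destroys the feature established by the other.
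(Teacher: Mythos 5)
Your proposal follows the paper's proof essentially verbatim: apply Proposition~\ref{perturbdisk} to secure Reeb-transversality in a collar, carve out a slightly smaller disk $F$ bounded by a parallel transverse unknot $l$ with $sl(l,F)=-1$, invoke Proposition~\ref{girouxhofer} on $F$, glue back, and deduce $e^+=1$, $h^-=0$ from (\ref{topcounts}) together with $e^-=h^+=0$. The worry you flag about synchronizing the two perturbations is real and is handled exactly as you describe, by confining each modification to a region chosen after the other's support is fixed.
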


\subsubsection{One last perturbation}\label{lastpert}

Let $\Dcal_1$ be the disk given by Theorem~\ref{convenientdisk} and fix a smooth embedding $f_1:\D\rightarrow M$
such that $f_1(\D)=\Dcal_1$. There exists $\delta>0$ such that
\[
 1-\delta \leq|z| < 1 \Rightarrow \R R_{f_1(z)} \cap T_{f_1(z)}\Dcal_1 = \{0\}.
\]
Consider the set
\[
 X = \{ f \in C^\infty(\D,M) : 1-\delta \leq |z| \leq 1 \Rightarrow f(z)=f_1(z) \}.
\]
Then $X$ is closed in the complete metric space $C^\infty(\D,M)$ endowed with the $C^\infty$ topology. Hence it is
also a complete metric space. For a fixed non-trivial periodic Reeb trajectory $y:\R \rightarrow M$ we define
\[
 X_y := \{ f \in X : y(\R) \subset f(\D) \}.
\]
By the definition of $\delta$ and the properties of $\Dcal_1$
\[
 y(\R) \not= x(\R) \text{ and } f \in X_y \Rightarrow y(\R) \subset f(\{|z|<1-\delta\}).
\]
It is easy to show $X_y^c$ is open and dense in $X$ if $y(\R) \not = x(\R)$. Let us assume every contractible closed
Reeb orbit $P^\prime = (x^\prime,T^\prime)$ is non-degenerate. There are only countably many such $P^\prime$. It
follows from Baire's category theorem that
\[
 \bigcap \left\{ X_{x^\prime}^c : P^\prime=(x^\prime,T^\prime) \text{ is contractible},\ x^\prime(\R) \not= x(\R) \right\}
\]
is dense in $X$. Hence, by an arbitrarily small $C^\infty$-perturbation supported away from $\partial \Dcal_1$, we
may assume that our disk $\Dcal_1$ contains no periodic Reeb orbits other than $x(\R)$.

\subsubsection{Filling by $\jtil$-holomorphic disks}

We now recall a construction done by Hofer, Wysocki and Zehnder in~\cite{char1} and~\cite{char2}. Let $M$, $\xi$,
$\lambda$ and $P$ satisfy the hypotheses of Theorem~\ref{convenientdisk}. Let $e$ be the (unique) singular point of
the characteristic foliation of $\Dcal_1$ and denote $\Dcal_1^* = \Dcal_1 \setminus \{e\}$. Recall that $e$ is a
nicely elliptic singularity. Following \cite{char2}, consider for each $J \in \jcal(\xi,d\lambda|_\xi)$ the boundary
value problem
\begin{equation}\label{bvp}
\left\{
\begin{aligned}
 & \util = (a,u) \in C^\infty(\D,\R\times M) \\
 & \bar\partial_{\jtil}(\util) = 0 \\
 & u(S^1) \subset \Dcal_1 \setminus \{e\} \text{ and } a|_{S^1} \equiv 0 \\
 & u|_{S^1} \text{ winds once positively around } e \text{ in } \Dcal_1 \\
 & \util \text{ is an embedding} \\
 & \util : (\mathbb{D},S^1) \rightarrow (\R\times M, \{0\} \times \Dcal_1^*) \text{ is homotopic to } e
\end{aligned}
\right.
\end{equation}
and set
\begin{equation}\label{solbvp}
 \Mcal(J) := \{ \util \in C^\infty(\D,\R\times M) : \util \text{ solves (\ref{bvp})} \}.
\end{equation}
Here $\jcal(\xi,d\lambda)$ denotes the set of $d\lambda$-compatible complex structures on $\xi$ and $\jtil$
is given by (\ref{almcpxstr}).


\begin{theorem}[Hofer, Wysocki and Zehnder]\label{bishop}
Let the closed $3$-manifold $M$ be equipped with a tight contact form $\lambda$. Assume every contractible closed
Reeb orbit $\hat P$ is non-degenerate, $c_1(\xi)$ vanishes along $\hat P$ and $\mu_{CZ}(\hat P) \geq 3$. Let $P =
(x,T)$ be an unknotted, simply covered periodic Reeb orbit satisfying $sl(P,disk) = -1$. Suppose $\Dcal_1$ is an
embedded disk spanning $P$ such that its characteristic foliation has precisely one positive nicely elliptic
singular point $e$, and that $\Dcal_1 \setminus x(\R)$ contains no periodic Reeb orbits. There exists $J \in
\jcal(\xi,d\lambda|_\xi)$ for which the following holds. The disk $\Dcal_1$ can be smoothly perturbed on an
arbitrarily small neighborhood of $e$ so that $e$ still is the only (nicely elliptic) singularity of the
characteristic foliation and, moreover, there exists a smooth $1$-parameter family $\{\util^t =
(a^t,u^t)\}_{t\in(0,1)}\subset \Mcal(J)$ satisfying:
\begin{enumerate}
 \item $\util^t$ converges in $C^\infty$ to the constant map $(0,e)$ as $t\rightarrow 0$.
 \item There exists $\eta>0$ such that $$ \limsup_{t\rightarrow 1} \left[ \sup \{ |d\util^t(z)| : 1-\eta \leq
     |z| \leq 1 \} \right] < \infty. $$ Here the norms are taken with respect to euclidean metric on $\D$ and to
     any $\R$-invariant riemannian metric on $\R \times M$.
 \item Given any $t_n \rightarrow 1^-$ there exists a subsequence, still denoted $t_n$, such that if we define $$ \Gamma :=
     \{ z \in \D \mid \exists n_j\to \infty \text{ and } \{z_j\} \text{ such that } z_j \rightarrow z \text{ and } |d\util^{t_{n_j}}(z_j)|
     \rightarrow \infty \} $$ then $\#\Gamma = 1$ and $\Gamma \subset \interior{\D}$. Moreover, one can find a
     sequence $g_n \in \mob(\D)$ such that $\util^{t_n} \circ g_n \rightarrow f_P$ in
     $C^\infty_{loc}(\D\setminus\{0\},\R\times M)$ where $f_P$ is the map
     $$ f_P(z) = \left( \frac{T}{2\pi} \log|z| , x\left( \frac{T}{2\pi} \arg z \right) \right). $$ In
     particular, the loops $u^{t_n} \circ g_n (e^{i2\pi t})$ converge to $x_T$ in $C^\infty(S^1,M)$.
 \item $u^t(\D) \cap x(\R) = \emptyset, \ \forall t \in (0,1)$.
\end{enumerate}
\end{theorem}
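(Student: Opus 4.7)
The plan is to implement the classical Bishop disk-filling scheme of~\cite{char1,char2} with boundary condition $\Dcal_1$ tailored by Theorem~\ref{convenientdisk} and Subsection~\ref{lastpert}. The novelty compared to~\cite{char1,char2} is that the transversality of the Reeb vector field to $\Dcal_1$ near $\partial\Dcal_1$ (Proposition~\ref{perturbdisk}) together with the absence of periodic orbits in $\Dcal_1 \setminus x(\R)$ will force the bubbling-off at $t\to 1^-$ to produce a single half-cylinder over $P$, leaving no room for higher levels of a holomorphic building.

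The first step is the local construction. Perturbing $\Dcal_1$ on an arbitrarily small neighborhood of $e$ one arranges that, in Darboux-like coordinates centered at $e$, both $\Dcal_1$ and $\lambda$ have a standard model form compatible with the nicely-elliptic hypothesis. Choosing $J \in \jcal(\xi,d\lambda|_\xi)$ so that $\jtil$ matches the standard complex structure in these coordinates, Bishop's theorem~\cite{bishop} (as implemented in~\cite{93}) produces a smooth family $\{\util^t\}_{t\in(0,\epsilon)}\subset\Mcal(J)$ of embedded disks shrinking to $(0,e)$ as $t\to 0^+$. This already gives property~(1), and the maximum principle applied to the harmonic function $a^t$ (with vanishing boundary values) forces $a^t\leq 0$, so the disks lie in $(-\infty,0]\times M$.

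The second step is continuation of the family. Automatic transversality for the totally real boundary value problem (\ref{bvp}) shows that the linearized Cauchy--Riemann operator is Fredholm of index~$2$ and surjective at every $\util^t$: the projected disks $u^t$ are immersions transverse to the Reeb vector field, by the similarity principle combined with a $\wind_\pi = 0$ count. Modulo the $\R$-action this cuts out a $1$-parameter family that extends by the implicit function theorem as long as uniform $C^1$-bounds hold. The Reeb-transversality of $\Dcal_1$ near $\partial\Dcal_1$ (Proposition~\ref{perturbdisk}) rules out boundary bubbling and yields uniform estimates on $|d\util^t|$ in a collar of $S^1$, giving property~(2). One obtains a maximal family $\{\util^t\}_{t\in(0,1)}$ such that interior blow-up of $|d\util^t|$ must occur along some sequence $t_n\to 1^-$.

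The third step, and the main obstacle, is the bubbling-off analysis at $t\to 1^-$. For any $t_n\to 1^-$, the $d\lambda$-energy is bounded by $T$, so Lemma~\ref{beforeclaim} shows that the blow-up set $\Gamma \subset \interior{\D}$ is finite and non-empty. Soft rescalings at each point of $\Gamma$ yield finite-energy planes asymptotic to contractible Reeb orbits of period $\leq T$, which by hypothesis are non-degenerate with $c_1(\xi)$ vanishing along them and $\mu_{CZ}\geq 3$. The unrescaled limit $\vtil:\D\setminus\Gamma\to\R\times M$ is a finite-energy map with boundary on $\{0\}\times\Dcal_1^*$ and negative punctures at $\Gamma$. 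To conclude, I would apply Lemma~\ref{notdyn} (interpreted by doubling $\vtil$ across its totally real boundary, a standard trick of~\cite{char1,char2}): the dynamical convexity along contractible orbits discards all interior negative punctures, forcing $\pi\cdot dv\equiv 0$. By Lemma~\ref{zerodlambda} the map $\vtil$ projects onto a Reeb trajectory contained in $\Dcal_1$, which by Subsection~\ref{lastpert} must be $x(\R)$ itself; a $\wind_\pi \geq 0$ count then yields $\#\Gamma = 1$. Choosing $g_n \in \mob(\D)$ concentrating at this unique blow-up point and normalizing by $\R$-translations so that $(a^{t_n}\circ g_n)(1/2) = 0$ produces the convergence $\util^{t_n}\circ g_n \to f_P$ of property~(3). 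Finally, property~(4) follows from positivity of intersections: any intersection of $u^{t_0}(\D)$ with $x(\R)$ would correspond to an interior intersection of $\util^{t_0}$ with the trivial $\jtil$-holomorphic cylinder $\R\times x(\R)$, hence persist under small deformations of $t$; but $u^t(\D)\to\{e\}\not\ni x(\R)$ as $t\to 0^+$, a contradiction.
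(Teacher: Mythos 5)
The paper does not prove Theorem~\ref{bishop}: it attributes it to Hofer, Wysocki and Zehnder and explicitly refers the reader to~\cite{char2} for the ``very delicate'' arguments. So there is no in-paper proof to compare against. Your sketch follows the right general scheme (Bishop disk-filling with the boundary disk tailored by Theorem~\ref{convenientdisk} and Subsection~\ref{lastpert}), but the pivotal Step~3 has a genuine gap and there are two concrete errors involving the $\R$-action.

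The gap: Lemma~\ref{notdyn} is stated for finite-energy spheres $\wtil:\C\setminus\hat\Gamma\to\R\times M$ with one distinguished positive puncture at $\infty$; the asymptotic winding estimate of Lemma~\ref{negpun} at $\infty$ is an essential input. Your limit $\vtil:\D\setminus\Gamma\to\R\times M$ has a totally real boundary on $\{0\}\times\Dcal_1^*$ instead of a positive puncture, and the proposed ``doubling across the totally real boundary'' does not convert one into the other: the reflection technique of~\cite{char1,char2} is a local device (flatten the boundary condition in coordinates, apply the similarity principle near $S^1$), not a global construction, because there is no anti-holomorphic involution of $(\R\times M,\jtil)$ fixing $\{0\}\times\Dcal_1^*$. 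The doubled map is therefore not $\jtil$-holomorphic and there is no puncture at which to run the argument. What replaces the positive-puncture winding in~\cite{char2} is a Maslov-type count along $v(S^1)$ controlled by the characteristic foliation of $\Dcal_1$ (one positive nicely elliptic point, boundary winding~$1$); your ``$\wind_\pi\geq 0$ count'' gestures at this, but deriving that inequality for the boundary problem and showing it forces $\pi\cdot dv\equiv0$, $\#\Gamma=1$, and convergence to $f_P$ is precisely the delicate content deferred to~\cite{char2}. As written the step does not go through.

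The two $\R$-action errors: in Step~2 you quotient ``modulo the $\R$-action,'' but the boundary condition $a|_{S^1}\equiv0$ in (\ref{bvp}) destroys the $\R$-translation symmetry entirely --- translating any nonconstant solution takes it out of $\Mcal(J)$. The residual symmetry of (\ref{bvp}) is $\mob(\D)$ acting by domain reparametrization, and it is that group one must mod out. Consistently, in Step~3 the normalization ``by $\R$-translations so that $(a^{t_n}\circ g_n)(1/2)=0$'' is both unavailable and impossible: by the strong maximum principle $a^{t_n}<0$ on $\interior{\D}$, and $g_n(1/2)\in\interior{\D}$, so $a^{t_n}(g_n(1/2))<0$ always. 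The $\R$-level is already pinned down by the boundary condition; the Möbius maps $g_n$ serve only to reparametrize the domain so the blow-up point goes to $0$ and the boundary loops are suitably marked.
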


Here $\mob(\D)$ denotes the group of holomorphic diffeomorphisms of $\D$. The arguments for proving
Theorem~\ref{bishop} are very delicate and we refer to~\cite{char2} for details.

\subsection{Obtaining the fast plane}

Suppose now we are under the assumptions of Theorem~\ref{existfast}. Let $\Dcal_1$ be the smooth embedded disk
spanning the orbit $P = (x,T)$ obtained by Theorem~\ref{convenientdisk}. As explained in Subsection~\ref{lastpert}
we can assume, in addition and without loss of generality, that $\Dcal_1$ contains no periodic Reeb trajectories
other than $x(\R)$. Consequently we can apply Theorem~\ref{bishop} to obtain a $1$-parameter family
$\{\util^t\}_{t\in(0,1)}$ of solutions of (\ref{bvp}). Select a sequence $t_n \rightarrow 1^+$ and denote
$$\util^{t_n} = \util_n = (a_n,u_n).$$ Theorem~\ref{bishop} tells us that, after selecting a subsequence, we can
assume there exists $g_n \in \mob(\D)$ such that $\util_n \circ g_n \rightarrow f_P$ in $C^\infty_{loc}
(\D\setminus\{0\},\R\times M)$. Replacing $\util_n$ by $\util_n \circ g_n$ we assume $g_n = id \ \forall n$.

In the following we denote $f_P(z) = (d,w) \in \R \times M$ and follow~\cite{fols} closely. The bubbling-off point
$0$ has a well-defined mass
\[
 m(0) = \lim_{\epsilon \downarrow 0} \lim_{n\uparrow+\infty} \int_{B_\epsilon(0)} u_n^*d\lambda.
\]
This limit exists since
\[
 m_\epsilon(0) = \lim_{n\uparrow+\infty} \int_{B_\epsilon(0)} u_n^*d\lambda = \lim_{n\uparrow+\infty} \int_{\partial B_\epsilon(0)}u_n^*\lambda = \int_{\partial B_\epsilon(0)}w^*\lambda
\]
is a continuous, non-decreasing and positive function of $\epsilon$. This shows $m(0)=T$. If we define
$\{z_n\}\subset\D$ by
\begin{equation}\label{sequencezn}
a_n(z_n)=\inf_{\D}a_n
\end{equation}
then $z_n\rightarrow0$.
Choose $\delta_n>0$ so that
\[
 \int_{B_{\delta_n}(z_n)}u_n^*d\lambda = m(0)-\frac{\gamma}{2} = T - \frac{\gamma}{2} >0
\]
where $\gamma$ is the constant (\ref{gamma}). It follows easily from the definition of $m(0)$ that $\delta_n
\rightarrow 0^+$. Now define
\[
  \vtil_n(z) = (b_n(z),v_n(z)) := \left(a_n(z_n+\delta_nz)-a_n(z_n+2\delta_n),u_n(z_n+\delta_nz)\right)
\]
for $z\in B_{\delta_n^{-1}(1-|z_n|)}(0)$. Consider
\[
 \hat \Gamma := \{ z \in \C : \exists n_j \to \infty \text{ and } \zeta_j \rightarrow z \text{ such that } |d\vtil_{n_j}(\zeta_j)| \rightarrow \infty \}.
\]
Then $\hat\Gamma \subset \D$ and, up the choice of a subsequence, we may assume $\#\hat\Gamma < \infty$. Thus $\{\vtil_n\}$ has uniform gradient bounds on compact subsets of $\C\setminus\hat{\Gamma}$. We have $C^1_{loc}$-bounds since $\vtil_n(2)\in\{0\}\times M$. Standard elliptic boot-strapping arguments give  $C^\infty_{loc}$-bounds for the sequence $\vtil_n$ on $\C\setminus\hat{\Gamma}$. A particular subsequence, again denoted $\{\vtil_n\}$, must have a $\jtil$-holomorphic limit
\begin{equation}\label{limitv}
 \vtil = (b,v) : \C\setminus\hat{\Gamma} \rightarrow \R\times M
\end{equation}
in $C^\infty_{loc}(\C\setminus\hat{\Gamma},\R\times M)$ satisfying
\begin{gather*}
 E(\vtil) \leq \limsup E(\util_n) \leq area_\lambda(\Dcal_1) < \infty,
\end{gather*}
where
\[
 area_\lambda(\Dcal_1) = \sup \left\{ \left| \int_U d\lambda \right| : U \subset \Dcal_1 \text{ is open} \right\}.
\]
The following important lemma can be extracted from~\cite{char2}. Note that very similar arguments were used to
prove Lemma~\ref{round1} above.

\begin{lemma}[Hofer, Wysocki and Zehnder]\label{propsvtil}
The map $\vtil$ (\ref{limitv}) satisfies the following properties:
\begin{enumerate}
 \item $E(\vtil)>0$ and $\int_{\C\setminus\hat\Gamma} v^*d\lambda>0$.
 \item All punctures $z\in\hat{\Gamma}$ are negative and $\infty$ is the unique positive puncture. If
     $\hat{\Gamma}\not=\emptyset$ then $0\in\hat{\Gamma}$.
 \item $\vtil$ is asymptotic to $P=(x,T)$ at the puncture $\infty$.
\end{enumerate}
\end{lemma}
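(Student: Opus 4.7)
The plan is to parallel the bubbling-off analysis of Lemma~\ref{round1}, adapted to the Bishop-family rescaling $\vtil_n$. The key built-in identities are $b_n(2)=0$, $b_n(0)=\inf_\C b_n$, and $\int_{B_1(0)} v_n^*d\lambda = T - \gamma/2$, together with the total-mass convergence $\int_\C v_n^*d\lambda \to T$ (inherited from $\int_\D u_n^*d\lambda \to T$ via Stokes and the convergence of the loops $u_n(S^1)$ to $x(\R)$).

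For (1), I would first localize $\hat\Gamma$: since the excess mass outside $B_1(0)$ tends to $\gamma/2$ while every bubble point absorbs mass at least $\gamma_1 > \gamma$, no bubble can lie outside $\cl{B_1(0)}$. Next pick $R>1$ with $\partial B_R(0) \cap \hat\Gamma = \emptyset$; by $C^\infty$-convergence on a neighborhood of this circle and Stokes, $\int_{\partial B_R(0)} v^*\lambda = \lim_n \int_{B_R(0)} v_n^*d\lambda \geq T - \gamma/2 > 0$, which gives $E(\vtil)>0$. To rule out $\int_{\C\setminus\hat\Gamma} v^*d\lambda = 0$, I would invoke Lemma~\ref{zerodlambda}: this would force $\vtil = f_{(\hat x, \hat T)}\circ p$ for a polynomial $p$, but then the $d\lambda$-mass of $v_n$ would have to concentrate entirely at the zeros of $p$, which is incompatible with the quantitative estimate on $\int_{\partial B_R(0)} v^*\lambda$ via the sign/winding analysis used in STEP 1 of Lemma~\ref{round1}.

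For (2), I would use that $v_n^*d\lambda \geq 0$ from $\jtil$-holomorphicity, so $\int_{\partial B_\epsilon(z^*)} v_n^*\lambda = \int_{B_\epsilon(z^*)} v_n^*d\lambda \geq 0$ with the counterclockwise orientation, for each $z^* \in \hat\Gamma$ and each small $\epsilon > 0$ with $\partial B_\epsilon(z^*)$ disjoint from $\hat\Gamma$. Passing to the limit in $n$ and then $\epsilon \to 0$ yields $m(z^*) \geq \gamma_1 > 0$. Under the chart $z = e^{-2\pi(s+it)}$ centered at $z^*$, the counterclockwise circle in $\C$ corresponds to the clockwise traversal of the $\{s\}\times S^1$ circles, so by Definition~\ref{defsymp} this forces $z^*$ to be a negative puncture. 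The analogous computation at $\infty$, using the chart $z = e^{2\pi(s+it)}$ (counterclockwise) together with the bound from (1), shows $\infty$ is positive. For the claim $0 \in \hat\Gamma$ whenever $\hat\Gamma \neq \emptyset$, I would argue by contradiction: if $0 \notin \hat\Gamma$, then $b(0) = \lim_n b_n(0)$ is finite; but $b_n(z)\geq b_n(0)$ for every $z$ in the domain of $v_n$ by the minimum property of $z_n$, so $C^\infty_{loc}$-convergence gives $b(z) \geq b(0)$ throughout $\C \setminus \hat\Gamma$; at any negative puncture $z^* \in \hat\Gamma$, however, $b \to -\infty$ as $z \to z^*$ by Theorem~\ref{thm93} together with Definition~\ref{behavior}, contradicting the lower bound.

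For (3), Theorem~\ref{thm93} provides an asymptotic periodic orbit $P' = (x', T')$ for $\vtil$ at $\infty$, whose period $T' = m(\infty) = \lim_{R \to \infty} \int_{\partial B_R(0)} v^*\lambda$ lies in $[T - \gamma/2, T]$ by the Stokes computation in (1) and the total mass bound. Since $\gamma < \gamma_2$, the only contractible-orbit period in this interval is $T$, so $T' = T$. Exactly as in STEP 2 of Lemma~\ref{round1}, Lemma~\ref{longcyl} applied to the long cylindrical ends of $\util_n$ transported to $\vtil_n$ via the rescaling forces the loops $v_n(Re^{i2\pi t})$ for large $R$ to lie in the connected component $\W_1$ of the loop-space neighborhood containing $x_T$; passing to the limit gives $x' = x$. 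The main obstacle, in my estimation, is ruling out $\int v^*d\lambda = 0$ in (1): the case in which $\vtil$ would be a branched cover of a trivial cylinder requires combining Lemma~\ref{zerodlambda} with the specific winding information coming from the Bishop-family normalization; once this is handled, the remaining steps amount to careful bookkeeping of mass and signs.
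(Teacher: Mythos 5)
Your treatments of parts (2) and (3) and of the claim $0\in\hat\Gamma$ are sound and essentially reproduce the paper's approach: the paper proves $0\in\hat\Gamma$ exactly as you do (using the minimum property of $b_n$ at $0$ together with $b\to-\infty$ at negative punctures), and for the rest of (2)--(3) the paper simply cites the standard ``soft-rescaling'' from~\cite{fols} while you mimic STEPs 1--3 of Lemma~\ref{round1}, which is a fair reconstruction.

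The weak point is your argument for $\int_{\C\setminus\hat\Gamma}v^*d\lambda>0$, and it is exactly where you yourself flag trouble. After invoking Lemma~\ref{zerodlambda}, asymptotics at $\infty$ force $\hat x=x$, $k\hat T=T$; simplicity of $P$ gives $\deg p=1$, so $p(z)=Az+D$ and $\hat\Gamma=\{-D/A\}\subset\cl{\D}$. To finish you need $-D/A$ to lie \emph{strictly} inside $\D$, so that the trivial-cylinder formula gives $\int_{|z|=1}v^*\lambda=T$, which then contradicts the built-in normalization
\[
\int_{|z|=1}v_n^*\lambda=\int_{B_{\delta_n}(z_n)}u_n^*d\lambda=T-\tfrac{\gamma}{2}.
\]
Your localization $\hat\Gamma\subset\cl{B_1(0)}$ does not rule out $|-D/A|=1$, where the period integral over $|z|=1$ is no longer controlled. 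The paper closes this by first establishing $0\in\hat\Gamma$, which forces $D=0$ and hence $\hat\Gamma=\{0\}\subset\interior{\D}$, and only then derives the contradiction. Since you prove $0\in\hat\Gamma$ as part of (2), \emph{after} attempting (1), the argument as ordered has a genuine gap: the ``sign/winding analysis of STEP 1 of Lemma~\ref{round1}'' that you cite concerns negativity of punctures, not this boundary-of-the-disk issue; the relevant precedent is the displayed contradiction in STEP~5 of Lemma~\ref{round1}, which likewise required knowing the polynomial zero sits off $\partial\D$. Reordering so that $0\in\hat\Gamma$ is proved first (it does not depend on $\int v^*d\lambda>0$), and then quoting $D=0$ together with the normalization $T-\gamma/2$, repairs the proof and aligns it with the paper.
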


\begin{proof}
We only show here that $\int_{\C\setminus\hat\Gamma} v^*d\lambda>0$ and that $0\in\hat{\Gamma}$ if $\hat{\Gamma}
\not= \emptyset$. The other properties follow from arguments easily found in the literature, see~\cite{fols} for
example. This is the so-called ``soft-rescaling''. Arguing indirectly, suppose $\hat \Gamma \not= \emptyset$ and $0
\not\in \hat\Gamma$. Then $\exists z_0 \in \C \setminus \hat \Gamma : b(z_0) < b(0) - 2$ since every puncture in
$\hat\Gamma$ is negative. Hence $b_n(z_0) < b_n(0) - 1$ if $n$ is large. Consequently $a_n(z_n+\delta_nz_0) <
a_n(z_n) = \inf_\D a_n$ for large $n$, a contradiction. This shows $0 \in \hat \Gamma$ if
$\hat\Gamma\not=\emptyset$.

Let $\pi : TM \rightarrow \xi$ be the projection along the Reeb direction. If $\pi\cdot dv \equiv 0$ then we can
apply Lemma~\ref{zerodlambda} to find a non-constant polynomial $p$ and a periodic orbit $\hat P = (\hat x,\hat T)$
such that $p^{-1}(0)=\hat \Gamma$ and $\vtil = f_{(\hat x,\hat T)}\circ p$. Here $f_{(\hat x,\hat T)}(\est) = (\hat
Ts,\hat x(\hat Tt))$. Let $k:= \deg p$. Since $\vtil$ is asymptotic to the orbit $P$ at the unique positive puncture
$\infty$, we must have $(x,T) = (\hat x,k\hat T)$. Thus $x = \hat x$ and $k=1$. In fact, if $k\geq2$ then $T$ is not
the minimal period of $x$, contradicting our assumptions on $P$. Consequently we must have $p(z)=Az+D$ and $\hat
\Gamma=\{-D/A\}\subset\D$ for some $A\in\C^*$. Thus $D = 0$ since $0 \in \hat\Gamma$. We now get the following
contradiction
\begin{equation}
 T = \int_{|z|=1} v^*\lambda = \lim_{n\rightarrow+\infty} \int_{|z-z_n|=\delta_n} u_n^*\lambda = T - \gamma/2.
\end{equation}
\end{proof}

\begin{remark}
Further bubbling-off analysis would reveal an entire bubbling-off tree. The first level of this tree has only one
vertex representing the sphere $\vtil$. All this is showed in \cite{fols} via the so-called ``soft-rescaling''. We
shall not make any use of these facts.
\end{remark}

We fix a small tubular neighborhood $U$ of $x(\R)$ and a non-vanishing section
\begin{equation}\label{choiceclass}
  \begin{array}{ccc}
    Z : U \rightarrow \xi|_U & \text{satisfying} & x_T^*Z \in \beta_P.
  \end{array}
\end{equation}
Here $\beta_P$ is the special homotopy class of non-vanishing sections of $x_T^*\xi$ discussed in Remark~\ref{abc}.
Theorem~\ref{bishop} tells us that the sequence of loops $t \mapsto u_n(e^{i2\pi t})$ converges in $C^\infty$ to
$x_T$. Thus $u_n(S^1) \subset U$ for $n\gg1$. By Lemma~\ref{localclass} the sections $t \mapsto Z\circ u_n(e^{i2\pi
t})$ extend to non-vanishing sections
\begin{equation}\label{sectionsZn}
 Z_n : \D \rightarrow u_n^*\xi.
\end{equation}
As usual, $\pi : TM \rightarrow \xi$ denotes the projection along the Reeb direction.

\begin{lemma}\label{pidunnotvanish}
If $n$ is large enough then the sections $\pi \cdot du_n$ do not vanish on $\D$.
\end{lemma}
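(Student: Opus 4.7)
By Remark~\ref{locallycr}, $\pi\cdot du_n$ satisfies a perturbed Cauchy-Riemann equation, so either vanishes identically or has only isolated zeros counted with positive local multiplicity. The first alternative is excluded because $\util_n$ is embedded with boundary loop winding once around $e$ in $\Dcal_1$ (the boundary problem \eqref{bvp}), so the image cannot be contained in an $\R$-invariant trivial cylinder. Hence the algebraic count $\wind_\pi(\util_n)\ge 0$ is a well-defined non-negative integer, and it suffices to show $\wind_\pi(\util_n)=0$ and to rule out zeros on $\partial\D$.

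\emph{Absence of boundary zeros.} For $n$ large, $u_n|_{\partial\D}\to x_T$ in $C^\infty$ by Theorem~\ref{bishop}(3), and $u_n(\D)\cap x(\R)=\emptyset$ by Theorem~\ref{bishop}(4); hence $u_n(\partial\D)\subset\OO\setminus\partial\Dcal_1$ for the neighborhood $\OO$ produced by Theorem~\ref{convenientdisk}, on which $R\notin T\Dcal_1$. Since $u_n$ is an embedding, $\partial_t u_n|_{\partial\D}\in T\Dcal_1\setminus\{0\}$; if $\pi\cdot\partial_t u_n$ vanished, then $\partial_t u_n\in\R R\cap T\Dcal_1=\{0\}$, a contradiction.

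\emph{Interior count via boundary winding.} Using the non-vanishing section $Z_n$ of $u_n^*\xi$ from \eqref{sectionsZn}, standard degree theory combined with the previous step gives
\[
 \wind_\pi(\util_n)+1=\wind\bigl(\pi\cdot du_n(\partial_r)|_{\partial\D},\,Z_n|_{\partial\D},\,J\bigr),
\]
where the boundary Cauchy-Riemann equations (using $a_n|_{\partial\D}\equiv 0$) ensure $\partial_r u_n|_{\partial\D}\in\xi$. To analyze this winding, compare $\util_n$ with the soft-rescaled sphere $\vtil$ of \eqref{limitv}: by Lemma~\ref{propsvtil}, $\vtil$ is asymptotic to $P$ at the positive puncture $\infty$ with $\pi\cdot dv\not\equiv 0$. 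Via $C^\infty_{loc}$-convergence of $\vtil_n\to\vtil$ and the asymptotic analysis of Theorem~\ref{p12} at $\infty$, for $R$ large and $n$ large the zeros of $\pi\cdot du_n$ in the small disk $B_{\delta_n R}(z_n)$ correspond to the zeros of $\pi\cdot dv$ in $B_R(0)$, while the boundary winding on $\partial\D$ equals $\wind_\infty(\vtil,\beta_P,\infty)$ plus the number of zeros of $\pi\cdot du_n$ in the annular region $\D\setminus B_{\delta_n R}(z_n)$.

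\emph{Main obstacle: $\vtil$ is a fast plane.} The remaining task is to show $\hat\Gamma=\emptyset$ and $\wind_\infty(\vtil,\beta_P,\infty)=1$. Non-emptiness of $\hat\Gamma$ is excluded by the dynamical convexity hypothesis $\mu_{CZ}(\hat P)\ge 3$ for every contractible orbit $\hat P$, combined with a winding-count argument in the spirit of Lemma~\ref{notdyn} applied to $\vtil$: the presence of a negative puncture would force $\wind_\pi(\vtil)<0$, contradicting positivity of the algebraic zero count. The delicate fast condition $\wind_\infty(\vtil,\beta_P,\infty)=1$ (equivalently $\wind_\pi(\vtil)=0$) has to be extracted from the boundary condition $u_n|_{\partial\D}\subset\Dcal_1^*$: parametrized via the map $h$ of Lemma~\ref{nicestrip}, whose defining property $\wind(d\pi_0\cdot h_r(1,t))=0$ was tailored for this purpose, the boundary constraint at $\partial\D$ selects, through matched asymptotic analysis of the linearized Cauchy-Riemann operator on the cylinder joining $\partial\D$ to the bubbling-off region, precisely the asymptotic eigenmode of $A_P$ with winding $1$ relative to $Z$. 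Combining with the identity of the previous paragraph and no zeros in the annular region (also a consequence of the matched asymptotics for $n$ large), we conclude $\wind_\pi(\util_n)=0$, which together with the first step proves $\pi\cdot du_n$ does not vanish anywhere on $\D$.
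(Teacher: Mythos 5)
Your proposal is logically circular and misses the paper's key device. The paper proves Lemma~\ref{pidunnotvanish} \emph{before} anything is known about the soft-rescaled limit $\vtil$ being fast, and then \emph{uses} Lemma~\ref{pidunnotvanish} as the essential ingredient in the proof of Lemma~\ref{winfty} (which establishes $\wind_\infty(\vtil,\delta,\infty)=1$), which in turn feeds into Lemma~\ref{notdyn} to show $\hat\Gamma=\emptyset$. Your plan reverses this dependency: you want to show $\hat\Gamma=\emptyset$ and $\wind_\infty(\vtil,\beta_P,\infty)=1$ first and then push the count back to $\util_n$. But to invoke Lemma~\ref{notdyn} you already need the bound $\wind_\infty(\vtil,\delta,\infty)\le 1$, and your only account of where that comes from (``matched asymptotic analysis of the linearized Cauchy-Riemann operator on the cylinder joining $\partial\D$ to the bubbling-off region'') is precisely the content of Lemma~\ref{winfty}, which in the paper depends on the lemma you are trying to prove. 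So the argument as written presupposes its conclusion.

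The actual proof is much more local and self-contained. Your first step (absence of boundary zeros, using $u_n(\partial\D)\subset\OO\setminus\partial\Dcal_1$ and $\R R\cap T\Dcal_1=\{0\}$ there) is essentially right, although the paper uses the strong maximum principle to get $\partial_\theta u_n\neq 0$ rather than the embeddedness of $u_n$, which amounts to the same. What is missing from your write-up is the computation that actually pins down the boundary winding without any appeal to $\vtil$: one compares $\pi\cdot\partial_\theta u_n|_{\partial\D}$ to the characteristic vector field $V$ of \eqref{field}. Since $V$ spans $\xi\cap T\Dcal_1$ away from $e$ and $\pi\cdot\partial_\theta u_n$ cannot be parallel to $V$ on $\partial\D$ (this again follows from the strong maximum principle, because parallelism would force $\lambda(\partial_\theta u_n)=0$), one gets $\wind(\pi\cdot\partial_\theta u_n,\,V\circ u_n,\,J)=0$. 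Then, because $V$ has a unique simple positive zero at $e$ and $Z$ extends to a non-vanishing section over $\Dcal_1$, one has $\wind(V\circ u_n,\,Z\circ u_n,\,J)=1$ for $n$ large. Combining with $\wind(\pi\cdot\partial_x u_n,\,\pi\cdot\partial_\theta u_n,\,J)=-1$ (Cauchy-Riemann) gives $\wind(\pi\cdot\partial_x u_n,\,Z_n,\,J)=-1+0+1=0$, which by degree theory and positivity of zeros kills all interior zeros. The role you attribute to Lemma~\ref{nicestrip} in selecting an eigenmode is a misreading: its only function here is to produce the neighborhood $\OO$ where $R\notin T\Dcal_1$, which you already used in the boundary step. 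You should replace your entire third and fourth paragraphs by the $V$-winding computation sketched above.
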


\begin{proof}
In view of Theorem~\ref{convenientdisk} there exists a neighborhood $\OO \subset \Dcal_1$ of $\partial \Dcal_1$ such
that $R_p \not\in T_p\Dcal_1$ for every $p \in \OO \setminus
\partial\Dcal_1$. There exists $n_0 \in \Z^+$ such that $n\geq n_0$ implies $u_n(S^1) \subset \OO$.
We can, of course, assume $\OO \subset U$. From now on we consider only $n\geq n_0$. For every $z\in S^1$ the linear
map $\pi\cdot du_n(z)$ does not vanish. In fact, $\pi\cdot du_n(z)$ has rank $0$ or $2$, since it satisfies
\begin{equation}\label{crpidun}
 \pi\cdot du_n(z) \cdot i = J \cdot \pi\cdot du_n(z).
\end{equation}
Denote $\partial_\theta u_n(z) = \left. \frac{d}{d\theta} \right|_{\theta=0} u_n(e^{i2\pi \theta}z)$. The strong
maximum principle tells us $\partial_\theta u_n(z) \not= 0$. If the rank is zero then $\partial_\theta u_n(z) \in \R
R \cap T_{u_n(z)}(\Dcal_1)$. This is a contradiction with $u_n(z) \in \OO$ and shows $\text{rank} \ \pi \cdot
du_n(z) = 2$, $\forall z\in S^1$.

Let $x+iy$ be usual Euclidean coordinates on $\D$. Since the $\pi \cdot du_n$ satisfy (\ref{crpidun}) we compute
\begin{equation}\label{wind1}
 \wind(t\mapsto \pi \cdot \partial_xu_n(e^{i2\pi t}), t\mapsto \pi \cdot \partial_\theta u_n(e^{i2\pi t}), J) = \wind(1, ie^{i2\pi t},i) = -1.
\end{equation}
Recall the section $V$ of $\xi|_{\Dcal_1} \cap T\Dcal_1$ from (\ref{field}). We claim $\pi \cdot
\partial_\theta u_n(e^{i2\pi t})$ and $V(u_n(e^{i2\pi t}))$ are not parallel, for every $t\in\R$.
In fact, suppose $\exists c\in\R$ such that $\pi\cdot\partial_\theta u_n(e^{i2\pi t}) = cV(u_n(e^{i2\pi t}))$ for
some $t\in\R$. Then
\[
 \begin{aligned}
  \pi \cdot &\left(\partial_\theta u_n(e^{i2\pi t}) - cV(u_n(e^{i2\pi t}))\right)=0 \\
  & \Rightarrow \partial_\theta u_n(e^{i2\pi t}) - cV(u_n(e^{i2\pi t})) \in T_{u_n(e^{i2\pi t})}\Dcal_1 \cap \R R = \{ 0 \} \\
  & \Rightarrow \partial_\theta u_n(e^{i2\pi t}) = cV(u_n(e^{i2\pi t})) \\
  & \Rightarrow \lambda(u_n(e^{i2\pi t})) \cdot \partial_\theta u_n(e^{i2\pi t}) = 0.
 \end{aligned}
\]
However, by the strong maximum principle, we have $\lambda(u_n(e^{i2\pi t})) \cdot \partial_\theta u_n(e^{i2\pi t})
> 0$. This is a contradiction. We proved
\begin{equation}\label{wind2}
 \wind(t\mapsto \pi \cdot \partial_\theta u_n(e^{i2\pi t}), t\mapsto V(u_n(e^{i2\pi t})) , J) = 0.
\end{equation}

The section $V$ of $\xi|_{\Dcal_1}$ has a unique simple positive zero inside $\Dcal_1$. By Lemma~\ref{localclass}
the section $Z|_{\partial\Dcal_1}$ extends to a non-vanishing section of $\xi|_{\Dcal_1}$. It follows from standard
degree theory that
\begin{equation}\label{wind3}
 \wind(t\mapsto V(x(Tt)), t\mapsto Z(x(Tt)), J) = 1.
\end{equation}
Consequently $\wind \left( t\mapsto V \circ u_n \left( e^{i2\pi t} \right) , t\mapsto Z \circ u_n \left( e^{i2\pi t}
\right) , J \right) \to 1$, proving
\begin{equation}\label{wind4}
 \wind \left( t\mapsto V \circ u_n \left( e^{i2\pi t} \right) , t\mapsto Z \circ u_n \left( e^{i2\pi t} \right) , J \right) = 1 \ \text{if } n\gg 1,
\end{equation}
because winding numbers are $\Z$-valued and $u_n\left( e^{i2\pi t} \right)$ converges in $C^\infty$ to $x(Tt)$. Now
recall the non-vanishing sections $Z_n$ of $u_n^*\xi$ (\ref{sectionsZn}) and compute, for $n\gg1$,
\[
 \begin{aligned}
  \wind & \left( t\mapsto \pi \cdot \partial_x u_n \left( e^{i2\pi t} \right) , t\mapsto Z_n \left( e^{i2\pi t} \right) ,J \right) \\
  &= \wind \left( t\mapsto \pi \cdot \partial_x u_n \left( e^{i2\pi t} \right) , t\mapsto \pi \cdot \partial_\theta u_n \left( e^{i2\pi t} \right) ,J \right) \\
  &+ \wind \left( t\mapsto \pi \cdot \partial_\theta u_n \left( e^{i2\pi t} \right) , t\mapsto V \circ u_n \left( e^{i2\pi t} \right) , J \right) \\
  &+ \wind \left( t\mapsto V \circ u_n \left( e^{i2\pi t} \right) , t\mapsto Z \circ u_n \left( e^{i2\pi t} \right) , J \right) \\
  &= - 1 + 0 + 1 = 0.
 \end{aligned}
\]
The last line follows from (\ref{wind1}), (\ref{wind2}) and (\ref{wind4}). This proves the algebraic count of zeros
of the section $\pi \cdot \partial_xu_n$ on $\D$ is zero. Since zeros count positively (this follows from
(\ref{crpidun})) we conclude $\pi \cdot \partial_xu_n$ never vanishes on $\D$, if $n\gg1$.
\end{proof}

We write $\hat\Gamma = \{z_2,\dots,z_N\}$ and $z_1 = \infty$. The map $\vtil$ was obtained by bubbling-off analysis
of the disks $\util_n$, following a standard procedure described in~\cite{fols}. Further bubbling-off analysis would
reveal an entire bubbling-off tree. By Lemma~\ref{uniqueextension} the section $Z\circ v$ (defined for $|z|$ large) extends to a special non-vanishing section $B$ of $v^*\xi$. Let $\delta$ be the homotopy class of $B$.

%
%
%
%
%
%

\begin{lemma}\label{winfty}
$\wind_\infty(\vtil,\delta,\infty) = 1$.
\end{lemma}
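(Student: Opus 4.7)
The plan is to combine a lower bound on $\wind_\infty(\vtil,\delta,\infty)$ coming from Gauss--Bonnet with a matching upper bound produced directly from the Bishop filling. The main obstacle will be justifying, for the upper bound, that $u_n$ stays inside the tubular neighborhood $U$ of $x(\R)$ on a large annular region of $\D$.

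First I would observe that $\pi\cdot dv$ never vanishes on $\C\setminus\hat\Gamma$: by Lemma~\ref{pidunnotvanish} the sections $\pi\cdot du_n$ are nowhere zero on $\D$, so after rescaling $\pi\cdot dv_n(z)=\delta_n\,\pi\cdot du_n(z_n+\delta_n z)$ is nowhere zero on its domain, and the $C^\infty_{loc}$-convergence $\vtil_n\to\vtil$ combined with the similarity principle (cf.\ Remark~\ref{locallycr}) rules out zeros of $\pi\cdot dv$ in the limit. Lemma~\ref{gauss} then gives $\wind_\pi(\vtil)=0$ and $\wind_\infty(\vtil)=1-\#\hat\Gamma$. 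Applying Lemma~\ref{negpun} at each negative puncture $z_j\in\hat\Gamma$ (available because every contractible orbit satisfies $\mu_{CZ}(\hat P)\geq 3$) gives $\wind_\infty(\vtil,\delta,z_j)\geq 2$, so summing produces the lower bound
\[
\wind_\infty(\vtil,\delta,\infty) \;=\; 1-\#\hat\Gamma+\sum_{j\geq 2}\wind_\infty(\vtil,\delta,z_j)\;\geq\;1+\#\hat\Gamma.
\]

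For the matching upper bound, a short Cauchy--Riemann computation yields, for any $R$ with $u_n(C_n(R))\subset U$ (where $C_n(R):=\{|w-z_n|=\delta_n R\}$),
\[
\wind\bigl(\pi\partial_s v_n|_{|z|=R},\,Z\circ v_n|_{|z|=R},\,J\bigr)\;=\;1+\wind\bigl(\pi\partial_x u_n|_{C_n(R)},\,Z\circ u_n|_{C_n(R)},\,J\bigr).
\]
Since $\pi\partial_x u_n$ and the extension $Z_n$ are both nowhere-zero sections of $u_n^*\xi$ on the contractible disk $\D$, their relative winding vanishes on every loop in $\D$; hence the right-hand winding equals $\wind(Z_n,\,Z\circ u_n,\,J)$ on $C_n(R)$, which is $0$ on $\partial\D$ by the very construction of $Z_n$. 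Granting that for $n$ and $R$ large the annulus between $\partial\D$ and $C_n(R)$ is mapped by $u_n$ into $U$, this relative winding is preserved and still equals $0$, giving $\wind(\pi\partial_s v_n|_{|z|=R},\,Z\circ v_n|_{|z|=R},\,J)=1$. Passing to the limit $n\to\infty$ at fixed large $R$, and then $R\to\infty$, yields $\wind_\infty(\vtil,\delta,\infty)\leq 1$. Combined with the lower bound this forces $\#\hat\Gamma=0$ and $\wind_\infty(\vtil,\delta,\infty)=1$.

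The hard step is the annulus claim. The outer portion is controlled by $\util_n\to f_P$ in $C^\infty_{loc}(\D\setminus\{0\})$, and the inner circle $C_n(R)$ itself sits in $u_n^{-1}(U)$ because $v_n|_{|z|=R}\to v|_{|z|=R}\subset U$ for $R$ large. The intermediate range of radii corresponds to the ``neck'' through which the bubble detaches, and controlling it requires a uniform (in $n$) closeness of the loops $v_n(|z|=R')$ to reparameterizations of $x_T$ for all $R'\in[R,\,(1-|z_n|)/\delta_n]$. This is a cylindrical-convergence statement of the type that follows from the exponential decay of $v$ at $\infty$ together with a small-$d\lambda$-energy long-cylinder estimate.
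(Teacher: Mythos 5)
Your upper bound has a gap at precisely the spot you flag, and the fix you sketch does not come cheaply. Lemma~\ref{longcyl} as stated applies to \emph{half-infinite} cylinders $[r,+\infty)\times S^1$ equipped with an actual asymptotic limit (hypothesis (3) of that lemma); your neck $\{R\delta_n\leq|z-z_n|\leq\epsilon\}$ is a \emph{finite} cylinder with no asymptotic limit, only two boundary loops each close to a reparametrization of $x_T$. Converting the small-$d\lambda$-energy condition into uniform containment of $u_n$ in the Martinet tube $U$ on the whole neck would require a finite long-cylinder estimate (both ends near the same orbit) that is neither stated nor proved in the paper, so as written your upper bound does not close.

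The paper avoids the issue entirely by moving \emph{inward} rather than \emph{outward}. Once $R$ and $n$ are large, the loop $u_n(|z-z_n|=R\delta_n)=v_n(|z|=R)$ is $C^\infty$-close to $x_T$, hence lies in $U$ and is homotopic to $x_T$ inside $U$; Lemma~\ref{localclass} then extends $Z\circ u_n$, restricted to this small circle, to a nowhere-zero section of $u_n^*\xi$ over the entire contractible disk $\{|z-z_n|\leq R\delta_n\}$. Since $Z_n$ is also nowhere-zero over that disk, the two sections have relative winding zero on the small circle, and no estimate on the neck is involved. The rest of the paper's computation uses only Lemma~\ref{pidunnotvanish}, which makes $\pi\cdot du_n$ and $Z_n$ nowhere-zero on all of $\D$, so the relative winding against $Z_n$ slides from the small circle to $\partial\D$, where (\ref{wind1})--(\ref{wind4}) give the value $1+0=1$. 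One further remark: your lower bound is essentially sound --- though you should make explicit that zeros of $\pi\cdot dv$, being isolated with positive local degree by the similarity principle, are stable under $C^\infty_{loc}$-convergence, so the nowhere-vanishing of $\pi\cdot dv_n$ forces $\pi\cdot dv$ nowhere zero --- but it is proving more than Lemma~\ref{winfty} asserts: the paper derives $\hat\Gamma=\emptyset$ separately, after the lemma, by combining it with Lemma~\ref{notdyn}, which needs only $\wind_\pi\geq0$.
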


\begin{proof}
We can assume $t\mapsto v(Re^{i2\pi t})$ converges to $t\mapsto x(Tt)$ in $C^\infty$, as $R\rightarrow+\infty$. Let
$z = re^{i2\pi \theta}$ be polar coordinates centered at $0$, and $z-z_n = \rho e^{i2\pi\varphi}$ be polar
coordinates centered at $z_n$. Here $z_n$ is the sequence defined in (\ref{sequencezn}). There exists $R_0 \gg 1$
such that $r\geq R_0$ implies $v(re^{i2\pi\theta}) \in U$, $\pi\cdot dv(re^{i2\pi\theta}) \not= 0$ and
$B(re^{i2\pi\theta}) = Z \circ v(re^{i2\pi\theta})$. This follows from Lemma~\ref{propsvtil}, Theorem~\ref{p1} and
from the asymptotic behavior described in Definition~\ref{behavior}. We compute for $R \gg R_0$
\[
 \begin{aligned}
  & \wind \left( \pi\cdot \left.\frac{d}{dr}\right|_{r=R} v \left( re^{i\theta} \right) , B \left( Re^{i\theta} \right) , J \right) \\
  & = \wind \left( \pi\cdot \left.\frac{d}{dr}\right|_{r=R} v \left( re^{i\theta} \right) , Z \circ v \left( Re^{i\theta} \right) , J \right) \\
  & = \lim_{n\rightarrow+\infty} \wind \left( \pi\cdot \left.\frac{d}{dr}\right|_{r=R} v_n \left( re^{i\theta} \right) , Z \circ v_n \left( Re^{i\theta} \right) , J \right) \\
  & = \lim_{n\rightarrow+\infty} \wind \left( \pi\cdot \left.\frac{d}{d\rho}\right|_{\rho=R\delta_n} u_n \left( z_n+\rho e^{i\varphi} \right) , Z \circ u_n \left( z_n+\delta_n Re^{i\varphi} \right) , J \right).
 \end{aligned}
\]
All windings are taken with respect to angular variables. \\

\noindent {\bf CLAIM:} If $n\gg1$ then
\[
 \begin{aligned}
  \wind & \left( \pi\cdot \left.\frac{d}{d\rho}\right|_{\rho=R\delta_n} u_n \left( z_n+\rho e^{i\varphi} \right) , Z \circ u_n \left( z_n+\delta_n Re^{i\varphi} \right) , J \right) \\
  & = \wind \left( \pi\cdot \left.\frac{d}{d\rho}\right|_{\rho=R\delta_n} u_n \left( z_n+\rho e^{i\varphi} \right) , Z_n \left( z_n+\delta_n Re^{i\varphi} \right) , J \right).
 \end{aligned}
\]
Here $Z_n$ are the sections (\ref{sectionsZn}).

\begin{proof}[Proof of CLAIM]
If $n\gg1$ then the loop $\varphi \mapsto u_n \left( z_n+R\delta_n e^{i\varphi} \right) = v_n \left( R e^{i\varphi}
\right)$ is arbitrarily $C^\infty$-close to the loop $\varphi \mapsto v \left( R e^{i\varphi} \right)$, which can be
made arbitrarily $C^\infty$-close to the loop $t\mapsto x(Tt)$ if $R$ is fixed large enough. By
Lemma~\ref{localclass} the section $\varphi \mapsto Z\left(u_n \left( z_n+R\delta_n e^{i\varphi} \right)\right)$
extends to a non-vanishing section of $u_n^*\xi|_{\{|z-z_n|\leq R\delta_n\}}$. Consequently it does not wind with
respect to $\varphi \mapsto Z_n \left( z_n+\delta_n Re^{i\varphi} \right)$.
\end{proof}

It follows from Lemma~\ref{pidunnotvanish} and from the above claim that
\[
 \begin{aligned}
  \lim_{n\rightarrow+\infty} \wind & \left( \pi\cdot \left.\frac{d}{d\rho}\right|_{\rho=R\delta_n} u_n \left( z_n+\rho e^{i\varphi} \right) , Z \circ u_n \left( z_n+\delta_n Re^{i\varphi} \right) , J \right) \\
  & = \lim_{n\rightarrow+\infty} \wind \left( \pi\cdot \left.\frac{d}{d\rho}\right|_{\rho=R\delta_n} u_n \left( z_n+\rho e^{i\varphi} \right) , Z_n \left( z_n+\delta_n Re^{i\varphi} \right) , J \right) \\
  & = \lim_{n\rightarrow+\infty} \wind \left( \pi\cdot \left.\frac{d}{dr}\right|_{r=1} u_n \left( re^{i\theta} \right) , Z_n \left( e^{i\theta} \right) , J \right) \\
  & = \lim_{n\rightarrow+\infty} \wind \left( \pi\cdot \left.\frac{d}{dr}\right|_{r=1} u_n \left( re^{i\theta} \right) , Z \circ u_n \left( e^{i\theta} \right) , J \right) \\
  & = \lim_{n\rightarrow+\infty} \wind \left( \pi\cdot \left.\frac{d}{dr}\right|_{r=1} u_n \left( re^{i\theta} \right) , \pi \cdot \partial_x u_n \left( e^{i\theta} \right) , J \right) \\
  & + \wind \left( \pi \cdot \partial_x u_n \left( e^{i\theta} \right) , Z \circ u_n \left( e^{i\theta} \right) , J \right) = 1+0 = 1.
 \end{aligned}
\]
\end{proof}

Let $\vtil$ be the finite-energy sphere obtained from Lemma~\ref{propsvtil}. If $\hat{\Gamma}\not=\emptyset$ then
Lemma~\ref{winfty} and Lemma~\ref{notdyn} provide a contradiction to $\mu_{CZ}(P_j) \geq 3 \ \forall j\geq 2$. We
showed $\hat{\Gamma}=\emptyset$. Consequently $\vtil$ is a finite-energy plane satisfying
$\wind_\infty(\vtil)\leq1$. By Lemma~\ref{gauss} we must have $\wind_\infty(\vtil)=1$. By Lemma~\ref{propsvtil},
$\vtil$ asymptotic to the orbit $P$ at the puncture $\infty$. We must now show that $\vtil$ is an embedding. To that
end we argue as in Subsection~\ref{endmain3}. The map $\vtil$ must be an immersion since $\wind_\pi(\vtil) =
\wind_\infty(\vtil) - 1 = 0$ implies $\pi\cdot dv$ does not vanish. Thus, if it is not an embedding, we find
self-intersections. Let $\Delta$ be the diagonal in $\C \times \C$ and consider
\[
 D = \{ (z_1,z_2) \in \C \times \C \setminus\Delta: \vtil(z_1) = \vtil(z_2)\}.
\]
If $D$ has a limit point in $\C\times\C \setminus \Delta$ then we find, using the similarity principle as
in~\cite{mcdsal}, a polynomial $p:\C\rightarrow\C$ of degree at least $2$ and a $\jtil$-holomorphic map
$f:\C\rightarrow \R\times M$ such that $\vtil = f\circ p$. This forces zeros of $d\vtil$, a contradiction. Thus $D$
is closed and discrete in $\C\times\C \setminus \Delta$. By stability and positivity of intersections of
pseudo-holomorphic immersions we find self-intersections of the maps $\util_n$ for large values of $n$ if $D \not=
\emptyset$. This is a contradiction since each $\util_n$ is an embedding. Theorem~\ref{existfast} is proved.


\section{Fredholm theory}\label{fredholm}

Our goal in this section is to prove Theorem~\ref{lemmafredholm}.


\subsection{The asymptotic analysis for $\bar\partial_0$}

In order to understand the asymptotic behavior of finite-energy surfaces near non-removable punctures we need the
fundamental analytical tools from~\cite{props1}. The next three statements can not be explicitly found in the
literature, but the proofs are completely contained in~\cite{props1}, see also~\cite{props4} and the appendix
of~\cite{sftcomp}. We include the arguments in the appendix for completeness.

We denote $Z = \R\times S^1$, $Z^+ = [0,+\infty)\times S^1$ and
\[
 J_0 = \begin{pmatrix} 0 & -I_{\R^n} \\ I_{\R^n} & 0 \end{pmatrix} \in \R^{2n \times 2n}
\]
where $n$ will be clear from the context. We always identify $S^1 = \R/\Z$.

\begin{theorem}\label{asympHWZ}
Fix $l\geq 3$. Let $S(s,t): Z^+ \rightarrow \R^{2k\times 2k}$ be $C^l$ and $N(t) : S^1 \rightarrow \R^{2k\times 2k}$
be smooth. Suppose $N(t)^T = N(t) \ \forall t$ and
\[
 \lim_{s\rightarrow +\infty} \sup_{t\in S^1} e^{ds} \norma{D^\gamma[S(s,t)-N(t)]} = 0 \ \forall |\gamma|\leq l
\]
for some $d>0$. Denote $L_N = -J_0\partial_t - N(t)$. If $\zeta(s,t) \in C^l(\Z^+,\R^{2k})$ satisfies
\[
  \begin{array}{ccc}
    \zeta_s+J_0\zeta_t+S\zeta = 0 & \text{and} & \lim_{s\rightarrow+\infty} \sup_{t\in S^1}e^{ds}\norma{\zeta(s,t)} = 0
  \end{array}
\]
then either $\zeta \equiv 0$ or the following holds: $\exists s_0>1$, $\mu<0$ and a smooth vector
$e:S^1\rightarrow\R^{2k}\setminus\{0\}$ satisfying $L_Ne = \mu e$ such that
\begin{equation}\label{asympbehaviorzeta}
 \zeta(s,t) = e^{\int_{s_0}^s\alpha(\tau)d\tau} \left( e(t) + R(s,t) \right)
\end{equation}
if $s\geq s_0$. The functions $\alpha$ and $R$ are $C^{l-2}$ and satisfy
\begin{equation}\label{remainder}
 \begin{aligned}
  & \lim_{s\rightarrow+\infty} \norma{D^j [\alpha(s)-\mu]} = 0 \ \forall j \leq l-2 \\
  & \lim_{s\rightarrow+\infty} \sup_t \norma{D^\gamma R(s,t)} = 0 \ \forall D^\gamma = \partial^{\gamma_1}_{s}\partial^{\gamma_2}_{t} \text{ with } |\gamma|\leq l-2.
 \end{aligned}
\end{equation}
\end{theorem}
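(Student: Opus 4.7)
The plan is to view the PDE as a non-autonomous ODE in Hilbert space and extract the asymptotic decay rate and profile from the normalized solution. Concretely, rewrite the equation as $\partial_s\zeta(s,\cdot) = L_{S(s)}\zeta(s,\cdot)$ with $L_{S(s)} := -J_0\partial_t - S(s,\cdot)$ acting on $L^2(S^1,\R^{2k})$; by hypothesis the family $L_{S(s)}$ converges in the $C^l$ topology to the self-adjoint operator $L_N$ at the exponential rate $e^{-ds}$. Formal solutions of $\partial_s\zeta = L_N\zeta$ are of the form $e^{\mu s} e(t)$ with $L_N e = \mu e$, so one expects $\zeta$ to look asymptotically like such a solution, with $\mu$ a negative eigenvalue determined by the decay hypothesis.

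Assuming $\zeta\not\equiv 0$, my first step is to show that $g(s) := \tfrac{1}{2}\|\zeta(s,\cdot)\|^2_{L^2(S^1)}$ is strictly positive for large $s$. If instead $g$ vanished along some sequence $s_n\to+\infty$, a Carleman-type unique continuation argument for the Cauchy--Riemann operator $\partial_s + J_0\partial_t + S$ (standard in this setting, compare~\cite{props1} and the appendix of~\cite{sftcomp}) would force $\zeta\equiv 0$. Once $g(s)>0$ for $s\ge s_1$, I would study the logarithmic derivative $\gamma(s) := g'(s)/g(s)$. Splitting $S = \tfrac{1}{2}(S+S^{T})+\tfrac{1}{2}(S-S^{T})$ and using $N=N^{T}$, one has $g'(s) = \langle L_{S(s)}\zeta,\zeta\rangle$; computing $g''(s)$ and applying Cauchy--Schwarz yields $\gamma'(s)\ge -Ce^{-ds}$ modulo an error controlled by the exponentially small antisymmetric part of $S-N$. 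Hence $\gamma$ is almost non-decreasing and converges to some $2\mu\in\R$, and the hypothesis $e^{ds}|\zeta(s,t)|\to 0$ forces $\mu<0$.

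Next I would normalize $\psi(s,\cdot) := \zeta(s,\cdot)/\sqrt{2g(s)}$, so that $\|\psi(s,\cdot)\|_{L^2}=1$ and $\partial_s\psi = L_{S(s)}\psi - \tfrac{1}{2}\gamma(s)\psi$. Elliptic estimates for the operator $\partial_s + J_0\partial_t + S + \tfrac{1}{2}\gamma$ give uniform $C^{l-1}$-bounds on $\psi$ on every cylinder $[s,s+1]\times S^1$. Passing to a subsequence $s_n\to+\infty$, one extracts $\psi(s_n)\to e$ in $C^{l-2}(S^1)$ and, taking the limit in the above equation (using $S\to N$ in $C^l$ and $\gamma\to 2\mu$), obtains $L_N e = \mu e$ with $\|e\|_{L^2}=1$. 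A spectral-projection argument based on the discreteness of $\sigma(L_N)$ and the exponential smallness of $L_{S(s)}-L_N$ then upgrades this to full convergence $\psi(s)\to e$ as $s\to+\infty$. Setting $\alpha(s) := \gamma(s)/2$ and $R(s,t) := \sqrt{2g(s_0)}\,\psi(s,t) - e(t)$, and using the identity $\sqrt{2g(s)} = \sqrt{2g(s_0)}\exp\int_{s_0}^s\alpha(\tau)d\tau$, I would obtain~\eqref{asympbehaviorzeta} after absorbing $\sqrt{2g(s_0)}$ into the normalization of $e$. Bootstrapping the equation for $\psi$ against the exponential convergence $S\to N$ would then yield the $C^{l-2}$-estimates for $R$ and $\alpha-\mu$ required by~\eqref{remainder}.

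The hardest step will be the identification of $\mu$ as an eigenvalue of $L_N$ together with promoting the subsequential convergence $\psi(s_n)\to e$ to full convergence of $\psi(s)$. Both rely on the spectral gap of $L_N$: membership $\mu\in\sigma(L_N)$ follows from the nontriviality of the limit $e$, while the projection of $\psi(s)$ onto the $L^2$-complement of the $\mu$-eigenspace must be controlled by a Gronwall-type inequality driven by the exponentially small perturbation $L_{S(s)}-L_N$ and by $\mathrm{dist}(\mu,\sigma(L_N)\setminus\{\mu\})>0$. The higher-regularity assertions in~\eqref{remainder} are then a routine iteration of elliptic estimates adapted to the exponential weight $e^{-ds}$.
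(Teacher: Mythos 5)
Your proposal follows the same overall scheme as the paper's proof (normalize $\zeta$, study the logarithmic derivative of $\|\zeta(s,\cdot)\|_{L^2}$, bootstrap elliptic estimates, project onto the $\mu$-eigenspace), and your almost-monotonicity observation $\gamma'(s)\geq -Ce^{-ds}$ actually simplifies the paper's argument: the paper derives exactly the lower bound $\alpha'\geq 2\|\xi_s\|_2^2 - 2o(s)\|\xi_s\|_2 - o(s)$ (with $\xi = \zeta/\|\zeta\|_2$, $\gamma = 2\alpha$) but then proves convergence of $\alpha$ by an oscillation argument that chooses a regular value $r\in(\liminf\alpha,\limsup\alpha)\setminus\sigma(L_N)$, whereas your version — complete the square to get almost-monotonicity, rule out $+\infty$ via $\|\zeta\|_{L^2}\to 0$, and use the hypothesis $e^{ds}|\zeta|\to 0$ to force $\mu<0$ — is more direct and equally rigorous.

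There is, however, one genuine gap in the passage to $L_Ne=\mu e$. You extract a subsequence $s_n\to\infty$ with $\psi(s_n)\to e$ and then claim that "taking the limit in the equation" $\partial_s\psi = (L_{S(s)}-\alpha)\psi$ yields $L_Ne=\mu e$. But this requires $\partial_s\psi(s_n,\cdot)\to 0$ in $L^2$, which does not follow from the subsequential convergence of $\psi(s_n,\cdot)$ alone: an arbitrary convergent subsequence of $\psi$ carries no information about $\partial_s\psi$ along it. The missing ingredient, which the paper supplies explicitly, is that $\liminf_{s\to\infty}\|\partial_s\psi(s,\cdot)\|_{L^2}=0$. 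This follows from your own lower bound on $\gamma'$: if $\|\partial_s\psi\|_{L^2}$ were bounded away from zero for large $s$, the term $2\|\xi_s\|_2^2$ would force $\alpha'\geq\delta>0$ eventually, so $\alpha\to+\infty$, contradicting $\alpha\to\mu<0$. One must therefore select the special subsequence along which $\partial_s\psi\to 0$ \emph{before} passing to the limit, and only then is $\mu\in\sigma(L_N)$ obtained. A related point: your claim that "elliptic estimates give uniform $C^{l-1}$-bounds on $\psi$" presumes uniform $C^{l-2}$-control of the coefficient $\gamma$; the paper establishes these bounds by a joint induction on $\psi$ and $\alpha$ in $W^{m,p}$ (its STEP 2), and your sketch should record this coupled bootstrap rather than treating the coefficient bound as automatic.
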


\begin{lemma}\label{megatech}
Let $K : Z^+\rightarrow \R^{2k \times 2k}$ and $X : Z^+ \rightarrow \R^{2k}$ be smooth functions satisfying
\begin{equation}\label{perturbedcr}
 X_s + J_0X_t + K X = 0.
\end{equation}
If
\begin{equation}\label{goodassumptions}
\begin{array}{ccc}
  \sup_{Z^+} \norma{D^\gamma K(s,t)} < \infty, \ \forall \gamma & \text{and} & \lim_{s\rightarrow+\infty} \sup_{t\in S^1} e^{ds}\norma{X(s,t)} = 0
\end{array}
\end{equation}
for some $d\geq0$ then $$ \lim_{s\rightarrow+\infty} \sup_{t\in S^1} e^{ds}\norma{D^\gamma X(s,t)} = 0 \ \forall
\gamma. $$
\end{lemma}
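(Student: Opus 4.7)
\textbf{Proof proposal for Lemma~\ref{megatech}.} The plan is to reduce to the case $d=0$ by a weighted substitution, and then run the standard elliptic bootstrap for the Cauchy–Riemann operator on translation-invariant cylinders of fixed size, exploiting the uniform bounds on $K$.

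First I would set $Y(s,t):=e^{ds}X(s,t)$. A direct computation shows
\[
 Y_s+J_0Y_t+(K-dI)Y=0 \quad\text{on } Z^+,
\]
and by~\eqref{goodassumptions} the matrix $\widetilde K:=K-dI$ still has all derivatives uniformly bounded on $Z^+$, while the hypothesis on $X$ becomes $\sup_{t}|Y(s,t)|\to 0$ as $s\to+\infty$. Thus I have reduced to the statement: if $\widetilde K$ has bounded derivatives and $Y$ solves $\bar\partial_0 Y+\widetilde K Y=0$ with $\|Y(s,\cdot)\|_{C^0(S^1)}\to 0$, then $\|D^\gamma Y(s,\cdot)\|_{C^0(S^1)}\to 0$ for every multi-index $\gamma$. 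Recovering the original statement then comes from expanding $D^\gamma Y$ with the Leibniz rule in the $s$-direction: one gets $e^{ds}D^\gamma X$ as a finite linear combination of the $D^\alpha Y$ with $|\alpha|\le|\gamma|$ (with constant coefficients depending on $d$), so $C^0$-decay of all $D^\alpha Y$ forces the desired $e^{ds}$-weighted $C^0$-decay of $D^\gamma X$.

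For the reduced statement I would use translation invariance of the operator $\bar\partial_0$. Fix $p>2$ and for each $s_0\ge 1$ set $\Omega_j:=[s_0-1+j/(k+2),\,s_0+2-j/(k+2)]\times S^1$ so that $\Omega_{j+1}\Subset\Omega_j\subset Z^+$ for $0\le j\le k+1$. The standard interior elliptic estimate for $\bar\partial_0$ on the cylinder (which on each fixed $\Omega_j$ has a universal constant independent of the base point $s_0$, by translation in $s$) yields
\[
 \|Y\|_{W^{j+1,p}(\Omega_{j+1})}\le C_j\bigl(\|\bar\partial_0 Y\|_{W^{j,p}(\Omega_j)}+\|Y\|_{L^p(\Omega_j)}\bigr).
\]
Using $\bar\partial_0 Y=-\widetilde K Y$ and the uniform bound $\|\widetilde K\|_{C^j(Z^+)}\le M_j$ from~\eqref{goodassumptions}, the product estimate gives $\|\widetilde K Y\|_{W^{j,p}(\Omega_j)}\le C'_j\|Y\|_{W^{j,p}(\Omega_j)}$. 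Iterating from $j=0$ up to $j=k$ produces
\[
 \|Y\|_{W^{k+1,p}(\Omega_{k+1})}\le C\,\|Y\|_{L^p(\Omega_0)},
\]
with $C$ independent of $s_0$. Since the measure of $\Omega_0$ is fixed and $\|Y(s,\cdot)\|_{C^0(S^1)}\to 0$, the right-hand side tends to $0$ as $s_0\to+\infty$.

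Choosing $k$ large enough that $W^{k+1,p}\hookrightarrow C^m$ on the two-dimensional domain $\Omega_{k+1}$ for a prescribed $m$, I conclude $\|Y\|_{C^m(\Omega_{k+1})}\to 0$; in particular $\sup_{t\in S^1}|D^\gamma Y(s_0,t)|\to 0$ for every $|\gamma|\le m$, and $m$ is arbitrary. Combined with the Leibniz step above this finishes the proof. The only genuinely delicate point is ensuring that the constants in the elliptic estimate can be taken uniform in $s_0$; this is automatic because $\bar\partial_0$ is translation invariant in $s$ and the chosen domains $\Omega_j$ are translates of fixed reference cylinders, so one proves the estimate once on the reference cylinders and transports it. No bespoke asymptotic analysis (such as Theorem~\ref{asympHWZ}) is needed—only uniform interior elliptic regularity plus Sobolev embedding.
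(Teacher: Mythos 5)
Your proposal is correct and follows essentially the same route as the paper: an elliptic bootstrap for $\bar\partial_0$ on translates of fixed-size cylinders, using the uniform bounds on $K$ and concluding by Sobolev embedding. The only difference is cosmetic — you first substitute $Y=e^{ds}X$ to reduce to the unweighted case and use nested subdomains, whereas the paper keeps the factor $e^{d\tau}$ outside the norm, multiplies by a translated cutoff $\beta^{\tau}$, and runs the induction on Sobolev order directly; both implementations rely on the same ingredients and yield identical conclusions.
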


\begin{lemma}\label{megatech2}
Let $X,h : Z^+ \rightarrow \R^{2k}$ be smooth functions satisfying
\begin{equation}\label{perturbedcr2}
 \begin{aligned}
  & X_s + J_0X_t = h \\
  \lim_{s\rightarrow+\infty} \sup_{t\in S^1} e^{ds}\norma{D^\gamma h(s,t)} & = 0, \ \forall \gamma \text{ and } \lim_{s\rightarrow+\infty} \sup_{t\in S^1} \norma{X(s,t)} = 0,
\end{aligned}
\end{equation}
for some $d>0$. Then $\exists r>0$ such that
\[
 \lim_{s\rightarrow+\infty} \sup_{t\in S^1} e^{rs}\norma{D^\gamma X(s,t)} = 0 \ \forall \gamma.
\]
\end{lemma}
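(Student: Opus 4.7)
The plan is to decompose $X = X_0 + Y$, where $X_0$ is a particular solution of $X_{0,s} + J_0 X_{0,t} = h$ that I will arrange to decay exponentially together with all its derivatives, and $Y := X - X_0$ then solves the homogeneous Cauchy--Riemann equation. To construct $X_0$, identify $\R^{2k} \cong \C^k$ so that $J_0$ acts as multiplication by $i$: the unbounded operator $L_0 := -J_0 \partial_t$ on $L^2(S^1, \R^{2k})$ has spectrum $2\pi \Z$, with kernel the constants. I would decompose $h(s, \cdot) = \sum_{n \in \Z} h_n(s) e^{i 2\pi n t}$ and solve the resulting scalar ODEs $a_n'(s) - 2\pi n\, a_n(s) = h_n(s)$ modewise, via the decaying branch of variation of constants:
\[
a_n(s) := \begin{cases}
 -\int_s^{\infty} e^{2\pi n(s-\tau)} h_n(\tau)\, d\tau & \text{if } n > 0, \\
 -\int_s^{\infty} h_n(\tau)\, d\tau & \text{if } n = 0, \\
 \phantom{-}\int_0^{s} e^{2\pi n(s-\tau)} h_n(\tau)\, d\tau & \text{if } n < 0.
\end{cases}
\]
Because $\sup_t |D^\gamma h(s,t)|$ decays at rate $e^{-ds}$ for every $\gamma$, Fourier decay of $h_n$ in $n$ (from smoothness in $t$) together with the integration-by-parts identity $a_n' = 2\pi n\, a_n + h_n$ yields rapid decay in $n$ of $a_n$ and all its $s$-derivatives. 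Summing gives a smooth $X_0 : Z^+ \to \R^{2k}$ with $\sup_t |D^\gamma X_0(s,t)| \leq C_\gamma e^{-rs}$ for every $r < d$ and every multi-index $\gamma$.

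With $X_0$ in hand, $Y := X - X_0$ satisfies $Y_s + J_0 Y_t = 0$ and $\sup_t |Y(s,t)| \to 0$. I would then apply Theorem~\ref{asympHWZ} with $S \equiv 0$ and $N \equiv 0$: either $Y \equiv 0$, or $Y(s,t) = \exp\bigl(\int_{s_0}^s \alpha(\tau)\, d\tau\bigr)(e(t) + R(s,t))$ with $\alpha(s) \to \mu$, where $\mu$ is a strictly negative eigenvalue of $L_0$, hence $\mu \leq -2\pi$. In either case $\sup_t |Y(s,t)| \leq C e^{-r's}$ for some $r' > 0$, and consequently $X = X_0 + Y$ decays exponentially in sup norm at rate $r := \min(r', d - \epsilon)$ for small $\epsilon$. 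To upgrade this $L^\infty$-decay of $X$ to exponential decay of every derivative, I would invoke Lemma~\ref{megatech} applied to $Y$ with $K \equiv 0$, which turns exponential $L^\infty$-decay of $Y$ into exponential decay of each $D^\gamma Y$; combined with the all-derivative exponential decay of $X_0$ built into its construction, this gives $\lim_{s \to \infty} \sup_t e^{rs} |D^\gamma X(s,t)| = 0$ for all $\gamma$ and some positive $r$.

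The main obstacle will be carrying out the construction of $X_0$ cleanly: one must justify the convergence of the Fourier sum and its $s$-differentiability by uniform-in-$n$ estimates, and the $n = 0$ mode lying in $\ker L_0$ requires the specific antiderivative $-\int_s^\infty h_0$ to enforce decay (the only place where the decay of $h_0$ is used rather than its boundedness). Once this technical construction is executed, the remainder of the argument is a direct invocation of the two preceding lemmas, and the particular rate $r$ is irrelevant --- only its positivity is asserted in the statement.
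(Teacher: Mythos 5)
Your decomposition $X = X_0 + Y$ with $X_0$ a decaying particular solution built modewise and $Y$ holomorphic is a genuinely different route from the paper's, which works directly with $X$ (after subtracting the $t$-average) and derives exponential decay from a second-order differential inequality for $g(s) = \tfrac12\|X(s,\cdot)\|_{L^2}^2$, namely $g'' \ge g - ce^{-2ds}$, followed by differentiating the equation to handle higher derivatives. Your approach is cleaner on the inhomogeneous side and would give a sharper rate (roughly $\min(d,2\pi)$, rather than the paper's unquantified $r$ obtained after shrinking $d$ below $\tfrac12$), but it has one genuine gap.

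The gap is the invocation of Theorem~\ref{asympHWZ} for $Y$. That theorem's hypothesis is $\lim_{s\to\infty}\sup_t e^{ds}|\zeta(s,t)| = 0$ \emph{for some $d>0$} --- it assumes exponential decay as input and produces the refined asymptotic formula. Your $Y = X - X_0$ is only known to satisfy $\sup_t|Y(s,t)| \to 0$ without any rate (since $X$ has no rate), so Theorem~\ref{asympHWZ} cannot be applied. Note that this is exactly the point of Lemma~\ref{megatech2}: it upgrades rateless decay to exponential decay, so appealing to a result whose hypothesis already contains exponential decay is where the argument becomes circular. Fortunately, for your homogeneous equation with $S \equiv N \equiv 0$ a direct replacement is easy: writing $Y(s,t) = \sum_n b_n(s)e^{2\pi i n t}$, the equation forces $b_n(s) = b_n(0)e^{2\pi n s}$, and $\sup_t|Y(s,\cdot)|\to 0$ forces $b_n(0)=0$ for all $n\ge 0$, so $Y$ has only negative modes and decays at rate $e^{-2\pi s}$. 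After this repair, the subsequent use of Lemma~\ref{megatech} on $Y$ (with $K\equiv 0$, now that exponential $L^\infty$-decay of $Y$ is established) and of your all-derivative estimates on $X_0$ completes the proof.
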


\subsection{The asymptotic behavior}\label{abehav}

Let $P = \left( x_0 , T_0 \right)$ be a periodic Reeb orbit with minimal period $T_{min}>0$. Set $k := T_0/T_{min}
\in \Z^+$ and write $P_{min} = \left( x_0 , T_{min} \right)$. We choose a Martinet tube $$ \Psi : U
\stackrel{\sim}{\rightarrow} \R/\Z \times B $$ for $P$, as described in Definition~\ref{martinettube}. Here $U$ is
an open tubular neighborhood of $x_0(\R)$ in $M$ and $B \subset \R^2$ is an open ball centered at the origin. We
equip $\R/\Z \times \R^2$ with coordinates $(\theta,z=(x,y))$. The contact form is represented as $f\lambda_0$,
where $\lambda_0 = d\theta + xdy$ and $f$ satisfies $f|_{\R/\Z \times \{0\}} \equiv T_{min}$ and $df|_{\R/\Z \times
\{0\}} \equiv 0$. The bundle $\xi|_{U}$ is framed by $\partial_x$ and $-x\partial_\theta +
\partial_y$. Setting $e_1 = f^{-1/2}\partial_x$ and $e_2 = f^{-1/2}(-x\partial_\theta +
\partial_y)$ then $\{e_1,e_2\}$ is a $d\lambda$-symplectic for $\xi|_{P_{min}}$. We denote by
\[
 \beta^\Psi_P \in \mathcal{S}_P
\]
the homotopy class of $t\in\R/\Z \mapsto \partial_x|_{(kt,0)}$. The Reeb vector field is locally written as
\begin{equation}\label{reeblocalrep}
 R = (R_1,R_2,R_3) = \frac{1}{f^2} \left( f+xf_x , f_y - xf_\theta , -f_x \right).
\end{equation}
Moreover, $d\lambda = T_{min} dx \wedge dy$ on $\xi|_{P_{min}}$. We lift the variable $\theta$ to the universal
cover $\R$. Since the Reeb flow $\phi_t$ preserves $\lambda$ and $\xi|_{(\theta,0)} \simeq 0 \times \R^2 \ \forall
\theta$ we can write
\begin{equation}\label{dphi}
 D\phi_{T_{min}t} (0,0) \simeq \begin{pmatrix} 1 & 0 \\ 0 & \varphi(t) \end{pmatrix}
\end{equation}
in these coordinates. Here $\varphi : \R \rightarrow \Sp(1)$ satisfies $\varphi(0) = I$ and represents the
linearized Reeb flow restricted to $\xi$ along the curve $x_0(T_{min}t)$. The matrix $D\phi_t$ satisfies
\[
 \frac{d}{dt} D\phi_t(0,0) = DR(T_{min}^{-1}t,0) D\phi_t(0,0).
\]
If we set $Y=(R_2,R_3)$ then $T_{min}D_2Y(t,0) = \dot \varphi \varphi^{-1}$. The periodic orbit $P$ is
non-degenerate if, and only if, $\det \left[ \varphi(k)-I \right] \not= 0$, that is, $\varphi^{(k)} \in \Sigma^*$
where $\varphi^{(k)}(t) = \varphi(kt)$. Moreover, $\mu_{CZ}\left(P,\beta^{\Psi}_P\right) = \mu(\varphi^{(k)})$.

Let $J$ be any $d\lambda$-compatible complex structure on $\xi$. Then $J$ can be represented by a $2\times 2$
matrix
\begin{equation}\label{jlocalrep}
 J(\theta,x,y) = \begin{bmatrix}
                  J_{11} & J_{12} \\
                  J_{21} & J_{22}
                 \end{bmatrix}
\end{equation}
using the frame $\{e_1,e_2\}$. $J$ induces $\jtil$ on $\R \times M$ by (\ref{almcpxstr}).

Suppose $\util = (a,u) : \left( \C,i \right) \rightarrow (\R \times M,\jtil)$ is a finite-energy plane. We write
$\util(s,t) = \util \left( \est \right)$. For the moment we only assume $\exists\sigma,\hat d \in \R$ such that
\begin{equation}\label{minimalassump}
 \begin{aligned}
  & \lim_{s\rightarrow +\infty} u(s,t) = x_0(T_0(t+\hat d)) \text{ in } C^0(S^1,M) \\
  & \lim_{s\rightarrow +\infty} \sup_{t\in S^1} \norma{a(s,t) - T_0s - \sigma} = 0.
 \end{aligned}
\end{equation}
Then $u(s,t) \in U$ for $s$ large enough and we have well-defined maps
\begin{equation}\label{loccomposition}
 \begin{aligned}
 & w (s,t) := \Psi \circ u (s,t)  = (\theta(s,t),z(s,t)); \ z(s,t) := (x(s,t),y(s,t)) \\
 & \wtil(s,t) = (a(s,t),w(s,t)).
 \end{aligned}
\end{equation}
Theorem~\ref{thm93} and (\ref{minimalassump}) imply that $u(s,t) \rightarrow x_0(T_0(t+\hat d))$ in
$C^\infty(S^1,M)$ as $s\rightarrow +\infty$.

We can write $Y = Dz$ where $D(\theta,z) = \int_0^1 D_2Y(\theta,\tau z)d\tau$. The Cauchy-Riemann equations $d\util
\cdot i = ( \jtil\circ\util ) \cdot d\util$ become
\begin{equation}\label{crateta}
 \left\{
  \begin{aligned}
   & a_s - (f\circ w)(\theta_t + xy_t) = 0 \\
   & \theta_s + (f\circ w)^{-1}a_t + xy_s = 0
  \end{aligned}
 \right.
\end{equation}
and
\begin{equation}\label{crz}
 z_s + (J\circ w) z_t + Sz = 0
\end{equation}
where
\begin{equation}\label{matrixS}
 S(s,t) = [a_tI - a_s (J\circ w)]D\circ w.
\end{equation}
We refer to~\cite{props1} for more details.

\begin{lemma}\label{importantdecay}
If $\util$ satisfies (\ref{minimalassump}) then
\begin{equation}
 \lim_{s\rightarrow +\infty} \sup_{t\in S^1} \norma{D^\gamma[\wtil(s,t) - (T_0s+\sigma,kt+k\hat d,0,0)]} = 0 \ \forall D^\gamma = \partial_s^{\gamma_1}\partial_t^{\gamma_2}.
\end{equation}
\end{lemma}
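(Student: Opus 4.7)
The plan is to upgrade the $C^0$ asymptotic behavior in~(\ref{minimalassump}) to the full $C^\infty$ decay statement by a translation-plus-elliptic-regularity argument applied to vertical translates of $\util$. First I would identify the natural comparison map: in the Martinet tube coordinates of Subsection~\ref{abehav}, set
\[
 G(s,t) := (T_0 s + \sigma,\ kt + k\hat d,\ 0,\ 0).
\]
Because $f\equiv T_{min}$ on $\R/\Z\times\{0\}$, the Reeb vector field along the core $\{z=0\}$ is $T_{min}^{-1}\partial_\theta$, so $\jtil\partial_a = T_{min}^{-1}\partial_\theta$ and $\jtil\partial_\theta = -T_{min}\partial_a$ along $\{z=0\}$. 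A direct computation, using $T_0 = kT_{min}$, then gives
\[
 \partial_s G + \jtil(G)\partial_t G = T_0\partial_a - kT_{min}\partial_a = 0,
\]
so $G$ is a $\jtil$-holomorphic half-cylinder in the tube -- a shifted trivial cylinder over $P$.

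Next I would introduce the vertical translates
\[
 \util_n(s,t) := \bigl(a(s+n,t) - T_0 n,\ u(s+n,t)\bigr),
\]
defined on $[-n,+\infty)\times S^1$. Each $\util_n$ is $\jtil$-holomorphic by the $\R$-invariance of $\jtil$, and assumption~(\ref{minimalassump}) yields $\util_n \to G$ uniformly on compact subsets of $\R\times S^1$: for the vertical component, $a(s+n,t) - T_0 n - (T_0 s + \sigma) = a(s+n,t) - T_0(s+n) - \sigma \to 0$ uniformly in $(s,t)$ on any compact set; for the $M$-component, $u(s+n,t)\to x_0(T_0(t+\hat d))$ uniformly in $t$ and the limit is $s$-independent. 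In particular, for any compact $K\subset\R\times S^1$ and all sufficiently large $n$, the image $\util_n(K)$ lies inside $\R\times U$, so the translates can be read off in Martinet coordinates.

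The main analytic input is to upgrade this $C^0_{\mathrm{loc}}$-convergence to $C^\infty_{\mathrm{loc}}$-convergence. Bubbling is ruled out by the decay of the $d\lambda$-energy on the ends: Stokes and~(\ref{minimalassump}) give
\[
 \int_{[n,+\infty)\times S^1} u^*d\lambda = T_0 - \int_{\{n\}\times S^1} u^*\lambda \longrightarrow 0,
\]
which forces $|d\util_n|$ to be uniformly bounded on every compact subset of $\R\times S^1$ for $n$ large. The standard interior estimates for the non-linear Cauchy-Riemann operator, applied in the Martinet coordinates (cf.\ \cite{mcdsal} and the appendix of \cite{sftcomp}), then yield uniform $C^k$ bounds for every $k$, and uniqueness of the $C^0$-limit forces the full sequence $\util_n$ to converge to $G$ in $C^\infty_{\mathrm{loc}}$.

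Restricting the $C^\infty_{\mathrm{loc}}$-convergence to $[0,1]\times S^1$ and unwinding the definition of $\util_n$ gives, for every multi-index $\gamma = (\gamma_1,\gamma_2)$,
\[
 \sup_{s\in[n,n+1],\,t\in S^1}\bigl\|D^\gamma\bigl[\wtil(s,t) - (T_0 s + \sigma,\ kt + k\hat d,\ 0,\ 0)\bigr]\bigr\| \longrightarrow 0 \quad (n\to\infty),
\]
which is precisely the conclusion of the lemma. The main obstacle is the elliptic bootstrap step: rigorously, one rewrites the Cauchy-Riemann equation in Martinet coordinates as a perturbation of $\partial_s + J_0\partial_t$ after a linear change of frame making $\jtil$ standard along the core, and then runs the usual Schauder bootstrap for $\bar\partial$; the $d\lambda$-energy decay above guarantees this is legitimate.
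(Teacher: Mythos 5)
Your proposal is correct and takes essentially the same route the paper intends: the paper states that "the proof is exactly the same as that of Lemma 2.4 from~\cite{props1}," and that argument is precisely the translation-plus-elliptic-bootstrap scheme you describe, with the $d\lambda$-energy decay on the ends ruling out bubbling and the $C^0$-limit from~(\ref{minimalassump}) identifying the trivial cylinder as the $C^\infty_{\mathrm{loc}}$-limit of the translates.
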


The proof is exactly the same as that of Lemma 2.4 from~\cite{props1} and is omitted.

We need one more computation: $\pi\cdot \partial_s u (s,t) = U(s,t) \partial_x + V(s,t) (-x\partial_\theta +
\partial_y)$ where
\begin{equation}\label{functionsUV}
 \begin{bmatrix} U(s,t) \\ V(s,t) \end{bmatrix} = \begin{bmatrix} x_s \\ y_s \end{bmatrix} + \Delta(s,t)
\end{equation}
and
\[
 \Delta(s,t) = - \frac{\theta_s+xy_s}{f}
 \begin{bmatrix}
  f_y - x f_\theta \\
  -f_x
 \end{bmatrix}.
\]
The values of $f$ and its partial derivatives are evaluated at $w(s,t) = \Psi \circ u(s,t)$. Since $df \equiv 0$ on
$\R/\Z\times \{0\}$ we can use Lemma~\ref{importantdecay} to obtain $\norma{\Delta} = o\left(|z(s,t)|\right)$ as
$s\rightarrow+\infty$.

\begin{lemma}\label{generaldecay}
Suppose $\util$ satisfies (\ref{minimalassump}). Then $\util$ has non-degenerate asymptotics as in
Definition~\ref{nondegpunc} if, and only of, $\exists b>0$ such that $e^{bs}\norma{z(s,t)}$ is bounded. In this case
$\exists r > 0$ such that
\begin{equation}\label{expgeneraldecay}
 \lim_{s\rightarrow +\infty} \sup_{t\in S^1} e^{rs}\norma{D^\gamma[\wtil(s,t) - (T_0s+\sigma,kt+k\hat d,0,0)]} = 0\ \forall D^\gamma = \partial^{\gamma_1}_{s}\partial^{\gamma_2}_{t}.
\end{equation}
\end{lemma}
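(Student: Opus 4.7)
The plan is to treat the two implications separately. The forward direction is essentially bookkeeping in the Martinet chart. The content lies in the reverse direction, which I will handle in two bootstrap steps: first upgrade exponential boundedness of $z$ to exponential decay of all derivatives of $z$, and then transfer exponential decay to $\tilde a := a - T_0 s - \sigma$ and $\tilde\theta := \theta - kt - k\hat d$ via the Cauchy--Riemann equations (\ref{crateta}).

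For the forward direction, assume $\util$ has non-degenerate asymptotics. By Definition~\ref{behavior}(4), there exists $b > 0$ with $\sup_t e^{bs}\norma{\zeta(s,t)} \to 0$, where $u(s,t) = \exp_{x_0(T_0(t+\hat d))}\zeta(s,t)$. The chart $\Psi$ is a fixed smooth diffeomorphism sending $x_0(T_0(t+\hat d))$ to $(kt+k\hat d,0,0)$, and by Lemma~\ref{importantdecay} the image $u(s,t)$ lies in $U$ for $s$ large. Hence $\norma{z(s,t)}$ is bilipschitz-equivalent to $\norma{\zeta(s,t)}$ on that range, and $e^{bs}\norma{z(s,t)}$ is bounded (in fact tends to $0$).

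For the reverse direction, assume $\norma{z(s,t)} \leq C e^{-bs}$. Choose a smooth loop $V : S^1 \to \mathrm{GL}(2,\R)$ with $V(t)^{-1} J(kt+k\hat d,0) V(t) = J_0$ (possible because $J(kt+k\hat d,0)$ is a smooth loop in the space of complex structures tamed by $d\lambda$), and set $\tilde z(s,t) = V(t)^{-1} z(s,t)$. A direct computation turns (\ref{crz}) into
\[
\tilde z_s + J_0 \tilde z_t + K(s,t)\tilde z = 0,
\]
where $K$ absorbs $V^{-1}J(w)V'$, $V^{-1}SV$, and the error $V^{-1}(J(w)-J(kt+k\hat d,0))V$ acting on $\tilde z_t$. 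By Lemma~\ref{importantdecay} the matrix $K$ and all its $(s,t)$-derivatives are uniformly bounded on $Z^+$. Fixing any $d\in(0,b)$, the hypothesis of Lemma~\ref{megatech} is satisfied, so $\sup_t e^{ds}\norma{D^\gamma z(s,t)} \to 0$ for every multi-index $\gamma$.

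To finish, use $f(\theta,0) \equiv T_{min}$ and $df(\theta,0) \equiv 0$, which give $f\circ w = T_{min} + O(|z|^2)$ and $\partial_\alpha(f\circ w) = O(|z|)$ for $\alpha\in\{\theta,x,y\}$. Substituting into (\ref{crateta}) and subtracting the trivial-cylinder ansatz, the pair $X = (\tilde a, T_{min}\tilde\theta)$ satisfies
\[
X_s + J_0 X_t = h(s,t),
\]
where $h$ is a smooth expression in $(z,z_s,z_t,\tilde\theta)$ that vanishes whenever $z \equiv 0$ and whose $(s,t)$-derivatives are estimated by polynomials in $\norma{D^\beta z}$ times bounded functions of $\tilde\theta$. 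By the previous step each $\norma{D^\beta z}$ decays like $e^{-ds}$, while $\tilde\theta$ and $\tilde a$ tend to $0$ by Lemma~\ref{importantdecay}; hence $\sup_t e^{ds}\norma{D^\gamma h} \to 0$ and $\sup_t\norma{X(s,t)} \to 0$. Lemma~\ref{megatech2} then supplies $r>0$ with $\sup_t e^{rs}\norma{D^\gamma X(s,t)} \to 0$ for all $\gamma$, which combined with the decay of $z$ yields (\ref{expgeneraldecay}).

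The main obstacle is the careful algebraic derivation of the modified equations for $\tilde z$ and $X$ and the verification that the $t$-dependent conjugation $V(t)$ contributes only bounded smooth corrections to $K$ rather than genuinely new singular terms. Once these normal forms are in hand, the exponential bootstrap follows mechanically from Lemma~\ref{megatech} and Lemma~\ref{megatech2}; in particular no use of Theorem~\ref{asympHWZ} is needed for this statement, although Theorem~\ref{asympHWZ} would be required to refine (\ref{expgeneraldecay}) to the sharper eigenfunction expansion of Theorem~\ref{p12}.
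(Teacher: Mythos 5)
Your overall architecture matches the paper's: conjugate (\ref{crz}) to a $\bar\partial_0$-type equation, bootstrap exponential decay of all $z$-derivatives via Lemma~\ref{megatech}, then bootstrap the $(a,\theta)$-components via Lemma~\ref{megatech2}. The second step is fine and the observation that Theorem~\ref{asympHWZ} is not needed is correct. However the conjugation in the first step does not actually produce the normal form that Lemma~\ref{megatech} requires, and this is a genuine gap.

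You define $V(t)$ so that $V(t)^{-1}J(kt+k\hat d,0)V(t)=J_0$ and set $\tilde z=V(t)^{-1}z$. Substituting into (\ref{crz}), the coefficient of $\tilde z_t$ becomes $V^{-1}(J\circ w)V = J_0 + V^{-1}\bigl[J(w)-J(kt+k\hat d,0)\bigr]V =: J_0 + E(s,t)$, so the resulting equation is $\tilde z_s + J_0\tilde z_t + E\tilde z_t + K\tilde z = 0$. The term $E\tilde z_t$ cannot be ``absorbed'' into $K\tilde z$: it multiplies $\tilde z_t$, not $\tilde z$. Lemma~\ref{megatech} requires the equation to be exactly $X_s+J_0X_t+KX=0$ with $K$ bounded, and Lemma~\ref{megatech2} allows a source but no $K$-term; neither handles the first-order error $E\tilde z_t$. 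Treating $E\tilde z_t$ as a source is also circular: $E$ is controlled by $|\theta(s,t)-kt-k\hat d|+|z(s,t)|$, and exponential decay of the $\theta$-component is part of what remains to be proved; furthermore, derivatives of $E\tilde z_t$ involve derivatives of $\tilde z$, which are exactly the quantities whose decay you are trying to establish. The paper avoids all of this by choosing the conjugating matrix $L=L(\theta,z)$ as a function of the \emph{Martinet chart coordinates} satisfying $LJ=J_0L$ identically on a neighbourhood of the orbit, so that $\zeta:=(L\circ w)z$ satisfies the clean equation $\zeta_s+J_0\zeta_t+\Lambda\zeta=0$ with $\Lambda$ and all of its derivatives bounded by Lemma~\ref{importantdecay}; Lemma~\ref{megatech} then applies directly. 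Replacing your $t$-only conjugation with the paper's $(\theta,z)$-dependent $L$ repairs the argument, after which your treatment of $(\tilde a,T_{\min}\tilde\theta)$ goes through as written.
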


The technical proof is postponed to the appendix. The arguments are essentially found in section 4 of~\cite{props1}.
We include them here since Lemma~\ref{generaldecay} is not proved in~\cite{props1} and is crucial for our results.

For the remaining of this subsection we assume, for simplicity, that $\sigma=\hat d=0$ in (\ref{minimalassump}).
Write $N(t) = -T_0 J(kt,0)D_2Y(kt,0)$. Lemma~\ref{generaldecay} implies
\begin{equation}\label{SN}
 \lim_{s\rightarrow+\infty} \sup_{t\in \R/\Z} e^{rs}\norma{ D^\gamma[S(s,t)-N(t)] } = 0 \ \forall \gamma
\end{equation}
for some $r>0$. The identity
\[
 ( -J(kt,0)\partial_t\varphi^{(k)} - N(t)\varphi^{(k)} ) [ \varphi^{(k)} ]^{-1} = -T_0 JD_2Y + T_0 JD_2Y = 0
\]
shows that
\begin{equation}\label{repasympop}
 -J(kt,0)\partial_t\varphi^{(k)} - N(t)\varphi^{(k)} = 0.
\end{equation}
By (\ref{repasympop}) the operator
\begin{equation}\label{operatorLN}
 L_N := -J(kt,0)\partial_t - N(t)
\end{equation}
represents the asymptotic operator $A_P$ in the $d\lambda$-symplectic frame $\{e_1,e_2\}$ of $\xi|_P$. Moreover,
$N(t)$ is symmetric with respect to the inner-product $\left< \cdot,-J_0J(kt,0)\cdot \right>$. Here we denoted by
$\left< \cdot,\cdot \right>$ the standard euclidean inner-product on $\R^2$. In fact, it follows from
(\ref{reeblocalrep}) that $T_{min}^2J_0D_2Y(\theta,0) = \nabla^2f(\theta,0) \ \forall \theta$.

The proof of the following corollary of Theorem~\ref{asympHWZ} is found in~\cite{props1}, and is included in the appendix for completeness. 

\begin{cor}\label{oops}
If the finite-energy plane $\util$ has non-degenerate asymptotic behavior at $\infty$ either $z(s,t) \equiv 0$ or it
has the form
\begin{equation}\label{asympbehavior}
 z(s,t) = e^{\int_{s_0}^s\alpha(\tau)d\tau} \left( e(t) + R(s,t) \right)
\end{equation}
where $\mu<0$ and the smooth vector $e:\R/\Z\rightarrow\R^2\setminus\{0\}$ satisfies $L_Ne=\mu e$. Here $L_N$ is the
operator (\ref{operatorLN}). The functions $\alpha$ and $R$ satisfy (\ref{remainder}) for every $l$. In other words, $\mu \in
\sigma(A_P) \cap (-\infty,0)$ and $e(t)$ represents an eigenvector of $A_P$ in the frame $\{e_1,e_2\}$ discussed in
Remark~\ref{homtriv}.
\end{cor}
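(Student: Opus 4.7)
The plan is to reduce the corollary to Theorem~\ref{asympHWZ} applied to the equation~(\ref{crz}) for $z(s,t)$. The three inputs I would line up first are: (i)~Lemma~\ref{generaldecay}, which under the non-degenerate asymptotic hypothesis gives exponential decay $\sup_t e^{rs}\|D^\gamma z(s,t)\|\to 0$ for some $r>0$ and every $\gamma$; (ii)~the exponential convergence~(\ref{SN}) of the coefficient matrix $S(s,t)$ to the symmetric matrix $N(t)$; and (iii)~the identification~(\ref{repasympop}) of $L_N = -J(kt,0)\partial_t - N(t)$ as the representative of $A_P$ in the frame $\{e_1,e_2\}$.

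The first real step is a change of frame that converts equation~(\ref{crz}) into the form $\zeta_s + J_0\zeta_t + \tilde S\zeta = 0$ required by Theorem~\ref{asympHWZ}. I would choose a smooth pointwise symplectic matrix $A_0(s,t)$, converging exponentially to a limit $M(t)\in\Sp(1)$, such that $(J\circ w)(s,t)\,A_0(s,t)=A_0(s,t)\,J_0$ and $M(t)^{-1}J(kt,0)M(t)=J_0$. Substituting $z = A_0\zeta$ in~(\ref{crz}) yields
\[
\zeta_s + J_0\zeta_t + \tilde S(s,t)\zeta = 0,\qquad \tilde S = A_0^{-1}\bigl[(A_0)_s + (J\circ w)(A_0)_t + S A_0\bigr].
\]
Using~(\ref{SN}), the exponential convergence of $A_0$ and its derivatives, one checks that $\tilde S\to \tilde N(t) := M^{-1}NM + J_0 M^{-1}M'(t)$ exponentially in every $C^k$-norm. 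The key algebraic point is that $\tilde N(t)^T=\tilde N(t)$: this comes from the fact that $N$ is symmetric with respect to the inner product $\langle\cdot,-J_0J(kt,0)\cdot\rangle$ while $M$ is chosen to intertwine that pairing with the standard one, together with the identity $(J_0 M^{-1}M')^T = -(M')^T (M^{-1})^T J_0$ and the symplectic condition $M^T J_0 M = J_0$.

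With the hypotheses of Theorem~\ref{asympHWZ} satisfied (the exponential decay of $\zeta$ follows from that of $z$ since $A_0$ and $A_0^{-1}$ are bounded), its conclusion gives the dichotomy: either $\zeta\equiv 0$, which is the trivial case $z\equiv 0$, or there exist $s_0$, $\mu<0$ and a smooth $1$-periodic eigenvector $\tilde e(t)\neq 0$ of $-J_0\partial_t - \tilde N(t)$ with eigenvalue $\mu$ such that $\zeta(s,t) = e^{\int_{s_0}^s\alpha(\tau)d\tau}(\tilde e(t) + \tilde R(s,t))$ with $\alpha\to\mu$ and $\tilde R\to 0$ in all derivatives. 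Transforming back via $z = A_0\zeta$, and writing $e(t) := M(t)\tilde e(t)$, $R(s,t) := A_0(s,t)\tilde e(t) - M(t)\tilde e(t) + A_0(s,t)\tilde R(s,t)$, a direct computation using $M^{-1}L_N M = -J_0\partial_t - \tilde N$ (which is exactly the conjugation identity that made $\tilde N$ symmetric) shows $L_N e = \mu e$ and $R$ satisfies~(\ref{remainder}) for every $l$. This yields the asserted form~(\ref{asympbehavior}) and, by~(\ref{repasympop}), identifies $\mu$ and $e$ with a negative eigenvalue and eigenvector of $A_P$ expressed in $\{e_1,e_2\}$.

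The main obstacle I expect is purely technical: producing the change of frame $A_0$ smoothly in $(s,t)$ with the correct exponential rates on all derivatives, and verifying the symmetry of $\tilde N$ in the standard inner product. Once those algebraic and regularity checks are in place, the rest is a matter of unpacking Theorem~\ref{asympHWZ} and translating the conclusion through $A_0$, which is routine. The smoothness $C^{l-2}$ in~(\ref{remainder}) coming from Theorem~\ref{asympHWZ} is in fact upgraded to $C^\infty$ here because Lemma~\ref{generaldecay} provides exponential decay of $z$ in all derivatives, so Theorem~\ref{asympHWZ} can be applied with arbitrary $l$.
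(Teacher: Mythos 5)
Your proposal is correct and follows essentially the same route as the paper's appendix proof: both perform the symplectic change of frame that conjugates $J\circ w$ to $J_0$ (your $A_0$ is the paper's $L^{-1}$ with $\zeta = Lz$), verify that the limiting coefficient matrix is symmetric and represents $A_P$, apply Theorem~\ref{asympHWZ}, and transform back, using Lemma~\ref{generaldecay} to get exponential decay of all derivatives so the theorem applies for every $l$. The only differences are notational.
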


\begin{lemma}\label{c1}
Define $\varsigma(z) := e_1\circ u(z)$ for $\norma{z}\gg1$ and write $\varsigma(s,t) = \varsigma \left( \est
\right)$. $\pi\cdot du(s,t)$ does not vanish identically if, and only if, $z(s,t) \not =0$ for $s\gg1$. Moreover,
\[
 \lim_{s\rightarrow+\infty} \wind\left( \pi\cdot \partial_su(s,t),\varsigma(s,t),J \right) = \lim_{s\rightarrow+\infty} \wind \left( z(s,\cdot) \right).
\]
\end{lemma}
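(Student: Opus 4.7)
The plan is to combine the explicit asymptotic formula from Corollary~\ref{oops} with the expression (\ref{functionsUV}) for $\pi\cdot\partial_s u$ in the $d\lambda$-symplectic frame $\{e_1,e_2\}$. That frame identifies $u^*\xi$ near $\infty$ with a trivial $\R^2$-bundle; under it $\varsigma$ becomes the constant section $(1,0)$ and $\pi\cdot\partial_s u$ becomes $f^{1/2}(\partial_s z+\Delta)$, with $f>0$ and $|\Delta|=o(|z|)$.

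First I establish the equivalence. Corollary~\ref{oops} yields the dichotomy: either $z(s,t)\equiv 0$ for $s$ large, or $z(s,t)=e^{\int_{s_0}^s\alpha}(e(t)+R(s,t))$ with $e(t)$ nowhere zero and $R\to 0$, in which case $z$ is nowhere zero for $s$ large. If $z\equiv 0$ on an annular neighborhood of $\infty$, then $u$ maps that region into $x_0(\R)$, so both $\partial_s u$ and $\partial_t u$ are proportional to the Reeb vector and $\pi\cdot du\equiv 0$ there; by the similarity principle (Remark~\ref{locallycr}) this propagates to all of $\C$. Conversely, if $\pi\cdot du\equiv 0$ on $\C$, Lemma~\ref{zerodlambda} gives $\util=f_{(\hat x,\hat T)}\circ p$, and the asymptotic condition (\ref{minimalassump}) identifies $\hat x$ with $x_0$, forcing $z(s,t)=0$ for $s\gg 1$.

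For the winding identity I assume we are in the second branch of the dichotomy. Differentiating (\ref{asympbehavior}) gives
\[
e^{-\int_{s_0}^s\alpha(\tau)d\tau}\,\partial_s z(s,t) = \alpha(s)\bigl(e(t)+R(s,t)\bigr)+\partial_s R(s,t),
\]
which by (\ref{remainder}) converges uniformly in $t$ to $\mu e(t)$ as $s\to+\infty$. Combined with $|e^{-\int\alpha}\Delta|=o(|e+R|)=o(1)$ uniformly (since $|e|$ is bounded below and $|\Delta|=o(|z|)$), this shows $e^{-\int\alpha}(\partial_s z+\Delta)\to\mu e$ uniformly on $S^1$. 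Likewise $e^{-\int\alpha}z=e+R\to e$ uniformly. Because $\wind$ is a $C^0$-continuous integer-valued function of nonvanishing loops, for $s$ large both the winding of $\partial_s z+\Delta$ and that of $z(s,\cdot)$ agree with $\wind(e)$ when viewed as $\R^2$-valued loops.

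The last step is to translate $\R^2$-valued winding back into $\wind(\cdot,\cdot,J)$. In the frame $\{e_1,e_2\}$ the complex structure $J$ is represented by a loop in $\Sp(1)$; since the space of $d\lambda$-compatible complex structures on $\R^2$ is contractible, the Notation paragraph's homotopy invariance lets me replace $J$ by the standard $J_0$, under which $\wind(A,(1,0),J_0)=\wind(A_1+iA_2)$. As the positive factor $f^{1/2}$ does not affect winding, one gets $\wind(\pi\cdot\partial_s u(s,\cdot),\varsigma(s,\cdot),J)=\wind(\partial_s z(s,\cdot)+\Delta(s,\cdot))=\wind(e)=\wind(z(s,\cdot))$ for $s$ large, which proves the claimed equality of limits. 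The main bookkeeping obstacle is the uniform control of the lower-order terms $\Delta$, $R$ and $\partial_s R$ in the asymptotic limit, which is handled entirely by Lemma~\ref{importantdecay}, (\ref{asympbehavior}), and (\ref{remainder}).
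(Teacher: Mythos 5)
Your proof is correct and takes essentially the same approach as the paper, which compresses the entire argument into the one-line remark citing (\ref{functionsUV}), (\ref{asympbehavior}) and $\norma{\Delta}=o(|z(s,t)|)$. You have simply filled in the details the paper leaves implicit: the differentiation of the asymptotic formula to obtain $z_s\sim\mu z$, the uniform smallness of $\Delta$ relative to $z$, the equivalence statement via the dichotomy in Corollary~\ref{oops}, and the bookkeeping that translates $\R^2$-valued winding in the Martinet frame back to $\wind(\cdot,\cdot,J)$.
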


\begin{proof}
The lemma is an easy consequence of (\ref{functionsUV}) and (\ref{asympbehavior}) since $\norma{\Delta} =
o(|z(s,t)|)$ as $s\rightarrow+\infty$.
\end{proof}

Given any $\beta \in \mathcal{S}_P$ we recall the winding numbers $(\mu,\beta) \in \Z$ associated to eigenvalues of
$A_P$. Those were defined in~\cite{props2} and are discussed in section~\ref{compactness}.

\begin{lemma}\label{windinftyasymp}
If the finite-energy plane $\util = (a,u)$ has non-degenerate asymptotics and if $\beta^\Psi_P = \beta_{\util}$ then
$\wind_\infty(\util) = (\mu,\beta^\Psi_P)$. Here $\mu<0$ is the negative eigenvalue of $A_P$ given by an application
of Corollary~\ref{oops} to the function $z(s,t)$.
\end{lemma}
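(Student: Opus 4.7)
The plan is to assemble pieces already proved in the excerpt rather than to do new analytic work. Recall that by Definition of $\wind_\infty$, given any class $\alpha$ of non-vanishing sections of $u^*\xi$ and a section $\sigma\in\alpha$, one computes
\[
\wind_\infty(\util) = \lim_{s\to+\infty} \wind\bigl(t\mapsto \pi\cdot\partial_s u(s,t),\,t\mapsto\sigma(s,t),\,J\bigr),
\]
and the answer is independent of the chosen representative $\sigma$ and of $\alpha$. The hypothesis $\beta^\Psi_P = \beta_\util$ lets me choose an especially convenient representative near $\infty$: by Remark~\ref{abc} the class $\beta_{\util}$ is characterized by the property that a $d\lambda$-symplectic trivialization of $\xi_P$ extends to $u^*\xi$ iff it lies in $\beta_{\util}$; since $\beta^\Psi_P=\beta_\util$, the section $\varsigma(z) := e_1\circ u(z)$, a priori defined only for $|z|\gg 1$, extends to a non-vanishing smooth section of $u^*\xi$ over the whole of $\C$. (This is essentially Lemma~\ref{localclass}.)

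With this choice, Lemma~\ref{c1} reduces the computation to
\[
\wind_\infty(\util) = \lim_{s\to+\infty} \wind\bigl(t\mapsto z(s,t)\bigr),
\]
where $z(s,t)=(x(s,t),y(s,t))$ is the $\R^2$-component in the Martinet tube. (Note that by Lemma~\ref{gauss} one has $\wind_\infty(\util)\geq 1$ under the non-degeneracy assumption at the puncture, so in particular $\pi\cdot du\not\equiv 0$ and thus $z(s,\cdot)\not\equiv 0$ for $s$ large, so the winding is well-defined.)

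Now I apply Corollary~\ref{oops}: there exist $s_0>1$, a negative eigenvalue $\mu$ of $A_P$, a smooth nowhere-zero eigenvector $e:\R/\Z\to\R^2\setminus\{0\}$ with $L_N e = \mu e$, and a remainder $R(s,t)$ with $\sup_t\|R(s,t)\|\to 0$, such that
\[
z(s,t) = e^{\int_{s_0}^s\alpha(\tau)\,d\tau}\bigl(e(t) + R(s,t)\bigr).
\]
The scalar prefactor is positive and does not affect the winding. The decay of $R$ implies that for all sufficiently large $s$ the loop $t\mapsto e(t)+R(s,t)$ is homotopic through non-vanishing loops to $e(t)$, hence
\[
\lim_{s\to+\infty}\wind\bigl(t\mapsto z(s,t)\bigr) = \wind(e).
\]
Finally, $L_N$ represents the asymptotic operator $A_P$ in the $d\lambda$-symplectic frame $\{e_1,e_2\}$, whose associated class of trivializations is exactly $\beta^\Psi_P$ (see Remark~\ref{homtriv}). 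By the very definition of the integers $(\mu,\beta)$ from Subsection~\ref{asympopprops}, the winding of an eigenvector for eigenvalue $\mu$ computed in this frame is $(\mu,\beta^\Psi_P)$. Combining the three displayed equalities gives $\wind_\infty(\util)=(\mu,\beta^\Psi_P)$.

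There is no serious obstacle: everything rests on identifying the correct trivialization via $\beta^\Psi_P=\beta_\util$ and on the exponential-decay expansion of Corollary~\ref{oops}, both of which are already available.
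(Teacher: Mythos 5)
Your proposal is correct and follows exactly the route the paper intends: the hypothesis $\beta^\Psi_P=\beta_\util$ makes $\varsigma=e_1\circ u$ extend to a global non-vanishing section of $u^*\xi$, Lemma~\ref{c1} then converts $\wind_\infty(\util)$ into the asymptotic winding of $z(s,\cdot)$, and Corollary~\ref{oops} identifies that winding with $\wind(e)=(\mu,\beta^\Psi_P)$. The paper's proof consists of only the first sentence, leaving the rest implicit; you have filled in the same chain of references. One small inaccuracy: the fact that $\pi\cdot du\not\equiv 0$ for a finite-energy plane with non-degenerate asymptotics comes from Lemma~\ref{zerodlambda} (as the remark after Lemma~\ref{gauss} points out), not from Lemma~\ref{gauss} itself, which already presupposes $\pi\cdot du\not\equiv 0$ in order for $\wind_\pi$ to be defined; your parenthetical gets the logical order slightly backwards, though the conclusion you need is correct.
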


\begin{proof}
If $\beta^\Psi_P = \beta_{\util}$ then $\varsigma$ extends to a non-vanishing section of $u^*\xi$.
\end{proof}

\begin{lemma}\label{omegalimit}
Let $\util = (a,u)$ be a fast finite-energy plane asymptotic to $P=(x_0,T_0)$ at the (positive) puncture $\infty$.
Suppose $\mu(\util) \geq 3$. Denote by $\phi_t$ the Reeb flow and fix $q\in M\setminus x_0(\R)$. If the
$\omega$-limit ($\alpha$-limit) set of $q$ intersects $x_0(\R)$ then $\forall n\in\Z$ $\exists t>n$ ($t<n$) such
that $\phi_{t}(q) \in u(\C)$.
\end{lemma}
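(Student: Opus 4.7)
The plan is to work in a Martinet tube $\Psi:U\to\R/\Z\times B$ around $x_0(\R)$, chosen via Remark~\ref{homtriv} so that the induced framing class $\beta^\Psi_P\in\mathcal{S}_P$ equals $\beta_{\util}$, and to exploit the fact that, in this trivialization, Reeb trajectories near $x_0(\R)$ wind strictly faster than the asymptotic sheet $u(\C)$ does. Writing $\Psi\circ u(e^{2\pi(s+it)})=(\theta(s,t),z(s,t))$ in coordinates $(\theta,x,y)$ on $\R/\Z\times B$, the assumptions $\wind_\infty(\util)=1$ and $\cov(\util)=1$, together with Corollary~\ref{oops} and Lemma~\ref{importantdecay}, give $\theta(s,t)=t+o(1)$ and $z(s,t)=e^{\int_{s_0}^s\alpha}(e(t)+R(s,t))$, where $e:\R/\Z\to\R^2\setminus\{0\}$ is an eigenvector of the asymptotic operator $A_P$ with negative eigenvalue and $\wind(e)=1$ (Lemma~\ref{windinftyasymp}). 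Shrinking to a sub-tube $U'=\Psi^{-1}(\R/\Z\times B')$, the map $z\circ u$ becomes a diffeomorphism from the relevant annular end of $\C$ onto $B'\setminus\{0\}$, so $u(\C)\cap(U'\setminus x_0(\R))$ is the graph of a smooth $\Theta_u:B'\setminus\{0\}\to\R/\Z$. Lifting to the universal cover with coordinates $(r,\tilde\phi)$ produces $\tilde\Theta_u:(0,r_0)\times\R\to\R$ with
\[
  \tilde\Theta_u(r,\tilde\phi+2\pi)=\tilde\Theta_u(r,\tilde\phi)+1,
\]
reflecting that the sheet turns exactly once in the $\theta$-direction per full angular loop about the orbit.

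The next step is to convert the spectral hypothesis into dynamical information. Since $\mu(\util)=\mu_{CZ}(P,\beta_\util)\geq 3$, Lemma~\ref{dynconvex} (applied with trivialization $\beta_\util$) gives $(\nu,\beta_\util)\geq 2$ for every non-negative eigenvalue $\nu$ of $A_P$. Via the local representation~\eqref{repasympop} $A_P\leftrightarrow L_N=-J_0\partial_t-N(t)$, this is equivalent to the linearized Reeb flow along $x_0(\R)$ having transverse rotation number strictly greater than $1$ in the frame of $\Psi$. Transferring this to the nonlinear flow by a normal-form comparison, the expectation is to obtain $\rho^*>1$, a smaller subtube $U''=\Psi^{-1}(\R/\Z\times B_{r_1})\subset U'$, and a constant $c>0$ such that whenever $\phi_t(q_0)\in U''$ on $[t_0,t_0+\tau]$ and $z(\phi_t(q_0))=r(t)e^{i\tilde\phi(t)}$, $\theta(\phi_t(q_0))$ are lifted continuously to $\R$, one has
\[
  \tilde\phi(t_0+\tau)-\tilde\phi(t_0)\geq 2\pi\rho^*\tau/T_0 - c \qquad\text{and}\qquad |\theta(\phi_{t_0+\tau}(q_0))-\theta(\phi_{t_0}(q_0))-\tau/T_0|\leq c.
\]

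With these inputs, consider the monitoring function
\[
  \Delta(t)=\theta(\phi_t(q))-\tilde\Theta_u(r(t),\tilde\phi(t))
\]
defined while $\phi_t(q)\in U''$; then $\phi_t(q)\in u(\C)$ iff $\Delta(t)\in\Z$. The two displays above and the slope-$1$ property of $\tilde\Theta_u$ combine to give $\Delta$ a negative asymptotic drift of rate approximately $(1-\rho^*)/T_0$ as $r(t)\to 0$. Now invoke the hypothesis $\omega\text{-limit}(q)\cap x_0(\R)\neq\emptyset$: take $t_n\to+\infty$ with $\phi_{t_n}(q)\to p\in x_0(\R)$. Continuous dependence of the flow ensures that, for any fixed $L>0$, $\phi_t(q)\in U''$ on $[t_n,t_n+L]$ once $n$ is large. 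Choosing $L$ so that the total drift of $\Delta$ on such intervals has magnitude $>1$, $\Delta$ must cross an integer, producing $t\in(t_n,t_n+L)$ with $\phi_t(q)\in u(\C)$; pre-selecting $t_n>n$ gives the desired $t>n$. The $\alpha$-limit case is identical after reversing time.

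The hard part will be extracting the quantitative angular estimates of the second paragraph from the spectral inequality $(\nu,\beta_\util)\geq 2$. This requires carefully comparing the full nonlinear Reeb flow to its linearization $A_P$ on a uniform tubular neighborhood of $x_0(\R)$ and controlling error terms—precisely the normal-form analysis near a closed orbit with $\mu_{CZ}\geq 3$ carried out in Section~5 of~\cite{convex}, to which the present lemma reduces.
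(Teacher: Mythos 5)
Your high-level strategy is correct and mirrors the argument the paper takes from Section~5 of~\cite{convex}: work in a Martinet tube with $\beta^\Psi_P=\beta_\util$, use Corollary~\ref{oops} to describe the asymptotic sheet via an eigenvector $e$ of winding~$1$, and exploit that the inequality $\mu_{CZ}\geq 3$ forces the (linearized) Reeb flow to rotate strictly faster than~$1$ per period, so that a trajectory approaching $x_0(\R)$ must overtake and cross the sheet. The ``monitoring function'' $\Delta$ crossing an integer is another way of phrasing the intersection the paper produces.

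The gap is in the second paragraph. You want a \emph{uniform} estimate on a fixed sub-tube $U''$ of the form $\tilde\phi(t_0+\tau)-\tilde\phi(t_0)\geq 2\pi\rho^*\tau/T_0-c$ with $\rho^*>1$, and you derive the drift of $\Delta$ from it. Such an estimate is not obtainable by naively comparing the nonlinear flow with $A_P$ on a neighborhood of fixed size: the discrepancy between the nonlinear and linearized angular rates is of the same order as the distance to the orbit, which is bounded away from zero on a fixed $U''$, and there is no reason the error constant stays below the linear excess $\rho^*-1$. This is exactly the point that the paper handles by rescaling: after choosing $t_n$ so that $\Psi\circ\phi_{t_n}(q)=(k_n,z_n)$, it Taylor-expands the flow with \emph{quadratic} error $|\Delta_n|_{C^1}\leq C|z_n|^2$, rescales both the trajectory and the plane by $|z_n|^{-1}$, and passes to the limit. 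In the limit the sheet is exactly $h(s,t)=(t,e^{\mu s}e(t))$, the trajectory is exactly $\gamma_\infty(t)=(t,\varphi(t)z^*)$, and the geometric characterization of $\mu_{CZ}\geq3$ (inequality~(\ref{globalsection1})) gives an intersection, which the eigenvector relation shows is \emph{transverse}; stability of transverse intersections then produces intersections of $\gamma_n$ with $h_n$. Your deferral to ``the normal-form analysis of \cite{convex}'' is essentially deferring the entire technical content of the lemma, which is precisely this blow-up argument rather than a fixed-neighborhood angular comparison.

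A secondary imprecision: you assert that $u(\C)\cap(U'\setminus x_0(\R))$ is globally a graph $\Theta_u:B'\setminus\{0\}\to\R/\Z$. What Corollary~\ref{oops} gives is that the \emph{asymptotic end} of $u(\C)$ (the image of $\{s\geq s_0\}\times S^1$ for $s_0$ large) is such a graph over a small punctured disk; $u(\C)$ could re-enter $U'$ elsewhere, so one should work only with the end. For the implication ``$\Delta(t)\in\Z\Rightarrow\phi_t(q)\in u(\C)$'' it suffices that the end be \emph{contained} in $u(\C)$, so this is repairable, but as written the claim is too strong. The paper avoids the issue entirely by defining $h_n$ from the rescaled end.
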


\begin{proof}
This argument can be found in section 5 of~\cite{convex}. Assume $T_{min} = 1$ for simplicity. We only prove the
first assertion. By Definition~\ref{fastplane} we must have $k=1$, $P = P_{min}$ and $\varphi^{(k)} = \varphi^{(1)}
= \varphi$. By Remark~\ref{homtriv} we can assume $\beta^{\Psi}_P = \beta_{\util}$, which implies $\mu(\varphi) \geq
3$. It follows from the geometrical characterization of the Conley-Zehnder index, discussed in
Appendix~\ref{geomindex}, that $\exists r>0$ such that $\arg(v(1)) - \arg(v(0)) \geq 2\pi + r$ for every non-trivial
solution $v$ of (\ref{ed}). The number $r$ is independent of $v(0)\not= 0$. If a non-vanishing smooth vector
$e:\R/\Z \rightarrow \C$ satisfies $\wind(e) \leq 1$ then
\[
 \arg \left(v\bar{e}|_{t=1}\right) - \arg\left(v\bar{e}|_{t=0}\right) \geq r.
\]
Iterating and using as initial condition the vectors $v(n)$, we obtain
\begin{equation}\label{globalsection1}
 \arg \left( v\bar{e}|_{t=k+n} \right) - \arg \left( v\bar{e}|_{t=k} \right) \geq nr
\end{equation}
whenever $k\in \Z$ and $n \in \Z^+$. Here $\arg$ is any continuous choice of the argument.

Lift the $\theta$-variable to the universal cover $\R$ and work with $(\theta,z) \in \R \times \R^2$. Since the Reeb
vector at $x_0(0) = (0,0,0)$ is transversal to $\xi|_{(0,0,0)} = 0 \times \R^2$ we can use the implicit function
theorem to find $t_n \rightarrow +\infty$, $\{k_n\} \subset \Z^+$ and $\{z_n\} \subset \R^2\setminus\{0\}$
satisfying $k_n \rightarrow +\infty$, $z_n \rightarrow 0$ and $\Psi\circ \phi_{t_n}(q) = (k_n,z_n)$. By
differentiability properties of the flow we know that $\forall L>0$ $\exists C>0, n_0\geq1$ such that $n\geq n_0$
implies
\begin{equation}\label{estdelta}
 \Psi \circ \phi_{t+t_n}(q) = (k_n + t,\varphi(t) z_n) + \Delta_n(t), \ \forall t\in[0,L]
\end{equation}
where $|\Delta_n|_{C^1\left([0,L],\R\times\R^2\right)} \leq C |z_n|^2$.

Let $L_N = -J(t,0) \partial_t - N(t)$ be the representation of $A_P$ in the frame $\{e_1,e_2\}$. Suppose
$e:\R/\Z\rightarrow \C\setminus\{0\}$ satisfies $L_Ne=\mu e$ and $\wind(e) \leq 1$, for some $\mu<0$. By
Corollary~\ref{oops}, the function $z(s,t)$ has the form (\ref{asympbehavior}) for such $e(t)$. If $n$ is large
define $s_n$ by $e^{-\int_{s_0}^{s_n}\alpha(r)dr} = |z_n|^{-1}$ and set
\[
 T_n : (t,\zeta) \mapsto (t - k_n , e^{-\int_{s_0}^{s_n}\alpha(r)dr}\zeta) = (t-k_n,|z_n|^{-1}\zeta).
\]
We can assume $z_n/|z_n| \rightarrow z^* \in S^1$. Consider the maps
\begin{gather*}
 h_n(s,t) := T_n \left( \theta(s+s_n,t)+k_n , z(s+s_n,t) \right) \\
 \gamma_n(t) := T_n \circ \Psi \circ \phi_{t+t_n}(q).
\end{gather*}
It follows from (\ref{asympbehavior}), Lemma~\ref{generaldecay} and (\ref{estdelta}) that
\begin{gather*}
 h_n \rightarrow h(s,t) := \left( t , e^{\mu s}e(t) \right) \text{ in } C^\infty_{loc} \\
 \gamma_n(t) \rightarrow \gamma_\infty(t) := \left(t,\varphi(t) z^* \right) \text{ in } C^1_{loc}.
\end{gather*}
It is an immediate consequence of (\ref{globalsection1}) that $\gamma_\infty$ intersects the embedded surface $h
\left( \R \times \R \right)$ at some time $t>0$. We now argue, following~\cite{convex}, that this intersection is
transverse.

For every $\theta \in \R$ the matrix $J(\theta,0)$ (\ref{jlocalrep}) is a complex multiplication in $\R^2$
compatible with the standard area form $dx\wedge dy$. This is so because the frame $\{e_1,e_2\}$ is
$d\lambda$-symplectic over $\R/\Z \times 0$. We find a $1$-periodic smooth map $\theta \mapsto L(\theta) \in \Sp(1)$
satisfying $J_0L(\theta) = L(\theta)J(\theta,0)$. Set $\zeta(t) = L(t) \varphi(t) z^*$ and $E(t) = L(t)e(t)$, $M(t)
= (L(t)N(t) - J_0L^\prime(t))L(t)^{-1}$. Viewing $\zeta$ and $E$ as complex numbers we set $u(t) =
\zeta(t)\cl{E(t)}$. Then $-J_0\zeta^\prime-M\zeta=0$, $-J_0E^\prime-ME=\mu E$ and
\[
 -J_0u^\prime = (M\zeta) \bar{E} - \zeta \cl{(ME)} - \mu u.
\]
If $\gamma_\infty$ intersects $h \left( \R \times \R \right)$ at time $t$ then $u(t) \in \R$, $(M\zeta) \bar{E} -
\zeta \cl{(ME)}$ is purely imaginary and the real part $r$ of $u^\prime(iu)^{-1}$ is equal to $-\mu>0$. However $r$
is the derivative of the argument of $u$. It follows that $\gamma_\infty$ intersects the embedded surface $h \left(
\R \times \R \right)$ transversely at time $t$.

As consequence we obtain (transverse) intersections of $\gamma_n$ with the surface $h_n$ for $n$ large enough,
proving that $\phi_t(q)$ intersects the surface $u(\C)$ for $t>t_n$.
\end{proof}


\subsection{Fredholm Theory}\label{fredtheory}

In this subsection we prove

\begin{lemma}\label{partialfredholm}
Let $\util = (a,u)$ be an embedded fast finite-energy plane with asymptotic limit $P_0 = (x_0,T_0)$. Suppose
$\mu(\util) \geq 3$. Then, for every $l\geq1$, there exists a $C^l$ embedding $f:\C\times B_r(0) \rightarrow
\R\times M$ satisfying properties (1) and (2) listed in Theorem~\ref{lemmafredholm}.
\end{lemma}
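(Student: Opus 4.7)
My plan follows the approach of~\cite{props3}, adapting the weighted-Sobolev Fredholm setup to embedded fast planes with possibly degenerate asymptotic limit (as allowed by the remark after Theorem~\ref{lemmafredholm}). From the asymptotic analysis of Section~\ref{abehav}, in particular Lemma~\ref{generaldecay}, Corollary~\ref{oops}, and Lemma~\ref{windinftyasymp}, I know that $\util$ decays exponentially at infinity at rate $|\mu|$, where $\mu<0$ is a negative eigenvalue of the asymptotic operator $A_{P_0}$ satisfying $(\mu,\beta_\util)=\wind_\infty(\util)=1$ by the fast condition. I will then choose a weight $\delta>0$ such that $-\delta$ lies in the spectral gap of $A_{P_0}$ immediately above $\mu$ and such that the shifted index satisfies $\mu_{CZ}^{-\delta}(P_0,\beta_\util)=3$. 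The hypothesis $\mu(\util)\geq 3$ is exactly what makes such a choice of $\delta$ possible in every case: for $\mu(\util)=3$ one picks $\delta$ small, while for $\mu(\util)\geq 4$ one slides $\delta$ past the negative winding-$1$ eigenvalues of $A_{P_0}$.

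Next I will introduce the Banach manifold of maps $\C\to\R\times M$ that agree at infinity with a $2$-parameter asymptotic model $(T_0s+\sigma,\,k(t+\hat d),\,0)$ modulo a correction in the weighted Sobolev space $W^{k,p,\delta}$, together with normalization conditions eliminating the $4$-dimensional reparametrization freedom of $\C$ fixing $\infty$. The $\bar\partial_{\jtil}$-equation then defines a smooth section $\mathcal F$ whose linearization $L_\util$ at $\util$ is bounded and Fredholm. A standard spectral-flow computation using $\mu_{CZ}^{-\delta}(P_0,\beta_\util)=3$ will yield $\text{ind}(L_\util)=2$, realizing the ``index-$2$ Fredholm theory with exponential weights'' alluded to in Subsection~\ref{outlinefredholm}.

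The central analytical step, and the hard part of the proof, is surjectivity of $L_\util$. No genericity assumption on $J$ is available, so cokernel elements must be excluded by hand. By duality any non-zero element of the cokernel satisfies a Cauchy-Riemann-type equation in a dual weighted space and hence, by Theorem~\ref{asympHWZ}, admits an exponential asymptotic expansion tied to a specific eigenvalue of $A_{P_0}$ of definite winding. Combining this with the Gauss-Bonnet identity of Lemma~\ref{gauss} and the non-negativity $\wind_\pi\geq 0$, an argument in the spirit of Lemma~\ref{notdyn} will force the asymptotic winding of any putative cokernel element to violate the constraints imposed by $\wind_\infty(\util)=1$ together with $\mu(\util)\geq 3$. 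Hence $L_\util$ is surjective and $\dim\ker L_\util=2$.

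With surjectivity in hand, the implicit function theorem produces a $C^l$-map from a small ball $B_r(0)\subset\ker L_\util\cong\R^2$ into the moduli of finite-energy planes, specializing to $\util$ at $\tau=0$, which provides the required embedding $f:\C\times B_r(0)\to\R\times M$ with $f(\cdot,0)=\util$. Property~(1) is then tautological; for Property~(2), embeddedness of each $f(\cdot,\tau)$ is open in $C^1_{loc}$ by positivity of intersections (using that $\util$ is an embedded immersion, thanks to $\wind_\pi(\util)=0$), the $\mu$-index is integer-valued and continuous in $\tau$ hence locally constant, and asymptoticity to $P_0$ with $\wind_\infty=1$ is built into the weighted function-space setup. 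The main obstacle throughout is the non-generic surjectivity step, which hinges crucially on the combination of fastness and the index inequality $\mu(\util)\geq 3$.
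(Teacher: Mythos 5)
Your overall architecture coincides with the paper's: normal coordinates around $\util$, a nonlinear Cauchy-Riemann operator in a weighted function space $C^{l,\alpha,\delta}_0$ with $\delta<0$, index $2$ coming from $\mu_{CZ}^{\delta}(P_0,\beta_{\util})=3$, automatic transversality, and then the implicit function theorem. The paper's actual weight choice is dictated by the windings, namely $(\nu^{pos}_\delta,\beta_{\util})=2$ and $(\nu^{neg}_\delta,\beta_{\util})=1$, which uses $\mu(\util)\geq 3$ via Lemma~\ref{dynconvex}; the relation $\mu<\delta$ is then a \emph{consequence} of fastness via Lemma~\ref{windinftyasymp}, not an input to the choice of $\delta$ as your description of a ``gap immediately above $\mu$'' suggests.

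The real gap is the surjectivity step, which you correctly identify as the hard part but do not handle convincingly. You propose to kill the \emph{cokernel} by analyzing the adjoint's asymptotics, invoking Lemma~\ref{gauss} and Lemma~\ref{notdyn} --- but those statements concern finite-energy surfaces, not kernel or cokernel elements of a linearized CR operator, and you never identify the adjoint's asymptotic operator or the weight in which it lives. The paper instead bounds the \emph{kernel} directly: a nontrivial $\zeta\in\ker DH(0)$ satisfies a perturbed CR equation, so by the similarity principle its zeros are isolated and count positively; by Theorem~\ref{asympHWZ} it converges exponentially to an eigenvector $\tilde e$ with $\tilde\eta<\delta$; computing the winding in the normal class $\beta_N$ (which satisfies $\wind(\beta_N,\beta_{\util})=+1$ by \eqref{windnp}) gives $\wind(\tilde e)=(\tilde\eta,\beta_N)=(\tilde\eta,\beta_{\util})-1\leq 1-1=0$, so $\zeta$ never vanishes; three independent kernel elements would then admit a nontrivial linear combination vanishing somewhere, a contradiction, hence $\dim\ker\leq 2$ and $D\eta(0)$ is onto since its index is $2$. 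Two further ingredients you elide: the graph solutions $v(\tau)$ produce surfaces that are \emph{not} holomorphically parametrized, so a reparametrization as in \eqref{mappsitau}--\eqref{embf} is needed before the family consists of $\jtil$-holomorphic planes; and the claim that $f$ itself is an embedding of $\C\times B_r(0)$ (not merely each slice $f(\cdot,\tau)$) is nontrivial --- it uses that kernel elements and the $v(\tau)$'s never vanish, giving an immersion via Lemma~\ref{embproj} and injectivity because distinct $\tau$'s yield disjoint graphs.
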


$T_0$ is the minimal positive period of $x_0$ since $\util$ is fast. We write $\util(s,t) = (a(s,t),u(s,t)) =
\util\left(\est\right)$ and assume, without loss of generality, that
\begin{equation}\label{normu0}
\begin{aligned}
& \lim_{s\rightarrow+\infty} u(s,t) = x_0(T_0t) \text{ in } C^0(S^1,M) \\
& \lim_{s\rightarrow+\infty} \sup_{t \in S^1} \norma{a(s,t)-T_0s} = 0.
\end{aligned}
\end{equation}

\subsubsection{Normal coordinates at $\util$}

We work on a Martinet tube $\Psi$ (\ref{phi}) defined on an open neighborhood $U$ of the set $x_0(\R)$. We assume
$\beta^\Psi_P = \beta_{\util}$ and use all the notation from Subsection~\ref{abehav}. Define the complex structure
$\hat{J}$ on $\R\times S^1\times B$ by
\[
 (id_\R\times\Psi)^*\hat{J} \equiv \jtil \text{ on } \R\times U.
\]
The basis $\{e_1, e_2\}$ is a $d\lambda$-symplectic frame for $\xi|_{U}$. $\hat J$ is represented by
\begin{equation}\label{jtillocalrep}
  \begin{bmatrix}
    0 & -f & 0 & -xf \\
    R_1 & xf(J_{21}R_2+J_{22}R_3) & -xJ_{21} & x^2f(J_{21}R_2+J_{22}R_3)-xJ_{22} \\
    R_2 & -f(J_{11}R_2+J_{12}R_3) & J_{11} & -xf(J_{11}R_2+J_{12}R_3)+J_{12} \\
    R_3 & -f(J_{21}R_2+J_{22}R_3) & J_{21} & -xf(J_{21}R_2+J_{22}R_3)+J_{22}
  \end{bmatrix}
\end{equation}
with respect to the basis $\{\partial_a,\partial_\theta,\partial_x,\partial_y\}$, where $J = (J_{ij})$ is the matrix
(\ref{jlocalrep}). Note that (\ref{jtillocalrep}) is independent of the first coordinate. For $s$ large enough we
have a well-defined map
\[
 \wtil(s,t) := \left( id_{\R} \times \Psi \right) \left( \util(s,t) \right) = (a(s,t),\theta(s,t),z(s,t) = (x(s,t),y(s,t))).
\]
It follows from Lemma~\ref{generaldecay} that $\hat{J}(s,t) := \hat{J}(\wtil(s,t))$ satisfies
\[
 \lim_{s\rightarrow+\infty} e^{rs} \left|D^\gamma\left[\hat{J}(s,t) - \begin{bmatrix}
                                        0 & -T_0 & 0 & 0 \\
                                        T_0^{-1} & 0 & 0 & 0 \\
                                        0 & 0 & J_{11}(x_0(T_0t)) & J_{12}(x_0(T_0t)) \\
                                        0 & 0 & J_{21}(x_0(T_0t)) & J_{22}(x_0(T_0t))
                                       \end{bmatrix}\right]\right| = 0 \ \forall \gamma
\]
for some $r>0$.

On $\R\times M$ we have projections $\pi_{\R} : \R \times M \rightarrow \R$ and $\pi_{M} : \R \times M \rightarrow
M$. The metric $g^0$ (\ref{norm0}) is $\R$-invariant and $\jtil$ is a pointwise isometry with respect to $g^0$. Fix
two non-vanishing smooth sections $n$ and $m$ of $\xi|_U$ such that $m = J n$. We can assume in addition that
$\{n,m\}$ is $d\lambda$-symplectic since $GL(1,\C)$ is homotopy equivalent to $U(1)$. Then $\tilde n = \pi_{M}^*n$
and $\tilde m = \pi_{M}^*m$ are smooth sections of $\pi_{M}^*\xi$ over $\R \times U$ satisfying $\jtil \tilde n =
\tilde m$. Define
\begin{equation}\label{sectionshat}
 \begin{aligned}
  \hat{n}(s,t) &= d(id_\R\times\Psi)_{\util(s,t)}\cdot \tilde n \circ \util (s,t) \in 0 \times \R^3 \subset \R^4 \\
  \hat{m}(s,t) &= d(id_\R\times\Psi)_{\util(s,t)}\cdot \tilde m \circ \util (s,t) \in 0 \times \R^3 \subset \R^4 \\
  \hat{n}_\infty(t) &= d\Psi|_{x_0(T_0t)} \cdot n(x_0(T_0t)) \in \R^3 \\
  \hat{m}_\infty(t) &= d\Psi|_{x_0(T_0t)} \cdot m(x_0(T_0t)) \in \R^3.
 \end{aligned}
\end{equation}
Lemma~\ref{generaldecay} gives $b>0$ such that
\begin{equation}\label{decaynmhat}
 \begin{aligned}
  & \lim_{s\rightarrow+\infty} \sup_{t \in S^1} e^{bs} \norma{ D^\gamma[\hat n (s,t) - (0,\hat{n}_\infty(t))]} = 0 \ \forall \gamma \\
  & \lim_{s\rightarrow+\infty} \sup_{t \in S^1} e^{bs} \norma{ D^\gamma[\hat m (s,t) - (0,\hat{m}_\infty(t))]} = 0 \ \forall \gamma
 \end{aligned}
\end{equation}

The bundle with fibers $E_{(s,t)} = \pi^*_M\xi|_{\util(s,t)}$ is smooth, $\jtil$-invariant and transverse to
$T\util$ over $[s_0,+\infty)\times S^1$ for a fixed $s_0>0$ large. Thus we can assume there exists a smooth
$\jtil$-invariant subbundle $L$ of $\util^*T(\R\times M)$ such that
\begin{enumerate}
 \item $L=E$ over points $z=\est$ with $s\geq s_0+1$.
 \item $L=N\util$ over points $z=\est$ with $s\leq s_0+1/2$.
\end{enumerate}
Here $N\util$ is the normal bundle to $\util$ with respect to the metric $g^0$. $N\util$ is also $\jtil$-invariant.
Standard degree theory shows that there is precisely one homotopy class $\beta_N \in \mathcal{S}_P$ with the
following property: the section $\tilde n$ extends to a smooth non-vanishing section of $L$ if, and only if, the
section $t\mapsto n(x_0(T_0t))$ is in class $\beta_N$. From now on we assume this is the case and that $\tilde n$
can be extended. We extend $\tilde m$ by $\tilde m = \jtil \tilde n$. The following identity is proved
in~\cite{props3}
\begin{equation}\label{windnp}
 \wind(\beta_N,\beta_{\util},J) = +1.
\end{equation}

Now fix a metric $g$ on $\R\times M$ agreeing with $\left(id_{\R}\times
\Psi\right)^*\left(da^2+d\theta^2+dx^2+dy^2\right)$ on $\R\times U$. Consider $\Phi$ defined by
\begin{equation}\label{mapPhi}
 \begin{aligned}
  \Phi:\C\times B^\prime &\rightarrow \R\times M \\
  (z,v) &\mapsto exp_{\util(z)}(v_1\tilde n(z)+v_2\tilde m(z))
 \end{aligned}
\end{equation}
where $exp$ is the exponential map associated to $g$ and $B^\prime$ is a small open ball around $0\in\R^2$. We need
to examine the map $\Phi$ in more detail. Let us define
\begin{equation}\label{mapF}
 F(s,t,v) = (id_\R\times\Psi) \circ \Phi \circ (\sigma\times id_{\R^2}) (s,t,v)
\end{equation}
where $\sigma(s,t) = \est$. By the asymptotic behavior of $\wtil$, $\hat{n}$ and $\hat{m}$ there exists $b>0$ such that
\begin{equation}\label{exponentialestimates}
 \lim_{s\rightarrow+\infty} e^{bs} |D^\gamma[F(s,t,v)-F_\infty(s,t,v)]| = 0\ \forall\gamma
\end{equation}
where $F_\infty(s,t,v) = (T_0s,t,v_1\hat{n}_\infty(t)+v_2\hat{m}_\infty(t))$. We used Lemma~\ref{generaldecay}. In
particular,
\begin{equation}\label{estDFinfty}
 \lim_{s\rightarrow+\infty} e^{bs} |D^\gamma[DF-DF_\infty]| = 0\ \forall\gamma
\end{equation}
where the smooth matrix
\begin{equation}\label{DFinfty}
 DF_\infty(t,v)       = \begin{bmatrix}
                    T_0 & 0 & 0 & 0 \\
                    0 & 1 & 0 & 0 \\
                    0 & v_1[\hat{n}_\infty^\prime(t)]_1 + v_2[\hat{m}_\infty^\prime(t)]_1 & [\hat{n}_\infty(t)]_1 & [\hat{m}_\infty(t)]_1 \\
                    0 & v_1[\hat{n}_\infty^\prime(t)]_2 + v_2[\hat{m}_\infty^\prime(t)]_2 & [\hat{n}_\infty(t)]_2 & [\hat{m}_\infty(t)]_2
                   \end{bmatrix}
\end{equation}
is independent of $s$.
Define $\bar J = \Phi^*\jtil$ and write
\begin{equation}\label{matrixJbar}
 \bar J(z,v) = \begin{bmatrix} j_1(z,v) & \Delta_1(z,v) \\ \Delta_2(z,v) & j_2(z,v) \end{bmatrix}.
\end{equation}
Note that
\begin{equation}\label{Jbaroverzero}
 \bar{J}(z,0) \equiv \begin{bmatrix} J_0 & 0 \\ 0 & J_0 \end{bmatrix}
\end{equation}
since $\util_t = \jtil\util_s$ and $\tilde m = \jtil \tilde n$.

\begin{lemma}\label{asympJbar}
Consider $\underbar J = F^*\hat J = (\sigma\times id_{\R^2})^*\bar J$. There exists a smooth map $\underbar J_\infty : S^1 \times B^\prime \rightarrow \R^{4\times 4}$ and $b>0$ such that $(\underbar J_\infty)^2 = -I$ and
\begin{equation}\label{estimatesJbar}
 \lim_{s\rightarrow+\infty} \sup_{(t,v) \in S^1\times K} e^{bs} |D^\alpha[\underbar{J}(s,t,v)-\underbar{J}_\infty(t,v)]| = 0 \ \forall \alpha
\end{equation}
for every compact set $K \subset B^\prime$.
\end{lemma}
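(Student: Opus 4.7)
The plan is to exploit the explicit formula
\[
\underbar J(s,t,v) = DF(s,t,v)^{-1}\,(\hat J\circ F)(s,t,v)\,DF(s,t,v),
\]
together with the crucial fact, visible from the local expression~\eqref{jtillocalrep}, that $\hat J$ depends only on the last three coordinates $(\theta,x,y)$ and not on $a$. Since $F_\infty(s,t,v) = (T_0s,\,t,\,v_1\hat n_\infty(t)+v_2\hat m_\infty(t))$, the projection of $F_\infty$ onto the last three coordinates is independent of $s$, so the natural candidate for the limit is
\[
\underbar J_\infty(t,v) := DF_\infty(t,v)^{-1}\,\hat J\bigl(t,\,v_1\hat n_\infty(t)+v_2\hat m_\infty(t)\bigr)\,DF_\infty(t,v),
\]
which is smooth on $S^1\times B'$ after possibly shrinking $B'$. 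Indeed, at $v=0$ the matrix~\eqref{DFinfty} is block lower-triangular with invertible diagonal blocks $\mathrm{diag}(T_0,1)$ and the $2\times 2$ block formed by $\hat n_\infty(t),\hat m_\infty(t)$, which is invertible since $\{n,m\}$ is a frame of $\xi$, so by openness of invertibility and compactness of $S^1$ we may shrink $B'$ uniformly.

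First I would check the algebraic identity $\underbar J_\infty^2=-I$. Since $\bar J=\Phi^*\jtil$ and $\jtil^2=-I$, we have $\underbar J(s,t,v)^2=-I$ wherever $\underbar J$ is defined, and pointwise convergence $\underbar J\to\underbar J_\infty$, which will be a by-product of the estimates below, forces $\underbar J_\infty^2=-I$.

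Next I would establish the exponential convergence~\eqref{estimatesJbar}. From~\eqref{exponentialestimates}, the projection of $F(s,t,v)$ onto the $(\theta,x,y)$-coordinates converges to $(t,v_1\hat n_\infty(t)+v_2\hat m_\infty(t))$ exponentially in all derivatives, uniformly on compacta $K\subset B'$; since $\hat J$ is smooth and independent of $a$, composition with $\hat J$ preserves this decay (the linear growth of the $a$-component $T_0s$ of $F_\infty$ is harmless). Combined with~\eqref{estDFinfty} and uniform invertibility of $DF_\infty$, which yields $DF(s,t,v)^{-1}\to DF_\infty(t,v)^{-1}$ exponentially in all derivatives on compacta by a standard Neumann-series / chain-rule argument, the triple product displayed above produces~\eqref{estimatesJbar}. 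The verification for higher derivatives is purely multilinear: any $D^\alpha(\underbar J-\underbar J_\infty)$ expands by the Leibniz rule into a finite sum of terms, each containing at least one factor of the form $D^\beta(\hat J\circ F-\hat J\circ F_\infty)$, $D^\beta(DF-DF_\infty)$, or $D^\beta(DF^{-1}-DF_\infty^{-1})$, all of which decay like $e^{-bs}$ uniformly on $S^1\times K$, while the remaining factors are uniformly bounded on $S^1\times K$ thanks to smoothness and boundedness of $F_\infty$, $DF_\infty$ and $DF_\infty^{-1}$ there.

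The only potential obstacle is controlling $DF^{-1}$: we need $DF(s,t,v)$ to be invertible for all large $s$ uniformly in $(t,v)\in S^1\times K$. This, however, is automatic from $DF\to DF_\infty$ in $C^0$ on $S^1\times K$ and the uniform invertibility of $DF_\infty$ on $S^1\times K$; once invertibility holds, the exponential control of $DF^{-1}-DF_\infty^{-1}$ and its derivatives is a routine consequence of the identity $A^{-1}-B^{-1}=A^{-1}(B-A)B^{-1}$ and induction on $|\alpha|$.
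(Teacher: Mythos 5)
Your proof is correct and takes essentially the same approach as the paper: the conjugation formula $\underbar J = DF^{-1}\cdot(\hat J\circ F)\cdot DF$, the estimates~(\ref{exponentialestimates}) and~(\ref{estDFinfty}), and the observation that $\hat J$ is independent of the first coordinate. The details you spell out (uniform invertibility of $DF_\infty$, control of $DF^{-1}$) are exactly what the paper leaves implicit when it says the lemma ``follows easily.''
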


\begin{proof}
The lemma follows easily from the formula
\[
 \underline J(s,t,v) = DF(s,t,v)^{-1} \cdot \hat J(F(s,t,v)) \cdot DF(s,t,v)
\]
and the estimates (\ref{exponentialestimates}) and (\ref{estDFinfty}), where $\hat J$ is the matrix
(\ref{jtillocalrep}). It is only important to note that $\hat J$ is defined on $\R \times S^1 \times B$ and is
independent of the first variable.
\end{proof}


\subsubsection{The non-linear Cauchy-Riemann equations in normal coordinates}

We look for smooth maps $z\mapsto v(z)$ such that $z\mapsto \Phi(z,v(z))$ has a $\jtil$-invariant tangent space.
This is equivalent to
\begin{equation}\label{badH}
 H(v) := \Delta_2(z,v) + j_2(z,v)\circ dv - dv \circ j_1(z,v) - dv\circ\Delta_1(z,v)\circ dv = 0.
\end{equation}
In view of (\ref{Jbaroverzero}) one can write
\begin{equation}\label{barJdecomp1}
 \begin{aligned}
  & j_1(z,v) = J_0 + \int_0^1 j_1'(z,rv)dr \cdot v  = J_0 + \delta j_1(z,v) \cdot v \\
  & j_2(z,v) = J_0 + \int_0^1 j_2'(z,rv)dr \cdot v  = J_0 + \delta j_2(z,v) \cdot v \\
  & \Delta_1(z,v) = \int_0^1 \Delta_1^\prime(z,rv)dr \cdot v = \delta \Delta_1(z,v) \cdot v \\
  & \Delta_2(z,v) = \int_0^1 \Delta_2^\prime(z,rv)dr \cdot v = \delta \Delta_2(z,v) \cdot v
 \end{aligned}
\end{equation}
where the prime denotes a derivative with respect to the $v$-variable. Analogously we write
\begin{equation}\label{barJdecomp2}
 \begin{aligned}
  \delta \Delta_2(z,v) &= \Delta_2^\prime(z,0) + \int_0^1 \int_0^1 \Delta_2^{\prime\prime}(z,r\tau v)rdrd\tau \cdot v \\
  &= C(z) + \delta \Delta_2^\prime(z,v) \cdot v.
 \end{aligned}
\end{equation}
Plugging (\ref{barJdecomp1}) and (\ref{barJdecomp2}) into (\ref{badH}) we find
\begin{equation}\label{goodH}
 J_0 \cdot dv - dv\cdot J_0 + C \cdot v + W(z,v,dv) \cdot v = 0
\end{equation}
where
\[
 \begin{aligned}
  W(z,v,L) \cdot u &= [ \delta \Delta_2^\prime(z,v) \cdot v ] \cdot u + [ \delta j_2(z,v) \cdot u ] L \\
  &- L [ \delta j_1(z,v) \cdot u ] - L [ \delta \Delta_1(z,v) \cdot u ] L.
 \end{aligned}
\]

Note that $C(z)$ and $W(z,v,L)$ are linear maps $\R^2\to\R^{2\times 2}$ for fixed $(z,v,L)$, and (\ref{goodH}) is an
identity on $2 \times 2$ matrices. Let $\tilde C(s,t)$ be the $2\times 2$-matrix given by $\tilde C(s,t)u = [C(\sigma(s,t))u]\partial_t\sigma(s,t)$. Then by Lemma~\ref{asympJbar}
\begin{equation}\label{propsmapW}
 \lim_{s\rightarrow+\infty} \sup_{t \in S^1} e^{bs} D^\gamma[\tilde C(s,t)-C_\infty(t)] = 0 \ \forall \  D^\gamma=\partial_s^{\gamma_1}\partial_t^{\gamma_2}
\end{equation}
where $b>0$ and $C_\infty(t)$ is some smooth matrix loop. Later we will need the following statement which is an easy consequence of the chain rule and Lemma~\ref{asympJbar}.

\begin{lemma}\label{asympvW}
Fix $m\geq 1$ and assume $v: \C \rightarrow B^\prime$ solves $H(v)=0$ . Suppose that for some $d>0$, $v(s,t) = v(\est)$ satisfies
\[
 \lim_{s\rightarrow+\infty} \sup_{t\in S^1} e^{ds} |D^\gamma v(s,t)| = 0 \ \forall |\gamma|\leq m.
\]
Then the matrix $\tilde W_v(s,t)$ defined by $\tilde W_v(s,t)u := [W(z,v(z),dv(z))\cdot u]\partial_t$, with $z=\sigma(s,t)$ and $\partial_t = \partial_t\sigma(s,t)$, satisfies
\[
 \lim_{s\rightarrow+\infty} \sup_{t\in S^1} e^{ds} |D^\gamma \tilde W_v(s,t)| = 0 \ \forall |\gamma|\leq m-1.
\]
\end{lemma}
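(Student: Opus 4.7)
The plan is to verify that each summand in the expansion of $\tilde W_v(s,t)$ carries at least one factor of $v(\sigma(s,t))$ or of an entry of $dv(\sigma(s,t))$, and then to combine this with uniform bounds on the remaining coefficient factors via the Leibniz rule.

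First I would unwind the definition by substituting $L=dv$ at $z=\sigma(s,t)$ into the four terms that define $W(z,v,L)\cdot u$ just after~(\ref{goodH}), and then post-composing with $\partial_t\sigma(s,t)$. The first summand contains the explicit factor $v(s,t)$; the remaining three each contain at least one entry of $dv(\sigma(s,t))$. Using $\partial_j[v(\sigma(s,t))]=dv(\sigma(s,t))\cdot\partial_j\sigma(s,t)$ for $j\in\{s,t\}$ together with $D\sigma(s,t)=2\pi e^{2\pi s}R(t)$ for an orthogonal $R(t)$, every entry of $dv(\sigma(s,t))$ is $e^{-2\pi s}$ times a smooth linear combination of $\partial_s v(s,t)$ and $\partial_t v(s,t)$. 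In particular each such entry is $o(e^{-(d+2\pi)s})$ together with all its $(s,t)$-derivatives of order up to $m-1$, by the hypothesis on $v$.

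Next I would show that the coefficient factors $\delta\Delta_2^\prime(\sigma(s,t),v(s,t))$, $\delta j_i(\sigma(s,t),v(s,t))$ and $\delta\Delta_1(\sigma(s,t),v(s,t))$ are smooth in $(s,t)$ with all partial derivatives uniformly bounded for $s$ large. These are obtained from finitely many $v$-derivatives of $\bar J$, and their pullback by $\sigma$ agrees with the corresponding $v$-derivatives of $\underline J=F^*\hat J$ evaluated at $(s,t,v(s,t))$. Lemma~\ref{asympJbar} asserts that $\underline J$ converges in $C^\infty$ to the $s$-independent $\underline J_\infty$ at exponential rate, so all $(s,t)$-derivatives of the coefficient factors are uniformly bounded in $(s,t)$ once $v(s,t)$ lies in a fixed compact subset of $B^\prime$, which holds for $s$ large by the assumption $\sup_t|v(s,t)|\to 0$.

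Finally I would apply the Leibniz rule: for $|\gamma|\leq m-1$, $D^\gamma\tilde W_v(s,t)$ is a finite sum of products in which at least one factor is $D^{\gamma'}v(s,t)$ or $D^{\gamma'}(\partial_j v)(s,t)$ with $|\gamma'|\leq m$, while every other factor is uniformly $C^\infty$-bounded. By hypothesis each such distinguished factor is $o(e^{-ds})$ uniformly in $t$, which yields the claimed estimate. The main technical obstacle is the bookkeeping in the first step, namely checking that after the substitution $L=dv(\sigma(s,t))$ and the pairing with $\partial_t\sigma$ no summand of $W$ loses its $v$- or $dv$-factor; once this is verified, the rest is a direct application of the chain rule, Lemma~\ref{asympJbar} and the Leibniz rule.
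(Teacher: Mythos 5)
The overall plan — isolate a $v$- or $dv$-factor in each summand of $W$, show the remaining ``coefficient'' factors are bounded via Lemma~\ref{asympJbar}, and finish with the Leibniz rule — is the right one (it is what the paper has in mind when it calls the lemma ``an easy consequence of the chain rule and Lemma~\ref{asympJbar}''). However, your accounting of the exponential weights has a genuine gap.

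The problem is in the middle step. It is not true that $\delta\Delta_1(\sigma(s,t),v(s,t))$ and $\delta\Delta_2'(\sigma(s,t),v(s,t))$ are uniformly bounded, and the final factor $\partial_t\sigma(s,t)$ is certainly not bounded either: $|\partial_t\sigma| = 2\pi e^{2\pi s}$. Lemma~\ref{asympJbar} bounds $\underline J = (\sigma\times id)^*\bar J$, and a $(1,1)$-tensor pullback is \emph{conjugation} by $D(\sigma\times id)$, not composition with $\sigma$. Blockwise this gives $\underline\Delta_1 = D\sigma^{-1}\Delta_1$, $\underline\Delta_2 = \Delta_2\,D\sigma$, $\underline j_1 = D\sigma^{-1} j_1\, D\sigma$, $\underline j_2 = j_2$. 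Since $D\sigma(s,t) = 2\pi e^{2\pi s}R(t)$ with $R(t)$ orthogonal, boundedness of $\underline J$ forces $\Delta_1(\sigma(s,t),\cdot)$ (and $\delta\Delta_1$) to \emph{grow} like $e^{2\pi s}$ and $\Delta_2$ (and $\delta\Delta_2'$) to \emph{decay} like $e^{-2\pi s}$; only $j_1,j_2$ are bounded. With your accounting, the first summand of $\tilde W_v$ would be ``bounded $\times\, v(s,t)\,\times\,\partial_t\sigma$,'' which is of size $e^{(2\pi-d)s}$ and does not decay unless $d>2\pi$, so the Leibniz step as you state it yields nothing.

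The fix is to make the cancellations of the $D\sigma^{\pm1}$ factors explicit, i.e., to express $\tilde W_v$ entirely in $(s,t)$-variables before applying Leibniz. Since $dv(z)\,\partial_t\sigma = \partial_t v(s,t)$, $dv(z)\,\partial_s\sigma = \partial_s v(s,t)$, and $D\sigma^{-1}\partial_t\sigma = (0,1)$, every $D\sigma$ gets absorbed: the first summand becomes $[\underline{\delta\Delta_2'}(s,t,v(s,t))\cdot v(s,t)]\cdot u\cdot(0,1)$; the third becomes $(\partial_s v,\partial_t v)[\underline{\delta j_1}(s,t,v)\cdot u](0,1)$; the fourth becomes $(\partial_s v,\partial_t v)[\underline{\delta\Delta_1}(s,t,v)\cdot u]\,\partial_t v$; and the second is already $[\underline{\delta j_2}\cdot u]\,\partial_t v$. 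Now the coefficient factors really are the $v$-derivatives of $\underline J$, which Lemma~\ref{asympJbar} does control uniformly with all derivatives, each summand carries at least one factor among $v,\partial_s v,\partial_t v$, and your Leibniz argument goes through. So what you identified as the ``main technical obstacle'' — checking each summand keeps a $v$- or $dv$-factor — is immediate from~(\ref{goodH}); the actual obstacle is this chain-rule bookkeeping of the $D\sigma$'s, which your write-up skips.
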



\subsubsection{The asymptotic operator and the Fredholm index}

The frame $\{n,m\}$ of $\xi|_{U}$ is $(d\lambda,J)$-unitary, that is, $Jn=m$ and $d\lambda(n,m) \equiv 1$. The
following important lemma is proved in~\cite{props3}.

\begin{lemma}\label{Cinftysymm}
The matrix $C_\infty(t)$ is symmetric. The solution $t \mapsto \psi(t) \in Sp(1)$ of $-J_0 \psi^\prime -
C_\infty\psi = 0$, $\psi(0) = I$, is the linearized Reeb flow restricted to $\xi$ along $t \mapsto x_0(T_0t)$
represented in the basis $\{n,m\}$. In other words, the differential operator $L_{C_\infty} = -J_0\partial_t -
C_\infty(t)$ represents the asymptotic operator $A_P$ in the frame $\{n,m\}$.
\end{lemma}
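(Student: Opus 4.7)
The plan is to identify $C_\infty(t)$ with the matrix representation of the asymptotic operator $A_P$ in the $(d\lambda,J)$-unitary frame $\{n,m\}$, from which both assertions follow at once. The key observation is that $C_\infty(t)$ is intrinsic to $(\lambda,J,P,\{n,m\})$: by the exponential estimates (\ref{exponentialestimates}), (\ref{estDFinfty}), (\ref{estimatesJbar}), and (\ref{propsmapW}), replacing $\util$ by the model trivial cylinder $(s,t)\mapsto(T_0s,x_0(T_0t))$ and repeating the construction of $\Phi$ yields the same limiting coefficient. Concretely, I would introduce the model chart $\Phi_\infty:Z\times B'\to\R\times M$ by
\[
\Phi_\infty(s+it,v) = \bigl(T_0 s,\,\Psi^{-1}\bigl(t,\,v_1\hat n_\infty(t)+v_2\hat m_\infty(t)\bigr)\bigr),
\]
whose pullback $\Phi_\infty^*\jtil$ is $s$-independent and, by the same derivation that produced (\ref{goodH}), yields a Cauchy--Riemann equation whose linearization at $v=0$ has the form $\partial_s\tilde v + J_0\partial_t\tilde v + C_\infty(t)\tilde v = 0$.

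Next I would exhibit an explicit two-parameter family of $\jtil$-holomorphic solutions to this linearized equation via transverse Reeb translations. For small $\eta_0\in\R^2$, the map $\Xi_{\eta_0}(s,t) := (T_0 s,\phi_{T_0t}(\Psi^{-1}(0,\eta_0)))$ is $\jtil$-holomorphic since $\phi_t^*\lambda = \lambda$ and $\jtil$ is $\R$-invariant. Expressed in the coordinates $\Phi_\infty$ it becomes $\tilde v(s,t;\eta_0) = \psi(t)\eta_0 + O(|\eta_0|^2)$, independent of $s$, where $\psi(t)\in\Sp(1)$ represents $d\phi_{T_0t}(x_0(0))|_\xi$ in the frame $\{n,m\}$ with $\psi(0)=I$. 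Linearizing in $\eta_0$ at $0$ shows that $\psi(t)\eta_0$ lies in the kernel of the linearized equation; since this solution is $s$-independent, we obtain $J_0\psi'(t) + C_\infty(t)\psi(t) = 0$, i.e. $-J_0\psi'-C_\infty\psi = 0$, which is the second assertion of the lemma and simultaneously identifies $L_{C_\infty} = -J_0\partial_t - C_\infty$ with the representation of $A_P$ in the frame $\{n,m\}$ (cf.\ Subsection~\ref{asympopprops}).

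The symmetry $C_\infty(t)^T = C_\infty(t)$ then follows immediately from the discussion at the end of Subsection~\ref{asympopprops}: since $\{n,m\}$ is $(d\lambda,J)$-unitary by construction, $A_P$ is presented in this frame as $L_S$ with $S(t)^T = S(t)$, and the identification above gives $C_\infty = S$. The main technical obstacle will be to verify carefully that the Cauchy--Riemann equation in the model chart $\Phi_\infty$ produces exactly the same linearization coefficient $C_\infty(t)$ as the one extracted asymptotically from $\util$ in (\ref{propsmapW}); this is essentially an unwinding of definitions that requires tracking the Jacobian factor $\partial_t\sigma$ when passing between the $z$-domain parametrization used for $\Phi$ and the $(s,t)$-domain used for $\Phi_\infty$, but it introduces no new analytical difficulty beyond the computations already carried out to derive (\ref{goodH}).
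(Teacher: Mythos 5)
The paper does not prove this lemma; it states that the proof is in \cite{props3}, so there is no in-text argument to compare against. Your reconstruction is correct and follows the standard route from that reference: the Reeb translates $\Xi_{\eta_0}$ are explicit $\jtil$-holomorphic maps (since $\jtil\partial_a=R$ and $\phi_t^*R=R$), and because the model chart is $s$-independent, so are their graphs; hence the linearized Reeb flow lies in the kernel of the linearized Cauchy--Riemann operator, which gives the ODE $-J_0\psi'-C_\infty\psi=0$, and symmetry then follows from the $(d\lambda,J)$-unitarity of $\{n,m\}$ as in the discussion at the end of Subsection~\ref{asympopprops}.

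Two small bookkeeping points. Since $\hat n_\infty(t),\hat m_\infty(t)\in\{0\}\times\R^2\subset\R^3$, the model chart should read $\Psi^{-1}\bigl((t,0,0)+v_1\hat n_\infty(t)+v_2\hat m_\infty(t)\bigr)$ rather than $\Psi^{-1}(t,\cdot)$. More substantively, parametrizing the initial condition by $q_0=\Psi^{-1}(0,\eta_0)$ uses the ambient frame $\{\partial_x,\partial_y\}$ rather than $\{n,m\}$, so the linearized graph function is $\zeta(t)=M(t)^{-1}\varphi(kt)\eta_0$ with $M(t)=[\hat n_\infty(t)\;\hat m_\infty(t)]$, whereas $\psi(t)=M(t)^{-1}\varphi(kt)M(0)$. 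Thus $\zeta(t)=\psi(t)M(0)^{-1}\eta_0$, not $\psi(t)\eta_0$ as written; this constant change of basis in $\eta_0$ does not affect the resulting ODE, but as stated the identification is off by $M(0)$. Finally, the coefficient match you flag as the main obstacle is exactly what the exponential estimates (\ref{exponentialestimates}), (\ref{estDFinfty}), (\ref{estimatesJbar}) are designed to deliver: $\tilde C(s,t)$ is a smooth algebraic expression in $F$, $DF$, and $\underbar{J}$ and their first derivatives, and all of these converge to the corresponding quantities for the model cylinder with all derivatives, so the limiting coefficient is the model coefficient. Your proposal is sound.
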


$H$ is smooth in the following Banach space set-up defined in~\cite{props3}.

\begin{defn}
Fix $l\in\Z^+$, $\alpha\in(0,1)$ and $\delta<0$. The space $C^{l,\alpha,\delta}_0(\C,\R^2)$ is defined
in~\cite{props3}. Introducing cylindrical coordinates $z=e^{2\pi(s+it)}$ and writing $f(s,t)=f(z)$ for $z\not=0$ one
says that $f\in C^{l,\alpha,\delta}_0(\C,\R^2)$ if
\begin{enumerate}
 \item $f\in C^{l,\alpha}(\C,\R^2)$.
 \item $e^{-\delta s}D^\beta f(s,t)\in C^{0,\alpha}(\left[R,+\infty\right)\times S^1,\R^2)\ \forall
     \norma{\beta}\leq l\text{ and }R\in \R$.
 \item $\lim_{R\to+\infty} e^{-\delta s} \Vert D^\beta f(s,t)\Vert_{C^{0,\alpha}(\left[R,+\infty\right)\times
     S^1,\R^2)}=0 \ \forall \ |\beta| \leq l$.
\end{enumerate}
$C^{l,\alpha,\delta}_0(\C,\R^2)$ is a Banach space with the norm
\begin{align*}
 \norma{f}_{l,\alpha,\delta} = & \Vert z\mapsto f(z) \Vert_{C^{l,\alpha}(\mathbb{D},\R^2)} \\
 &+ \Vert (s,t)\mapsto e^{-\delta s}f(\est) \Vert_{C^{l,\alpha}(\left[-1,+\infty\right)\times S^1,\R^2)}.
 \end{align*}
The vector bundle $Y\to \C$ with fiber $Y_z = \{ \R\text{-linear maps } T_z\C \to \R^2\}$ admits a splitting $Y = Y^{1,0} \oplus Y^{0,1}$ into $\C$-linear and $\C$-anti-linear maps. The space $C^{l-1,\alpha,\delta}_0(Y)$ consists of sections $A:\C \to Y$ of class $C^{l-1,\alpha}$ such that $(s,t) \mapsto A(\sigma(s,t))\cdot \partial_t\sigma(s,t)$ belongs to $C^{l-1,\alpha,\delta}_0$ on $\R^+ \times S^1$. $C^{l-1,\alpha,\delta}_0(Y^{0,1})$ is defined analogously.
\end{defn}

By (\ref{estimatesJbar}), equation (\ref{goodH}) defines a smooth map $$ H:C^{l,\alpha,\delta}_0(\C,B^\prime) \to C^{l-1,\alpha,\delta}_0(Y) $$ satisfying $H(0)=0$ and $DH(0)\zeta = J_0  d\zeta - d\zeta  J_0 + C \zeta$. Differentiating the identity $\bar J^2=-I$ one checks that $DH(0)\zeta \in C^{l-1,\alpha,\delta}_0(Y^{0,1}) \subset C^{l-1,\alpha,\delta}_0(Y)$, $\forall \zeta$. There exists a Banach bundle over $C^{l,\alpha,\delta}_0(\C,B^\prime)$ (with fibers modeled on $C^{l-1,\alpha,\delta}_0(Y^{0,1})$), and a smooth Fredholm section $\eta$ such that $\eta(v) = 0 \Leftrightarrow H(v) = 0$. Moreover, $D\eta(0)\zeta = [DH(0)\zeta]^{0,1} = DH(0)\zeta$, $\forall \zeta$. The details are spelled out in section 5 from~\cite{props3}. We will fix $\delta$ later. The following theorem is proved in~\cite{props3}, see also~\cite{mschwarz}.

\begin{theorem}[Hofer, Wysocki and Zehnder]\label{indexformula}
If $\delta \not\in \sigma\left(L_{C_\infty}\right)$ and $\delta<0$ then the vertical derivative $D\eta(0)$ is a Fredholm operator with index $\tilde \mu^\delta \left(L_{C_\infty}\right) + 1 = \mu_{CZ}^\delta (P,\beta_N) + 1= \mu_{CZ}^\delta(P,\beta_{\util}) - 1$.
\end{theorem}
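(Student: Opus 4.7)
The plan is to reduce the Fredholm computation to the standard spectral-flow/Riemann--Roch formula for Cauchy--Riemann operators with cylindrical ends, and then match the resulting integer with each of the three expressions using Lemma~\ref{Cinftysymm} and identity~\eqref{windnp}.

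First I would pass to cylindrical coordinates $z=\est$ on a neighborhood of $\infty$. After contracting with $\partial_t$ to land in the $Y^{0,1}$-component and writing $\zeta(s,t)$ for a section on the end, the operator $D\eta(0)\zeta = J_0\,d\zeta - d\zeta\,J_0 + C\zeta$ takes the form
\[
 \zeta_s + J_0\,\zeta_t + \tilde C(s,t)\,\zeta,
\]
where by~\eqref{propsmapW} the coefficient $\tilde C(s,t)$ converges exponentially to $C_\infty(t)$. The weighted space $C^{l,\alpha,\delta}_0$ is isomorphic on this end to an unweighted H\"older space via the substitution $\zeta = e^{\delta s}\tilde\zeta$, which conjugates the asymptotic operator from $L_{C_\infty}$ to $L_{C_\infty}-\delta$. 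Invertibility of the latter is equivalent to $\delta\notin\sigma(L_{C_\infty})$, and standard elliptic theory for Cauchy--Riemann type operators with cylindrical ends (as developed in~\cite{props3}) then yields the Fredholm property.

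To compute the index I would invoke the Riemann--Roch-type formula proved in~\cite{props3} (cf.\ also~\cite{mschwarz}): for an operator of this kind on a punctured Riemann surface $(\Sigma,\Gamma^+\sqcup\Gamma^-)$ with admissible weights $\delta_z\notin\sigma(A_z)$,
\[
 \mathrm{ind} = \chi(\Sigma\setminus\Gamma) + \sum_{z\in\Gamma^+}\tilde\mu^{\delta_z}(A_z) - \sum_{z\in\Gamma^-}\tilde\mu^{\delta_z}(A_z).
\]
In our setting $\Sigma = S^2$, $\Gamma = \Gamma^+ = \{\infty\}$, and $A_\infty = L_{C_\infty}$, so the index equals $\chi(\C) + \tilde\mu^\delta(L_{C_\infty}) = 1 + \tilde\mu^\delta(L_{C_\infty})$, which is the first equality. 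For the second equality, Lemma~\ref{Cinftysymm} says that $L_{C_\infty}$ represents $A_P$ in the unitary frame $\{n,m\}$, whose homotopy class is $\beta_N$; since both $\tilde\mu^\delta(L_{C_\infty})$ and $\mu_{CZ}^\delta(P,\beta_N)$ are defined, via formula~\eqref{genmuindex}, by the windings of eigenvectors of this operator relative to the same frame, the two integers coincide. Finally, the change-of-trivialization rule $(\nu,\beta_{\util}) = (\nu,\beta_N) + \wind(\beta_N,\beta_{\util},J)$ combined with~\eqref{windnp} gives $(\nu,\beta_N) = (\nu,\beta_{\util})-1$ for every eigenvalue $\nu$ of $A_P$, whence $\mu_{CZ}^\delta(P,\beta_N) = \mu_{CZ}^\delta(P,\beta_{\util}) - 2$ and the third equality follows.

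The substantive step is the index formula itself; in the present one-positive-puncture case it may be reproved by deforming $C_\infty(t)$ along a path of symmetric loops whose spectra avoid $\delta$, reducing to an explicit winding count on a constant-coefficient model on $\C$. Everything else is bookkeeping already laid out in Subsection~\ref{asympopprops} and in the computation of $\beta_N$ preceding the statement.
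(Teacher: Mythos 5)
The paper does not prove Theorem~\ref{indexformula}; it is cited to~\cite{props3} and~\cite{mschwarz}. Your outline correctly reconstructs the argument in those references: after contracting with $\partial_t$ the operator becomes the perturbed Cauchy--Riemann operator $\zeta_s+J_0\zeta_t+\tilde C(s,t)\zeta$ with exponentially decaying coefficients, the weight $\delta$ shifts the asymptotic operator by $-\delta$, so the Fredholm property follows from $\delta\notin\sigma(L_{C_\infty})$, and the punctured Riemann--Roch formula with $\chi(\C)=1$, one positive puncture, and vanishing relative first Chern number (since $\{n,m\}$ extends over $L$ by construction) gives $\mathrm{ind}=1+\tilde\mu^\delta(L_{C_\infty})$. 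The identification $\tilde\mu^\delta(L_{C_\infty})=\mu_{CZ}^\delta(P,\beta_N)$ via Lemma~\ref{Cinftysymm}, and the shift by $2$ coming from the change of trivialization and~\eqref{windnp}, are also worked out correctly. This is the same route as the source; the only acknowledged black box is the index theorem itself, which the paper likewise takes as given.
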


\subsubsection{The choice of $\delta < 0$}

By the inequality $\mu(\util) \geq 3$ we can choose $\delta$ so that
\begin{equation}\label{choice}
 \begin{array}{cccc}
   \delta \in (-\infty,0) \setminus \sigma(A_P), & (\nu^{pos}_\delta,\beta_{\util}) = 2 & \text{and} & (\nu^{neg}_\delta,\beta_{\util}) = 1.
 \end{array}
\end{equation}
This follows from the characterization of $\mu_{CZ}$ explained in Section~\ref{compactness}. For this choice we have
$\mu_{CZ}^\delta(P,\beta_{\util}) = 3$. The index of $D\eta(0)$ is $3 - 1 = 2$, by Theorem~\ref{indexformula}.


\subsubsection{Automatic transversality}

Let $\zeta \in \ker D\eta(0) = \ker DH(0)$. Then $$ 0 = [DH(0)\zeta]\partial_t = \zeta_s+J_0\zeta_t+\tilde C\zeta.
$$ It follows from Theorem~\ref{asympHWZ} that either $\zeta$ vanishes identically or
\[
 \zeta(s,t)=e^{\int_{s_0}^s \tilde\alpha(r)dr}(\tilde e(t)+\tilde R(s,t))
\]
holds for $s\geq s_0 \gg 1$. Here $\tilde e$ is the representation of an eigenvector for $\tilde
\eta\in\sigma(A_P)\cap(-\infty,0)$ in the (unitary) frame $\{n,m\}$. The functions $\tilde\alpha$
and $\tilde R$ satisfy
\[
 \begin{aligned}
  & \lim_{s\rightarrow+\infty} |\tilde\alpha(s)-\tilde\eta| = 0 \\
  & \lim_{s\rightarrow+\infty} \sup_t |\tilde R(s,t)| = 0.
 \end{aligned}
\]
The definition of the space $C^{l,\alpha,\delta}_0(\C,\R^2)$ forces $\tilde{\eta}<\delta$. The map
$\zeta$ satisfies a perturbed Cauchy-Riemann equation so, by the similarity principle, it has only
isolated zeros or vanishes identically, see~\cite{93}. Moreover, each zero counts positively in the
algebraic count of the intersection number with the zero map $z\mapsto(z,0)$. The above asymptotic
behavior tells us that if $\zeta$ does not vanish identically then it does not vanish near
$\infty$. Standard degree theory implies
\[
 0\leq\#\{\text{zeros of }\zeta\}=\lim_{R\rightarrow+\infty}\wind\left(\zeta(Re^{i2\pi t})\right)=\wind(\tilde{e}(t)).
\]
By the choice of $\delta<0$ in (\ref{choice})
\[
 0\leq\wind(\tilde{e}(t))=(\tilde{\eta},\beta_N)=(\tilde{\eta},\beta_{\util})+\wind(\beta_{\util},\beta_N)\leq1-1=0.
\]
Thus $\zeta$ never vanishes or vanishes identically. If there are 3 linearly independent sections
in $\ker D\eta(0)$ then a linear combination of them would have to vanish at some point, which is
impossible by the above discussion. This proves that $D\eta(0)$ is surjective.


\subsubsection{Consequences of the implicit function theorem}

We fix $\delta<0$ as in (\ref{choice}) and an integer $l\geq 4$.

Since the linearization $D\eta(0)$ is surjective the implicit function theorem yields an open neighborhood $\OO \subset
C^{l,\alpha,\delta}_0(\C,\R^2)$ of $0$, an open neighborhood $\mathcal{U} \subset \R^2$ of $0$ and a smooth
embedding $\tau \in \mathcal{U} \mapsto v(\tau) \in \OO$ satisfying
\begin{equation}\label{ift}
 \begin{aligned}
  & \left\{ v(\tau) : \tau \in \Ucal \right\} = \OO \cap H^{-1}(0) \\
  & D\eta(v(\tau)) \text{ is a surjective Fredholm map of index } 2.
 \end{aligned}
\end{equation}

Writing $v(\tau)(z)=v(\tau,z)$, the maps $z\mapsto \Phi(z,v(\tau,z))$ are not necessarily $\jtil$-holomorphic. This
is taken care of in the appendix of~\cite{props3}. Analyzing a suitable Fredholm problem, fixing $0<\epsilon<2\pi$
and possibly making $\Ucal$ smaller, it is possible to find a smooth function
\begin{equation}\label{goodrep}
 \tau \in \Ucal \mapsto (C_\tau,D_\tau,\phi_\tau(z))\in \C\times\C\times\C^{l,\alpha,-\epsilon}_0(\C,\R^2)
\end{equation}
with the following properties: $C_0=1$, $D_0=0$, $\phi_0(z)\equiv0$,
\begin{equation}\label{mappsitau}
 \psi_\tau(z)=C_\tau z+D_\tau+\phi_\tau(z)
\end{equation}
is a diffeomorphism of $\C$, and if we define
\begin{equation}\label{embf}
 \begin{aligned}
  f : \C \times \Ucal & \rightarrow \R \times M \\
  (z,\tau) & \mapsto \Phi(\psi_\tau(z),v(\tau,\psi_\tau(z)))
 \end{aligned}
\end{equation}
then the maps $f(\cdot,\tau)$ are $\jtil$-holomorphic.

For $\tau \in \Ucal$ small, $t\in S^1$ and $s\gg1$ we can define smooth functions $S = S(\tau,s,t) \in \R$ and $T =
T(\tau,s,t) \in S^1$ by
\begin{equation}\label{formulaST}
 e^{2\pi(S+iT)} = \psi_\tau (\est).
\end{equation}
Denoting $\sigma_\tau = T_0(2\pi)^{-1}\log |C_\tau|$ and $d_\tau = (2\pi)^{-1}\arg C_\tau$ one easily verifies
\begin{equation}\label{estST}
 \lim_{s\rightarrow+\infty} e^{-\hat{b}s} \left| D^\gamma[(S,T)-(s+\sigma_\tau/T_0,t+d_\tau)] \right| = 0 \ \forall |\gamma|\leq l
\end{equation}
uniformly in $(\tau,t)$, for $\max\{\delta,-\epsilon\}<\hat{b}<0$. Define
\begin{equation}\label{mapw}
 \begin{aligned}
  \wtil_\tau(s,t) &= (id_\R\times\Psi)\circ f(\est,\tau) \\
  &= (id_\R\times\Psi)\circ\Phi(e^{2\pi(S+iT)},v(\tau,e^{2\pi(S+iT)})) \\
  &= F(S,T,v(\tau,S,T))
 \end{aligned}
\end{equation}
where $F$ is the map (\ref{mapF}). We write
\[
 \wtil_\tau = (a_\tau,w_\tau) = (a_\tau,\theta_\tau,z_\tau = (x_\tau,y_\tau)).
\]
Note that $w_0(s,t) = \Psi\circ u \left( \est \right)$ and $a_0(s,t) = a\left( \est \right)$.

\begin{lemma}\label{regvtau}
The map $f$ is $C^l$, and $v(\tau)$ is smooth $\forall \tau \in \Ucal$.
\end{lemma}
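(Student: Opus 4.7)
The plan is to deduce both statements from the fact that the maps $f(\cdot,\tau)$ are $\jtil$-holomorphic by construction, combined with the Banach-space smoothness of $\tau \mapsto v(\tau)$ and $\tau \mapsto \psi_\tau$ furnished by the implicit function theorem. Routing the pointwise regularity through $f(\cdot,\tau)$ avoids an elliptic bootstrap on the quasilinear equation (\ref{goodH}).

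For the smoothness of $v(\tau,\cdot)$, I fix $\tau \in \Ucal$ and observe that $f(\cdot,\tau):\C\to\R\times M$ is $\jtil$-holomorphic and a priori of class $C^{l,\alpha}$, since $\psi_\tau$ and $v(\tau)$ are. Because $\jtil$ is smooth, standard elliptic regularity for the nonlinear Cauchy--Riemann equation (see~\cite{mcdsal}) upgrades this to $f(\cdot,\tau)\in C^\infty(\C,\R\times M)$. Since $\util$ is an embedding, by shrinking $B^\prime$ if necessary the map $\Phi$ restricts to a smooth diffeomorphism from $\C\times B^\prime$ onto an open tubular neighborhood $\mathcal{N}$ of $\util(\C)$, and the image of $f(\cdot,\tau)$ lies in $\mathcal{N}$. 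The identity
\[
 (\psi_\tau(z),v(\tau,\psi_\tau(z)))=\Phi^{-1}(f(z,\tau))
\]
then shows the right-hand side is smooth in $z$. Reading off the first component yields that $\psi_\tau:\C\to\C$ is smooth, and because it is a global diffeomorphism its inverse is smooth; composing on the right with $\psi_\tau^{-1}$ then gives $v(\tau,\cdot)\in C^\infty(\C,\R^2)$.

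For the joint $C^l$ regularity of $f$ on $\C\times \Ucal$, I rely on the Banach-space smoothness produced by (\ref{ift}) and (\ref{goodrep}): both $\tau\mapsto v(\tau)\in C^{l,\alpha,\delta}_0(\C,\R^2)$ and $\tau \mapsto (C_\tau,D_\tau,\phi_\tau)\in\C\times\C\times C^{l,\alpha,-\epsilon}_0(\C,\R^2)$ are $C^\infty$ as maps between Banach spaces. Combining the continuous inclusions $C^{l,\alpha,\delta}_0,\, C^{l,\alpha,-\epsilon}_0\hookrightarrow C^l(\C,\R^2)$ with the standard fact that a $C^\infty$ map into $C^l$ gives a jointly $C^l$ evaluation map, I conclude that $(z,\tau)\mapsto v(\tau)(z)$ and $(z,\tau)\mapsto\psi_\tau(z)$ are of class $C^l$ on $\C\times\Ucal$. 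Since $\Phi$ is smooth, the composition formula (\ref{embf}) then yields $f\in C^l(\C\times\Ucal,\R\times M)$.

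The only delicate point is the first one, namely bypassing a direct bootstrap on the quasilinear equation (\ref{goodH}), whose coefficient $W(z,v,dv)$ depends on $dv$ and would make the standard Schauder iteration more awkward. The trick is to perform the elliptic upgrade on the target map $f(\cdot,\tau)$, where one has a genuine nonlinear Cauchy--Riemann equation with smooth structure constants, and then transfer regularity back to $v(\tau)$ via the smooth diffeomorphism $\Phi$ and the (necessarily smooth) diffeomorphism $\psi_\tau$ of $\C$. Everything else reduces to bookkeeping of smooth maps into weighted H\"older spaces.
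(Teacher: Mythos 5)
Your proof takes essentially the same route as the paper: joint $C^l$ regularity of $f$ from the Banach-space smoothness of $\tau\mapsto v(\tau)$ and $\tau\mapsto(C_\tau,D_\tau,\phi_\tau)$, and pointwise smoothness of $v(\tau)$ obtained by applying elliptic regularity to the $\jtil$-holomorphic map $f(\cdot,\tau)$ and transferring back through the local diffeomorphism $\Phi$. The only cosmetic difference is that you make the reparametrization $\psi_\tau$ explicit and invert it, whereas the paper phrases the transfer step as observing that the graph of $v(\tau)$ is a smooth submanifold of $\C\times\R^2$ (a reparametrization-invariant statement), but the content is identical.
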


\begin{proof}
$f$ is $C^l$ because of the definition of the topology of $C^{l,\alpha,\delta}_0(\C,\R^2)$ and from the fact that
$\tau \mapsto v(\tau)$ is smooth. By the regularity properties of the Cauchy-Riemann equations each $f(\cdot,\tau)$
is a smooth map. Thus the graph $z \in \C \mapsto (z,v(\tau)(z))$ is a smooth submanifold of $\C \times \R^2$. It
follows that $v(\tau)$ must be smooth.
\end{proof}

\begin{remark}\label{loopconv}
Each $f(\cdot,\tau)$ is a finite-energy plane with energy $E(f(\cdot,\tau)) = T_0$. We also have
\begin{gather*}
 \lim_{s\rightarrow +\infty} \pi_{M} \circ f (e^{2\pi(s+it)},\tau ) = x_0(T_0(t+d_\tau)) \text{ in } C^0(S^1,M) \\
 \lim_{s\rightarrow +\infty} \sup_{t\in S^1} \norma{a_\tau(s,t) - T_0s - \sigma_\tau} = 0.
\end{gather*}
Consequently we can apply Lemma~\ref{importantdecay} to obtain
\begin{equation}\label{estwtil}
 \lim_{s\rightarrow +\infty} \sup_{t\in S^1} \norma{D^\gamma[\wtil_\tau(s,t) - (T_0s+\sigma_\tau,t+d_\tau,0,0)]} = 0 \ \forall\gamma \ \forall \tau.
\end{equation}
\end{remark}


\subsubsection{The asymptotic analysis of fast planes}

It follows from Lemma~\ref{generaldecay} and Corollary~\ref{oops}, as explained in Subsection~\ref{abehav}, that
either $z_0$ vanishes or
\begin{equation}\label{asympz0}
 z_0(s,t) = e^{\int_{s_0}^s\alpha(\tau)d\tau}(e(t)+R(s,t))
\end{equation}
for $s\geq s_0\gg1$. Here $e(t)$ is an eigenvector corresponding to the eigenvalue $\mu<0$ of the asymptotic
operator $A_P$, represented in the trivializing frame $\{e_1,e_2\}$ for $\xi|_{P}$. The functions $\alpha$ and $R$
satisfy (\ref{remainder}).

The homotopy class of $t\mapsto e_1(x_0(T_0t))$ is $\beta_{\util}$. We must have $\mu<\delta$. In fact, if
$\mu>\delta$ then $\wind(e) = (\mu,\beta_{\util}) \geq (\nu^{pos}_\delta,\beta_{\util}) = 2$. Using
Lemma~\ref{windinftyasymp} we have $1 = \wind_\infty(\util) =  \wind(e) \geq 2$. This is absurd.

\begin{lemma}\label{eachtaufast}
Each $f(\cdot,\tau)$ is a fast finite-energy plane.
\end{lemma}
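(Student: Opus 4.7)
The plan is to verify, for each $\tau \in \Ucal$ near $0$, the three conditions in Definition~\ref{fastplane}: (a) $\infty$ is a non-degenerate puncture of $f(\cdot,\tau)$; (b) $\wind_\infty(f(\cdot,\tau)) = 1$; and (c) $\cov(f(\cdot,\tau)) = 1$. Condition (c) is immediate from Remark~\ref{loopconv}: since $\wtil_\tau \to (T_0s + \sigma_\tau,\, t+d_\tau,\,0,\,0)$ uniformly in $t$ with all derivatives, the asymptotic orbit is $(x_0,T_0)$, the same as $\util$, and since $\util$ is fast, $T_0 = T_{min}$.

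For (a), by Lemma~\ref{generaldecay} it suffices to produce $b > 0$ with $\sup_t e^{bs}|z_\tau(s,t)|$ bounded. Now $\wtil_\tau(s,t) = F(S,T,v(\tau,S,T))$ where $(S,T)$ satisfies (\ref{estST}) and $F - F_\infty$ decays exponentially by (\ref{exponentialestimates}). Since $(F_\infty)_{3,4}(S,T,v)$ is $\R$-linear in $v$ and $v(\tau) \in C^{l,\alpha,\delta}_0$ provides $|v(\tau)(S,T)| \leq K e^{\delta S}$ with $\delta < 0$, the last two components of $F_\infty \circ (S,T,v(\tau,\cdot))$ decay like $e^{\delta S}$. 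Using (\ref{estST}) to trade $S$ for $s$ and picking $\delta' := \max(\delta,-b) < 0$ yields $\sup_t |z_\tau(s,t)| \leq K'' e^{\delta' s}$. Lemma~\ref{generaldecay} then gives non-degenerate asymptotics at $\infty$ with exponential decay (\ref{expgeneraldecay}), and in particular (b) makes sense.

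For (b), first observe that $z_\tau$ cannot vanish identically: if it did, then $\pi \cdot du_\tau \equiv 0$ since $\wtil_\tau \to (T_0s + \sigma_\tau, t+d_\tau, 0, 0)$ implies $\util_\tau(\C) \subset \R \times x_0(\R)$, and Lemma~\ref{zerodlambda} would force $f(\cdot,\tau) = f_{(x_0,T_{min})} \circ p$ for a non-constant polynomial $p$ with $p^{-1}(0) = \emptyset$ (since $f(\cdot,\tau)$ is a plane with no negative punctures), an impossibility. Therefore Corollary~\ref{oops} applies to $z_\tau$, yielding the asymptotic formula
\[
 z_\tau(s,t) = e^{\int_{s_0}^s \alpha_\tau(r)dr}\bigl(e_\tau(t) + R_\tau(s,t)\bigr),
\]
where $\alpha_\tau(s) \to \mu_\tau$ for some eigenvalue $\mu_\tau < 0$ of $A_P$, and $e_\tau$ represents an eigenvector in the frame $\{e_1,e_2\}$ (class $\beta^\Psi_P = \beta_{\util}$). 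The estimate $|z_\tau(s,t)| \leq K'' e^{\delta' s}$ from the previous paragraph forces $\mu_\tau \leq \delta' \leq \delta$, and since $\delta \notin \sigma(A_P)$ by (\ref{choice}), we conclude $\mu_\tau < \delta$. Therefore $\wind(e_\tau) = (\mu_\tau,\beta_{\util}) \leq (\nu^{neg}_\delta,\beta_{\util}) = 1$.

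Finally, since the capping disks $u_\tau$ vary continuously in $\tau$, the class $\beta_{f(\cdot,\tau)} \in \mathcal S_P$ is locally constant and equals $\beta_{\util} = \beta^\Psi_P$ for $\tau$ near $0$. Lemma~\ref{windinftyasymp} then gives $\wind_\infty(f(\cdot,\tau)) = (\mu_\tau,\beta_{\util}) \leq 1$. Combined with the general bound $\wind_\infty(f(\cdot,\tau)) = \wind_\pi(f(\cdot,\tau)) + 1 \geq 1$ from Lemma~\ref{gauss}, we conclude $\wind_\infty(f(\cdot,\tau)) = 1$. The main obstacle is the transfer of the $C^{l,\alpha,\delta}_0$ decay of $v(\tau)$ through the reparametrization (\ref{formulaST}) and the nonlinear chart $F$ into a sharp exponential bound on $z_\tau$; all remaining steps are then dictated by the spectral setup and the choice (\ref{choice}) of $\delta$.
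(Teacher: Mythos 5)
Your overall structure is correct (verify the three conditions in Definition~\ref{fastplane}), your observation that $z_\tau$ cannot vanish identically (via Lemma~\ref{zerodlambda} and the fundamental theorem of algebra) correctly fills in a detail the paper's proof treats only implicitly, and the continuity of $\beta_{f(\cdot,\tau)}$ and the final combination of Lemma~\ref{windinftyasymp} with Lemma~\ref{gauss} are all fine. However, there is a genuine gap in your exponential estimate on $z_\tau$, and it is exactly at the point you flag as ``the main obstacle.''

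You compare $\wtil_\tau(s,t) = F(S,T,v(\tau)(S,T))$ against $F_\infty(S,T,v(\tau)(S,T))$. The linear term decays like $e^{\delta S}$ since $v(\tau)\in C^{l,\alpha,\delta}_0$, fine; but the remainder $F-F_\infty$ is only known to decay like $e^{-bS}$ from~(\ref{exponentialestimates}), and this remainder is \emph{not} $O(|v|)$: evaluating at $v=0$ gives $F(S,T,0)-F_\infty(S,T,0)=\wtil_0(S,T)-(T_0S,T,0,0)$, which is nonzero. So the best you can conclude is $|z_\tau(s,t)|\lesssim e^{\delta' s}$ with $\delta'=\max(\delta,-b)$. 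Your inequality ``$\mu_\tau\leq\delta'\leq\delta$'' then runs in the wrong direction: $\max(\delta,-b)\geq\delta$ always, with strict inequality whenever $b<|\delta|$, and nothing forces $b\geq|\delta|$ ($b$ is a fixed rate coming from the asymptotics of $\util_0$, $\hat n$, $\hat m$ via~(\ref{decaynmhat}), while $\delta$ is pinned to a specific spectral gap of $A_P$ by~(\ref{choice})). If $\delta'>\delta$ you cannot rule out $\mu_\tau\in(\delta,\delta')$, where the asymptotic winding would be $\geq 2$, and the argument collapses.

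The paper sidesteps this by Taylor-expanding $F$ around $v=0$ rather than replacing it by $F_\infty$: writing $\wtil_\tau(s,t)-\wtil_0(S,T)=\bigl[\int_0^1 D_3F(S,T,av(\tau)(S,T))\,da\bigr]\cdot v(\tau)(S,T)$, one has $|z_\tau(s,t)-z_0(S,T)|\leq C|v(\tau)(S,T)|\leq C'e^{\delta s}$ with the sharp rate $\delta$, because $D_3F$ is uniformly bounded and the troublesome constant term cancels. Combined with $|z_0(S,T)-z_0(s,t)|\leq Ce^{(\mu+\epsilon)s}$ (from the known asymptotic rate $\mu<\delta$ of $z_0$ and the uniform closeness of $(S,T)$ to $(s,t)$ given by~(\ref{estST})), the triangle inequality gives $|z_\tau(s,t)|\leq C''e^{\delta s}$, which is what forces $\mu_\tau<\delta$. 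You need to replace your $F_\infty$-based estimate with this mean-value comparison against $z_0(S,T)$.
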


\begin{proof}
We can write
\[
 \begin{aligned}
  \wtil_\tau(s,t) &= F(S,T,v(\tau)(S,T)) \\
  &= \wtil_0(S,T) + \left[ \int_0^1 D_3F(S,T,av(\tau)(S,T)) da \right] \cdot v(\tau)(S,T)
 \end{aligned}
\]
where $\psi_\tau(\est) = e^{2\pi(S+iT)}$. If $z_\tau(s,t)$ is as defined in (\ref{mapw}) then
\[
 \begin{aligned}
  |z_\tau(s,t)-z_0(S,T)|&\leq|\wtil_\tau(s,t)-\wtil_0(S,T)| \\
  &\leq C|v(\tau)(S,T)| \\
  &\leq Ce^{\delta S} \leq C^\prime e^{\delta s}
 \end{aligned}
\]
in view of (\ref{exponentialestimates}), (\ref{estST}) and of the definition of $C^{l,\alpha,\delta}_0(\C,\R^2)$.
Also
\[
 \begin{aligned}
 z_0(S,T) &= z_0(s,t) \\
 &+ \left[ \int_0^1Dz_0((1-a)s+aS,(1-a)t+aT)da \right] \cdot(S-s,T-t).
 \end{aligned}
\]
$\forall \epsilon>0 \ \exists C>0$ such that $\norma{Dz_0(s,t)} \leq Ce^{(\mu+\epsilon)s}$ in view of formula
(\ref{asympz0}). Consequently
\[
 |z_0(S,T)-z_0(s,t)|\leq C\sup_{a\in[0,1]}e^{(\mu+\epsilon) (s+a(S-s))}\leq C^\prime e^{\mu s}
\]
in view of (\ref{estST}). We proved
\begin{equation}\label{est37}
 |z_\tau(s,t)-z_0(s,t)|\leq C^\prime e^{\max\{\mu+\epsilon,\delta\}s}= C^\prime e^{\delta s} \text{ for } s\gg 1,
\end{equation}
since $\mu+\epsilon<\delta$ if $\epsilon>0$ is small enough. Again by $\mu < \delta$ we estimate
\begin{equation}\label{decaydeltaz}
 \norma{z_\tau(s,t)} \leq \norma{z_\tau(s,t) - z_0(s,t)} + \norma{z_0(s,t)} \leq C^{\prime\prime}e^{\delta s}
\end{equation}
for some $C^{\prime\prime}>0$. (\ref{decaydeltaz}) and Lemma~\ref{generaldecay} prove that each plane
$f(\cdot,\tau)$ has non-degenerate asymptotics.

We can apply Corollary~\ref{oops} and conclude that either $z_\tau\equiv0$ or
\begin{equation}\label{asympztau}
 z_\tau(s-T_0^{-1}\sigma_\tau,t-d_\tau)=e^{\int_{s_0}^s \alpha_\tau(r)dr}(e_\tau(t)+R_\tau(s,t))
\end{equation}
for $s\geq s_0\gg1$. Here $e_\tau$ is the representation in the frame $\{e_1,e_2\}$ of an eigenvector of the
asymptotic operator $A_P$ and $\tilde\mu_\tau$ is the corresponding eigenvalue. The functions $\alpha_\tau$ and
$R_\tau$ satisfy
\[
  \lim_{s\rightarrow+\infty} \left[|D^j [\alpha_\tau(s)-\mu_\tau]| + \sup_t |D^\gamma R_\tau(s,t)|\right] = 0 \ \forall j \text{ and } \gamma.
\]
It follows from Lemma~\ref{windinftyasymp} that $\wind_\infty(f(\cdot,\tau))=(\tilde{\mu}_\tau,\beta_{\util})$.
(\ref{asympztau}) and (\ref{decaydeltaz}) imply $\tilde{\mu}_\tau<\delta$. Thus, by our choice of $\delta$ in
(\ref{choice}), we have $\wind_\infty(f(\cdot,\tau)) = (\tilde{\mu}_\tau,\beta_{\util}) \leq 1$. Lemma~\ref{gauss}
proves $\wind_\infty(f(\cdot,\tau))=1$.
\end{proof}


\subsubsection{Further consequences of the asymptotic analysis}

%
%
%
%
%
%
%

Fix $\tau \in \Ucal$ and $\zeta$ in the kernel of $DH(0)$. We can write
\begin{gather*}
 H(v(\tau)) = J_0dv(\tau) - dv(\tau)J_0 + [C + W_\tau] v(\tau) = 0 \\
 DH(0)\zeta = J_0 d\zeta - d\zeta J_0 + C \zeta = 0
\end{gather*}
for a suitable $C^{l-1}$ function $W_\tau(z)$. As a consequence of the definition of the space $C^{l,\alpha,\delta}_0(\C,\R^2)$ and Lemma~\ref{asympvW} we have
\[
 \lim_{s\rightarrow+\infty} \sup_{t \in S^1} e^{-\delta s} |D^\gamma \tilde W_\tau(s,t)| = 0 \ \forall |\gamma| \leq l-1
\]
where $\tilde W_\tau(s,t) u = [W_\tau\left(\est\right)u] \partial_t$, for some $d>0$. Since we chose $l\geq4$, these estimates and
Lemma~\ref{Cinftysymm} allow us to apply Theorem~\ref{asympHWZ} and find that either $v(\tau)$ vanishes identically
or
\[
 v(\tau)(s,t)=e^{\int_{s_0}^s \alpha(r)dr}(e(t)+R(s,t))
\]
for $s\geq s_0\gg1$. The functions $\alpha$ and $R$ are $C^{l-3}$ and satisfy
\[
 \begin{aligned}
  & \lim_{s\rightarrow+\infty} |D^j [\alpha(s)-\eta]| = 0 \ \forall j \leq l-3 \\
  & \lim_{s\rightarrow+\infty} \sup_t |D^\gamma R(s,t)| = 0 \ \forall |\gamma| \leq l-3
 \end{aligned}
\]
where $\eta<0$ is an eigenvalue of $A_P$ and $e$ is the representation of an eigenvector for $\eta$ in the (unitary)
frame $\{n,m\}$. Analogously, either $\zeta$ vanishes identically or
\[
 \zeta(s,t)=e^{\int_{s_0}^s \tilde\alpha(r)dr}(\tilde e(t)+\tilde R(s,t))
\]
where $\tilde e$ is the representation of an eigenvector for $\tilde \eta\in\sigma(A_P)\cap(-\infty,0)$ in the
(unitary) frame $\{n,m\}$. The functions $\tilde\alpha$ and $\tilde R$ satisfy
\[
 \begin{aligned}
  & \lim_{s\rightarrow+\infty} |D^j [\tilde\alpha(s)-\tilde\eta]| = 0 \ \forall j \leq l-3 \\
  & \lim_{s\rightarrow+\infty} \sup_t |D^\gamma \tilde R(s,t)| = 0 \ \forall |\gamma| \leq l-3.
 \end{aligned}
\]
The definition of the space $C^{l,\alpha,\delta}_0(\C,\R^2)$ forces $\eta<\delta$ and $\tilde{\eta}<\delta$.

The maps $v(\tau)$ and $\zeta$ satisfy perturbed Cauchy-Riemann equations. By the similarity principle, they have
only isolated zeros or vanish identically, see~\cite{93}. Moreover, each zero counts positively in the algebraic
count of the intersection number with the zero map $z\mapsto(z,0)$. The above asymptotic behavior tells us that if
$v_\tau$ or $\zeta$ do not vanish identically then they do not vanish near $\infty$. Standard degree theory implies
\[
 \begin{aligned}
  &0\leq\#\{\text{zeros of }v(\tau)\}=\lim_{R\rightarrow+\infty}\wind\left(t\mapsto v(\tau)(Re^{i2\pi t})\right)=\wind(e(t)) \\
  &0\leq\#\{\text{zeros of }\zeta\}=\lim_{R\rightarrow+\infty}\wind\left(t\mapsto \zeta(Re^{i2\pi t})\right)=\wind(\tilde{e}(t)).
 \end{aligned}
\]
By the choice of $\delta<0$ in (\ref{choice})
\[
 \begin{aligned}
  &0\leq\wind(e(t))=(\eta,\beta_N)=(\eta,\beta_{\util})-\wind(\beta_N,\beta_{\util})\leq1-1=0 \\
  &0\leq\wind(\tilde{e}(t))=(\tilde{\eta},\beta_N)=(\tilde{\eta},\beta_{\util})-\wind(\beta_N,\beta_{\util})\leq1-1=0.
 \end{aligned}
\]
Thus $v(\tau)$ and $\zeta$ never vanish or vanish identically. This has important consequences. The map $$ G :
(z,\tau) \in \C\times\Ucal \mapsto (z,v(\tau)(z)) \in \C\times B' $$ is an immersion. In fact, $DG(z,0)$ is
non-singular $\forall z\in\C$ since non-zero elements $\zeta\in\ker DH(0)$ never vanish. Now fix any $\tau
\in\Ucal$. Applying the implicit function theorem and doing the same analysis centered at a given fast plane with
image $\Pi_\tau := \{\Phi(z, v(\tau)(z)):z\in\C\}$ we conclude that $DG(z,\tau)$ is also non-singular $\forall
z\in\C$. Analogously one shows that $G$ is 1-1. In fact, if $\tau\not=0$ then $$ \{(z,v(\tau)(z)):z\in\C\} \cap
\C\times\{0\} = \emptyset $$ since $v(\tau)$ has no zeros. This shows that $v(\tau)(z_1) \not= v(0)(z_2)$ for all
$(z_1,z_2)\in\C^2$. Doing the same analysis by applying the implicit function theorem centered at a given map fast
plane with image $\Pi_{\tau_0}$ we conclude that $v(\tau_0)(z_1) \not= v(\tau_1)(z_2)$ for all $(z_1,z_2)\in\C^2$
when $\tau_0\not=\tau_1$.


We proved that the map $f$ in (\ref{embf}) is an embedding. This concludes the proof of Lemma~\ref{partialfredholm}.
We collect a useful lemma that follows from our arguments so far.

\begin{lemma}\label{embproj}
Let $f$ be the map (\ref{embf}). If $\delta\tau\in\R^2$ is such that $D_2f(z_0,\tau)\cdot\delta\tau=0$ for some
$(z_0,\tau)\in\C\times \Ucal$ then $\delta\tau=0$.
\end{lemma}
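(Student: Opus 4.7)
The plan is to differentiate $f$ in the $\tau$-direction, reduce the vanishing of $D_2f(z_0,\tau)\cdot\delta\tau$ to the vanishing at a single point of an element of $\ker DH(v(\tau))$, then invoke the similarity-principle/winding dichotomy already used in Lemma~\ref{partialfredholm} to upgrade this to an identical vanishing. From there, the local injectivity of the implicit function theorem parametrization $\tau\mapsto v(\tau)$ will force $\delta\tau=0$.

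First I would apply the chain rule to (\ref{embf}). Writing $w_0=\psi_\tau(z_0)$, $\dot\psi(z):=\partial_s|_{s=0}\psi_{\tau+s\delta\tau}(z)$ and $\dot v(w):=\partial_s|_{s=0}v(\tau+s\delta\tau)(w)$, this yields
\[
 D_2 f(z_0,\tau)\cdot\delta\tau = D\Phi\bigl(w_0,v(\tau)(w_0)\bigr)\cdot\bigl(\dot\psi(z_0),\ \dot v(w_0)+Dv(\tau)(w_0)\cdot\dot\psi(z_0)\bigr).
\]
The map $\Phi$ is constructed in (\ref{mapPhi}) via the exponential map along the $\jtil$-invariant transverse sections $\tilde n,\tilde m$ to $\util$, so $D\Phi(z,0)$ is injective for every $z\in\C$, with image splitting as $d\util(z)\oplus\mathrm{span}\{\tilde n(z),\tilde m(z)\}$. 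Because $v(\tau)\to 0$ in $C^0$ as $\tau\to 0$, injectivity persists at $(w_0,v(\tau)(w_0))$ for $\tau\in\Ucal$ (possibly shrinking $\Ucal$). Hence the hypothesis forces $\dot\psi(z_0)=0$ and, as a consequence, $\dot v(w_0)=0$.

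Next, differentiating the identity $H(v(\tau))\equiv 0$ in $\tau$ gives $DH(v(\tau))\cdot\dot v=0$. This places $\dot v$ in the kernel of the linearized operator at $v(\tau)$, and the dichotomy proved in the last subsection of the proof of Lemma~\ref{partialfredholm}---non-trivial kernel elements never vanish---applies to $\dot v$ verbatim. Indeed, $\dot v$ satisfies a perturbed Cauchy--Riemann equation (so its zeros are isolated and count positively by the similarity principle); its asymptotic behavior at $\infty$ is controlled by Theorem~\ref{asympHWZ}; and the winding at $\infty$ is bounded above by the choice of weight $\delta$ in (\ref{choice}), via Lemma~\ref{windinftyasymp} and the identity (\ref{windnp}), so that non-vanishing $\dot v$ is forced. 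Therefore $\dot v\equiv 0$.

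Finally, the implicit function theorem output $v:\Ucal\to C^{l,\alpha,\delta}_0(\C,\R^2)$ is a smooth parametrization of the two-dimensional manifold $\OO\cap H^{-1}(0)$, hence an embedding with injective derivative at every $\tau\in\Ucal$. Since $\dot v=Dv(\tau)\cdot\delta\tau$ vanishes identically, this gives $\delta\tau=0$. I expect the only non-trivial step to be checking that the kernel dichotomy transfers from $\tau=0$ to general $\tau\in\Ucal$; this however reduces to the observation---already made in the proof of Lemma~\ref{eachtaufast}---that each $v(\tau)$ decays with rate strictly below $\delta$ at $\infty$, so the perturbed Cauchy--Riemann operator satisfied by $\dot v$ has the same asymptotic profile as at $\tau=0$ and the winding analysis via Corollary~\ref{oops} goes through without modification.
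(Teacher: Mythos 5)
Your argument follows the same basic outline that the paper intends for this lemma (which it states as an immediate corollary of the preceding analysis): differentiate $f$ in the $\tau$-direction, use injectivity of $D\Phi$ to get $\dot\psi(z_0)=0$ and $\dot v(w_0)=0$, place $\dot v$ in the kernel of the linearization, invoke the never-vanishes-or-vanishes-identically dichotomy, and finish with injectivity of $Dv(\tau)$ from the implicit function theorem. Your chain-rule reduction is worked out more explicitly than in the paper, which is useful. The one place you diverge is the transfer to $\tau\neq 0$. The paper handles this by \emph{re-centering}: since each $f(\cdot,\tau)$ is again an embedded fast plane with $\mu=\mu(\util_0)$, one re-runs the normal-coordinate construction and the automatic-transversality analysis at $\Pi_\tau$, obtaining the dichotomy for $\ker D\eta(v^\tau(0))$ directly. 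You instead try to argue that the dichotomy for $\ker DH(v(\tau))$ holds in the original coordinates because $v(\tau)$ and its derivatives decay at rate $\delta$, so the asymptotic profile is unchanged. This is correct in spirit, but is more than a ``verbatim'' transcription: for $\tau\neq 0$ the linearization $DH(v(\tau))$ acquires first-order perturbation terms coming from differentiating $W(z,v,dv)v$ in the $L$-slot, and one must first absorb these (using their exponential decay, which indeed comes from Lemma~\ref{asympvW} and the definition of $C^{l,\alpha,\delta}_0$, not from Lemma~\ref{eachtaufast}, which controls the curve's $z$-component rather than $v(\tau)$) before Theorem~\ref{asympHWZ} applies; also, the correct reference for the asymptotics of kernel elements is Theorem~\ref{asympHWZ}, not Corollary~\ref{oops}, which concerns the curve itself. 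Neither route is long, but the paper's re-centering avoids these bookkeeping steps, while your direct computation makes the dependence on the weight $\delta$ more transparent. With the first-order perturbation issue acknowledged and the references adjusted, your proof is sound.
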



\subsection{Completeness}

First we need a particular instance of completeness which will be crucial in the proof of Lemma~\ref{int4} below.

\begin{lemma}\label{rtranslations}
Let $\util$ be an embedded fast finite-energy plane satisfying $\mu(\util)\geq 3$. Let $f$ be the map (\ref{embf})
constructed using the implicit function theorem. If the sequence $\{c_n\} \subset \R$ satisfies $c_n
\rightarrow 0$ then $\exists \tau_n \rightarrow 0$ such that $f(\C,\tau_n) = \left( c_n\cdot\util \right)(\C)$ if
$n\gg1$.
\end{lemma}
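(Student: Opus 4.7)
The plan is to fit each translate $c\cdot\util$ into the implicit function theorem framework used to build $f$, by reading it in the normal coordinates $\Phi$ around $\util$ from~\eqref{mapPhi}. First I would verify that for $|c|$ sufficiently small the image $(c\cdot\util)(\C)$ lies entirely in $\Phi(\C\times B')$: on compact subsets of $\C$ this is clear by continuity, while near infinity it follows from the identity $(c\cdot\util)(s,t)=(a(s,t)+c,u(s,t))$ together with the decay estimates provided by Lemma~\ref{generaldecay}. Setting $(\alpha_c(z),v_c(\alpha_c(z))):=\Phi^{-1}((c\cdot\util)(z))$ then produces a smooth map $\alpha_c:\C\to\C$ close to the identity—which for $|c|$ small is a genuine diffeomorphism by a standard properness plus local-inverse argument—together with a function $v_c:\C\to B'$ close to $0$ satisfying $(c\cdot\util)(z)=\Phi(\alpha_c(z),v_c(\alpha_c(z)))$.

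The key technical step is to show $v_c\in C^{l,\alpha,\delta}_0(\C,B')$ with $\|v_c\|_{l,\alpha,\delta}\to 0$ as $c\to 0$. The essential mechanism is that in the Martinet coordinates the frame sections $\tilde n,\tilde m$ converge exponentially to purely $\xi$-valued limits $(0,\hat n_\infty),(0,\hat m_\infty)$ (see~\eqref{decaynmhat}), so the translation vector $(c,0)\in T(\R\times M)$ is asymptotically tangent to $\util$, leaving only an exponentially small component normal to $\util$ in the frame $\{\tilde n,\tilde m\}$. That normal component is precisely $v_c$, while the tangential contribution is absorbed into the reparametrization $\alpha_c$. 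Tracking this decomposition through the Martinet tube and applying the chain rule together with the higher-order decay of $\util$ from Lemma~\ref{generaldecay} yields the required weighted $C^{l,\alpha}$ bounds on $v_c$ and all its derivatives up to order $l$, uniformly in $|c|$ small. The rate of decay of $v_c$ is at least as fast as $e^{\mu s}$, where $\mu=\mu(\util)<\delta$ by~\eqref{choice} and Lemma~\ref{windinftyasymp}, which matches the weight $\delta$ with room to spare.

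With the weighted-norm estimate in hand, the remainder is formal. Since the image of $w\mapsto\Phi(w,v_c(w))$ equals $(c\cdot\util)(\C)$ and is therefore $\jtil$-invariant at every point, $v_c$ solves the nonlinear Cauchy--Riemann equation $H(v_c)=0$ from~\eqref{goodH}. By the local uniqueness statement \eqref{ift} of the implicit function theorem, once $\|v_c\|_{l,\alpha,\delta}$ is sufficiently small there is a unique $\tau_c\in\Ucal$ with $v(\tau_c)=v_c$, and continuity of the local inverse gives $\tau_c\to 0$ as $c\to 0$. Finally, because $\psi_{\tau_c}$ from~\eqref{mappsitau} is a self-diffeomorphism of $\C$,
\[
f(\C,\tau_c)=\{\Phi(\psi_{\tau_c}(z),v(\tau_c,\psi_{\tau_c}(z))):z\in\C\}=\{\Phi(w,v_c(w)):w\in\C\}=(c\cdot\util)(\C),
\]
so setting $\tau_n:=\tau_{c_n}$ completes the argument.

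The principal obstacle is the weighted-norm decay of $v_c$: propagating the exponential decay of the $\R$-shift through the change of coordinates while controlling derivatives up to order $l$ and producing estimates uniform as $c\to 0$. The computation is analogous in spirit to the asymptotic analysis carried out in the proof of Lemma~\ref{eachtaufast}, but starts from the explicit geometry of $\R$-translation rather than from an IFT solution, and must be performed in the $C^{l,\alpha,\delta}_0$-topology rather than only pointwise.
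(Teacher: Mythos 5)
Your proof is correct and follows the same strategy the paper uses: express $c\cdot\util$ as a graph $v_c$ over $\util$ via the normal coordinate chart $\Phi$, show $v_c\to0$ in $C^{l,\alpha,\delta}_0(\C,\R^2)$ as $c\to0$, and conclude via the uniqueness clause of the implicit function theorem, using that $\psi_{\tau}$ is a self-diffeomorphism of $\C$ to identify images. Your account of why the weighted decay holds (that the $\R$-translation vector is asymptotically tangent to $\util$, leaving only an $O(c\,e^{\mu s})$ normal component) fills in what the paper leaves as a ``long and technical argument''; the one slip is writing $\mu=\mu(\util)$, where you clearly mean the negative asymptotic eigenvalue of $A_P$ governing the decay rate, not the integer Conley--Zehnder index $\mu(\util)\geq3$.
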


The proof is technical but straightforward, we only sketch it here.

\begin{proof}[Sketch of Proof]
Consider the map $\Phi$ in (\ref{mapPhi}). Clearly we can find $v_n:\C\to B'$ of class $C^\infty$ such that
$$ (c_n\cdot \util)(\C) = \{ \Phi(z,v_n(z)) : z\in\C \} $$ and $v_n \to 0$ in $C^\infty_{loc}$. It follows
from $\wind_\infty(c_n\cdot \util) = 1$ that $v_n \in C^{l,\alpha,\delta}_0(\C,\R^2)$. A long and technical
argument, again using the very particular fact that $c_n\cdot \util$ are translations of $\util$, shows that
$v_n\to0$ in the space $C^{l,\alpha,\delta}_0(\C,\R^2)$. The uniqueness statement in the implicit function theorem
concludes the proof.
\end{proof}

The notion of a somewhere injective pseudo-holomorphic curve is well known, we refer the reader to~\cite{props2},
\cite{props3} or~\cite{mcdsal}. The following theorem is proved in~\cite{props2}.

\begin{theorem}[Hofer, Wysocki and Zehnder]
Fix $P=(x,T)\in\p$. If $\util = (a,u) : \C \rightarrow \R\times M$ is a somewhere injective finite-energy plane
asymptotic to $P$ at the (positive) puncture $\infty$ and $u(\C) \cap x(\R) = \emptyset$ then $u : \C \rightarrow M
\setminus x(\R)$ is a proper embedding.
\end{theorem}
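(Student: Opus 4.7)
The proof splits into four parts: properness, non-triviality of $\pi\cdot du$, injectivity of $u$, and $u$ being an immersion. Properness is immediate from the asymptotic behavior: by Theorem~\ref{thm93} and monotonicity of $s\mapsto\int_{\{s\}\times S^1}u^*\lambda$, the loops $t\mapsto u(\est)$ converge to $t\mapsto x(Tt+c)$ in $C^\infty(S^1,M)$ as $s\to+\infty$, for some $c\in\R$. Hence for any neighborhood $V$ of $x(\R)$ there exists $R>0$ with $u(\{|z|>R\})\subset V$. Given compact $K\subset M\setminus x(\R)$, choose $V$ disjoint from $K$; then $u^{-1}(K)\subset\{|z|\leq R\}$ is bounded and closed in $\C$, hence compact.

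Next, I would establish that $\pi\cdot du\not\equiv 0$. If it vanished identically, Lemma~\ref{zerodlambda} applied to $\util$ (whose unique non-removable puncture $\infty$ is positive, with no negative punctures) would produce a non-constant polynomial $p$ and a periodic Reeb orbit $(\hat x,\hat T)$ with $\util=f_{(\hat x,\hat T)}\circ p$; then $u(\C)\subset\hat x(\R)$, and matching asymptotic limits forces $\hat x(\R)=x(\R)$, contradicting $u(\C)\cap x(\R)=\emptyset$. Consequently, by Remark~\ref{locallycr}, the zeros of $\pi\cdot du$ are isolated and count positively, and Lemma~\ref{gauss} gives $\wind_\pi(\util)=\wind_\infty(\util)-1\geq 0$.

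The core of the argument is injectivity of $u$. Consider $D:=\{(z_1,z_2)\in\C^2\setminus\Delta : u(z_1)=u(z_2)\}$. If $(z_1,z_2)\in D$, setting $c:=a(z_1)-a(z_2)$ gives $\util(z_1)=c\cdot\util(z_2)$, so $\util$ shares a value with its $\R$-translate $c\cdot\util$. Arguing as in Subsection~\ref{endmain3}: if $D$ accumulated at a point in $\C^2\setminus\Delta$, the similarity principle applied to the coincidence would produce a factorization $\util=f\circ p$ with $\deg p\geq 2$, contradicting somewhere injectivity; so $D$ is discrete off the diagonal. Then positivity of intersections for the distinct $\jtil$-holomorphic curves $\util$ and $c\cdot\util$ (for $c\neq 0$), combined with the asymptotic intersection count of two planes asymptotic to the same orbit $P$ --- controlled by the relative asymptotic winding and vanishing under the hypothesis $u(\C)\cap x(\R)=\emptyset$ --- rules out any $(z_1,z_2)\in D$, yielding $D=\emptyset$.

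With injectivity in hand, immersion follows: the Cauchy-Riemann equations force $du(z)=0\iff d\util(z)=0$, and at such a critical point $\util$ would admit a local factorization $\util(z)=\util(z_0)+A(z-z_0)^k+O(|z-z_0|^{k+1})$ with $k\geq 2$, making $\util$ locally $k$-to-1; this yields nearby pairs $z_1\neq z_2$ with $\util(z_1)=\util(z_2)$, hence $u(z_1)=u(z_2)$, contradicting the injectivity just proved. The main obstacle is the injectivity step, specifically the evaluation of the asymptotic intersection number between $\util$ and its $\R$-translates, which requires the full asymptotic machinery of~\cite{props1,props2}.
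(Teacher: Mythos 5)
The paper cites this theorem from~\cite{props2} without reproducing a proof, so there is no ``paper's own proof'' to compare against; I evaluate your proposal on its own merits. Your high-level decomposition (properness, non-triviality of $\pi\cdot du$, injectivity, immersion) is the standard one, and the properness and non-triviality steps are essentially correct. But the two load-bearing steps --- injectivity and immersion --- each contain a genuine gap.

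\emph{Injectivity.} Your set $D=\{(z_1,z_2)\in\C^2\setminus\Delta : u(z_1)=u(z_2)\}$ is the preimage of the diagonal $\Delta_M\subset M\times M$ under $(z_1,z_2)\mapsto(u(z_1),u(z_2))$. Since $\Delta_M$ has real codimension $3$, $D$ has expected real dimension $1$ in $\C^2$; it is not a discrete set, and the similarity-principle factorization argument cannot be applied to it. That argument applies to the zero set of $\util(z_1)-\util(z_2)$ (the preimage of the codimension-$4$ diagonal of $(\R\times M)^2$ under the genuinely pseudo-holomorphic map $(z_1,z_2)\mapsto(\util(z_1),\util(z_2))$), which is the set appearing in Subsection~\ref{endmain3}. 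Your pairs $(z_1,z_2)$ correspond to intersections of $\util$ with the \emph{varying} translates $c\cdot\util$, $c=a(z_1)-a(z_2)$; there is no single pair of pseudo-holomorphic curves whose coincidence set accumulates. The correct strategy, used in Lemma~\ref{int4} of this paper and in~\cite{props2}, is to fix $c\neq 0$, study $A_c=\{(z,w):\util(z)=(c\cdot\util)(w)\}$ --- which \emph{is} discrete off the diagonal by the similarity principle --- show via the asymptotic behavior that for $|c|$ bounded away from $0$ all intersections are confined to a compact set, and then combine positivity of intersections with a homotopy argument. Your one-sentence appeal to an ``asymptotic intersection count vanishing under $u(\C)\cap x(\R)=\emptyset$'' is too vague to substitute for this.

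\emph{Immersion.} You establish that $du(z)\neq 0$ (equivalently $d\util(z)\neq 0$) everywhere, but this is strictly weaker than $u$ being an immersion. When $\pi\cdot du(z_0)=0$ and $d\util(z_0)\neq 0$, the Cauchy-Riemann structure forces the image of $du(z_0)$ to be exactly the Reeb line $\R R_{u(z_0)}$, so $du(z_0)$ has rank $1$; your argument does not exclude this case. For $u$ to be an immersion one needs $\pi\cdot du$ to never vanish, i.e.\ $\wind_\pi(\util)=0$, and establishing this from injectivity of $u$ requires a separate local degree/branching argument near a hypothetical zero of $\pi\cdot du$ (or, in applications, it is supplied as a hypothesis: in Lemma~\ref{int2} the plane is assumed fast, so $\wind_\infty(\util)=1$ and $\wind_\pi(\util)=0$ by Lemma~\ref{gauss}). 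Your proof is silent on it.

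In short, the skeleton is right, but the crux --- the intersection-theoretic control coming from~\cite{props2} --- is neither reproduced nor correctly reduced to the similarity principle, and the passage from ``$u$ injective'' to ``$u$ is an immersion'' is asserted, not argued.
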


\begin{lemma}\label{int2}
Fix $P=(x,T)\in\p$. If $\util = (a,u) : \C \rightarrow \R\times M$ is a fast finite-energy plane asymptotic to $P$
at the (positive) puncture $\infty$ and $u(\C) \cap x(\R) = \emptyset$ then $u : \C \rightarrow M \setminus x(\R)$
is a proper embedding.
\end{lemma}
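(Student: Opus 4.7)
The plan is to reduce Lemma~\ref{int2} to the theorem of Hofer--Wysocki--Zehnder quoted immediately before it. The only thing that needs to be verified is that a fast finite-energy plane is automatically somewhere injective; the conclusion then follows at once from that theorem together with the hypothesis $u(\C)\cap x(\R)=\emptyset$.

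To establish somewhere injectivity, I would argue by contradiction, using the same similarity-principle-based factorization that has already been applied twice in this article (see Subsection~\ref{endmain3} and the end of Section~\ref{sectionexistence}, based on~\cite{mcdsal}). If $\util$ were not somewhere injective, that argument produces a non-constant polynomial $p:\C\to\C$ of degree $k\geq 2$ and a somewhere injective $\jtil$-holomorphic map $\vtil:\C\to\R\times M$ such that $\util=\vtil\circ p$. One checks that $E(\vtil)\leq E(\util)<\infty$ and that the unique puncture of $\vtil$ at $\infty$ is non-removable, so $\vtil$ is itself a finite-energy plane asymptotic to some periodic Reeb orbit $P'=(x',T')$ by Theorem~\ref{thm93}.

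Near $\infty$ one has $p(z)\sim cz^k$ for some $c\in\C^*$, so the asymptotic loop of $\util$ is obtained by wrapping the asymptotic loop of $\vtil$ exactly $k$ times around $x'(\R)$. Consequently $P=(x',kT')$, and the minimal period $T_{\min}$ of $x$ divides $T/k$. This forces
\[
\cov(\util)=\frac{T}{T_{\min}}\geq k\geq 2,
\]
contradicting $\cov(\util)=1$, which is part of the definition of a fast plane (Definition~\ref{fastplane}). Hence $\util$ is somewhere injective, and the quoted theorem of Hofer--Wysocki--Zehnder applied to $\util$ (with asymptotic limit $P$ and the hypothesis $u(\C)\cap x(\R)=\emptyset$) yields that $u:\C\to M\setminus x(\R)$ is a proper embedding.

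The only step that requires care is the factorization itself: one must argue that the set of self-coincidences of $\util$, together with the zeros of $d\util$, behaves in a way compatible with a branched-cover description and that the resulting map $\vtil$ indeed extends across the preimages of $\infty$ as a finite-energy plane with non-removable positive puncture. This is by now standard and follows essentially from the similarity principle plus the non-degenerate asymptotic behavior guaranteed by the fast condition (via Theorem~\ref{p1} and Definition~\ref{behavior}). No new analytical input is needed beyond what has already been used in the paper.
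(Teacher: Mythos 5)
Your proof is correct and correctly reduces the lemma to the HWZ theorem on somewhere injective planes. Where it diverges from the paper is in the way the fast hypothesis is invoked to rule out the factorization $\util=\vtil\circ p$, $\deg p=k\geq 2$: you track the effect of the degree-$k$ branched cover on the asymptotic orbit, deduce $T=kT'$ with $T'$ a period of $x$, and conclude $\cov(\util)=T/T_{\min}\geq k\geq2$, contradicting $\cov(\util)=1$. The paper instead makes a purely local observation at a branch point of $p$: there $d\util$ vanishes, so $\pi\cdot du$ has a zero, which is impossible because Lemma~\ref{gauss} gives $\wind_\pi(\util)=\wind_\infty(\util)-1=0$ for a fast plane. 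Both arguments use a part of Definition~\ref{fastplane}: the paper uses $\wind_\infty=1$ and you use $\cov=1$. The paper's route is a touch more economical, since it needs no asymptotic bookkeeping at $\infty$ (just counting zeros of $\pi\cdot du$ over $\C$); yours has the mild advantage of not requiring the identity $\wind_\pi=\wind_\infty-1$, at the cost of the small amount of care you already flagged in matching the asymptotic data of $\util$ and $\vtil$ under the covering map. Either way the contradiction is clean and the proof is sound.
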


\begin{proof}
All we need to show is that $\util$ is somewhere injective. Suppose not. In the appendix of~\cite{props2} it is
proved that there exists a somewhere injective $\jtil$-holomorphic curve $f:\C\rightarrow\R\times M$ and a
polynomial $p$ of degree at least $2$ such that $\util = f\circ p$. This forces zeros of $\pi\cdot du$,
contradicting $\wind_\pi(\util) = \wind_\infty(\util) - 1 = 0$.
\end{proof}

\begin{lemma}\label{int3}
Let $P=(x,T)\in\p$ and suppose $\util = (a,u)$ and $\vtil = (b,v)$ are embedded fast finite-energy planes asymptotic
to $P$ at $\infty$. Assume $u(\C) \cap x(\R) = \emptyset$, $v(\C) \cap x(\R) = \emptyset$ and $\beta_{\util} =
\beta_{\vtil}$. Then $u(\C) = v(\C)$ or $u(\C) \cap v(\C) = \emptyset$.
\end{lemma}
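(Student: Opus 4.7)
The plan is to translate the claim into an intersection-theoretic question in $\R \times M$ via the $\R$-action on the symplectization. Suppose $u(\C) \cap v(\C) \neq \emptyset$; I aim to show $u(\C) = v(\C)$. First I would pick $p = u(z_1) = v(z_2)$, set $c_0 := a(z_1) - b(z_2)$, and form the vertical translate $\vtil^{c_0} := (b + c_0, v)$, which is again an embedded fast finite-energy $\jtil$-holomorphic plane asymptotic to $P$ with $\beta_{\vtil^{c_0}} = \beta_{\vtil} = \beta_{\util}$, and satisfies $\util(z_1) = \vtil^{c_0}(z_2)$. If $\util(\C) = \vtil^{c_0}(\C)$ as subsets of $\R \times M$, projecting to $M$ yields $u(\C) = v(\C)$ immediately; so I may assume $\util(\C) \neq \vtil^{c_0}(\C)$ and aim for a contradiction.

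Arguing as at the end of Subsection~\ref{endmain3} and in the proof of Lemma~\ref{int2}, the similarity principle shows that the intersection set
\[
D_{c_0} := \{(\zeta_1,\zeta_2) \in \C \times \C : \util(\zeta_1) = \vtil^{c_0}(\zeta_2)\}
\]
is discrete: otherwise both maps would factor through a common polynomial of degree $\geq 2$, forcing zeros of $d\util$ and contradicting embeddedness. By positivity of intersections for distinct somewhere injective $\jtil$-holomorphic curves in the $4$-manifold $\R \times M$, every point of $D_{c_0}$ contributes strictly positively to the algebraic intersection count, which is therefore $\geq 1$.

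The contradiction should come by varying $c$ in the family $\vtil^c = (b + c, v)$ and tracking how the intersection count with $\util$ depends on $c$. For $|c|$ large, any intersection $\util(\zeta_1) = \vtil^c(\zeta_2)$ forces both $|\zeta_1|, |\zeta_2| \to \infty$ (since $a, b$ are bounded below and $\to +\infty$ at the punctures), so intersections can only exist in the asymptotic regime near $P$. In a Martinet tube around $P$, Corollary~\ref{oops} supplies the expansions $u(s,t) - x_T(t + d_u) \sim e^{\int_{s_0}^s \alpha_u}(e_u(t) + R_u(s,t))$ and analogously for $v$, with $e_u, e_v$ eigenvectors of $A_P$ of winding $1$ in the common framing (by Lemma~\ref{windinftyasymp} and $\wind_\infty(\util) = \wind_\infty(\vtil) = 1$). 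Applying Theorem~\ref{asympHWZ} to the perturbed Cauchy--Riemann equation satisfied by the appropriate difference of the two ends in the tube, I would analyze the asymptotic intersection contribution and show that the finite intersection count is invariant under $c$ modulo this controlled contribution and vanishes for $|c|$ large -- contradicting the lower bound $\geq 1$.

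The principal obstacle is precisely this final asymptotic intersection analysis: controlling how finite intersections can escape to (or arrive from) infinity as $c$ varies, and justifying that the common framing hypothesis $\beta_{\util} = \beta_{\vtil}$ combined with $\wind_\infty = 1$ forces the asymptotic correction term to vanish. This is the setting where the ``intersection at infinity'' theory for finite-energy curves becomes essential: the matching of asymptotic trivializations is exactly what makes the leading-order difference of the two ends an eigensection of $A_P$ with controlled winding, from which vanishing of the asymptotic contribution -- and hence the contradiction -- can be extracted.
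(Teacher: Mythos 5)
Your strategy matches the paper's: the paper simply cites Theorem 4.11 of~\cite{props2} and remarks that the hypotheses $\wind_\infty(\util)=\wind_\infty(\vtil)=1$ together with $\beta_{\util}=\beta_{\vtil}$ play the same role as $\mu(\util)=\mu(\vtil)\leq 3$ there. You are reconstructing precisely that argument (translate one plane in the $\R$-direction, invoke discreteness via the similarity principle and positivity of intersections, then show the intersection count is both $\geq 1$ and $0$), so the overall approach is right.

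The gap is exactly the one you flag, but you underestimate what closing it requires. Saying that for $|c|$ large all intersections are forced into the asymptotic regime does not yet show there are \emph{no} intersections there, and the step you describe---``applying Theorem~\ref{asympHWZ} to the perturbed Cauchy--Riemann equation satisfied by the appropriate difference of the two ends in the tube''---is not a direct consequence of the results you cite. Theorem~\ref{asympHWZ} governs a single section $\zeta$ solving $\zeta_s + J_0\zeta_t + S\zeta = 0$; a difference of the two ends $z_u(s_1,t)-z_v(s_2,t')$ does not automatically satisfy such an equation, because the two ends are parametrized by independent cylindrical coordinates $(s_1,t_1)$ and $(s_2,t_2)$, tied together only through the constraint $a(z_1)=b(z_2)+c$ (forcing $s_1-s_2\approx c/T_0$) and the angular matching forced by the Martinet-tube coordinate. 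Extracting a perturbed CR equation for a single unknown from this constrained pair is the content of a relative asymptotic formula, and making it precise---in particular handling the case where the asymptotic eigenvalues $\mu_u,\mu_v<0$ of $A_P$ disagree, so that the magnitude condition $|z_u(s_1,\cdot)|\asymp|z_v(s_2,\cdot)|$ pins $s_1$ and $s_2$ along a line of slope $\mu_v/\mu_u\neq 1$ rather than slope $1$---is where the real work lies. Your proposal asserts that $\beta_{\util}=\beta_{\vtil}$ and $\wind_\infty=1$ ``force the asymptotic correction term to vanish,'' but gives no computation: with only the winding of $e_u$ and $e_v$ fixed at $1$, the eigenvectors can still be non-parallel (there are two eigenvalues per winding), the eigenvalues can differ, and you need to show that the resulting intersection count at the ends still cancels. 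Until that estimate is supplied, the claim that the intersection number with $c\cdot\vtil$ is constant in $c$ and equals $0$ for $|c|\gg 1$ is not established. You should also note, for the record, that the hypotheses $u(\C)\cap x(\R)=\emptyset$ and $v(\C)\cap x(\R)=\emptyset$ are what make $u,v:\C\to M\setminus x(\R)$ proper (Lemma~\ref{int2}), which is what keeps all intersections in the asymptotic regime for $|c|$ large; that input should appear explicitly in the argument.
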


The proof of the above lemma is the same as that of Theorem 4.11 in~\cite{props2}. One only needs to check that the assumptions $\wind_\infty(\util)=\wind_\infty(\vtil)=1$ and $\beta_{\util}=\beta_{\vtil}$ play the exact same role as the assumption $\mu(\util)=\mu(\vtil)\leq3$ made in~\cite{props2}. 

\begin{lemma}\label{int4}
Let $\util = (a,u)$ be an embedded fast finite-energy plane asymptotic to the periodic Reeb orbit $P=(x,T)$ at the
(positive) puncture $\infty$. Suppose $\mu(\util)\geq3$. Then $\util(\C) \cap \R \times x(\R) = \emptyset$.
\end{lemma}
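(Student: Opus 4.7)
The plan is to use the asymptotic analysis of Subsection~\ref{abehav} in a Martinet tube whose framing is tuned, via Remark~\ref{homtriv}, to coincide with the normal-bundle class $\beta_N$ from the Fredholm setup of Subsection~\ref{fredtheory}. This choice forces the asymptotic winding at infinity to vanish and, combined with positivity of zeros coming from the similarity principle, rules out intersections with $x(\R)$.

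First, I fix a Martinet tube $\Psi:U\to\R/\Z\times B$ for $P$ with $\beta^\Psi_P = \beta_N$, and write $\Psi\circ u(e^{2\pi(s+it)}) = (\theta(s,t),z(s,t))$ wherever $u(e^{2\pi(s+it)})\in U$, with $z(s,t)\in B\subset \R^2$ the normal coordinate; the set $u^{-1}(x(\R))$ is precisely the zero set of this normal coordinate. Since $\util$ is fast it has non-degenerate asymptotics, and Corollary~\ref{oops} yields either $z\equiv 0$ near infinity, or
\[
z(s,t) = e^{\int_{s_0}^s \alpha(\tau)\,d\tau}(e(t)+R(s,t))
\]
with $\mu<0$ an eigenvalue of $A_P$ and $e(t)$ non-vanishing. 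The case $z\equiv 0$ is excluded because it would force $\pi\cdot du\equiv 0$ near infinity and hence on all of $\C$ by the similarity principle (Remark~\ref{locallycr}); Lemma~\ref{zerodlambda} would then give $\util=f_P\circ p$ for a non-constant polynomial $p$ with $p^{-1}(0)=\emptyset$, which is impossible.

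I then compute the asymptotic winding in the chosen frame:
\[
\wind(e(t)) = (\mu,\beta^\Psi_P) = (\mu,\beta_N) = (\mu,\beta_{\util}) + \wind(\beta_{\util},\beta_N,J) = 1 + (-1) = 0,
\]
using $(\mu,\beta_{\util})=\wind_\infty(\util)=1$ (fastness combined with Lemma~\ref{windinftyasymp}) and $\wind(\beta_N,\beta_{\util},J) = +1$ from equation~\eqref{windnp}.

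To finish, I extend the normal coordinate $z$ from $u^{-1}(U)\subset\C$ to a smooth map $Z:\C\to\R^2$ which agrees with $z$ outside a small neighborhood of $\partial u^{-1}(U)$ and is non-vanishing on $\C\setminus u^{-1}(x(\R))$; such an extension is built by standard bump-function methods, exploiting that $z$ is bounded away from $0$ on a collar of $\partial u^{-1}(U)$ inside $u^{-1}(U)$ and that, after shrinking $B$ if necessary, $u^{-1}(U)$ consists of a neighborhood of infinity together with small disks around each of the finitely many zeros of $z$ (these exist because zeros of $z$ are isolated by the similarity principle applied to equation~\eqref{crz} and absent for $|\zeta|$ large). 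The zero set of $Z$ equals $u^{-1}(x(\R))$, each zero contributing strictly positive local multiplicity. Since $Z$ coincides with $z$ on $\{|\zeta|\geq R_0\}$ for $R_0$ large (by Lemma~\ref{importantdecay}), elementary degree theory gives the algebraic count of zeros of $Z$ as $\lim_{R\to\infty}\wind(Z|_{\{|\zeta|=R\}}) = \wind(e(t)) = 0$. Strict positivity combined with total count zero then forces $u^{-1}(x(\R))=\emptyset$, i.e.\ $\util(\C)\cap\R\times x(\R)=\emptyset$. The main obstacle is the careful construction of $Z$ across the possibly complicated geometry of $\partial u^{-1}(U)$, which is handled by the shrinking argument described above.
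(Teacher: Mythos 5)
Your approach is genuinely different from the paper's, and the winding computation is correct, but there is a gap in the final degree-theoretic step that makes the argument circular.

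Your computation $\wind(e) = (\mu,\beta_N) = (\mu,\beta_{\util}) + \wind(\beta_{\util},\beta_N,J) = 1 - 1 = 0$ is sound (since $\mu$ is intrinsic, $\wind_\infty(\util)=1$ by fastness, and $\wind(\beta_N,\beta_{\util},J)=+1$ from~\eqref{windnp}). The exclusion of $z\equiv 0$ is also correct, though more directly $\wind_\pi(\util)=\wind_\infty(\util)-1=0$ already shows $\pi\cdot du$ never vanishes. The problem is the construction of the global map $Z$. You require $Z$ to agree with $z$ near each zero $\zeta_j\in u^{-1}(x(\R))$ and for $|\zeta|\geq R_0$, and to be \emph{non-vanishing} on $\C\setminus u^{-1}(x(\R))$. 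But the obstruction to filling in such a non-vanishing map on $D_{R_0}\setminus\bigcup_j D_j$ (with boundary values dictated by $z$) is precisely the total boundary winding $\wind\bigl(z|_{\{|\zeta|=R_0\}}\bigr) - \sum_j\deg_{\zeta_j}(z) = 0 - \sum_j\deg_{\zeta_j}(z)$, and this vanishes if and only if $\sum_j\deg_{\zeta_j}(z)=0$ — which is exactly what you are trying to prove. Saying that $z$ is bounded away from $0$ on a collar of $\partial u^{-1}(U)$ is not enough: the winding of $z$ over components of $\partial u^{-1}(U)\cap D_{R_0}$ is uncontrolled, and these contribute to the degree balance. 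Thus the asserted existence of $Z$ presupposes the conclusion, and the argument is circular. To make an intersection-theoretic proof rigorous one would have to account correctly for the asymptotic intersection between $\util$ and the trivial cylinder $\R\times x(\R)$ (this is exactly what Siefring's relative intersection number does), rather than using an ad hoc extension.

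The paper's proof proceeds along a completely different route. Starting from $u(\C)\cap x(\R)\neq\emptyset$, it produces self-intersections of $u$ (the transverse disk at an intersection point $u(\zeta_0)\in x(\R)$ must meet the end of $u$ spiraling around $x(\R)$), hence intersections of $\util$ with the $\R$-translate $c\cdot\util$. The asymptotic embedding property~\eqref{asympemb} confines these intersections to a compact set, so homotopy invariance together with positivity of intersections forces $\util$ to intersect $c\cdot\util$ for \emph{every} $c>0$. On the other hand, letting $c_n\to 0^+$, Lemma~\ref{rtranslations} identifies $(c_n\cdot\util)(\C)$ with a leaf $f(\C,\tau_n)$ of the Fredholm family from Theorem~\ref{lemmafredholm}, with $\tau_n\to 0$, $\tau_n\neq 0$; since $f$ is injective these leaves are disjoint from $\util(\C)=f(\C,0)$, a contradiction. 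You would need to bring in a tool of comparable strength — either the translation/Fredholm argument or a genuine relative intersection theory at the asymptotic orbit — to close the gap.
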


\begin{proof}
It follows from $\wind_\pi(\util) = \wind_\infty(\util) - 1 = 0$ that $\pi\cdot du$ does not vanish. Hence $u$ and
$x$ only intersect transversely. Let us assume, by contradiction, that $u(\C) \cap x(\R) \not= \emptyset$. In view
of Definition~\ref{behavior}, the map $u$ has self-intersections and we find intersections of the planes $\util$ and
$c \cdot \util$ for $c \gg 1$. Here $c \cdot \util$ denotes the translation of the plane $\util$ in the
$\R$-direction of $\R \times M$ by $c$, as explained in Remark~\ref{realaction}. Consider, for each $c>0$, the
closed set
\[
 A_c = \left\{ (z,w) \in \C\times\C : \util(z) = c\cdot\util (w) \right\}.
\]
$A_c$ can not accumulate in $\C \times \C$ since otherwise one could argue, using the similarity principle as
in~\cite{mcdsal}, that $\util(\C) = (c \cdot \util)(\C)$. It is not hard to show $\exists R_0 \gg 1$ such that
\begin{equation}\label{asympemb}
 R \geq R_0 \Rightarrow
 \left\{
 \begin{aligned}
  & u^{-1} \left( u \left( \C \setminus B_R(0) \right) \right) = \C \setminus B_R(0) \\
  & u|_{\C \setminus B_R(0)} : \C \setminus B_R(0) \rightarrow M \setminus x(\R) \text{ is an embedding.}
 \end{aligned}
 \right.
\end{equation}
This follows essentially from Lemma~\ref{generaldecay} since $P$ is simply covered.

It follows from (\ref{asympemb}) that given any $\epsilon>0$ one finds a compact $K\subset\C$ such that $A_c\subset
K\times K$ for every $c\geq\epsilon$. This is so since intersections of the plane $\util$ with any of its
translations $c \cdot \util$ induce self-intersections of the map $u$. We can now use the homotopy invariance of
intersection numbers together with positivity of intersections of pseudo-holomorphic maps to conclude that $\util$
intersects $c\cdot\util$ for any $c>0$. Let
\[
 f: \C \times B_r(0) \rightarrow \R\times M
\]
be the embedding (\ref{embf}) obtained by the implicit function theorem. Choose $c_n\rightarrow 0^+$. By
Lemma~\ref{rtranslations} $\exists \tau_n\rightarrow 0$ satisfying $\tau_n \not= 0$ and $\left(c_n\cdot\util\right)
(\C) = f(\C,\tau_n)$ for $n$ large enough. This is an absurd because $f$ is 1-1 and $c_n \cdot \util$ intersects
$\util$ for every $n$.
\end{proof}

\begin{lemma}\label{rep}
Suppose $\util = (a,u)$ and $\vtil = (b,v)$ are embedded fast finite-energy planes asymptotic to the periodic Reeb
orbit $P=(x,T)$ at the (positive) puncture $\infty$. Suppose $\beta_{\util} = \beta_{\vtil}$ and denote $\mu =
\mu(\util) = \mu(\vtil)$. Suppose further that $\mu\geq3$. Then either $u(\C) \cap v(\C) = \emptyset$ or $u(\C) =
v(\C)$. If $u(\C)=v(\C)$ then we find $c\in\R$ and $A,B\in\C$, $A\not=0$, such that
\[
 \util(Az+B) = (b(z)+c,v(z)) = c\cdot\vtil(z) \ \forall z\in\C.
\]
\end{lemma}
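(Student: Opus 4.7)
The plan is to handle the disjointness/equality dichotomy using the intersection lemmas already proved, and then, in the equality case, produce a biholomorphic identification of the two parameter planes and match the $\R$-components via the Cauchy-Riemann equations.

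First observe that the hypothesis $\mu\geq 3$ and Lemma~\ref{int4} force $u(\C)\cap x(\R) = v(\C)\cap x(\R) = \emptyset$. Because $\beta_{\util}=\beta_{\vtil}$, Lemma~\ref{int3} applies and yields the dichotomy $u(\C)\cap v(\C) = \emptyset$ or $u(\C) = v(\C)$, which gives the first assertion. Assume henceforth that $u(\C) = v(\C)$; by Lemma~\ref{int2} both $u$ and $v$ are proper smooth embeddings $\C \to M\setminus x(\R)$ onto this common image, so $\phi := v^{-1}\circ u : \C \to \C$ is a well-defined smooth diffeomorphism.

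The next step is to show that $\phi$ is holomorphic. Fastness together with Lemma~\ref{gauss} gives $\wind_\pi(\util) = \wind_\pi(\vtil) = 0$, so $\pi\cdot du$ and $\pi\cdot dv$ are nowhere-vanishing; the identity $\pi\cdot du \circ i = J\,\pi\cdot du$ (and likewise for $v$) then forces them to be pointwise complex linear isomorphisms $T\C \to u^*\xi$ and $T\C \to v^*\xi$, since the kernel of a non-zero $\C$-antilinear-free map between $2$-dimensional spaces is $i$-invariant and hence trivial. Differentiating $u = v\circ\phi$ and projecting to $\xi$ gives $\pi\cdot du = (\pi\cdot dv|_\phi)\circ d\phi$; composing with $i$ on the right and using the two linear Cauchy-Riemann identities yields $(\pi\cdot dv|_\phi)\circ d\phi\circ i = (\pi\cdot dv|_\phi)\circ i\circ d\phi$, and pointwise injectivity of $\pi\cdot dv$ forces $d\phi\circ i = i\circ d\phi$. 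Hence $\phi$ is a biholomorphism of $\C$, and its inverse has the form $z\mapsto Az+B$ with $A\in\C\setminus\{0\}$, $B\in\C$.

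Finally, the composition $\vtil\circ\phi = (b\circ\phi,\,v\circ\phi) = (b\circ\phi,\,u)$ is $\jtil$-holomorphic. Writing the $\jtil$-Cauchy-Riemann equation for a map $(a,u)$ in the splitting $T(\R\times M) = \R\partial_a \oplus \R R \oplus \xi$, the $\partial_a$-component reduces to an identity determining $da$ purely in terms of $u^*\lambda$; applying this identity both to $\util$ and to $\vtil\circ\phi$ (whose $M$-component is the same $u$) shows $d(a - b\circ\phi) = 0$. By connectedness of $\C$ there is $c\in\R$ with $a = b\circ\phi + c$, so $\util(z) = c\cdot\vtil(\phi(z))$; substituting $z = Aw+B$ gives $\util(Aw+B) = c\cdot\vtil(w)$, as required. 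The only conceptually delicate step is the verification that $\phi$ is holomorphic, which depends essentially on the fastness hypothesis through the non-vanishing of $\pi\cdot du$ and $\pi\cdot dv$; the rest of the argument is a clean assembly of the preceding intersection and embedding lemmas.
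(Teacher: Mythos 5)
Your proposal is correct and follows essentially the same route as the paper: the dichotomy comes from Lemmas~\ref{int4} and~\ref{int3}, the transition map between the two parameter planes is shown to be holomorphic (hence affine) by combining the perturbed Cauchy-Riemann identity for $\pi\cdot du$, $\pi\cdot dv$ with the pointwise isomorphism property coming from $\wind_\pi = 0$, and the $\R$-components are matched via the $\partial_a$-component of the $\jtil$-Cauchy-Riemann equations. The only cosmetic difference is that you work with $\phi = v^{-1}\circ u$ and its inverse, whereas the paper directly takes $\varphi$ with $v = u\circ\varphi$ and writes the computation in $s+it$ coordinates; the content is the same.
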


\begin{proof}
It is a straightforward consequence of lemmas~\ref{int3} and~\ref{int4} that either $u(\C) \cap v(\C) = \emptyset$
or $u(\C) = v(\C)$. Suppose $u(\C) = v(\C)$. One finds a diffeomorphism $\varphi:\C\rightarrow\C$ such that
$v=u\circ\varphi$ since both $u$ and $v$ are embeddings of $\C$ into $M\setminus x(\R)$. Let $s+it$ be a complex
parameter on $\C$ and compute
\[
 \begin{aligned}
  \pi \cdot (du\circ\varphi) \cdot \varphi_s &= \pi \cdot v_s \\
  &= - (J\circ v) \cdot \pi \cdot v_t \\
  &= - (J\circ u \circ \varphi) \cdot \pi \cdot (du\circ\varphi) \cdot \varphi_t \\
  &= \pi \cdot (du\circ\varphi) \cdot (- i \varphi_t).
 \end{aligned}
\]
The condition $\wind_\pi(\util)=0$ implies that $\pi \cdot du$ is a linear isomorphism from $T_z\C$ to $\xi|_{u(z)}$
for every $z\in\C$. Hence so is $\pi \cdot (du\circ\varphi)$. We conclude from the above computation that
$\varphi_s+i\varphi_t=0$, that is, $\varphi$ is a biholomorphism. It must have the form $\varphi(z)=Az+B$. The
Cauchy-Riemann equations $d\util\cdot i = (\jtil\circ\util) \cdot d\util$ imply $(\lambda\circ u) \cdot du \cdot i =
da$. We compute
\[
 \begin{aligned}
  b_s &= (\lambda\circ v) \cdot v_t \\
  &= (\lambda\circ u\circ\varphi) \cdot (du\circ\varphi) \cdot \varphi_t \\
  &= (\lambda\circ u\circ\varphi) \cdot (du\circ\varphi) \cdot i \cdot \varphi_s \\
  &= (da\circ \varphi) \cdot \varphi_s = (a\circ\varphi)_s.
 \end{aligned}
\]
Analogously, $b_t = (a\circ\varphi)_t$. Hence $\exists c\in\R$ such that $b + c = a\circ\varphi$.
\end{proof}

We now complete the proof of Theorem~\ref{lemmafredholm}. Let $f$ be the $C^l$-embedding (\ref{embf}) and write $f =
(h,g) \in \R \times M$. Fix $\tau_0 \in B_r(0)$. Let $\vtil_n = (b_n,v_n)$ be a sequence of embedded fast
finite-energy planes with $\mu(\vtil_n) = \mu$ and $\vtil_n \rightarrow f(\cdot,\tau_0)$ in $C^\infty_{loc}$. We
find $(B_n,\tau_n) \rightarrow (0,\tau_0)$ such that $\vtil_n(0) = f(B_n,\tau_n)$. By Lemma~\ref{rep} we find $c_n
\in \R$, $A_n\rightarrow 1$ such that $(c_n\cdot \vtil_n)(z) = f(A_nz+B_n,\tau_n) \ \forall z\in\C$ since $v_n(0) =
g(B_n,\tau_n)$. Note that $c_n = h(B_n,\tau_n) - b_n(0) = 0$. The conclusion follows.



\section{Open book decompositions}\label{cons}

In this section we prove Theorem~\ref{ob2}. Recall the families $\Lambda(H,P)$ in (\ref{setLambda}) and the
$C^\infty_{loc}$-closed subfamilies $\Lambda_k(H,P)$ defined in (\ref{setLambdak}). From now on we assume the
contact form $\lambda$ and the periodic orbit $P$ satisfy the hypotheses of Theorem~\ref{ob2}.


\subsection{Local foliations}\label{localfols}

\begin{lemma}\label{unique}
Fix a point $(r,q)\in\R\times M$. If $\{\util,\vtil\} \subset \Lambda_k(\{(r,q)\},P)$ for some $k\geq 3$ then there
exists $\theta\in\R$ such that $\util(e^{i\theta}z) = \vtil(z) \ \forall z\in\C$.
\end{lemma}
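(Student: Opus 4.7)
The plan is to reduce the lemma to Lemma~\ref{rep} and then pin down the biholomorphism using the two normalizations built into the definition of $\Lambda_k(\{(r,q)\},P)$: namely the common point $\util(0)=\vtil(0)=(r,q)$ and the common $d\lambda$-area $\int_{\C\setminus \D}u^*d\lambda=\int_{\C\setminus\D}v^*d\lambda=\gamma$ coming from (\ref{setTheta}).

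First I would check that Lemma~\ref{rep} applies. Both $\util=(a,u)$ and $\vtil=(b,v)$ are embedded fast finite-energy planes asymptotic to $P$, both with $\mu$-index equal to $k\geq 3$. Since $\mu(\util)=\mu(\vtil)$ and $\mu(\util)=\mu(\vtil)+2\,\wind(\beta_{\vtil},\beta_{\util},J)$, the classes $\beta_{\util}$ and $\beta_{\vtil}$ coincide. By Lemma~\ref{int4}, $u(\C)\cap x(\R)=\emptyset=v(\C)\cap x(\R)$. All hypotheses of Lemma~\ref{rep} are met, so either $u(\C)\cap v(\C)=\emptyset$ or $u(\C)=v(\C)$. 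Since $u(0)=v(0)=q$, the first alternative is excluded and Lemma~\ref{rep} produces $c\in\R$ and $A,B\in\C$, $A\neq 0$, with
\[
 \util(Az+B) = c\cdot\vtil(z) \quad \forall z\in\C.
\]

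Next I would use the two normalizations to kill $B$ and $c$ and force $|A|=1$. Evaluating at $z=0$ gives $\util(B)=c\cdot\vtil(0)=(r+c,q)$; in particular $u(B)=q=u(0)$, and since $u:\C\to M\setminus x(\R)$ is a proper embedding (Theorem~\ref{lemmafredholm}, together with Lemma~\ref{int2}), injectivity forces $B=0$. Then $\util(0)=(r,q)=c\cdot\vtil(0)=(r+c,q)$ gives $c=0$, hence $\util(Az)=\vtil(z)$ for all $z$.

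Finally, $\wind_\pi(\vtil)=\wind_\infty(\vtil)-1=0$ means $\pi\cdot dv$ never vanishes, and the usual Cauchy-Riemann computation $v^*d\lambda(\partial_x,\partial_y)=d\lambda(\pi\cdot v_x, J\,\pi\cdot v_x)>0$ shows that $v^*d\lambda$ is a strictly positive area form on $\C$; in particular $R\mapsto\int_{\{|z|\geq R\}}v^*d\lambda$ is strictly decreasing. The change of variables $w=Az$ in $\util(Az)=\vtil(z)$ gives
\[
 \gamma = \int_{\C\setminus\D} u^*d\lambda = \int_{\{|z|\geq 1/|A|\}} v^*d\lambda,
\]
and comparing with $\int_{\{|z|\geq 1\}}v^*d\lambda=\gamma$ yields $|A|=1$. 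Writing $A=e^{i\theta}$ concludes the proof. No step is really an obstacle here; the only point that requires some vigilance is justifying that $\beta_{\util}=\beta_{\vtil}$ and that $u$ is globally injective on $\C$, both of which are consequences of results already established.
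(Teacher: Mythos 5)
Your proof is correct and follows the same route as the paper's own (very terse) proof: reduce to Lemma~\ref{rep}, then use the two normalizations $\util(0)=\vtil(0)=(r,q)$ and $\int_{\C\setminus\D}u^*d\lambda=\gamma$ built into the definition of $\Lambda(\{(r,q)\},P)$ to force $B=0$, $c=0$, and $|A|=1$. The paper simply states the conclusions of these last steps; you have supplied the verifications (including $\beta_{\util}=\beta_{\vtil}$ from $\mu(\util)=\mu(\vtil)$, and positivity of $v^*d\lambda$ from $\wind_\pi=0$), all of which are sound.
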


\begin{proof}
The sets $u(\C)$ and $v(\C)$ intersect at the point $q\in M$. Lemma~\ref{rep} provides constants $A\in\C^*$ and
$B\in\C$ such that $\util(Az+B) = \vtil(z)$ for all $z\in\C$. The definition of $\Lambda(\{(r,q)\},P)$ implies $B=0$
and $A\in S^1$.
\end{proof}

Let $\hat x : \R \rightarrow M$ be a Reeb trajectory. We denote
\begin{equation}\label{setLambdat}
 \Lambda_t^k = \Lambda_k \left( {\{(0,\hat x(t))\}},P \right) \ \forall t \in \R.
\end{equation}
It follows from Lemma~\ref{int4} that $\hat x (\R) \cap x(\R) = \emptyset$ if $\Lambda^k_t \not= \emptyset$ for some
$t\in\R$ and $k\geq3$. In the following $I_r(a)$ denotes the open interval $(a-r,a+r)$.

\begin{lemma}\label{l}
Suppose $I$ is an open real interval, $k\geq3$ and $\util = (a,u) : I \times \C \rightarrow \R \times M$ is a
$C^1$-map such that $\util(t,\cdot) \in \Lambda_t^k \ \forall t \in I$. Then $\forall t_0 \in I \ \exists \epsilon >
0$ such that $u$ is 1-1 on $I_\epsilon(t_0) \times \C$.
\end{lemma}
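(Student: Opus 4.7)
The plan is to reduce the statement to a local injectivity fact for the base-point map $(t,z)\mapsto u(t,z)$ near $(t_0,0)$, handled by the inverse function theorem, and then rule out long-range coincidences using Lemma~\ref{rep} combined with Theorem~\ref{lemmafredholm}.

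First I would show $\util(t,\cdot)\to\util(t_0,\cdot)$ in $C^\infty_{loc}(\C,\R\times M)$ as $t\to t_0$. Given any sequence $t_n\to t_0$ in $I$, the planes $\util(t_n,\cdot)$ lie in $\Lambda_k(H,P)$ where $H=\{(0,\hat x(t)):|t-t_0|\le\epsilon_0\}$ is compact and, by Lemma~\ref{int4}, disjoint from $\R\times x(\R)$. The compactness hypothesis of Theorem~\ref{ob2} produces subsequential $C^\infty_{loc}$-limits in $\Lambda_k(H,P)$, and the $C^1$-continuity of $\util$ forces any such limit to agree with $\util(t_0,\cdot)$ pointwise, so the full sequence converges. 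Next, consider the $C^1$-map $\Phi:I\times\C\to M$, $\Phi(t,z)=u(t,z)$. Since $\hat x$ is a Reeb trajectory, $D\Phi(t_0,0)\cdot\partial_t=R(\hat x(t_0))$, while $D\Phi(t_0,0)|_{T_0\C}=du(t_0,0)$ is an isomorphism onto a $2$-plane transverse to $R$, because $\wind_\infty(\util(t_0,\cdot))=1$ together with Lemma~\ref{gauss} makes $\pi\cdot du(t_0,\cdot)$ everywhere nonvanishing. Thus $D\Phi(t_0,0):\R^3\to T_{\hat x(t_0)}M$ is an isomorphism, and the inverse function theorem yields a neighborhood $W=I_{\epsilon_0}(t_0)\times B_{\delta_0}(0)$ on which $\Phi$ is a $C^1$-diffeomorphism onto an open $V\ni\hat x(t_0)$.

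The delicate ingredient is a uniform-properness statement: there exist $\delta_1,\epsilon_1,R_1>0$ such that $u(t,z)\in\overline{B_{\delta_1}(\hat x(t_0))}$ and $|t-t_0|<\epsilon_1$ imply $|z|\le R_1$. To obtain it, I would apply Theorem~\ref{lemmafredholm} at $\util(t_0,\cdot)$ to get a $C^l$-embedding $f:\C\times B_r(0)\to\R\times M$ and use part~(3) of that theorem to write $\util(t,z)=f(A(t)z+B(t),\tau(t))$ for $t$ near $t_0$, with $A(t)\to 1$, $B(t)\to 0$, $\tau(t)\to 0$ continuously (continuity follows from injectivity of $f$, which makes the triple $(A,B,\tau)$ unique). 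Then $u(t,\C)=g(\C,\tau(t))$ for the single $C^l$-map $g=\pi_M\circ f$, so uniform bounds on $g(\cdot,\tau)^{-1}(\overline{B_{\delta_1}(\hat x(t_0))})$ over the compact range $\tau([t_0-\epsilon_1,t_0+\epsilon_1])$ give the required asymptotic control.

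To conclude, suppose for contradiction that no $\epsilon$ works: there are sequences $t_n^i\to t_0$ and $z_n^i\in\C$ with $(t_n^1,z_n^1)\neq(t_n^2,z_n^2)$ and $u(t_n^1,z_n^1)=u(t_n^2,z_n^2)$. Lemma~\ref{int2} rules out $t_n^1=t_n^2$, so $t_n^1\neq t_n^2$, and Lemma~\ref{rep} yields $\tilde A_n\in\C^*,\tilde B_n\in\C$ with $u(t_n^1,\tilde A_n z+\tilde B_n)=u(t_n^2,z)$ for all $z$; setting $z=0$ gives $u(t_n^1,\tilde B_n)=\hat x(t_n^2)$. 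The uniform properness bounds $\{\tilde B_n\}$, and then $C^\infty_{loc}$-convergence with the unique preimage $0\in u(t_0,\cdot)^{-1}(\hat x(t_0))$ forces $\tilde B_n\to 0$. For $n$ large, $(t_n^1,\tilde B_n),(t_n^2,0)\in W$ with $\Phi(t_n^1,\tilde B_n)=\hat x(t_n^2)=\Phi(t_n^2,0)$, and injectivity of $\Phi$ on $W$ yields $t_n^1=t_n^2$, the desired contradiction. I expect the main obstacle to be the uniform-properness statement, since $\C$ is non-compact and one genuinely needs the Fredholm family $f$ from Theorem~\ref{lemmafredholm} to control all slices of $\util(t,\cdot)$ simultaneously near infinity.
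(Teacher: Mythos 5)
Your overall strategy matches the paper's: establish local injectivity of $(t,z)\mapsto u(t,z)$ near $(t_0,0)$ via the inverse function theorem (using $\wind_\pi(\util(t_0,\cdot))=0$ and $u(t,0)=\hat x(t)$), then argue by contradiction, invoking Lemma~\ref{int2} to force $t_n^1\neq t_n^2$, Lemma~\ref{rep} to identify the images and extract a preimage $\zeta_n$ of $\hat x(t_n^2)$ under $u(t_n^1,\cdot)$, a boundedness argument for $\{\zeta_n\}$, and finally a dichotomy on the limit $\zeta^*$ to contradict either the local diffeomorphism or injectivity of $u(t_0,\cdot)$. Two remarks on where you deviate from, or need to tighten, the paper's argument.

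First, your opening paragraph deriving $C^\infty_{loc}$-convergence of $\util(t,\cdot)\to\util(t_0,\cdot)$ from the compactness hypothesis of Theorem~\ref{ob2} is superfluous: $\util$ is assumed $C^1$ on $I\times\C$, so $C^1_{loc}$-continuity in $t$ is automatic, and elliptic regularity upgrades this to $C^\infty_{loc}$ if needed. The paper does not even need the convergence statement explicitly; the inverse function theorem is applied directly to the $C^1$-map $(t,z)\mapsto u(t,z)$ on a neighborhood of $(t_0,0)$.

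Second, and more substantively, your uniform-properness step is the genuinely different route, and it is not quite closed. You obtain a continuous family $\util(t,\cdot)=f(A(t)\cdot+B(t),\tau(t))$ from Theorem~\ref{lemmafredholm}(3) (the uniqueness argument you give for the triple is fine), and then assert that compactness of $\tau([t_0-\epsilon_1,t_0+\epsilon_1])$ yields uniform bounds on $g(\cdot,\tau)^{-1}(\overline{B_{\delta_1}(\hat x(t_0))})$. But the statement of Theorem~\ref{lemmafredholm} only says $f$ is a $C^l$ injective immersion whose slices are embedded fast planes; it does not assert uniform exponential decay toward $x(\R)$ over a compact $\tau$-range. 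To make this step rigorous you would need to import the estimates from the \emph{proof} of Theorem~\ref{lemmafredholm} (e.g., the uniform decay encoded in (\ref{decaydeltaz}) via the Banach space $C^{l,\alpha,\delta}_0$). The paper instead leans, implicitly here and explicitly in the proof of Lemma~\ref{surj}, on the shared normalization $\int_{\C\setminus\D}u^*d\lambda=\gamma$ built into $\Lambda^k_t$ together with Lemma~\ref{longcyl}: for a compact parameter range there is a single radius beyond which all planes lie in a prescribed neighborhood of $x(\R)$, forcing the preimages of the compact set $\hat x(\cl{I_\epsilon(t_0)})$ to be uniformly bounded. Either route works, but the Lemma~\ref{longcyl} argument is available directly from the definition of $\Lambda^k_t$, whereas yours requires unpacking the exponential weights inside the Fredholm construction.
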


\begin{proof}
By the inverse function theorem we can find $\rho > 0$ and $\epsilon > 0$ small so that
\begin{equation}\label{difloc}
 (t,z) \in I_\epsilon(t_0) \times B_\rho(0) \mapsto u(t,z) \in M
\end{equation}
is a $C^1$-embedding onto an open set of $M$ since $\wind_\pi(\util(t_0,\cdot)) = 0$ and $u(t,0) = \hat x(t)$. We
now argue indirectly. Take sequences $t_n,t^*_n\rightarrow t_0$ and $z_n,z^*_n \in \C$ such that $(t_n,z_n) \not=
(t^*_n,z^*_n)$ and $u(t_n,z_n) = u(t^*_n,z^*_n)$. By Lemma~\ref{int2} $t_n = t^*_n$ implies $z_n = z^*_n$. Consequently we must have $t_n \not= t^*_n$. By Lemma~\ref{rep} we find a sequence $\{\zeta_n\} \subset \C\setminus\{0\}$ such that $u(t_n,\zeta_n) = \hat x(t^*_n)$ since $u(t_n,\C) = u(t^*_n,\C) \ \forall n$. The sequence $\{\zeta_n\}$ is bounded since the compact sets $x(\R)$ and $\hat x \left( \cl{I_\epsilon(t_0)} \right)$ do not intersect. Suppose $\liminf|\zeta_n|=0$. Then we find a subsequence $\zeta_{n_j} \rightarrow 0$ as $j\rightarrow+\infty$. For $j$ large enough the points $(t_{n_j},\zeta_{n_j})$ and $(t^*_{n_j},0)$ are distinct points in $I_\epsilon(t_0) \times B_\rho(0)$ and satisfy $u(t_{n_j},\zeta_{n_j}) = \hat x(t^*_{n_j}) = u(t^*_{n_j},0)$. This is in contradiction to the injectivity of the map (\ref{difloc}). This proves $\liminf_n \norma{\zeta_n} > 0$. After selecting a subsequence we can assume $\zeta_n\rightarrow \zeta^* \not= 0$. Consequently $u(t_0,\zeta^*) = u(t_0,0) = \hat x(t_0)$. However, $u(t_0,\cdot)$ is an embedding by Lemma~\ref{int2}. This is a contradiction.
\end{proof}

\begin{lemma}\label{localemb}
Suppose $k\geq3$ and $\util_0 \in \Lambda_{t_0}^k$. $\forall l\geq1 \ \exists \epsilon>0$ and a $C^l$-map
\[
 \util = (a,u) : I_\epsilon(t_0) \times \C \rightarrow \R\times M
\]
satisfying the following properties:
\begin{enumerate}
 \item $\util(t_0,\cdot) \equiv \util_0$ and $\util(t,\cdot)\in \Lambda_t^k\ \forall t\in I_\epsilon(t_0)$.
 \item $u : I_\epsilon(t_0) \times \C \rightarrow M \setminus x(\R)$ is an embedding.
 \item Suppose $\vtil_n$ are embedded fast finite-energy planes asymptotic to $P$ at $\infty$, as in
     definition~\ref{behavior}. If $\vtil_n \rightarrow \util_0$ in $C^\infty_{loc}$ and $\mu(\vtil_n) = k \
     \forall n$ then, for $n$ large enough, one finds $\{A_n,B_n\}\subset\C$, $\{r_n\}\subset\R$ and $\{t_n\}\subset\R$ such that $A_n\rightarrow1$, $B_n\rightarrow0$, $r_n\rightarrow0$, $t_n\rightarrow t_0$ and
 \[
  \util(t_n,z) = \left( r_n \cdot \vtil_n \right) \left( A_nz+B_n \right) \ \forall z\in\C.
 \]
\end{enumerate}
\end{lemma}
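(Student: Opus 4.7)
The plan is to combine the local Fredholm family supplied by Theorem~\ref{lemmafredholm} with the inverse function theorem applied to the curve $t\mapsto(0,\hat x(t))$, and then to reparametrize in $z$ so that each slice lies in $\Lambda_t^k$. First, apply Theorem~\ref{lemmafredholm} to $\util_0$ to obtain a $C^l$-embedding $f:\C\times B_r(0)\to\R\times M$ whose slices $f(\cdot,\tau)$ are embedded fast planes asymptotic to $P$ with $\mu=k$, and with $f(\cdot,0)=\util_0$. Since $f$ is an embedding of a $4$-manifold into a $4$-manifold it is open, so the curve $t\mapsto(0,\hat x(t))$ lifts uniquely and $C^l$-smoothly to $t\mapsto(z(t),\tau(t))$ satisfying $f(z(t),\tau(t))=(0,\hat x(t))$ and $(z(t_0),\tau(t_0))=(0,0)$. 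The identity $\wind_\pi(\util_0)=0$ forces $\pi\cdot du_0(0)$ to be an isomorphism onto a $2$-plane transverse to $\R R$, so $(0,R(\hat x(t_0)))\notin T_{\util_0(0)}\util_0(\C)$; this transversality ensures that the component $\tau'(t_0)\in T_0B_r(0)$ is nonzero.

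To enforce the area normalization, set $G(\rho,t):=\int_{\{|w-z(t)|>\rho\}}u_{\tau(t)}^*d\lambda$ with $u_\tau:=\pi_M\circ f(\cdot,\tau)$; then $G(1,t_0)=\gamma$ and $\partial_\rho G(1,t_0)<0$, the latter because $\wind_\pi(f(\cdot,\tau))=0$ makes $u_\tau^*d\lambda$ pointwise positive. The implicit function theorem yields a $C^l$-function $\rho(t)>0$ with $\rho(t_0)=1$ and $G(\rho(t),t)=\gamma$. Define
\[
 \util(t,z):=f(\rho(t)z+z(t),\tau(t)).
\]
Each slice $\util(t,\cdot)$ is a M\"obius reparametrization of $f(\cdot,\tau(t))$, hence an embedded fast plane with $\mu=k$ asymptotic to $P$. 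The identities $\util(t_0,\cdot)=\util_0$, $\util(t,0)=(0,\hat x(t))$ and $\int_{\C\setminus\D}u(t,\cdot)^*d\lambda=\gamma$ all hold by construction, yielding $\util(t,\cdot)\in\Lambda_t^k$ and hence (1).

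For (2), the containment $u(I_\epsilon(t_0)\times\C)\subset M\setminus x(\R)$ follows from Lemma~\ref{int4} slice by slice, and injectivity of $u$ after shrinking $\epsilon$ is precisely Lemma~\ref{l}. To upgrade injectivity to a $C^l$-embedding I would verify that $u$ is an immersion. At $z=0$ this is immediate from $\partial_tu(t,0)=R(\hat x(t))$ together with $du(t,\cdot)(T_0\C)$ being a $2$-plane transverse to $\R R$. For general $z$, expanding $\util$ reduces $\partial_tu(t,z)\notin du(t,\cdot)(T_z\C)$ to the transversality of $\tau'(t)$, in $T_{\tau(t)}B_r(0)$, to the tangent of the $\R$-translation orbit provided by Lemma~\ref{rtranslations}; this transversality holds at $t_0$ because $\tau'(t_0)$ and $\tau_\R'(0)$ are the complement-components of the linearly independent vectors $(0,R(\hat x(t_0)))$ and $\partial_a$, whose difference cannot lie in $T_{\util_0(0)}\util_0(\C)$ for the same reason as in Paragraph~1. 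Continuity, combined with the exponential decay of $\pi\cdot du$ furnished by Corollary~\ref{oops} for the asymptotic analysis as $|z|\to\infty$, propagates immersion to all of $I_\epsilon(t_0)\times\C$ after a final shrinking of $\epsilon$.

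For (3), the completeness statement in Theorem~\ref{lemmafredholm}(3) writes $\vtil_n(z)=f(\tilde A_nz+\tilde B_n,\tilde\tau_n)$ with $\tilde A_n\to 1$, $\tilde B_n\to 0$, $\tilde\tau_n\to 0$, hence $v_n(0)\to\hat x(t_0)$. By (2), the embedded planes $\{u(t,\C)\}_{t\in I_\epsilon(t_0)}$ foliate a neighborhood of $u_0(\C)$ in $M\setminus x(\R)$, so for large $n$ there is a unique $t_n\in I_\epsilon(t_0)$ with $v_n(0)\in u(t_n,\C)$, and $t_n\to t_0$. Applying Lemma~\ref{int3} to the embedded fast planes $\util(t_n,\cdot)$ and $\vtil_n$, which share the point $v_n(0)$ and the asymptotic class $\beta_P$, forces $v_n(\C)=u(t_n,\C)$. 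Lemma~\ref{rep} then supplies $A_n\in\C^*$, $B_n\in\C$ and $r_n\in\R$ with $\util(t_n,z)=(r_n\cdot\vtil_n)(A_nz+B_n)$, and the convergences $A_n\to 1$, $B_n\to 0$, $r_n\to 0$ are obtained by evaluating the identity and its $z$-derivative at $z=0$ and combining the normalization $\util(t_n,0)=(0,\hat x(t_n))$ with $\vtil_n(0)\to\util_0(0)$ and $\partial_z\util_0(0)\neq 0$. The main obstacle is the immersion check in Paragraph~3, where the transversality of $\tau'(t)$ to the $\R$-translation direction must be extended from the single point $(t_0,0)$ to all of $I_\epsilon(t_0)\times\C$, with the behavior at $|z|\to\infty$ requiring genuine asymptotic input.
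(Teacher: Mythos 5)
Your construction in paragraphs one and two is the same as the paper's STEP~1: take the Fredholm family $f$ from Theorem~\ref{lemmafredholm}, lift $t\mapsto(0,\hat x(t))$ to $(z(t),\tau(t))$ by the inverse function theorem, and use the implicit function theorem on the $d\lambda$-area to fix the scaling $\rho(t)$; your reduction of part (2) to the linear independence of $\tau'(t)$ and the tangent $\sigma'_t(0)$ of the $\R$-translation curve in $\tau$-space is also the right idea, and your argument for (3) matches the paper.

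The genuine gap is the one you flag yourself at the end. You prove the transversality of $\tau'(t)$ to $\sigma'_t(0)$ only at $t_0$ and then propose to ``propagate'' it by continuity and asymptotic decay of $\pi\cdot du$; this is misdirected, because the condition you reduced to is already independent of $z$, so there is nothing to propagate in $z$, and relying on continuity in $t$ forces a further shrinking of $\epsilon$ that the argument should not need. The paper instead runs the contradiction globally: if the immersion fails at some $(t^*,z^*)\in I_\epsilon(t_0)\times\C$, one writes $D\util(t^*,z^*)\cdot(\delta t^*,\delta z^*)=(c,0)$, deduces $\tau'(t^*)=\sigma'(0)$ from the injectivity of $D_2f$ (Lemma~\ref{embproj}), and then, by evaluating the resulting identity at $\zeta(t^*)$ (i.e.\ at $z=0$ in the normalization of $\util_{t^*}$), concludes $(-c,R(\hat x(t^*)))\in T_{\util_{t^*}(\zeta(t^*))}\util_{t^*}(\C)$, which forces $\pi\cdot du_{t^*}$ to vanish at that point, contradicting $\wind_\pi(\util_{t^*})=0$. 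Since $\wind_\pi(\util_t)=0$ for \emph{every} $t\in I_\epsilon(t_0)$, this works verbatim at all $t^*$ without any continuity argument. Two smaller points: (i) for the $\R$-translation curve at time $t^*$ you should invoke the completeness statement, Theorem~\ref{lemmafredholm}(3), applied to $\util_{t^*}$, rather than Lemma~\ref{rtranslations}, which is stated only for the base plane; and (ii) ``whose difference cannot lie in $T_{\util_0(0)}\util_0(\C)$'' is imprecise—one needs that no nontrivial linear combination of $\partial_a$ and $(0,R)$ lies in that tangent plane, which is exactly what $\wind_\pi(\util_0)=0$ at $z=0$ gives.
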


\begin{proof}
We split the proof into two steps.
\\

\noindent \textbf{STEP 1:} A $C^l$-map $\util = (a,u) : I_\epsilon(t_0) \times \C \rightarrow \R\times M$ satisfying
(1) exists.

\begin{proof}[Proof of STEP 1]
Consider the $C^l$-embedding $f:\C\times B_r(0) \hookrightarrow \R\times M$ given by Theorem~\ref{lemmafredholm}
such that $f(\cdot,0)\equiv\util_0$. We find $\epsilon>0$ small and unique $C^l$-curves
$\tau:I_\epsilon(t_0)\rightarrow B_r(0)$ and $\zeta:I_\epsilon(t_0)\rightarrow\C$ satisfying $\tau(t_0)=0$,
$\zeta(t_0)=0$ and
\[
 (0,\hat x(t)) = f(\zeta(t),\tau(t)) \ \forall t\in I_\epsilon(t_0).
\]
We must have $\tau^\prime(t_0)\not=0$ since $\wind_\pi(f(\cdot,\tau(t_0)) = 0$. Write $f=(h,g)\in\R \times M$ and
define
\[
 F(r,t) := \int_{|z-\zeta(t)|\leq r} g(\cdot,\tau(t))^*d\lambda.
\]
Then $F(1,t_0)=T-\gamma$ and $\partial_rF>0$. By the implicit function theorem we find, possibly after making
$\epsilon$ smaller, a $C^l$-function $t\in I_\epsilon(t_0)\mapsto r(t)$ such that $r(t_0)=1$ and $F(r(t),t)\equiv
T-\gamma$. Define $\util : I_\epsilon(t_0) \times \C \rightarrow \R \times M$ by the formula
\[
 \util(t,z) = (a(t,z),u(t,z)) = f(r(t)z+\zeta(t),\tau(t)).
\]
If $\epsilon$ is small enough then $\util$ is a $C^l$-embedding. Clearly $\util(t,\cdot) \in \Lambda_t^k \ \forall
t\in I_\epsilon(t_0)$.
\end{proof}

By Lemma~\ref{l} we can assume $u$ is injective.
\\

\noindent \textbf{STEP 2:} The map $u$ is an immersion.

\begin{proof}[Proof of STEP 2]
Suppose not. We find $(z^*,t^*) \in \C\times I_\epsilon(t_0)$ and a non-zero vector $(\delta z^*,\delta t^*) \in
\C\times \R$ such that
\[
 D\util(t^*,z^*) \cdot (\delta t^*,\delta z^*) = (c,0) \in \R\times 0 \subset T_{\util(t^*,z^*)}\left(\R\times M\right)
\]
We must have $c\not=0$ since $\util$ is an immersion. Denote $\util(t^*,\cdot)$ by $\util_{t^*} =
(a_{t^*},u_{t^*})$. We claim that $\delta t^* \not= 0$. If not then $\R\times 0$ is a (real) line in
$T_{z^*}\util_{t^*}$. The Cauchy-Riemann equations $d\util_{t^*} \cdot i = \jtil \cdot d\util_{t^*}$ imply that
$z^*$ is a zero of $\pi \cdot du_{t^*}$ since $\jtil$ maps $\R\times 0$ onto $0 \times \R R$. This contradicts
$\wind_\pi(\util_{t^*}) = 0$. From now on we assume $\delta t^* = 1$ and denote $Q(z,t) = (Q_1,Q_2) =
(r(t)z+\zeta(t),\tau(t))$. Setting $Q^* = Q(z^*,t^*)$ we compute
\begin{equation}\label{vec1}
 \begin{aligned}
  (c,0) &= D\util(t^*,z^*) \cdot (1,\delta z^*) \\
  &= D \left( f \circ Q \right)|_{(z^*,t^*)} \cdot (\delta z^*,1) \\
  &= D_1f|_{Q^*} \cdot \left[ D_1Q_1|_{(z^*,t^*)} \cdot \delta z^* + D_2Q_1|_{(z^*,t^*)} \right] \\
  &+ D_2f|_{Q^*} \cdot \left[ D_1Q_2|_{(z^*,t^*)} \cdot \delta z^* + D_2Q_2|_{(z^*,t^*)} \right] \\
  &= D_1f|_{Q^*} \cdot \left[ r(t^*) \delta z^* + r^\prime(t^*)z^* + \zeta^\prime(t^*) \right] + D_2f|_{Q^*} \cdot \left[ \tau^\prime(t^*) \right]
 \end{aligned}
\end{equation}

By Theorem~\ref{lemmafredholm} we find $C^l$-curves $A(s)\in\C^*$, $B(s)\in\C$ and $\sigma(s)\in\R^2$ defined on
$I_\delta(0)$ ($\delta>0$ small), satisfying $\sigma(0) = \tau(t^*)$, $A(0) = r(t^*)$, $B(0) = \zeta(t^*)$ and
\[
 (a_{t^*}(z) + cs , u_{t^*}(z)) = f(A(s)z+B(s) , \sigma(s)) \ \forall z\in\C.
\]
We compute
\begin{equation}\label{vec2}
 \begin{aligned}
  (c,0) &= \left. \frac{d}{ds} \right|_{s=0} f \left( A(s)z^*+B(s),\sigma(s) \right) \\
  &= D_1f|_{Q^*} \cdot \left[ A^\prime(0)z^* + B^\prime(0) \right] + D_2f|_{Q^*} \cdot \sigma^\prime(0).
 \end{aligned}
\end{equation}
Subtracting (\ref{vec2}) from (\ref{vec1}) we obtain
\[
 D_1f|_{Q^*} \cdot \Delta + D_2f|_{Q^*} \cdot \left[ \tau^\prime(t^*) - \sigma^\prime(0) \right] = 0
\]
where $\Delta = r(t^*) \delta z^* + r^\prime(t^*)z^* + \zeta^\prime(t^*) - A^\prime(0)z^* - B^\prime(0)$. Since the
images of $D_1f$ and of $D_2f$ are transversal, the section
\[
 z \mapsto D_2f(z,\tau(t^*)) \cdot \left[ \tau^\prime(t^*) - \sigma^\prime(0) \right]
\]
has a zero at $z = r(t^*)z^* + \zeta(t^*)$. By Lemma~\ref{embproj} we have $\tau^\prime(t^*) - \sigma^\prime(0) =
0$. We compute
\begin{equation}\label{vec3}
 \begin{aligned}
  (0,R(\hat x(t^*))) &= \left. \frac{d}{dt} \right|_{t=t^*} f \left( \zeta(t),\tau(t) \right) \\
  &= D_1f|_{(\zeta(t^*),\tau(t^*))} \cdot \zeta^\prime(t^*) + D_2f|_{(\zeta(t^*),\tau(t^*))} \cdot \tau^\prime(t^*)
 \end{aligned}
\end{equation}
and
\begin{equation}\label{vec4}
 \begin{aligned}
  (c,0) &= \left. \frac{d}{ds} \right|_{s=0} f \left( B(s),\sigma(0) \right) \\
  &= D_1f|_{(B(0),\sigma(0))} \cdot B^\prime(0) + D_2f|_{(B(0),\sigma(0))} \cdot \sigma^\prime(0).
 \end{aligned}
\end{equation}
Subtracting (\ref{vec4}) from (\ref{vec3}) we obtain
\[
 (-c,R(\hat x(t^*))) \in \text{image} \left( D_1f|_{(\zeta(t^*),\tau(t^*))} \right) = T_{\util_{t^*}(\zeta(t^*))}\util_{t^*}
\]
and this is again in contradiction to $\wind_\pi(\util_{t^*}) = 0$.
\end{proof}

We proved $\util$ satisfies (1) and (2). Let us write $\util_0 = (a_0,u_0)$ and $\vtil_n = (b_n,v_n)$. We find
$\tau_n\rightarrow0$ such that $f(\C,\tau_n) = \vtil_n(\C)$ in view of Theorem~\ref{lemmafredholm}. Since $v_n
\rightarrow u_0$ in $C^\infty_{loc}$ there exists $t_n \rightarrow t_0$ such that $\hat x(t_n) \in v_n(\C) \ \forall
n$. We used that $\hat x$ intersects the embedded surface $u_0(\C)$ transversally at $\hat x(t_0)$. Consequently
$\hat x(t_n) \in v_n(\C) \cap u(t_n,\C)$ and $v_n(\C) = u(t_n,\C)$ by Lemma~\ref{int3}. Lemma~\ref{rep} provides
$\{r_n\}\subset\R$, $\{A_n\}\subset\C^*$, $\{B_n\}\in\C$ such that $(r_n \cdot \vtil_n)(A_nz+B_n) = \util(t_n,z) \
\forall z\in\C$.
\end{proof}

\begin{lemma}\label{bundle}
Let $I$ and $J$ be two open real intervals. Fix $k\geq3$ and $l\geq 1$. Let $\util : I \times \C \rightarrow \R
\times M$ and $\vtil : I \times \C \rightarrow \R \times M$ be $C^l$ maps such that $\util(t,\cdot) \in \Lambda_t^k
\ \forall t \in I$ and $\vtil(t,\cdot) \in \Lambda_t^k \ \forall t \in J$. If $I \cap J \not= \emptyset$ then there
exists a $C^l$ function $t \in I \cap J \mapsto \theta(t) \in \R$ such that
\[
 \util(t,z) = \vtil(t,e^{i2\pi\theta(t)}z)
\]
for every $(t,z) \in I \cap J \times \C$.
\end{lemma}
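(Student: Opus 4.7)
For each fixed $t \in I \cap J$, the planes $\util(t,\cdot)$ and $\vtil(t,\cdot)$ both lie in $\Lambda_t^k$, so Lemma~\ref{unique} will provide a unique $\theta(t) \in \R/\Z$ such that $\util(t,z) = \vtil(t,e^{i2\pi\theta(t)}z)$ for every $z \in \C$. The plan is to promote this pointwise angle to a $C^l$ function: first by establishing $C^l$-regularity of $t \mapsto e^{i2\pi\theta(t)}$ on a neighborhood of each point of $I \cap J$ via an inverse function theorem argument, and then by lifting across the connected interval $I \cap J$.

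To establish the local regularity at a fixed $t_0 \in I \cap J$, I would pick an auxiliary point $z_0 \in \C \setminus \{0\}$ and set $\zeta_0 := e^{i2\pi\theta(t_0)}z_0$ for some chosen lift. Writing $\vtil = (b,v)$ and $\util = (a,u)$, I would consider the $C^l$ map
\[
 \Phi : I \times \C \to I \times M, \qquad \Phi(t,\zeta) := (t, v(t,\zeta)).
\]
Its differential at $(t_0,\zeta_0)$ is injective: if $d\Phi(t_0,\zeta_0)\cdot(\delta t,\delta\zeta) = 0$ then $\delta t = 0$ and $d_\zeta v(t_0,\zeta_0) \cdot \delta\zeta = 0$, which forces $\delta\zeta = 0$ because $\vtil(t_0,\cdot)$ is an immersion (the identity $\wind_\pi(\vtil(t_0,\cdot)) = \wind_\infty(\vtil(t_0,\cdot)) - 1 = 0$ combined with Lemma~\ref{gauss} makes $\pi\cdot dv(t_0,\cdot)$ nowhere vanishing). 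By the constant rank theorem, after shrinking a neighborhood $U \subset I \times \C$ of $(t_0,\zeta_0)$, the map $\Phi|_U$ is a $C^l$-embedding onto an open subset $V$ of a $3$-dimensional submanifold of $I \times M$, with $C^l$ inverse of the form $\Phi^{-1}(t,w) = (t, \rho(t,w))$. The $C^l$ curve $t \mapsto (t, u(t,z_0))$ is contained in the global image of $\Phi$ by the pointwise identity, passes through $\Phi(t_0, \zeta_0)$, and hence by continuity enters $V$ for $t$ near $t_0$; pulling back through $\Phi^{-1}|_V$ yields a $C^l$ function $\zeta(t) := \rho(t, u(t,z_0))$ solving $v(t,\zeta(t)) = u(t,z_0)$. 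Since each $\vtil(t,\cdot)$ is an embedding by Lemma~\ref{int2}, necessarily $\zeta(t) = e^{i2\pi\theta(t)}z_0$ for the unique mod-$\Z$ angle $\theta(t)$ supplied by Lemma~\ref{unique}, so $t \mapsto e^{i2\pi\theta(t)} = \zeta(t)/z_0$ is $C^l$ near $t_0$.

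Since $t_0$ was arbitrary, $t \mapsto e^{i2\pi\theta(t)}$ is $C^l$ on all of $I \cap J$. As $I \cap J$ is a simply connected open interval, this $S^1$-valued function admits a $C^l$ real lift $\theta : I \cap J \to \R$, unique up to an additive integer, and the desired identity $\util(t,z) = \vtil(t, e^{i2\pi\theta(t)}z)$ for all $(t,z) \in (I \cap J) \times \C$ then follows from the pointwise statement of Lemma~\ref{unique}. The main technical obstacle will be the inverse function theorem step: combining the joint $C^l$ dependence of $\vtil$ on $(t,\zeta)$ with the fiberwise immersion property to produce an auxiliary $C^l$-embedding $\Phi$ whose local inversion turns the pointwise rotation angle into a $C^l$-function of $t$. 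Everything else amounts to a routine covering-space lifting argument.
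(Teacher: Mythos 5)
Your approach mirrors the paper's in outline (use Lemma~\ref{unique} pointwise, then an inverse-function/constant-rank argument to upgrade the angle to $C^l$), but there is a real gap in the step where you assert that the curve $t \mapsto (t,u(t,z_0))$ ``by continuity enters $V$ for $t$ near $t_0$.'' The set $V=\Phi(U)$ is only a piece of a $3$-dimensional submanifold sitting inside the $4$-dimensional space $I\times M$, so it has empty interior. Knowing that the curve passes through $\Phi(t_0,\zeta_0)$ and lies in the global image of $\Phi$ does not yet place it in the \emph{local} image $V$: the unique $\Phi$-preimage of $(t,u(t,z_0))$ over the $t$-slice is $(t,e^{i2\pi\theta(t)}z_0)$, and for this to lie in the small neighborhood $U$ of $(t_0,\zeta_0)$ you need $e^{i2\pi\theta(t)} \to e^{i2\pi\theta(t_0)}$ as $t\to t_0$ --- which is continuity of the very quantity you are trying to prove regular. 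Without this, the curve might pass through a different ``sheet'' of the image of $\Phi$, and $\Phi^{-1}|_V$ would then not apply.

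The gap is repairable. Since $\bigl|e^{i2\pi\theta(t)}z_0\bigr| = |z_0|$ is constant, any sequence $t_n\to t_0$ admits a subsequence with $e^{i2\pi\theta(t_n)} \to c \in S^1$; passing to the limit in $u(t_n,z_0)=v\bigl(t_n,e^{i2\pi\theta(t_n)}z_0\bigr)$ gives $u(t_0,z_0)=v(t_0,cz_0)$, and injectivity of $v(t_0,\cdot)$ (Lemma~\ref{int2}) then forces $c = e^{i2\pi\theta(t_0)}$. Once $e^{i2\pi\theta}$ is known continuous, the rest of your argument goes through. The paper avoids the issue entirely by anchoring at $z_0=0$, where $u(t,0)=v(t,0)=\hat x(t)$ and there is no rotation ambiguity: it invokes Lemma~\ref{l} for injectivity of $v$ on a strip $I_\eta(t_0)\times\C$, applies the inverse function theorem to $(t,\zeta)\mapsto v(t,\zeta)$ viewed as a map into $M$ (which has full rank $3$ at $(t_0,0)$ because $\wind_\pi = 0$ makes the image of $d_\zeta v$ transverse to the Reeb vector $\partial_t v(t_0,0)$), so the local image is a genuinely \emph{open} subset of $M$, and then pulls back the nearby points $u(t,\epsilon)$ with $0<\epsilon\ll 1$; injectivity on the strip then identifies the resulting $C^l$ functions with $t$ and $e^{i2\pi\theta(t)}\epsilon$. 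Building the local inverse into the ambient $M$ rather than into $I\times M$ is what makes the paper's version immune to the sheet-selection issue you run into.
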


\begin{proof}
Note that $\hat x(t) = u(t,0) = v(t,0) \ \forall t \in I \cap J$. By Lemma~\ref{unique} we can find a function
$\theta(t)$ defined on $I \cap J$ satisfying the desired equation. It remains only to show that we can arrange
$\theta$ to be $C^l$. By Lemma~\ref{l} $\exists \eta > 0$ small such that $v : I_\eta(t_0) \times \C \rightarrow M
\setminus x(\R)$ is injective. Since $\wind_\pi(\vtil(t_0,\cdot)) = 0$ we can invoke the inverse function theorem
and assume, without loss of generality, that $\exists \rho > 0$ such that $v: I_\eta(t_0) \times B_\rho(0)
\rightarrow M$ is an embedding onto an open neighborhood of $\hat x(t_0)$. If $0<\epsilon\ll\eta$ then we find $C^l$
functions $\alpha:I_\epsilon(t_0)\rightarrow I_\eta(t_0)$ and $\beta:I_\epsilon(t_0)\rightarrow\C$ such that
$u(t,\epsilon) = v(\alpha(t),\beta(t))$. By lemmas~\ref{int4} and~\ref{int3} we know that $u(t,\C) = v(t,\C) \
\forall t\in I_\epsilon(t_0)$. Thus $\alpha(t) \equiv t$ since $v$ is 1-1 on $I_\eta(t_0)\times \C$ and $\beta(t)
\not= 0$ since $v(t,0) = u(t,0) \not= u(t,\epsilon)$ and each $u(t,\cdot)$ is 1-1. Let $\theta(t) :=
\frac{1}{2\pi}\arg \beta(t)$ be a $C^l$ choice of argument. It follows from Lemma~\ref{unique} that
$\vtil(t,e^{i2\pi\theta(t)}z) = \util(t,z)$ for every $(t,z) \in I_\epsilon(t_0)\times \C$. We proved $\theta$ can
be locally chosen of class $C^l$.
\end{proof}

The following statement is an easy consequence of lemmas~\ref{localemb} and~\ref{bundle}.

\begin{lemma}\label{localinj}
Suppose $I$ is an open real interval and $l\geq1$. Let $\util = (a,u) : I \times \C \rightarrow \R \times M$ be a
$C^l$ map such that $\util(t,\cdot) \in \Lambda_t^k \ \forall t \in I$ for some $k\geq3$. Then $\forall t^\prime \in
I \ \exists \epsilon>0$ such that $u : I_\epsilon(t^\prime) \times \C \rightarrow M \setminus x(\R)$ is a
$C^l$-embedding.
\end{lemma}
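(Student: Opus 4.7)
The plan is to reduce the statement to the two lemmas just established. Fix $t'\in I$. First I would apply Lemma~\ref{localemb} to the plane $\util(t',\cdot)\in\Lambda^k_{t'}$ to obtain, for the prescribed regularity $l\geq 1$, a number $\epsilon_0>0$ and a $C^l$-map
\[
 \vtil=(b,v):I_{\epsilon_0}(t')\times\C\to\R\times M
\]
such that $\vtil(t',\cdot)=\util(t',\cdot)$, $\vtil(t,\cdot)\in\Lambda^k_t$ for every $t\in I_{\epsilon_0}(t')$, and, crucially, the projection $v:I_{\epsilon_0}(t')\times\C\to M\setminus x(\R)$ is a $C^l$-embedding.

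Next I would compare the two $C^l$-families $\util|_{I_{\epsilon_0}(t')}$ and $\vtil$. Both are $C^l$-maps from $I_{\epsilon_0}(t')\times\C$ into $\R\times M$ whose slices lie in $\Lambda^k_t$, so Lemma~\ref{bundle} applies and produces a $C^l$-function $\theta:I_{\epsilon_0}(t')\to\R$ satisfying
\[
 \util(t,z)=\vtil\bigl(t,e^{i2\pi\theta(t)}z\bigr)\quad\forall (t,z)\in I_{\epsilon_0}(t')\times\C.
\]
Since $\vtil(t',\cdot)=\util(t',\cdot)$, Lemma~\ref{unique} forces $e^{i2\pi\theta(t')}=1$, so, after adjusting by a constant if needed, we may take $\theta(t')=0$.

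Finally I would observe that the map
\[
 \Phi:(t,z)\in I_{\epsilon_0}(t')\times\C\ \longmapsto\ \bigl(t,e^{i2\pi\theta(t)}z\bigr)\in I_{\epsilon_0}(t')\times\C
\]
is a $C^l$-diffeomorphism (its smooth inverse is $(t,w)\mapsto(t,e^{-i2\pi\theta(t)}w)$). Hence on the possibly smaller neighbourhood $I_{\epsilon}(t')$ (with $\epsilon\le\epsilon_0$) one has $u=v\circ\Phi$, which is a composition of a $C^l$-diffeomorphism with the $C^l$-embedding $v$ into $M\setminus x(\R)$. Therefore $u:I_{\epsilon}(t')\times\C\to M\setminus x(\R)$ is itself a $C^l$-embedding, as required. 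There is no genuine obstacle in this argument: the only nontrivial content is the injectivity and immersivity of $v$ provided by Lemma~\ref{localemb}, and the rigidity-by-rotation provided by Lemma~\ref{bundle}; once those are in hand, the conclusion is a mechanical consequence of composing a local diffeomorphism with an embedding.
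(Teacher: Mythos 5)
Your argument is correct and is precisely what the paper intends, since the paper merely remarks that the lemma "is an easy consequence of lemmas~\ref{localemb} and~\ref{bundle}" without spelling out the details. You correctly fill those details in: obtain a local family with embedded projection from Lemma~\ref{localemb}, match it to the given family via the $C^l$ rotation function from Lemma~\ref{bundle}, and conclude by composing with the resulting $C^l$-diffeomorphism $(t,z)\mapsto(t,e^{i2\pi\theta(t)}z)$.
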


Later we will need the following claim.

\begin{claim}\label{claimob}
Suppose $I$ and $J$ are two open real intervals. Let $\util : I \times \C \rightarrow \R \times M$ and $\vtil : J
\times \C \rightarrow \R \times M$ be $C^l$ maps such that $\util(t,\cdot) \in \Lambda^k_t \ \forall t \in I$ and
$\vtil(t,\cdot) \in \Lambda^k_t \ \forall t \in J$. Suppose $v(s_0,0) = u(t_0,z_0)$. Then there exists $\epsilon>0$
and $C^l$ functions $\alpha:I_\epsilon(s_0) \rightarrow I$ and $\zeta:I_\epsilon(s_0) \rightarrow \C$ such that
$I_\epsilon(s_0) \subset J$, $\alpha^\prime > 0$, $\alpha(s_0) = t_0$, $\zeta(s_0) = z_0$ and $v(s,0) =
u(\alpha(s),\zeta(s)) \ \forall s\in I_\epsilon(s_0)$.
\end{claim}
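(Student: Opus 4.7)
The plan is to combine Lemma~\ref{localinj} with the inverse function theorem, and then compute $\alpha'(s_0)$ using a locally-defined ``page label'' $\Phi$. First recall that each $\util(t,\cdot) \in \Lambda^k_t$ forces $u(t,0) = \hat x(t)$, and similarly $v(s,0) = \hat x(s)$; in particular, $v(s_0,0) = u(t_0,z_0)$ means $\hat x(s_0) = u(t_0,z_0)$.

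By Lemma~\ref{localinj}, after shrinking $I$ around $t_0$ we may assume $u : I \times \C \to M\setminus x(\R)$ is a $C^l$-embedding, hence in particular an injective immersion. Since the domain and codomain are both $3$-dimensional, $du$ is an isomorphism at every point, so $u$ is actually a $C^l$-diffeomorphism onto its (open) image. The inverse function theorem then yields an open $V \subset I \times \C$ containing $(t_0,z_0)$ and an open neighborhood $W$ of $\hat x(s_0)$ in $M\setminus x(\R)$ such that $u|_V: V \to W$ is a $C^l$-diffeomorphism. By continuity of $s \mapsto \hat x(s) = v(s,0)$ choose $\epsilon > 0$ with $I_\epsilon(s_0) \subset J$ and $\hat x(I_\epsilon(s_0)) \subset W$; the required map is then $(\alpha(s),\zeta(s)) := (u|_V)^{-1}(\hat x(s))$, which is $C^l$, satisfies $\alpha(s_0)=t_0$, $\zeta(s_0)=z_0$, and obeys $v(s,0) = u(\alpha(s),\zeta(s))$ by construction.

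It remains to verify $\alpha'(s_0) > 0$. Define the $C^l$ function $\Phi$ on the open set $u(I\times\C)$ by $\Phi(u(t,z)) := t$; then $d\Phi\cdot \partial_t u \equiv 1$ and $d\Phi$ annihilates tangent vectors to each page $u(t,\C)$. Consider the scalar $a(t,z) := (d\Phi \cdot R)(u(t,z))$. Since $\wind_\pi(\util(t_0,\cdot)) = \wind_\infty(\util(t_0,\cdot))-1 = 0$, Lemma~\ref{gauss} ensures that $u(t_0,\cdot)$ is an immersion transverse to $R$, so $a(t_0,\cdot)$ never vanishes on $\C$. At the center $z=0$ we have $\partial_t u(t,0) = \dot{\hat x}(t) = R(\hat x(t))$, hence $a(t_0,0) = 1$. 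By connectedness of $\C$ we conclude $a(t_0,z) > 0$ for every $z$, in particular $a(t_0,z_0) > 0$. Differentiating $\hat x(s) = u(\alpha(s),\zeta(s))$ at $s_0$ gives
\[
 R(\hat x(s_0)) \;=\; \alpha'(s_0)\,\partial_t u(t_0,z_0) \;+\; du(t_0,z_0)\cdot\zeta'(s_0),
\]
and applying $d\Phi$ to both sides (using that $d\Phi$ kills the $z$-derivative term) yields $\alpha'(s_0) = a(t_0,z_0) > 0$, completing the proof. The main point to get right is the positivity of $\alpha'(s_0)$; everything else is a direct application of Lemma~\ref{localinj} plus the inverse function theorem.
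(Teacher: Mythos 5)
Your argument is correct and follows the same route as the paper: invoke Lemma~\ref{localinj} to make $u$ a local embedding around $(t_0,z_0)$, then pull back the curve $s\mapsto v(s,0)=\hat x(s)$ via the inverse function theorem to obtain $\alpha$ and $\zeta$. The paper simply declares that $\alpha'>0$ is ``trivial to check,'' whereas you spell it out cleanly via the page-label $\Phi$, the identity $\wind_\pi(\util(t_0,\cdot))=0$, and connectedness of $\C$; this is a legitimate and welcome elaboration of the same approach rather than a different one.
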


\begin{proof}
By Lemma~\ref{localinj} $\exists\epsilon_0 > 0$ such that $u:I_{\epsilon_0}(t_0) \times \C \rightarrow M$ is an
embedding onto an open subset of $M$. If $\epsilon>0$ is small enough then $v \left( I_\epsilon(s_0) \times \{0\}
\right) \subset u \left( I_{\epsilon_0}(t_0) \times \C \right)$. Hence there are unique $\alpha$ and $\zeta$ as in
the statement satisfying $v(s,0) = u(\alpha(s),\zeta(s)) \ \forall s\in I_\epsilon(s_0)$. The inequality
$\alpha^\prime>0$ is trivial to check.
\end{proof}


\subsection{Proof of Theorem~\ref{ob2}}

From now on we denote $k = \mu(\util_0) \geq 3$ and proceed in three steps.

\subsubsection{Foliating $M \setminus x(\R)$}

The Reeb flow preserves the volume form $\lambda\wedge d\lambda$. By Poincar\'e's recurrence almost every point of
$M$ is a recurrent point, that is, it belongs to its own $\alpha$ and $\omega$ limit sets.

Let $\util_0 = (a_0,u_0)$ be the embedded fast finite-energy plane asymptotic to $P = (x,T)$ as in
Theorem~\ref{ob2}. After translation in the $\R$-direction we can assume $\util_0(0) = (0,q) \in \{0\} \times M$.
Let $\hat x : \R \rightarrow M$ be the Reeb trajectory satisfying $\hat x(0) = q$. As in Subsection~\ref{localfols}
we denote
\[
 \Lambda_t^k = \Lambda_k(\{(0,\hat x(t))\},P) \ \forall t \in \R.
\]
We can reparametrize $\util_0$ in order to achieve $\util_0 \in \Lambda^k_0$. In view of Lemma~\ref{localemb} we can
assume, without loss of generality, that $q$ is a recurrent point. We will prove

\begin{lemma}\label{foliating}
There exists $L>0$, $\delta>0$ and a $C^l$ map
\[
 \util = (a,u) : (-\delta,L+\delta) \times \C \rightarrow \R \times M
\]
such that
\begin{enumerate}
 \item $\util(0,\cdot) \equiv \util_0$ and $\util(t,\cdot) \in \Lambda^k_t \ \forall t\in (-\delta,L+\delta)$.
 \item $u(0,\C) = u(L,\C)$ and each $u(t,\cdot) : \C \rightarrow M \setminus x(\R)$ is an embedding transversal
     to the Reeb vector.
 \item $u:(0,L) \times \C \rightarrow M \setminus \left( u_0(\C) \cup x(\R) \right)$ is a $C^l$ orientation
     preserving diffeomorphism.
\end{enumerate}
\end{lemma}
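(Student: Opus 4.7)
The plan is to build $\util$ in three stages: continuation of a local family of fast planes along $\hat x$, identification of the return time $L$ via recurrence, and verification of the global diffeomorphism property.

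Stage 1 (maximal continuation). First observe $\hat x(\R) \cap x(\R) = \emptyset$: indeed $q = \hat x(0) \in u_0(\C)$, and two Reeb trajectories sharing a point coincide, whereas $u_0(\C) \cap x(\R) = \emptyset$ by Lemma~\ref{int4}. Lemma~\ref{localemb} applied at $t=0$ yields a local $C^l$-family on $I_{\epsilon_0}(0) \times \C$. Set
\[
 T^* = \sup\{T > 0 : \exists\ \delta > 0 \text{ and a } C^l\text{-family } \util(t,\cdot) \in \Lambda^k_t \text{ on } (-\delta,T) \text{ extending } \util_0\}.
\]
Suppose $T^* < +\infty$ and pick $t_n \uparrow T^*$. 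The compact set $H := \{0\} \times \hat x([T^*-1,T^*])$ is disjoint from $\R \times x(\R)$, and $\util(t_n,\cdot) \in \Lambda_k(H,P)$; by the compactness hypothesis of Theorem~\ref{ob2}, after passing to a subsequence $\util(t_n,\cdot) \to \vtil \in \Lambda^k_{T^*}$ in $C^\infty_{loc}$. Apply Lemma~\ref{localemb} at $T^*$ to $\vtil$: assertion (3) identifies $\util(t_n,\cdot)$ with a reparametrization of the new local family $\vtil(s_n,\cdot)$ for some $s_n \to T^*$; Lemma~\ref{bundle} then shows $\util$ and $\vtil$ agree on their overlap up to a $C^l$-loop of rotations, which I absorb into $\vtil$ to extend $\util$ past $T^*$, contradicting maximality. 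The same argument works for $t < 0$, so $\util$ extends to all of $\R$.

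Stage 2 (locating $L$). Since $q$ is recurrent, choose $t_n \to +\infty$ with $\hat x(t_n) \to q$. The identity $\wind_\infty(\util_0) = 1$ combined with Lemma~\ref{gauss} implies $u_0$ is transverse to $R$ at $q$, and Lemma~\ref{int2} says $u_0:\C \to M \setminus x(\R)$ is a proper embedding. The implicit function theorem produces $s_n$ near $t_n$ with $\hat x(s_n) \in u_0(\C)$. Define
\[
 L := \inf\{t > 0 : \hat x(t) \in u_0(\C)\}.
\]
Transversality of $R$ to $u_0(\C)$ at $q$ forces $L > 0$, and the previous observation forces $L < +\infty$. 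Since $\hat x(L) \in u(L,\C) \cap u(0,\C)$, Lemma~\ref{int3} gives $u(L,\C) = u(0,\C)$.

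Stage 3 (disjointness, injectivity, surjectivity). For $t \in (0,L)$, $\hat x(t) \notin u_0(\C)$ forces $u(t,\C) \cap u_0(\C) = \emptyset$ by Lemma~\ref{int3}. If $u(t_1,\C) = u(t_2,\C)$ for some $0 < t_1 < t_2 < L$, apply Claim~\ref{claimob} to $\util$ and its time-shift by $t_2-t_1$: the resulting local curve $(s,\alpha(s))$ with $\alpha' > 0$ and $\alpha(t_1)=t_2$ satisfies $u(\alpha(s),\C) = u(s,\C)$ near $t_1$, and propagating this relation to $s = 0$ using Lemma~\ref{localinj} yields $\hat x(L') \in u_0(\C)$ for some $0 < L' \leq t_2 - t_1 < L$, contradicting the definition of $L$. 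Combined with Lemma~\ref{localinj}, the restriction $u|_{(0,L)\times\C}$ is an injective $C^l$-immersion into $M \setminus (u_0(\C) \cup x(\R))$; orientation preservation follows from the fact that $(\partial_t u, \pi\cdot du)$ is a positive frame of $TM$ modulo $\R R$ and $R$ orients the normal to each page.

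For surjectivity, let $A = u((0,L)\times\C)$. $A$ is open in $M \setminus (u_0(\C) \cup x(\R))$ by the local embedding property, and it suffices to show $A \cup u_0(\C) \cup x(\R)$ is closed in $M$, since $M$ is connected. Near any $p \in u_0(\C)$, the continuations to $t \in (-\delta,\delta)$ and $t \in (L-\delta, L+\delta)$ (using $u(L,\C)=u(0,\C)$) foliate a neighborhood of $p$ by pages with parameters in $(0,\delta) \cup (L-\delta,L)$ on the two sides of $u_0(\C)$, so the $(t,z)$-images cover a neighborhood. Near a point $p \in x(\R)$, I would use uniform asymptotic control provided by Lemma~\ref{generaldecay} applied to the family $\util(t,\cdot)$ on a compact parameter slab (the asymptotic eigenvalue and eigenvector depend continuously on $t$), to show that the pages $u(t,\C)$ for $t \in (0,L)$ wind once around $x(\R)$ and fill a punctured tubular neighborhood. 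I expect this last point — showing that the family leaves no angular wedge near $x(\R)$ uncovered — to be the main technical obstacle, although it is closely analogous to analyses already carried out in the asymptotic sections of~\cite{props1,props2}.
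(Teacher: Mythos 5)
Your Stage 1 (maximal continuation by compactness) and Stage 2 (locating $L$ via recurrence and transversality) match the paper's Lemma~\ref{continuation} and the definition of $L = \inf E$ essentially verbatim, and are fine. The problems are concentrated in Stage 3, where both of your remaining arguments are either significantly under-specified or aimed at a harder problem than needed.

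For injectivity, the claim that you can ``propagate the relation to $s=0$ using Lemma~\ref{localinj}'' hides the entire content of the argument. Claim~\ref{claimob} only produces $\alpha$ on a \emph{small} interval $I_\epsilon(t_1)$; extending $\alpha$ over $(0,t_1]$ requires an open-closed argument, and the ``closed'' part is not automatic. You must (a) keep $\alpha(s)$ uniformly separated from $s$ so the two pages never collide — the paper does this via a Lebesgue number $\eta>0$ for the cover of $[0,L]$ by intervals on which $u$ is an embedding, giving $\alpha(s) \geq s+\eta$ — and (b) control the $\C$-coordinate $\zeta(s)$ of the intersection points so that along a limiting sequence you can extract a convergent subsequence; for that the paper again invokes the uniform asymptotic estimate of Lemma~\ref{longcyl}, which produces $r_0\gg1$ with $|z|\geq r_0 \Rightarrow u(t,z)\notin \hat x([0,L])$ uniformly in $t\in[0,L]$. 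Lemma~\ref{localinj} contributes only the ``open'' direction; without (a) and (b) the backward propagation can fail, e.g.\ $\zeta(s)$ could escape to infinity as $s$ decreases. You should spell out the set-theoretic version of the propagation (the paper's set $D$ and the bound $\alpha(s)\leq s-\eta$ in the forward formulation) and the boundedness of $\zeta$.

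For surjectivity, you correctly reduce the problem to showing $u((0,L)\times\C)$ is closed in $\Ucal := M\setminus(u_0(\C)\cup x(\R))$, but then you pivot to a strictly stronger and unnecessary goal: foliating a punctured tubular neighborhood of $x(\R)$ by winding control of the pages. You flag this yourself as the main obstacle; it is also not what the paper does. Since the target point $y$ of your convergent sequence lies in $\Ucal$, and in particular is bounded away from $x(\R)$, you only need to know that the preimages $B_n$ (with $u(t_n,B_n)=y_n$) stay in a bounded region of $\C$. This is exactly what Lemma~\ref{longcyl} gives, once you run the small-$d\lambda$-energy cylinder estimate uniformly over $t\in[0,L]$ using the normalization $\int_{\C\setminus\D}u(t,\cdot)^*d\lambda=\gamma$. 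Then $(t_n,B_n)$ has a convergent subsequence, and $t^*=0$ or $t^*=L$ is ruled out because $y\notin u_0(\C)$. Replacing your tubular-neighborhood analysis with this compactness-from-$\gamma$-normalization argument both closes the gap and matches the actual mechanism in the paper.

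Finally, a small cleanup: your injectivity sketch states a bound $L' \leq t_2-t_1$ that does not follow from the propagation and is also not needed — any $0<L'<L$ with $u(L',\C)=u_0(\C)$ already contradicts $L=\inf E$.
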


By Lemma~\ref{localemb} $\exists\delta > 0$ and $\util : I_\delta(0) \times \C \rightarrow \R \times M$ of class
$C^l$ satisfying $\util(0,\cdot) \equiv \util_0$ and $\util(t,\cdot) \in \Lambda^k_t \ \forall t \in I_\delta(0)$.
In view of Lemma~\ref{int4} we must have $q \not\in x(\R)$, implying $x(\R) \cap \hat x(\R) = \emptyset$. We define
a set
\begin{equation}\label{setB}
 B \subset (0,+\infty)
\end{equation}
by requiring that $t \in B$ if, and only if, there exists $\delta>0$ and a $C^l$ map $\util : (-\delta,t) \times \C
\rightarrow \R \times M$ such that $\util(0,\cdot) \equiv \util_0$ and $\util(s,\cdot) \in \Lambda^k_s \ \forall s
\in (-\delta,t)$. $B\not=\emptyset$ by our remarks so far.

\begin{lemma}\label{continuation}
$\sup B = +\infty$.
\end{lemma}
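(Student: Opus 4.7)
The plan is to argue by contradiction: assume $T^\star := \sup B < +\infty$ and use the hypothesized $C^\infty_{loc}$-compactness of $\Lambda_k(H,P)$ together with Lemma~\ref{localemb} to produce a local family $\wtil$ at $T^\star$, then glue it onto the existing family via Lemma~\ref{bundle}, contradicting $T^\star = \sup B$.

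First I would assemble the evidence for $(0,T^\star) \subset B$ into a single $C^l$-family $\util : (-\delta, T^\star) \times \C \to \R \times M$. Monotonicity of $B$ under restriction is immediate; for any $t_n \nearrow T^\star$ with $t_n \in B$ and families $\util^{(n)}$ on $(-\delta, t_n)$ (fixed $\delta > 0$), Lemma~\ref{bundle} yields $C^l$ rotations $\theta_{n,n+1}(t)$ relating $\util^{(n)}$ and $\util^{(n+1)}$ on their overlap, all vanishing at $t=0$ because $\util^{(n)}(0,\cdot) \equiv \util_0$. Absorbing these rotations inductively produces a single $C^l$ map $\util : (-\delta, T^\star) \times \C \to \R \times M$ with $\util(t,\cdot) \in \Lambda^k_t$ for every $t \in (-\delta, T^\star)$.

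Second I would apply compactness at $T^\star$. Lemma~\ref{int4} yields $q = \hat x(0) \notin x(\R)$, and since $x(\R)$ is Reeb-invariant, $\hat x(T^\star) \notin x(\R)$. Fix a compact neighborhood $H$ of $(0,\hat x(T^\star))$ in $\R\times M$ with $H \cap (\R \times x(\R)) = \emptyset$. For $t$ close to $T^\star$ we have $\util(t,0) = (0,\hat x(t)) \in H$, so $\util(t,\cdot) \in \Lambda_k(H,P)$; by the compactness hypothesis of Theorem~\ref{ob2} one extracts a sequence $t_j \nearrow T^\star$ and $\vtil_\infty \in \Lambda_k(H,P)$ with $\util(t_j,\cdot) \to \vtil_\infty$ in $C^\infty_{loc}$. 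Because $\util(t_j,0) \to (0,\hat x(T^\star))$, in fact $\vtil_\infty \in \Lambda^k_{T^\star}$. Lemma~\ref{localemb} applied at $\vtil_\infty$ then furnishes $\epsilon > 0$ and a $C^l$-map $\wtil : I_\epsilon(T^\star) \times \C \to \R \times M$ with $\wtil(T^\star,\cdot) \equiv \vtil_\infty$ and $\wtil(t,\cdot) \in \Lambda^k_t$.

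Third I would glue $\util$ and $\wtil$ across $T^\star$. On the overlap $(T^\star - \epsilon, T^\star)$, Lemma~\ref{bundle} produces a $C^l$ function $\theta(t)$ with $\util(t,z) = \wtil(t, e^{i2\pi\theta(t)}z)$. Property (3) of Lemma~\ref{localemb}, applied to the approximating sequence $\util(t_j,\cdot) \to \vtil_\infty$, expresses each $\util(t_j,\cdot)$ as an $\R$-translate of $\wtil(t_j^\star,\cdot)$ affinely reparametrized by $z \mapsto A_j z + B_j$ with $A_j \to 1$, $B_j \to 0$ and $t_j^\star \to T^\star$; comparing with the rotational matching above and using local injectivity of $\wtil$ near $(T^\star, 0)$ forces $\theta(t_j)$ to converge modulo $\Z$, so a continuous choice of $\theta$ extends across $T^\star$, and by applying Lemma~\ref{bundle} on progressively smaller subintervals this extension is in fact $C^l$. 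Reparametrizing $\wtil(t,z) \mapsto \wtil(t, e^{i2\pi\theta(t)}z)$ matches $\util$ on $(T^\star - \epsilon, T^\star)$ and extends it to a $C^l$-family on $(-\delta, T^\star + \epsilon)$, so $T^\star + \epsilon/2 \in B$ — the desired contradiction. The one delicate point is the last one: the rotation $\theta$ a priori only smooth on the open overlap, and the ``completeness'' statement (3) of Lemma~\ref{localemb} is precisely the analytic input needed to control $\theta$ up to and across $T^\star$.
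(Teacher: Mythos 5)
Your overall strategy matches the paper's: argue by contradiction from $\tau := \sup B < +\infty$, use the $C^\infty_{loc}$-compactness of $\Lambda_k(H,P)$ (with $H = \{0\}\times\hat x([0,\tau])$, which misses $\R\times x(\R)$) to extract a limit plane $\wtil\in\Lambda^k_\tau$, apply Lemma~\ref{localemb} to obtain a local family $\vtil$ on $I_\eta(\tau)$, and relate the two families on their overlap via Lemma~\ref{bundle}. However, your gluing step contains a genuine gap, and it is precisely where you flag ``the one delicate point.''

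The problem is that you attempt to extend $\theta$ across $T^\star$. Lemma~\ref{bundle} only produces $\theta$ as a $C^l$ function on the open overlap $(T^\star-\epsilon,T^\star)$, where both families are defined and belong to $\Lambda^k_t$. To make sense of $\wtil(t,e^{i2\pi\theta(t)}z)$ on $(T^\star-\epsilon,T^\star+\epsilon)$ you would first need $\theta$ to admit a limit at $T^\star$; since different subsequences $t_j\nearrow T^\star$ could a priori converge to different rotates of $\wtil(T^\star,\cdot)$ (Lemma~\ref{unique} only pins down $\vtil_\infty$ up to rotation), this is not automatic. Your appeal to property (3) of Lemma~\ref{localemb} to ``force $\theta(t_j)$ to converge modulo $\Z$'' is suggestive but not a proof, and even if a $C^0$-limit existed, your claim that $\theta$ becomes $C^l$ across $T^\star$ ``by applying Lemma~\ref{bundle} on progressively smaller subintervals'' is circular: Lemma~\ref{bundle} produces $\theta$ only on regions where a family already exists, which is exactly what you are trying to construct past $T^\star$.

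The paper sidesteps this entirely with a cutoff. It picks a single $t_1\in(\tau-\eta,\tau)\cap B$ and a family $\util$ on $(-\delta,t_1)$ (no need to assemble a full family on $(-\delta,\tau)$), gets $\theta\in C^l((\tau-\eta,t_1))$ from Lemma~\ref{bundle}, then chooses $\phi$ smooth with $\phi\equiv 1$ near $\tau-\eta$ and $\phi\equiv 0$ near $t_1$, and sets the blended family $(t,z)\mapsto\vtil(t,e^{i2\pi\phi(t)\theta(t)}z)$ on $(\tau-\eta,t_1)\times\C$. This agrees with $\util$ where $\phi\equiv 1$ and with $\vtil$ where $\phi\equiv 0$, so the rotation is switched off strictly before $t_1<\tau$, and the glued map $\tilde f:(-\delta,\tau+\eta)\times\C\to\R\times M$ is $C^l$ by inspection. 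No boundary control on $\theta$ at $\tau$ is ever needed. You should replace your third paragraph with this cutoff argument.
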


\begin{proof}
We argue by contradiction and assume $\tau = \sup B < +\infty$. Fix an increasing sequence $t_n \rightarrow \tau^-$
and fast planes $\util_n \in \Lambda^k_{t_n}$. We can assume, in view of the assumptions of Theorem~\ref{ob2}, that
$\util_n \rightarrow \wtil$ in $C^\infty_{loc}$ for some $\wtil \in \Lambda^k_\tau$. We used that the compact set $H
= \{0\} \times \hat x \left( [0,\tau] \right)$ satisfies $H \cap \R \times x(\R) = \emptyset$. Applying
Lemma~\ref{localemb} to the plane $\wtil$ we find $\eta>0$ and a $C^l$ map
\[
 \vtil = (b,v) : I_\eta(\tau) \times \C \rightarrow \R \times M
\]
such that $\vtil(\tau,\cdot) \equiv \wtil$ and $\vtil(t,\cdot) \in \Lambda^k_t$ for every $t \in I_\eta(\tau)$. Now
we fix $t_1 \in (\tau-\eta,\tau) \cap B$ and a $C^l$ map $\util : (-\delta,t_1) \times \C \rightarrow \R \times M$
such that $\util(0,\cdot) \equiv \util_0$ and that $\util(t,\cdot) \in \Lambda^k_t$ for every $t\in(-\delta,t_1)$.
By Lemma~\ref{bundle} we can find a $C^l$ function $\theta : (\tau-\eta,t_1) \rightarrow \R$ such that $\tau-\eta <
t < t_1$ implies $\util(t,z) = \vtil\left(t,e^{i2\pi\theta(t)}z\right) \ \forall z \in \C$. Fix a number $0 < \rho
\ll t_1 - \tau + \eta$ and a smooth function $\phi \in C^\infty(\R)$ such that $\phi \equiv 1$ on
$(-\infty,\tau-\eta+\rho)$ and $\phi \equiv 0$ on $(t_1 - \rho,+\infty)$. The map $(t,z) \mapsto \vtil\left(
t,e^{i2\pi\phi(t)\theta(t)}z \right)$ defined on $(\tau-\eta,t_1)\times \C$ agrees with $\util(t,z)$ on
$(\tau-\eta,\tau-\eta+\rho) \times \C$ and with $\vtil(t,z)$ on $(t_1-\rho,t_1) \times \C$. Thus it can be used to
glue $\util$ and $\vtil$ and to provide a map $\tilde f : (-\delta,\tau+\eta) \times \C \rightarrow \R \times M$
satisfying $\tilde f(t,\cdot) \in \Lambda^k_t \ \forall t\in(-\delta,\tau+\eta)$ and $\tilde f(0,\cdot) \equiv
\util_0$. This is a contradiction to the definition of $\tau$.
\end{proof}

The point $q\in u_0(\C)$ is a recurrent point. We know that $\hat x$ intersects $u_0$ transversely at $q$ since
$\wind_\pi(\util_0) = 0$. Hence $\exists\tau > 0$ such that $\hat x (\tau) \in u_0(\C)$. By Lemma~\ref{continuation}
we find $\delta>0$ and a $C^l$ map $\util : (-\delta,\tau+\delta)\times\C \rightarrow \R\times M$ such that
$\util(t,\cdot) \in \Lambda^k_t \ \forall t\in(-\delta,\tau+\delta)$ and $\util(0,\cdot) \equiv \util_0$. It follows
from lemmas~\ref{int4} and~\ref{int3} that $u(\tau,\C) = u_0(\C)$. Consider the set
\[
 E = \left\{ t \in (0,\tau] : u(t,\C) = u_0(\C) \right\}.
\]
It follows easily from Lemma~\ref{localinj} that
\[
 L := \inf E > 0
\]
Moreover $E$ is closed in $(0,\tau]$. In fact, let $\{t_n\} \subset E$ satisfy $t_n \rightarrow t \in (0,\tau]$.
There exists a unique sequence $\{z_n\} \subset \C$ such that $\hat x(t_n) = u(t_n,0) = u_0(z_n) \ \forall n$. We
must have $\sup_n |z_n| < \infty$ since $\util_0$ is asymptotic to $P$ and $\hat x\left( [0,\tau] \right) \cap x(\R)
= \emptyset$. Hence we may assume $z_n \rightarrow z^*$. It follows that $\hat x(t) = u(t,0) = u_0(z^*)$. Thus $t
\in E$ in view of lemmas~\ref{int4} and~\ref{int3}, concluding the proof that $E$ is closed. Consequently $L \in E$.

\begin{lemma}\label{surj}
$u \left( (0,L) \times \C \right) = M \setminus \left( u_0(\C) \cup x(\R) \right)$.
\end{lemma}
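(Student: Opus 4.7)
The plan is to set $V := u((0,L)\times\C)$ and split the proof into a trivial inclusion and a harder reverse inclusion. The easy inclusion $V \subset M\setminus(u_0(\C)\cup x(\R))$ is immediate: by Lemma~\ref{int4} every $u(t,\cdot)$ avoids $x(\R)$, and the minimality $L=\inf E$ combined with Lemma~\ref{int3} forces $u(t,\C)\cap u_0(\C)=\emptyset$ for $t\in(0,L)$. For the reverse inclusion, the strategy will be to show that $V$ is both open and closed in $M\setminus(u_0(\C)\cup x(\R))$ and that this target is connected.

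Openness will follow at once from Lemma~\ref{localinj}: around each $t_0\in(0,L)$ the map $u$ is a $C^l$-embedding of one 3-manifold into another, hence a local diffeomorphism onto an open set. For closedness, I would take a sequence $p_n=u(t_n,z_n)\to p^*\in M\setminus(u_0(\C)\cup x(\R))$ with $t_n\in(0,L)$ and extract $t_n\to t^*\in[0,L]$. The compact set $H:=\{0\}\times \hat x([0,L])$ is disjoint from $\R\times x(\R)$ (because $\hat x(\R)\cap x(\R)=\emptyset$, as noted right before Lemma~\ref{l}), so Theorem~\ref{main3}(2) provides $C^\infty_{loc}$-compactness of $\Lambda_k(H,P)$, which contains the continuous family $\{\util(t,\cdot):t\in[0,L]\}$. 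Standard uniform asymptotic behavior along such a compact family of fast planes keeps $\{z_n\}$ bounded, since otherwise $p^*=\lim u(t_n,z_n)$ would lie on $x(\R)$. Passing to $z_n\to z^*$ gives $u(t^*,z^*)=p^*$, and the boundary cases $t^*\in\{0,L\}$ are ruled out by $u(0,\C)=u(L,\C)=u_0(\C)\not\ni p^*$.

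The main task will be showing that $M\setminus D$ is connected, where $D:=u_0(\C)\cup x(\R)$. Since $\wind_\pi(\util_0)=0$, the Reeb vector $R$ is everywhere transverse to $u_0(\C)$, so the properly embedded surface $u_0(\C)\subset M\setminus x(\R)$ has trivial normal bundle and admits a tubular neighborhood $N\cong u_0(\C)\times(-1,1)$ in $M\setminus x(\R)$, co-oriented by $R$, decomposing $N\setminus u_0(\C)=N^-\sqcup N^+$ into two connected sides. By Lemma~\ref{localemb} applied to $\util_0$, the pages $u(t,\C)$ for small $t>0$ foliate a neighborhood of $u_0(\C)$ near $q$; since $\hat x'(0)=R(q)$, they sit on the $+R$ side and meet $N^+$. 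Applying Lemma~\ref{localemb} to $\util(L,\cdot)\in\Lambda^k_L$ and using Lemma~\ref{bundle} to identify the resulting local family with our $\util$ (up to a rotation parameter), the identity $\hat x'(L)=R(\hat x(L))$ analogously forces the pages $u(t,\C)$ for $t<L$ close to $L$ to meet $N^-$. Now $V$ is connected as the continuous image of the connected set $(0,L)\times\C$. Since every component of $M\setminus D$ necessarily accumulates on $D$ (otherwise it would be open-and-closed in the connected manifold $M$), $M\setminus D$ has at most two components, distinguished by whether they contain $N^+$ or $N^-$. If there were two, then $V$ would have to lie in a single component yet simultaneously meet both $N^+$ and $N^-$, a contradiction. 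The hard part will be precisely this orientation bookkeeping at the two endpoints $q$ and $\hat x(L)$ of the Reeb arc, certifying that the family of pages truly sweeps across both sides of the spanning disk $D$ as $t$ ranges over $(0,L)$.
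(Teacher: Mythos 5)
Your overall strategy — the easy inclusion, then showing $V:=u((0,L)\times\C)$ is open and closed in the connected set $\mathcal{U}:=M\setminus(u_0(\C)\cup x(\R))$ — is the same as the paper's. The openness step is right. But two points deserve attention.

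First, the closedness argument has a genuine gap as stated. You invoke $C^\infty_{loc}$-compactness of $\Lambda_k(H,P)$ and then assert ``standard uniform asymptotic behavior along such a compact family of fast planes keeps $\{z_n\}$ bounded.'' Compactness in $C^\infty_{loc}(\C,\R\times M)$ controls the maps only on compact subsets of the domain; it says nothing directly about the behavior as $|z|\to\infty$, which is exactly what you need to rule out $|z_n|\to\infty$ while $u(t_n,z_n)$ stays away from $x(\R)$. The paper does not go through Theorem~\ref{main3} here at all. Instead it applies Lemma~\ref{longcyl} to the cylinders $Z_t(s,\vartheta)=\util(t,e^{2\pi(s+i\vartheta)})$ for $t\in[0,L]$: the defining normalization of $\Lambda^k_t$ gives, uniformly in $t$, the bounds $E(Z_t)\le T$, $\int_{[s_0,\infty)\times S^1}Z_t^*d\lambda\le\gamma$ and $\int_{\{s\}\times S^1}Z_t^*\lambda\ge T-\gamma$, so Lemma~\ref{longcyl} produces $r_0$ with $\util(t,z)$ near the binding orbit for $|z|\ge r_0$, uniformly in $t\in[0,L]$. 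That uniform cylinder estimate, not $C^\infty_{loc}$-compactness, is the tool that bounds $\{B_n\}$.

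Second, the lengthy orientation bookkeeping for connectedness of $\mathcal{U}$ is unnecessary. Since $\util_0$ is fast and embedded, $\overline{u_0(\C)}=u_0(\C)\cup x(\R)$ is a compact embedded $2$-disk with boundary circle $x(\R)$ in the closed connected $3$-manifold $M$, and the complement of such a disk is always connected: a regular neighborhood of the disk is a $3$-ball $B$, whose complement $M\setminus\interior{B}$ is connected, and $B\setminus\overline{u_0(\C)}$ has two pieces each touching $\partial B$. There is no need to track which side of $u_0(\C)$ the pages lie on, nor to worry about a hypothetical two-component complement. The paper simply asserts $\mathcal{U}$ is open and connected and moves on. Your side-counting argument can be made rigorous (though the intermediate claim ``at most two components'' as you phrase it is not immediate without noting that near $\partial D$ the complement is locally connected and touches both sides), but it trades a one-line general-topology fact for delicate Reeb-direction bookkeeping that the lemma does not require.
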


\begin{proof}
Set $\mathcal{U} := M \setminus \left( u_0(\C) \cup x(\R) \right)$. Clearly $\mathcal{U}$ is open and connected. It
follows from lemmas~\ref{int4} and~\ref{int3} and from the definition of $L$ that $u \left( (0,L) \times \C \right)
\subset \mathcal{U}$. We claim $u \left( (0,L) \times \C \right)$ is closed in $\mathcal{U}$. In fact, suppose the
sequence $y_n$ satisfies $\forall n \exists t_n \in (0,L)$ such that $y_n \in u(t_n,\C)$ and $y_n \rightarrow y$ for
some $y \in \mathcal{U}$. One finds a unique sequence $B_n \in \C$ such that $u(t_n,B_n) = y_n$.

Let $\W$ be a $S^1$-invariant neighborhood of the (discrete) set of $S^1$-orbits
\[
 \{c \in C^\infty(S^1,M) : \exists \tilde P  = (\tilde x,\tilde T) \in \p^* \text{ such that } c \in \tilde P \text{ and } \tilde T\leq T \}
\]
We can assume no connected component of $\W$ contains loops in distinct classes of $C^\infty(S^1,M)/S^1$.
Let $\W_P$ be the component containing the class $P$. We can assume, without loss of generality, that $c \in \W_P
\Rightarrow c(S^1) \cap \cl{\{y_n\}} = \emptyset$.

Consider the cylinders $Z_t (s,\vartheta) := \util \left( t,e^{2\pi(s+i\vartheta)} \right)$ for $t \in [0,L]$ and
set $s_0 = (2\pi)^{-1}\log 2$. The definition of $\Lambda^k_t$ implies that
\begin{enumerate}
 \item $\int_{\{s\}\times S^1} Z_t^*\lambda \geq T-\gamma \ \forall t\in[0,L], \ \forall s\geq s_0$.
 \item $E(Z_t) = T \ \forall t\in[0,L]$.
 \item $\int_{[s_0,+\infty)\times S^1} Z_t^*d\lambda \leq \gamma \ \forall t\in[0,L]$.
\end{enumerate}
Applying Lemma~\ref{longcyl} we find $r_0 \gg 1$ such that $|z| \geq r_0 \Rightarrow \util(t,z) \not\in \cl{\{y_n\}}
\ \forall t\in [0,L]$. It follows that $\sup_n |B_n| < \infty$. We can assume $B_n \rightarrow B$ for some $B \in
\C$ and $t_n \rightarrow t^*$ for some $t^* \in [0,L]$. If $t^* = 0$ then $y_n = u(t_n,B_n) \rightarrow u(0,B)$
contradicting $y \not\in u_0(\C)$. Hence $t^* \not= 0$. Analogously $t^* \not= L$ and we have $t^* \in (0,L)$. It
follows that $y_n = u(t_n,B_n) \rightarrow u(t^*,B)$ and $(t^*,B) \in (0,L) \times \C$. We proved $y \in u \left(
(0,L) \times \C \right)$. Thus $u \left( (0,L) \times \C \right)$ is closed in $\mathcal{U}$. That $u \left( (0,L)
\times \C \right)$ is open in $\mathcal{U}$ follows easily from Lemma~\ref{localinj}.
\end{proof}

By the previous lemma we have $u \left( [0,L] \times \C \right) = M \setminus x(\R)$.

\begin{lemma}\label{inj}
$u$ is 1-1 on $(0,L) \times \C$.
\end{lemma}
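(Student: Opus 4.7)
The plan is to study the set
\[
 E := \{(s_1,s_2) \in [0,L]^2 : u(s_1,\C) = u(s_2,\C)\}
\]
and show that $E \cap (0,L)^2 \subset \Delta$ (the diagonal). Once this is established, a hypothetical failure $u(t_1,z_1) = u(t_2,z_2)$ with $(t_i,z_i) \in (0,L) \times \C$ forces $t_1 = t_2$, since otherwise lemmas~\ref{int4} and~\ref{int3} would place $(t_1,t_2) \in E \setminus \Delta$; and then $z_1 = z_2$ because $u(t_1,\cdot)$ is an embedding by Lemma~\ref{int2}. Note $E$ is closed in $[0,L]^2$: if $(s_1^n,s_2^n) \to (s_1,s_2)$ along $E$, then $C^\infty_{loc}$-continuity of $u$ together with the properness of each $u(t,\cdot) : \C \to M \setminus x(\R)$ (from lemmas~\ref{int4} and~\ref{int2}) gives $\hat x(s_1) = \lim u(s_1^n,0) \in u(s_2,\C)$, and then Lemma~\ref{int3} yields $u(s_1,\C) = u(s_2,\C)$.

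The first main step is to show that off the diagonal, $E \cap (0,L)^2$ is a $C^l$ one-dimensional submanifold given locally as the graph of a strictly increasing function. Fix $(t_1,t_2) \in E$ with $t_1 < t_2$. Since both $\util(t_i,\cdot)$ are embedded fast planes asymptotic to $P$ with the same $\mu$-index $k$, one has $\beta_{\util(t_1,\cdot)} = \beta_{\util(t_2,\cdot)}$, so Lemma~\ref{rep} yields constants $A \in \C^*, B \in \C, c \in \R$ with $\util(t_1, Az+B) = c \cdot \util(t_2,z)$; in particular $u(t_1,B) = u(t_2,0) = \hat x(t_2)$. Applying Claim~\ref{claimob} with $v=u$, $s_0 = t_2$, $t_0 = t_1$, $z_0 = B$ produces a $C^l$ strictly increasing function $\alpha$ defined near $t_2$ with $\alpha(t_2) = t_1$ and $\hat x(s) = u(\alpha(s), \zeta(s))$, so by Lemma~\ref{int3} we have $u(s,\C) = u(\alpha(s),\C)$. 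Thus $E$ contains the graph $s \mapsto (\alpha(s), s)$ near $(t_1,t_2)$. Local uniqueness of this graph (no branching of $E$) follows from Lemma~\ref{localinj}: on a small neighborhood $I_\epsilon(t_1) \times \C$ the map $u$ is injective, so $s_1'$ is uniquely determined by $s_2'$ whenever $(s_1',s_2') \in E$ lies close to $(t_1,t_2)$.

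The second main step is to rule out any connected component $C$ of $E \setminus \Delta$ inside $(0,L)^2$. Parametrize $C$ by $s_2 \in (a,b)$, with $s_1 = \alpha(s_2)$ strictly increasing; the inequality $\alpha(s_2) < s_2$ persists throughout $C$ by continuity, the initial inequality $t_1 < t_2$, and the fact that $C$ cannot accumulate on $\Delta$ (again by Lemma~\ref{localinj}). Closed loops are excluded by strict monotonicity of $\alpha$. Examining the left endpoint $s_2 \to a^+$: if $a > 0$, then $\alpha(a^+) \in [0,t_1)$; a value in $(0,L)$ would extend $C$ past $s_2=a$ via the local graph structure, contradicting maximality, while $\alpha(a^+) = 0$ would give $u(0,\C) = u(a,\C) = u_0(\C)$, forcing $a \in \{0,L\}$ by the definition of $L$, which contradicts $a \in (0,L)$; if instead $a = 0$, then $\alpha(s_2) < s_2 \to 0^+$ drives the curve into the corner $(0,0) \in \Delta$, once more contradicting Lemma~\ref{localinj} near the diagonal. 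Either way we reach a contradiction.

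The hard part is the second step: setting up the $1$-manifold/graph structure of $E \setminus \Delta$ cleanly, ruling out branching via Lemma~\ref{localinj}, and then chasing the endpoints of an arc against both the minimality of $L$ and the local rigidity near $\Delta$. The essential inputs are Claim~\ref{claimob} (local graph with $\alpha'>0$), Lemma~\ref{localinj} (uniqueness of the graph plus the diagonal obstruction), Lemma~\ref{rep} (to extract the local identification), and the defining property of $L$ (to obstruct interior boundary endpoints).
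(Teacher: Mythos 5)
Your argument is correct in substance, and it reaches the conclusion by a genuinely different route than the paper's. You work with the two-dimensional set $E = \{(s_1,s_2)\in[0,L]^2 : u(s_1,\C)=u(s_2,\C)\}$, establish that off the diagonal it is locally a $C^l$ graph $s_1=\alpha(s_2)$ with $\alpha'>0$ via Claim~\ref{claimob} and Lemma~\ref{localinj}, and then chase the \emph{left} endpoint of the arc component $C$ downward until either it hits the diagonal at the corner $(0,0)$ (killed by Lemma~\ref{localinj} applied at $t'=0$) or $\alpha$ hits $0$ (killed by the minimality of $L$). The paper instead works with a one-dimensional set
$D = \{b \in (t_1,L] : \exists\, d \in [e,b-\eta] \text{ with } u(b,0)\in u(d,\C)\}$
and chases its supremum \emph{upward} to $L$, where the contradiction $u(d,\C)=u(L,\C)=u_0(\C)$ with $d\in[e,L-\eta]\subset(0,L)$ violates $L=\inf\{t>0 : u(t,\C)=u_0(\C)\}$. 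The inputs are the same (Claim~\ref{claimob}, Lemma~\ref{localinj}, lemmas~\ref{int3} and~\ref{int4}, the definition of $L$); the directions and the bookkeeping set are different. The paper's $D$ avoids the arc-parametrization and component-maximality discussion, at the cost of needing the Lebesgue-number argument to extract the uniform $\eta$ up front; your version foregrounds the $1$-manifold structure of $E\setminus\Delta$, which is perhaps more geometric but requires you to be careful about what ``extends $C$ past $s_2=a$, contradicting maximality'' means (it works because the limit point $(\alpha(a^+),a)$ lies in $E\setminus\Delta$, hence in the component $C$ by connectedness, and the local graph at that point widens the parametrizing interval).

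Two small points you should tighten. First, ``$E$ is closed in $[0,L]^2$'' is not a direct consequence of pointwise properness of each $u(t,\cdot)$: with $u(s_1^n,0)=u(s_2^n,z_n)$ you must bound $\{z_n\}$ \emph{uniformly} over the family, which is exactly the Lemma~\ref{longcyl} estimate invoked in the proof of Lemma~\ref{surj}; properness of the single limit map $u(s_2,\cdot)$ alone does not control $z_n$ as $s_2^n$ varies. The paper's proof needs the same estimate (``Arguing as in the proof of Lemma~\ref{surj}\dots'') when passing to the supremum of $D$, so this is the same compactness ingredient, just not spelled out in your sketch. Second, in the case $a>0$ and $\alpha(a^+)=0$, the contradiction is not ``$a\in\{0,L\}$'' but simply $a\geq L$ (since $u(a,\C)=u_0(\C)$ puts $a$ in the set whose infimum defines $L$), which contradicts $a<t_2<L$ directly.
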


\begin{proof}
Suppose $(t_0,z_0) \not= (t_1,z_1)$ are points of $(0,L) \times \C$ satisfying $u(t_0,z_0) = u(t_1,z_1)$. It follows
from lemmas~\ref{int4} and~\ref{int3} that $u(t_0,\C) = u(t_1,\C)$. In view of Lemma~\ref{int2} every $u(t,\cdot)$
is 1-1. Thus $t_0 = t_1$ implies $z_0 = z_1$. Consequently we may assume $0 < t_0 < t_1 < L$. In view of
Lemma~\ref{localinj}, we can cover $[0,L]$ by finitely many open intervals $\{I_k\}$ such that $u:I_k\times \C
\rightarrow M$ is an embedding for each $k$. Examining the Lebesgue number of this cover we find $\eta>0$ such that
$u:F\times\C \rightarrow M$ is an embedding whenever $F$ is a subinterval of $[0,L]$ of length at most $\eta$. It
follows that $t_0 \leq t_1 - \eta$.

Fix $r>0$ very small. Applying Claim~\ref{claimob} to the maps $\util : (t_0-r,L] \times \C \rightarrow M$ and
$\util : (t_1-r,L] \times \C \rightarrow M$ we find $\epsilon>0$ small, $C^l$ functions
$\alpha:I_\epsilon(t_1)\rightarrow (t_0-r,L]$ and $\zeta: I_\epsilon(t_1) \rightarrow \C$ such that
$\alpha^\prime>0$, $\alpha(t_1) = t_0$ and $u(t,0) = u(\alpha(t),\zeta(t)) \ \forall t\in I_\epsilon(t_1)$. By the
properties of the number $\eta$ we must have $\alpha(t) \leq t-\eta \ \forall t\in I_\epsilon(t_1)$. Now fix
$t_1<b_0<t_1+\epsilon$, $t_0<e<\alpha(b_0)$ and consider the set
\[
 D = \left\{ b \in (t_1,L] : \exists d \in [e,b-\eta] \text{ such that } u(b,0) \in u(d,\C) \right\}.
\]
Clearly $b_0 \in D$ since $d=\alpha(b_0)$ works for $b_0 \in (t_1,L]$. It follows easily from Claim~\ref{claimob}
that $D$ is open in $(t_1,L]$.

Let $B := \sup D$. We claim $B\in D$ and $B = L$. In fact, take $\{b_n\} \subset D$, $b_n \rightarrow B^-$. We find
$d_n \in [e,B-\eta]$ and $z_n\in\C$ such that $u(b_n,0) = u(d_n,z_n)$. Arguing as in the proof of Lemma~\ref{surj}
we find $r_0 \gg 1$ such that $|z| \geq r_0 \Rightarrow \util(t,z) \not\in \hat x([0,L]) \ \forall t\in [0,L]$. Thus
$\sup |z_n| < \infty$ since $u(b_n,0) = \hat x(b_n)$. Consequently we can assume, after selecting a subsequence,
that $d_n \rightarrow d \in [e,B-\eta]$ and $z_n \rightarrow z^* \in \C$. We conclude $u(B,0) = u(d,z^*)$, proving
$B \in D$. Suppose $B < L$. Since $D$ is open in $(t_1,L]$ we find an element of $D$ larger than $B$, contradicting
the definition of $B$. Thus $L = \sup D \in D$ and $\exists d \in [e,L-\eta]$ such that $u(L,0) \in u(d,\C)$. By
lemmas~\ref{int3} and~\ref{int4} we have $u(d,\C) = u(L,\C) = u(0,\C) = u_0(\C)$. This contradicts the definition of
$L$.
\end{proof}

The proof of Lemma~\ref{foliating} is complete.


\subsubsection{Reparametrizing the foliation}

We now glue the foliation $$ \util : (-\delta,L+\delta) \times \C \rightarrow \R \times M $$ given by
Lemma~\ref{foliating} near the ends to produce a closed $S^1$-family of planes.

Let $z_0 \in \C$ be defined by $u(0,z_0) = u(L,0)$. Using Claim~\ref{claimob} we find $\epsilon>0$ small and $C^l$
functions $\alpha:I_\epsilon(L) \rightarrow \R$, $\zeta:I_\epsilon(L) \rightarrow \C$ satisfying $\alpha^\prime>0$,
$\alpha(L) = 0$, $\zeta(L) = z_0$ and $u(t,0) = u(\alpha(t),\zeta(t)) \ \forall t\in I_\epsilon(L)$.
\\

\noindent {\bf Fact:} There exist $r>0$, $L-r<c<d<L$ and an increasing diffeomorphism
\[
 F : (L-r,L) \rightarrow (\alpha(L-r),0)
\]
such that $F(t) = \alpha(t)$ on $(L-r,c)$ and $F(t) = t-L$ on $(d,L)$.

\begin{proof}[Proof of Fact]
Fix $0<\sigma<\alpha^\prime(L)$. We make some \emph{a priori} arguments. For fixed $r,c$ and $d$ as above, choose
$\phi:\R\rightarrow[0,1]$ smooth such that $\phi \equiv 0$ on $(-\infty,c)$ and $\phi \equiv 1$ on $(d,+\infty)$.
The function $G(t) := \alpha(t)(1-\phi(t)) + \sigma(t-L)\phi(t)$ satisfies the desired properties with $t-L$
replaced by $\sigma(t-L)$ if $r>0$ is small enough. Clearly $G$ can be modified to obtain $F$ as required.
\end{proof}

Let $\rho(t) > 0$ be the $C^l$ function defined on $(L-r,c)$ characterized by the identity
\[
 \int_{B_{\rho(t)}(\zeta(t))} u(\alpha(t),\cdot)^*d\lambda \equiv T-\gamma.
\]
It follows that the $C^l$ family of (fast) finite-energy planes $\vtil(t,z) = (b(t,z),v(t,z))$ defined on $(L-r,c)
\times \C$ by
\[
 \begin{aligned}
  & b(t,z) := a(\alpha(t),\rho(t)z+\zeta(t)) - a(\alpha(t),\zeta(t)) \\
  & v(t,z) := u(\alpha(t),\rho(t)z+\zeta(t))
 \end{aligned}
\]
satisfies $\vtil(t,\cdot) \in \Lambda^k_t \ \forall t\in(L-r,c)$. By Lemma~\ref{bundle} we find
$\theta:(L-r,c)\rightarrow\R$ of class $C^l$ such that $\vtil (t,e^{i2\pi\theta(t)}z ) = \util(t,z)$ $\forall (t,z)
\in (L-r,c) \times \C$. Fix $L-r<x<y<c$ and choose $C^l$ functions $t\mapsto A(t) \in \C \setminus\{0\}$, $t\mapsto
B(t) \in\C$ and $t\mapsto\tau(t) \in \R$ defined on $(L-r,c)$ satisfying the following properties. If $\psi_t(z) :=
A(t)z+B(t)$ then
\begin{enumerate}
 \item $\psi_t(z) = \rho(t)e^{i2\pi\theta(t)}z+ \zeta(t)$ and $\tau(t) = a(F(t),\zeta(t))$ if $t\in(L-r,x]$.
 \item $\psi_t(z) = z$ and $\tau(t) = 0$ if $t\in[y,c)$.
\end{enumerate}

We finally define $\tilde U : \R / L\Z \times \C \rightarrow \R \times M$ by
\begin{equation}\label{mapU}
\tilde U (t,z) =
\left\{
\begin{aligned}
 & \util (t,z) \text{ on } [0,L-r] \times \C \\
 & \left( -\tau(t) \cdot \util \right) (\alpha(t),\psi_t(z)) \text{ on } (L-r,c] \times \C \\
 & \util (F(t),z) \text{ on } (c,L].
\end{aligned}
\right.
\end{equation}
By construction $\tilde U$ is $C^l$ since $F(t) = t-L$ near $L$. Each $\tilde U (t,\cdot)$ is an embedded fast
finite-energy plane asymptotic to $P$ at the positive puncture $\infty$. It follows that $\tilde U$ satisfies (1)
and (2) of Theorem~\ref{ob2}.


\subsubsection{Each page is a global surface of section}

Let $\tilde U = (a,u)$ be the map (\ref{mapU}). We claim that $\forall s\in \R/L\Z$ and $\forall p \in M \setminus
x(\R)$ we find sequences $t^\pm_n$ such that $t^+_n \rightarrow +\infty$, $t^-_n \rightarrow -\infty$ and
$\phi_{t^\pm_n}(p) \in u(s,\C)$.

Let $p \in M \setminus x(\R)$ be arbitrary, denote by $y_0:\R\rightarrow M$ be the Reeb trajectory satisfying
$y_0(0) = p$. We have to prove that the sets
\begin{gather*}
 C^+_n = \{s \in \R/L\Z : u(s,\C) \cap y_0([n,+\infty)) \not= \emptyset \} \\
 C^-_n = \{s \in \R/L\Z : u(s,\C) \cap y_0((-\infty,n]) \not= \emptyset \}
\end{gather*}
are equal to $\R/L\Z$ for every $n\in\Z$. We only prove $C^+_n = \R/L\Z \ \forall n$, the arguments for $C^-_n$ are
analogous. Let $\omega(p)$ be the $\omega$-limit set of $p$. If $\omega(p) \cap x(\R) = \emptyset$ then we find a
neighborhood $\OO$ of $x(\R)$ in $M$ such that $y_0([n,+\infty)) \cap \OO = \emptyset$. Using Lemma~\ref{longcyl}
exactly as in the proof of Lemma~\ref{surj} we find $r > 0$ such that $|z| > r \Rightarrow u(s,z) \in \OO \ \forall
s$. It follows that $y_0([n,+\infty)) \subset u( \R/L\Z \times \cl{B_r(0)} )$. It follows easily that $C^+_n$ is
closed $\forall n$. It is also open (each $u(s,\cdot) : \C \rightarrow M\setminus x(\R)$ is an embedding transversal
to the Reeb vector) and clearly non-empty. Thus $C^+_n = \R/L\Z \ \forall n\in\Z$ in this case. Now suppose
$\omega(p) \cap x(\R) \not= \emptyset$ and fix $s\in\R/L\Z$. Since $\tilde U(s,\cdot)$ is an embedded fast
finie-energy plane asymptotic the the orbit $P$ at the positive puncture $\infty$ then we conclude from
Lemma~\ref{omegalimit} that $s\in C^+_n$ for every $n\in\Z$. The conclusion follows.

It is proved in~\cite{convex} that the Poincar\'e return map $\psi:u(s,\C)\rightarrow u(s,\C)$ is an area-preserving
diffeomorphism with respect to the smooth area form $\omega = d\lambda|_{u(s,\C)}$, for every $s$. Clearly $\int
\omega = T$. They also show that $\psi$ is conjugated to a diffeomorphism of the open unit disk $\interior{\D}$
preserving the measure $\frac{T}{\pi}dx\wedge dy$. The proof of Theorem~\ref{ob2} can now be completed as explained
in the introduction.


\appendix

\section{A geometrical characterization of the index}\label{geomindex}

For a closed interval $I$ of length less than $\pi$ such that $2\pi\Z \cap \partial I = \emptyset$ consider the
integer $\hat\mu(I)$ defined by
\[
 \begin{aligned}
  & \hat \mu(I) = 2k \text{ if } 2\pi k\in I, \\
  & \hat \mu(I) = 2k+1 \text{ if } I \subset (2\pi k,2\pi(k+1)).
 \end{aligned}
\]
This function can be extended to the set of all intervals of length less that $\pi$ by
\[
 \hat \mu(I) = \lim_{\epsilon\rightarrow 0^+} \hat \mu(I-\epsilon).
\]

Fix a smooth $\varphi:[0,1]\rightarrow \Sp(1)$ with $\varphi(0) = I$. Let $S=-J_0\varphi^\prime\varphi^{-1}$. To
any $z_0 \in \C\setminus\{0\}$ we can associate the real number
\[
 \Delta(z_0) := \Delta\arg(z(t)) = \arg(z(1)) - \arg(z(0))
\]
where $z(t)$ solves
\begin{equation}\label{ed}
\left\{
\begin{aligned}
 & -J_0 \dot z - Sz = 0 \\
 & z(0) = z_0
\end{aligned}
\right.
\end{equation}
and $\arg$ is a continuous choice of argument. The interval $I_\varphi = \{ \Delta(z_0) : z_0 \not=0\}$ has length
less than $\pi$. This follows easily from the linearity of (\ref{ed}) and is explained in~\cite{fols}. Moreover, $\det [\varphi(1)-I] = 0$
if, and only if, $2\pi\Z \cap \partial I_\varphi \not= \emptyset$. It turns out, see~\cite{hkriener}, that the
$\mu$-index given by Theorem~\ref{axiomscz} satisfies
\[
 \mu(\varphi) = \hat \mu (I_\varphi) \ \forall \varphi \in \Sigma^*.
\]
This discussion provides an extension of the $\mu$-index to paths $\varphi \not\in \Sigma^*$, which coincides with
the extension explained in Section~\ref{compactness}.

\section{Asymptotical analysis for $\bar\partial_0$}

\subsection{Proof of Theorem~\ref{asympHWZ}}

\begin{proof}
We follow~\cite{props1} and proceed in four steps.
\\

\noindent {\bf STEP 1:} There exists $\mu \in \sigma(L_N) \cap (-\infty,0)$ and a $C^{l-1}$ function
$\alpha:[1,+\infty) \rightarrow \R$ satisfying $\lim_{s\rightarrow+\infty} \alpha(s) = \mu$ and
\[
 \Vert \zeta(s,\cdot) \Vert_{L^2(S^1)} = e^{\int_1^s \alpha(\tau)d\tau} \Vert \zeta(1,\cdot) \Vert_{L^2(S^1)} \text{ for } s\geq 1.
\]

\begin{proof}[Proof of STEP 1]
We write $L^2 = L^2(S^1,\R^{2k})$ and abbreviate $\norma{\cdot}_2 = \Vert\cdot\Vert_{L^2(S^1)}$.
$\escp{\cdot}{\cdot}_2$ denotes the inner-product on $L^2$ induced by the standard euclidean structure
$\escp{\cdot}{\cdot}$ on $\R^{2k}$. If $\exists s_0>0$ such that $\norma{\zeta(s,\cdot)}_2 = 0$ then the zero set of
$\zeta(s,t)$ has a limit point in $\R^+ \times S^1$. It follows from usual arguments using the similarity principle
that $\zeta \equiv 0$ on $\R^+\times S^1$, see~\cite{props1}. Thus we assume $\norma{\zeta(s,\cdot)}_2 \not= 0 \
\forall s>0$. Define $\xi(s,t) := \zeta(s,t)\norma{\zeta(s,\cdot)}_2^{-1}$ and
\begin{equation}\label{defnalpha}
 \alpha(s) := \escp{-J_0\xi_t(s,\cdot)-S(s,\cdot)\xi(s,\cdot)}{\xi(s,\cdot)}_2 = \escp{-J_0\xi_t-S\xi}{\xi}_2.
\end{equation}
It is not hard to see that
\begin{equation}\label{eqnnormzeta}
 \norma{\zeta(s,\cdot)}_2 = e^{\int_1^s \alpha(\tau)d\tau} \norma{\zeta(1,\cdot)}_2
\end{equation}
for $s\geq 1$, and that $\xi$ satisfies
\begin{align}\label{diffeqnxi}
	\xi_s = (L_N-\alpha+\epsilon)\xi
\end{align}
where $\epsilon(s,t) := N(t) - S(s,t)$ and $L_N = -J_0\partial_t - N(t)$. By our assumptions we have estimates
$\norma{\escp{\epsilon\xi}{\xi_s}_2} \leq o(s) \norma{\xi_s}_2$ and $\norma{\escp{\epsilon_s\xi}{\xi}_2} \leq o(s)$
for some $o(s) \to 0$ as $s\to +\infty$. Since $\escp{\xi_s}{\xi}_2 \equiv 0$ and $L_N$ is self-adjoint we obtain
\begin{equation}\label{estalphaprime}
  \alpha^\prime = 2\norma{\xi_s}^2_2 - \escp{\epsilon\xi}{\xi_s}_2 + \escp{\xi}{\epsilon\xi_s} + \escp{\epsilon_s\xi}{\xi}_2 \geq 2\norma{\xi_s}_2 \left[ \norma{\xi_s}_2 - o(s) \right] - o(s).
\end{equation}

We claim $\alpha(s)$ has a limit as $s\rightarrow+\infty$.
%
To see this define
\[
 \begin{array}{ccc}
   A = \liminf_{s \rightarrow +\infty} \alpha(s) & \text{and} & B = \limsup_{s \rightarrow +\infty} \alpha(s).
 \end{array}
\]
By contradiction, suppose $A<B$. This establishes an oscillatory behavior for $\alpha$. By Kato's perturbation theory of unbounded self-adjoint operators with compact resolvent, see~\cite{kato}, $\sigma(L_N)\subset\R$ is discrete and accumulates only at $\pm\infty$. Thus 
we find $r \in (A,B) \setminus \sigma(L_N)$ and  $s^\prime_n \rightarrow +\infty$ satisfying $\alpha(s^\prime_n) =
r$ and $\alpha^\prime(s^\prime_n) \leq 0$. Denote $a = \text{dist}(r,\sigma(L_N)) > 0$. Recall that for any
(possibly unbounded, closed and densely defined) self-adjoint operator $T$ on $L^2$, if $x\not\in\sigma(T)$ then $\|
(T-x)^{-1} \|^{-1} = \text{dist}(x,\sigma(T))$ where $\left\|\cdot\right\|$ is the operator norm on
$\Lcal\left(L^2\right)$. Using this fact with $L_N$, $\xi(s,\cdot)$ and $\alpha(s)$ we obtain
\begin{equation}\label{inversebounds}
 \begin{aligned}
  \text{dist}(\alpha(s),\sigma(L_N)) &= \text{dist}(\alpha(s),\sigma(L_N))|\xi|_2 \\
  &= \text{dist}(\alpha(s),\sigma(L_N))|(L_N-\alpha(s))^{-1}(L_N-\alpha(s))\xi|_2 \\
  &\leq \text{dist}(\alpha(s),\sigma(L_N))\left\|(L_N-\alpha(s))^{-1}\right\| |(L_N-\alpha(s))\xi|_2 \\
  &= \norma{(L_N-\alpha(s))\xi}_2
 \end{aligned}
\end{equation}
whenever $\alpha(s) \not\in \sigma(L_N)$. By (\ref{diffeqnxi}) and (\ref{inversebounds}) we can estimate
\begin{equation}\label{distspectrum}
 \norma{\xi_s}_2 \geq \norma{(L_N-\alpha(s))\xi}_2 - o(s) \geq \text{dist}(\alpha(s),\sigma(L_N)) - o(s),
\end{equation}
proving that $\norma{\xi_s(s^\prime_n,\cdot)}_2 \geq a/3$ if $n$ is large enough. Equation (\ref{estalphaprime}) implies that $\alpha^\prime(s^\prime_n) \geq a^2/9$ if $n$ is large enough, contradicting $\alpha^\prime(s^\prime_n)\leq 0 \ \forall n$. 

This discussion shows that $\lim_{s\rightarrow +\infty} \alpha(s) = \mu$ exists in $[-\infty,+\infty]$. Equation
(\ref{eqnnormzeta}) proves $\mu\leq0$ since $\norma{\zeta(s,\cdot)}_2$ is bounded. One can argue similarly as before
to prove $\mu>-\infty$. Note that $\liminf_{s\rightarrow+\infty} \norma{\xi_s}_2 = 0$ since, otherwise, we could use
(\ref{estalphaprime}) to get an estimate $\alpha^\prime(s)\geq \delta>0$, for $s$ large enough, and use
(\ref{eqnnormzeta}) to prove $\norma{\zeta(s,\cdot)}_2 \rightarrow +\infty$, an absurd. Pick $s_n\rightarrow\infty$
such that $\norma{\xi_s(s_n,\cdot)}_2 \rightarrow 0$. Then (\ref{distspectrum}) proves $\mu\in\sigma(L_N)$. By
assumption we know that $\lim_{s\rightarrow+\infty} \sup_t e^{rs}\norma{\zeta(s,t)} = 0$ for some $r>0$. Again
(\ref{eqnnormzeta}) proves $\mu<0$.
\end{proof}

In view of step 1 we can write
\begin{equation}\label{formulawithxi}
 \zeta(s,t) = \Vert \zeta(1,\cdot) \Vert_{L^2(S^1)} e^{\int_1^s \alpha(\tau)d\tau} \xi(s,t)
\end{equation}
where $\Vert \xi(s,\cdot) \Vert_{L^2(S^1)} \equiv 1$ for $s\geq 1$. In the following we denote $I_r(\tau) =
[\tau-r,\tau+r]$ and $Q_r(\tau) = I_r(\tau)\times S^1$, for $r>0$.
\\

\noindent {\bf STEP 2:} For every $1<p<\infty$ the functions $\xi$ and $\alpha$ satisfy
\[
  \limsup_{\tau\rightarrow+\infty} \left[ \norma{\xi}_{W^{l,p}(Q_1(\tau))} + \norma{\alpha}_{W^{l,p}(I_1(\tau))} \right] < \infty.
\]

\begin{proof}[Proof of STEP 2]
The argument relies on the elliptic estimates (\ref{ellipticestimates}) for the $\bar\partial_0$-operator. Equation
(\ref{diffeqnxi}) can be rewritten as
\begin{equation}\label{diffeqnxi2}
 \bar\partial_0\xi + S\xi + \alpha\xi = 0
\end{equation}
where $\alpha$ is the function (\ref{defnalpha}). We will prove by induction on $m$ that
\begin{equation}\label{indstepfinal}
 \begin{aligned}
  & \limsup_{\tau\rightarrow+\infty} \norma{\xi}_{W^{m,p}(Q_1(\tau))} < \infty \ \forall m\leq l \\
  & \limsup_{\tau\rightarrow+\infty} \norma{\alpha}_{W^{m,p}(I_1(\tau))} < \infty \ \forall m\leq l
 \end{aligned}
\end{equation}
for every $1<p<\infty$. These estimates for $m=0$ and $p=2$ follow from STEP 1 and from $\norma{\xi(s,\cdot)}_2
\equiv 1$.

Choose $\beta \in C^\infty(\R,[0,1])$ such that $\beta \equiv 1$ on $[-1,1]$ and $\beta \equiv 0$ on
$\R\setminus[-2,2]$. For each $\tau \in \R$ we define $\beta^\tau(s) := \beta(s-\tau)$. Assume $m\geq1$ and
$1<p<\infty$ are arbitrary. Using (\ref{diffeqnxi2}) and (\ref{ellipticestimates}) we estimate
\begin{equation}\label{useofellest}
 \begin{aligned}
  \norma{\xi}_{W^{m,p}(Q_1(\tau))} &\leq \norma{\beta^\tau \xi}_{W^{m,p}(Q_2(\tau))} \\
  &\leq c_m \left( \norma{\bar \partial_0 \left( \beta^\tau \xi \right)}_{W^{m-1,p}(Q_2(\tau))} + \norma{\beta^\tau \xi}_{W^{m-1,p}(Q_2(\tau))} \right) \\
  &\leq c_m^\prime \left( \norma{S\xi + \alpha\xi}_{W^{m-1,p}(Q_2(\tau))} + \norma{\xi}_{W^{m-1,p}(Q_2(\tau))} \right).
 \end{aligned}
\end{equation}
The constant $c_m^{\prime}>0$ depends only on $c_m$ from (\ref{ellipticestimates}) and on the derivatives of $\beta$
up to order $m$. Thus $c_m^\prime$ is independent of $\xi$, $\alpha$ and $\tau$.

We have to distinguish two cases: $m=1$ and $m>1$. If $m=1$ then
\[
 \norma{\alpha\xi}_{L^p(Q_2(\tau))} \leq \left(\sup_{s\geq\tau-2} \norma{\alpha(s)}\right) \norma{\xi}_{L^p(Q_2(\tau))}.
\]
STEP 1 implies $\alpha$ is bounded. Since $S$ is uniformly bounded on $Q_2(\tau)$, independently of $\tau$, we
conclude from (\ref{useofellest}) that $1<p<\infty$ implies
\[
 \limsup_{\tau\rightarrow+\infty} \norma{\xi}_{W^{1,p}(Q_1(\tau))} \leq c \limsup_{\tau\rightarrow+\infty} \norma{\xi}_{L^p(Q_2(\tau))}
\]
for some $c>0$ independent of $\tau$. The case $p=2$ proves $|\xi|_{W^{1,2}(Q_1(\tau))}$ is bounded in $\tau$. By
the Sobolev embedding theorem $W^{1,2}(Q_1(\tau)) \hookrightarrow L^p(Q_1(\tau))$ $\forall 1<p<\infty$, the
embedding constant being independent of $\tau$. This proves $|\xi|_{L^p(Q_1(\tau))}$ is bounded in $\tau$, for every
$1<p<\infty$. Repeating the above argument we conclude that
\begin{equation}\label{estxi1p}
 \sup_\tau |\xi|_{W^{1,p}(Q_1(\tau))} < \infty
\end{equation}
holds for every $1<p<\infty$. As a consequence
\[
  \begin{array}{ccc}
    s \mapsto |\xi_s(s,\cdot)|_{L^p(S^1)} & \text{and} & s \mapsto |\xi_t(s,\cdot)|_{L^p(S^1)}
  \end{array}
\]
belong to $L^p(I_1(\tau))$, for every $1<p<\infty$, their norms being bounded uniformly on $\tau$. Combining the
case $p=2$ with (\ref{estalphaprime}) we have
\[
 |\alpha^\prime(s)| \leq C(|\xi_s(s,\cdot)|^2_{L^2(S^1)} + |\xi_s(s,\cdot)|_{L^2(S^1)} + 1).
\]
for some $C>0$. Now fix any $1<q<\infty$. Recall that there is a linear embedding $L^{2q}(S^1) \hookrightarrow
L^2(S^1)$ with norm $\leq1$. We estimate the first term.
\[
 \begin{aligned}
  ||\xi_s(s,\cdot)|^2_{L^2(S^1)}|^q_{L^q(I_1(\tau))} & \leq ||\xi_s(s,\cdot)|^2_{L^{2q}(S^1)}|^q_{L^q(I_1(\tau))} \\
  & = \int_{\tau-1}^{\tau+1} \int_{S^1} |\xi_s(s,t)|^{2q} dtds \\
  & = |\xi_s|^{2q}_{L^{2q}(Q_1(\tau))}.
 \end{aligned}
\]
The other terms are easier. We conclude from (\ref{estxi1p}) that
$$ \limsup_{\tau\rightarrow\infty} |\alpha|_{W^{1,q}(I_1(\tau))} < \infty. $$
This proves (\ref{indstepfinal}) whenever $1<p<\infty$ and $m=1$.

Now fix $m\geq2$ and assume that (\ref{indstepfinal}) holds for $m-1$ and every $2<p<\infty$. Note that, since
$m-1\geq1$, pointwise multiplication yields a bilinear continuous form
\[
 W^{m-1,p}(I_2(\tau)) \times W^{m-1,p}(Q_2(\tau)) \rightarrow W^{m-1,p}(Q_2(\tau)).
\]
The norm of this bilinear map in independent of $\tau$. In other words, $\exists \hat c_m>0$ independent of $\tau$,
$\alpha$ and $\xi$ such that
\[
 \norma{\alpha\xi}_{W^{m-1,p}(Q_2(\tau))} \leq \hat c_m \norma{\alpha}_{W^{m-1,p}(I_2(\tau))}\norma{\xi}_{W^{m-1,p}(Q_2(\tau))}.
\]
Now we argue as before to prove $\sup_\tau |\xi|_{W^{m,p}(Q_1(\tau))} < \infty$ whenever $2<p<\infty$. The case
$1<p\leq2$ follows easily. In particular, $$ \sup_\tau ||D^\gamma\xi(s,\cdot)|_{L^p(S^1)}|_{L^p(I_1(\tau))} < \infty
\ \forall |\gamma|\leq m, \ \forall 1<p<\infty.
$$ Differentiating (\ref{estalphaprime}) and arguing as before, using the linear embedding $L^{2p}(S^1)
\rightarrow L^2(S^1)$ for $p>1$, one proves
$$ \sup_{\tau\geq 1} |\alpha|_{W^{m,p}(I_1(\tau))} < \infty \ \text{ for arbitrary }1<p<\infty. $$
This concludes the induction argument.
\end{proof}

STEP $2$ shows that $\alpha$ is not only $C^{l-1}$ but is also $W^{l,p}_{loc}$, for every $1<p<\infty$.
\\

\noindent {\bf STEP 3:} Let $E = \ker (L_N - \mu)$. Then
\[
 \lim_{s\rightarrow+\infty} \text{dist}_{W^{1,2}}(\xi(s,\cdot),E) = 0.
\]

\begin{proof}[Proof of STEP 3]
Fix $L>0$ and let $I_n$ be a sequence of closed intervals of length greater or equal than $L$ satisfying $\inf I_n
\rightarrow +\infty$. We claim that
\begin{equation}\label{lowerboundxis}
 A_n := \inf_{s\in I_n} \norma{\xi_s(s,\cdot)}_{L^2(S^1)} \rightarrow 0
\end{equation}
If not we can assume, after taking a subsequence, that $A_n \geq \delta$ for some $\delta>0$. It follows from
estimates (\ref{estalphaprime}) that $\alpha^\prime(s) \geq \hat\delta$ on $I_n$ if $n$ is large enough, for some
$\hat\delta>0$. Denoting $I_n = [a_n,b_n]$ then we obtain $\alpha(b_n) \geq \alpha(a_n) + \hat\delta L \ \forall n$,
contradicting $\alpha(s) \rightarrow \mu$ as $s\rightarrow+\infty$. This proves (\ref{lowerboundxis}).

By STEP 2 there exists $c>0$ such that
\[
 \norma{\xi(b,\cdot)-\xi(a,\cdot)}_{W^{1,2}(S^1)} \leq c\norma{b-a}
\]
if $\min\{a,b\}\geq1$. Here we used that the assumption $l\geq3$ gives us a linear embedding $W^{l,p}(Q_1(\tau))
\hookrightarrow C^2(Q_1(\tau))$ when $p>2$, the embedding constant being independent of $\tau$. Suppose $\exists s_n
\rightarrow +\infty$ such that $\inf_n \text{dist}_{W^{1,2}}(\xi(s_n,\cdot),E)>0$. The above estimate provides $L>0$
and closed intervals $I_n$ of length $\geq L$ such that $\inf_{s\in I_n} \text{dist}_{W^{1,2}}(\xi(s,\cdot),E) \geq
\epsilon$ for some $ \epsilon>0$, and $\inf I_n \rightarrow +\infty$. Let $\tau_n \in I_n$ satisfy
$\norma{\xi_s(\tau_n,\cdot)}_2 = \inf_{s\in I_n} \norma{\xi_s(s,\cdot)}_2$. By (\ref{lowerboundxis}) we know that
$\norma{\xi_s(\tau_n,\cdot)}_2 \rightarrow 0$. Equation (\ref{distspectrum}) proves
$\norma{(L_N-\mu)\xi(\tau_n,\cdot)}_2 \rightarrow 0$. We can assume $\exists e\in W^{1,2}$ such that
\[
 \norma{\xi(\tau_n,\cdot) - e}_{W^{1,2}} \stackrel{n\rightarrow\infty}{\longrightarrow} 0
\]
in view of the $C^2$-bounds obtained from STEP 2 when $l\geq3$ and $p>2$. Thus
\[
 (L_N-\mu)e = \lim_{n\rightarrow\infty} (L_N-\mu)\xi(\tau_n,\cdot) = 0 \text{ in }L^2
\]
proving $e\in E$. This contradiction concludes the argument.
\end{proof}

\noindent {\bf STEP 4:} The functions $\xi(s,t)$ and $\alpha(s)$ satisfy
\[
 \begin{aligned}
  & \lim_{s\rightarrow +\infty} \sup_{t\in S^1} \norma{D^\gamma[\xi(s,t)-e(t)]} = 0 \ \forall |\gamma|\leq l-2 \\
  & \lim_{s\rightarrow +\infty} \norma{D^j[\alpha(s)-\mu]} = 0 \ \forall j\leq l-2
 \end{aligned}
\]
for some $e(t) \in E$.

\begin{proof}[Proof of STEP 4]
First we claim $\exists e \in E$ such that
\begin{equation}\label{existenceofevector}
 \lim_{s\rightarrow+\infty} \norma{\xi(s,\cdot)- e }_{W^{1,2}} = 0
\end{equation}

\begin{proof}[Proof of (\ref{existenceofevector})]
By STEP 2 we have $\limsup_{s\rightarrow+\infty} \norma{\xi(s,\cdot)}_{W^{2,2}} < \infty$. Again we used $p>2$ and
$l\geq 3$. Consequently, for every sequence $s_n\rightarrow+\infty$ one finds $e\in W^{1,2}$, $\norma{e}_2 = 1$, and
a subsequence $s_{n_j}$ such that $\lim_j |\xi(s_{n_j},\cdot)-e|_{W^{1,2}}=0$. Fix $e \in W^{1,2}$ obtained by
taking such a limit. STEP 3 implies $e\in E$.

We claim $\xi(s,\cdot) \rightarrow e$ in $W^{1,2}$ as $s\rightarrow+\infty$. Let $P \in \Lcal(L^2)$ denote the
orthogonal projection onto $E$. Set $\hat\xi := P\xi$ and $\eta := \hat\xi|\hat\xi|^{-1}_2$. We have
$|\hat\xi(s,\cdot)|_2 \geq \frac{1}{2}$ if $s$ is large, in view of STEP 3 and of $|\xi|_2 \equiv 1$. As noted
in~\cite{props1}, $\eta$ satisfies
\[
 \eta_s = \frac{P\epsilon\xi}{|\hat\xi|_2} - \frac{\escp{\eta}{P\epsilon\xi}_2}{|\hat\xi|_2}\eta
\]
where $\epsilon(s,t) = N(t) - S(s,t)$. Since $\exists r>0$ such that $\sup_t e^{rs}|D^\gamma\epsilon(s,t)|
\rightarrow 0 \ \forall |\gamma|\leq l$ as $s\rightarrow +\infty$, we find $C>0$ satisfying $|\eta_s|_2 \leq
Ce^{-rs}$ for $s\gg1$. Consequently we estimate using H\"older's inequality
\[
 \begin{aligned}
  \norma{\eta(\tau_n,\cdot)-\eta(\hat\tau_n,\cdot)}_2 &\leq \sqrt{|\tau_n-\hat\tau_n|\int_{\hat\tau_n}^{\tau_n}|\eta_s(s,\cdot)|^2_{L^2(S^1)}ds} \\
  & \leq \sqrt{|\tau_n-\hat\tau_n| C \int_{\hat\tau_n}^{\tau_n} e^{-2ry}dy} \stackrel{n}{\longrightarrow} 0
 \end{aligned}
\]
for every pair of sequences $\hat\tau_n\leq\tau_n$ with $\hat \tau_n\rightarrow+\infty$. This proves
$\lim_{s\rightarrow+\infty} \eta(s,\cdot)$ exists in $L^2$. By STEP 3 we know that $|\eta(s,\cdot)-\xi(s,\cdot)|_2
\rightarrow 0$ as $s\rightarrow+\infty$. We proved $\lim_{s\rightarrow+\infty} \xi(s,\cdot)$ exists in $L^2$. The
conclusion follows.
\end{proof}

Since we have $C^0$-convergence of $\xi(s,\cdot)$ to $e$, the bounds obtained by STEP 2 and the Arzel\`a-Ascoli
theorem show that $$ \xi(s,\cdot) \rightarrow e \text{ in } C^{l-2}(S^1,\R^{2k}) \text{, as } s\rightarrow+\infty.
$$ STEP 4 follows from an easy induction argument using equations (\ref{diffeqnxi}) and (\ref{estalphaprime}).
\end{proof}

Formula (\ref{formulawithxi}) and STEP 4 imply Theorem~\ref{asympHWZ}.
\end{proof}

\subsection{Proof of Lemma~\ref{megatech}}

\begin{proof}
This proof can be found in~\cite{props1}, however the statement can not. Fix $1<p<\infty$ and $\beta \in
C^\infty(\R,[0,1])$ such that $\beta \equiv 1$ on $[-1,1]$ and $\beta \equiv 0$ on $\R\setminus[-2,2]$. For each
$\tau \in \R$ we define $\beta^\tau(s) := \beta(s-\tau)$ and denote $Q_r(\tau) = [\tau-r,\tau+r]\times S^1$. We will
prove
\begin{equation}\label{indstep+}
 \lim_{\tau\rightarrow+\infty} e^{d\tau} \norma{X}_{W^{k,p}(Q_1(\tau))} = 0 \ \forall k\geq 0
\end{equation}
by induction on $k\geq0$. The case $k=0$ is a direct consequence of (\ref{goodassumptions}). Now assume
(\ref{indstep+}) for $k-1\geq 0$. There is a semi-Fredholm estimate for the $\bar
\partial_0$-operator
\begin{equation}\label{ellipticestimates}
 \norma{f}_{W^{k,p}(Z)} \leq c_k \left( \norma{\bar \partial_0 f}_{W^{k-1,p}(Z)} + \norma{f}_{W^{k-1,p}(Z)} \right).
\end{equation}
This holds for every smooth $f$ with compact support on $Z$. Here $\bar\partial_0 = \partial_s + J_0\partial_t$.
Using (\ref{perturbedcr}) we can estimate for $\tau \gg 1$
\begin{equation}\label{apriori}
 \begin{aligned}
  \norma{X}_{W^{k,p}(Q_1(\tau))} &\leq \norma{\beta^\tau X}_{W^{k,p}(Q_2(\tau))} \\
  &\leq c_k \left( \norma{\bar \partial_0 \left( \beta^\tau X \right)}_{W^{k-1,p}(Q_2(\tau))} + \norma{\beta^\tau X}_{W^{k-1,p}(Q_2(\tau))} \right) \\
  &\leq c_k^\prime \left( \norma{KX}_{W^{k-1,p}(Q_2(\tau))} + \norma{X}_{W^{k-1,p}(Q_2(\tau))} \right).
 \end{aligned}
\end{equation}
The constant $c_k^{\prime}$ depends only on $c_k$ and on the derivatives of $\beta$. By (\ref{goodassumptions}) we
can estimate
\begin{equation}\label{apriori2}
 \norma{KX}_{W^{k-1,p}(Q_2(\tau))} \leq \hat c \norma{X}_{W^{k-1,p}(Q_2(\tau))}.
\end{equation}
The constant $\hat c \geq 0$ depends on $K$ but is independent of $\tau$. By (\ref{apriori}) and (\ref{apriori2}) we
have an estimate
\begin{equation}\label{apriori3}
 \norma{X}_{W^{k,p}(Q_1(\tau))} \leq c_k^{\prime\prime} \norma{X}_{W^{k-1,p}(Q_2(\tau))}
\end{equation}
for some constant $c_k^{\prime\prime} > 0$ independent of $\tau$. The induction hypothesis implies
\begin{equation}\label{apriori4}
 \lim_{s\rightarrow+\infty} e^{d\tau} \norma{X}_{W^{k-1,p}(Q_2(\tau))} = 0.
\end{equation}
Equations (\ref{apriori3}) and (\ref{apriori4}) complete the induction step. The conclusion now follows from the
Sobolev embedding theorem.
\end{proof}

\subsection{Proof of Lemma~\ref{megatech2}}

\begin{proof}
The argument can be found in section 4 of~\cite{props1}. We only include it here because we are making a more general statement. We proceed in three steps. \\

\noindent {\bf STEP 1:} If $\int_0^1 X(s,t)dt = 0, \ \forall s$ and $0<d<\frac{1}{2}$ then $e^{\rho s} \Vert X(s,\cdot)
\Vert_{L^2([0,1])} \rightarrow 0$ as $s\rightarrow +\infty$, for any $0<\rho<d$.

\begin{proof}[Proof of STEP 1]
We abbreviate $\norma{\cdot}_2 = \Vert\cdot\Vert_{L^2([0,1])}$ and denote by $\left<\cdot,\cdot\right>$ the $L^2$
inner-product on $L^2([0,1])$. Set $g(s) := \frac{1}{2}\norma{X(s,\cdot)}^2_2$. Then
\[
 g^\prime(s) = \left< X_s,X \right> = \left< -J_0X_t+h,X \right>
\]
and
\[
 \begin{aligned}
  g^{\prime\prime}(s) &= \escp{-J_0X_{ts}}{X} + \escp{-J_0X_t}{X_s} + \escp{h_s}{X} + \escp{h}{X_s} \\
  &= 2\escp{-J_0X_t}{X_s} + \escp{h_s}{X} + \escp{h}{X_s} \\
  &= 2\norma{X_t}_2^2 + 2\escp{-J_0X_t}{h} + \escp{h_s}{X} + \escp{h}{-J_0X_t+h} \\
  &\geq 2\norma{X_t}_2^2 - 2\norma{X_t}_2\norma{h}_2 - \norma{h_s}_2\norma{X}_2 - \norma{h}_2\norma{X_t}_2 - \norma{h}_2^2 \\
  &\geq 2\norma{X_t}_2^2 - \epsilon \left( \norma{X}^2_2 + \norma{X_t}^2_2 \right) - \frac{C}{\epsilon} \left( \norma{h}^2_2 + \norma{h_s}^2_2 \right).
 \end{aligned}
\]
Here $\epsilon>0$ is arbitrarily small, $C>0$ depends only on $\epsilon$, and they are both independent of $X$, $h$
and $s$. Our hypotheses imply that $\norma{X}_2 = \norma{X_t}_2$. If $\epsilon$ is small enough we obtain
\[
 g^{\prime\prime}(s) \geq g(s) - c e^{-2ds}
\]
for some $c > 0$. Choose $0<\nu<2d$ and set $L = \frac{c}{4d^2-\nu^2}$. Consider $f(s) = g(s) + Le^{-2ds}$. We have
a differential inequality $f^{\prime\prime} \geq \nu^2f$. We used that $4d^2<1$. Since $f(s) \rightarrow 0$ as
$s\rightarrow+\infty$ we must have $g(s) \leq f(s) \leq f(s_0)e^{-\nu (s-s_0)}$. This gives the desired conclusion
since $\nu/2 \in (0,d)$ can be taken arbitrarily.
\end{proof}

\noindent {\bf STEP 2:} If $0<d<\frac{1}{2}$ then $e^{\rho s} \Vert X(s,\cdot)\Vert_{L^2([0,1])} \rightarrow 0$ as
$s\rightarrow +\infty$ for every $0<\rho<d$.

\begin{proof}[Proof of STEP 2]
Set $\alpha(s) = \int_0^1 X(s,\tau)d\tau$ and $\bar X = X - \alpha$. Then
\[
 \bar X_s + J_0 \bar X_t = h(s,t) - \int_0^1h(s,\tau)d\tau =: \bar h.
\]
By STEP 1 we have
\[
 \lim_{s\rightarrow +\infty} e^{\rho s} \Vert \bar X (s,\cdot)\Vert_{L^2([0,1])} = 0, \ \forall \ 0<\rho<d.
\]
Since
\[
 \norma{\alpha^\prime(s)} = \norma{\int_0^1 h_s(s,\tau) d\tau}  \leq Ke^{-ds}
\]
for some $K>0$ we find $M_*>0$ such that $\norma{\alpha(s)} \leq M_* e^{-ds}$. If $0<\rho<d$ then
\[
 e^{\rho s} \Vert X(s,\cdot) \Vert_{L^2([0,1])} \leq e^{\rho s} \Vert \bar X(s,\cdot) \Vert_{L^2([0,1])} + e^{(\rho-d)s}e^{ds}\norma{\alpha(s)}  \stackrel{s\rightarrow+\infty}{\longrightarrow} 0.
\]
\end{proof}

\noindent {\bf STEP 3:} If $0<d<\frac{1}{2}$ then $e^{\rho s} \Vert D^\gamma X(s,\cdot)\Vert_{L^2([0,1])} \rightarrow
0$ as $s\rightarrow +\infty$ for any $\gamma$ and $0<\rho<d$.

\begin{proof}[Proof of STEP 3]
Fix $\gamma$. We have equations
\[
 \partial_s D^\gamma X + J_0 \partial_t D^\gamma X = D^\gamma h =: h_\gamma
\]
and $\sup_t e^{ds}\norma{D^\beta h_\gamma} \rightarrow 0$ as $s\rightarrow+\infty$, for any $\beta$. The conclusion
follows from STEP~2.
\end{proof}

Lemma~\ref{megatech2} follows from STEP 3 by the Sobolev embedding theorem, since we can assume, possibly after
making $d$ smaller, that $0<d<\frac{1}{2}$.
\end{proof}

\section{Proofs of technical lemmas}\label{techproofs}

\subsection{Proofs of Lemma~\ref{generaldecay} and Corollary~\ref{oops}}

Suppose $e^{bs}\norma{z(s,t)}$ is bounded for some $b>0$. Then we can assume, possibly after making $b>0$ smaller,
that
\begin{equation}\label{decayonz}
 \lim_{s\rightarrow+\infty} \sup_{t\in S^1} e^{bs} \norma{z(s,t)} = 0.
\end{equation}
We can find a smooth $Sp(1)$-valued function $L = L(\theta,z)$ defined on a neighborhood of $\R/\Z \times 0$ such
that $LJ = J_0L$ where $J$ is the matrix (\ref{jlocalrep}). Recall that $z$ satisfies (\ref{crz}) where $S$ is given
in (\ref{matrixS}) and $J(s,t) = J\circ w(s,t)$. Denote $L(s,t) = L \circ w(s,t)$. Then $\zeta(s,t) := L(s,t)z(s,t)$
satisfies
\[
 \zeta_s+J_0\zeta_t + \Lambda\zeta = 0
\]
where
\[
 \Lambda(s,t) = \left( LS-L_s-J_0L_t \right) L^{-1}.
\]
By Lemma~\ref{importantdecay} we know that $\norma{D^\gamma \Lambda(s,t)}$, $\norma{D^\gamma L(s,t)}$ and
$\norma{D^\gamma L^{-1}(s,t)}$ are bounded $\forall \gamma$. By (\ref{decayonz}) we can estimate
\begin{equation}\label{expdecayzeta}
 \lim_{s\rightarrow+\infty} \sup_{t\in S^1} e^{bs}\norma{\zeta(s,t)} = 0.
\end{equation}
Lemma~\ref{megatech} implies
\[
 \lim_{s\rightarrow+\infty} \sup_{t\in S^1} e^{bs}\norma{D^\gamma\zeta(s,t)} = 0 \ \forall \gamma.
\]
This proves
\begin{equation}\label{bla}
 \lim_{s\rightarrow+\infty} \sup_{t\in S^1} e^{bs}\norma{D^\gamma z(s,t)} = 0 \ \forall \gamma.
\end{equation}
Equations (\ref{crateta}) can be written as
\[
 \begin{pmatrix} a_s \\ \theta_s \end{pmatrix} + \begin{pmatrix} 0 & -T_{min} \\ T_{min}^{-1} & 0 \end{pmatrix} \begin{pmatrix} a_t \\ \theta_t \end{pmatrix} + B z = 0
\]
for some smooth matrix $B(s,t)$ satisfying $\sup \norma{D^\gamma B} < \infty \ \forall \gamma$. We used $f\equiv
T_{min}$ and $df \equiv 0$ on $\R/\Z \times 0$. The exponential decay of $z$ and its derivatives prove that if $h :=
Bz$ then
\[
 \lim_{s\rightarrow+\infty} \sup_{t\in S^1} e^{bs}\norma{D^\gamma h(s,t)} = 0 \ \forall \gamma.
\]
For simplicity assume $T_{min}=1$. Then $T_0 = k$. Denoting $\Delta = \begin{pmatrix} a - ks - \sigma \\
\theta - kt - k\hat d \end{pmatrix}$ we have $\Delta_s + J_0 \Delta_t + h = 0$. Definition~\ref{behavior} implies
$\sup_{t\in S^1} \norma{\Delta(s,t)} \rightarrow 0$ as $s\rightarrow+\infty$. Lemma~\ref{megatech2} provides $r>0$
such that
\begin{equation}\label{decayonateta}
 \lim_{s\rightarrow+\infty} \sup_{t\in S^1} e^{rs}\norma{D^\gamma \Delta(s,t)} = 0 \ \forall \gamma.
\end{equation}
Equations (\ref{decayonateta}) and (\ref{bla}) prove (\ref{expgeneraldecay}). Obviously if $\util$ has
non-degenerate asymptotics then (\ref{decayonz}) is true. This completes the proof of Lemma~\ref{generaldecay}.

We now turn to the proof of Corollary~\ref{oops}. For simplicity we assume $\sigma=\hat d=0$. Define $L_\infty(t) =
L(kt,0)$. The definition of $L(s,t)$ and Lemma~\ref{generaldecay} together imply
\[
 \lim_{s\rightarrow+\infty} \sup_t e^{rs} \norma{D^\gamma[L(s,t) - L_\infty(t)]} = 0 \ \forall \gamma
\]
for some $r>0$. Again Lemma~\ref{generaldecay} and the definition of $S(s,t)$ in (\ref{matrixS}) imply
\[
 \lim_{s\rightarrow+\infty} \sup_t e^{rs} \norma{D^\gamma[\Lambda(s,t) - \Lambda_\infty(t)]} = 0 \ \forall \gamma
\]
where $\Lambda_\infty(\theta) = [L(k\theta,0)N(\theta)-kJ_0L_\theta(k\theta,0)]L(k\theta,0)^{-1}$. We compute
\[
 \begin{aligned}
  (-J_0\partial_t-\Lambda_\infty)L_\infty f &= -J_0(\partial_tL_\infty) f - J_0 L_\infty f_t \\
  &- L_\infty NL_\infty^{-1}L_\infty f + J_0(\partial_tL_\infty) L_\infty^{-1}L_\infty f \\
  &= L_\infty(-J(kt,0)f_t - Nf).
 \end{aligned}
\]
This proves $-J_0\partial_t-\Lambda_\infty$ is just a representation of the asymptotic operator $A_P$ in a different
symplectic frame. Moreover, $\Lambda(\theta)^T = \Lambda(\theta) \ \forall \theta$ since $L$ is $Sp(1)$-valued. The
asymptotic behavior of $\Lambda$ and (\ref{expdecayzeta}) allow us to apply Theorem~\ref{asympHWZ} to $\zeta(s,t)$.
The conclusion follows immediately from the formula $z(s,t)=L^{-1}(s,t)\zeta(s,t)$ and from the asymptotic behavior
of $L(s,t)$.


\begin{thebibliography}{99}
\bibitem{bishop}E. Bishop. \textit{Differentiable manifolds in complex Euclidean space.} Duke Math.
    J. {\bf 32} (1965), 1-21.
\bibitem{sftcomp}F. Bourgeois, Y. Eliashberg, H. Hofer, K. Wysocki and E. Zehnder.
    \textit{Compactness results in Symplectic Field Theory.} Geometry and Topology, Vol. 7 (2004),
    799-888.
\bibitem{conley}C. Conley and E. Zehnder. \textit{The Birkhoff-Lewis fixed point theorem and a conjecture of V. I. Arnold.} Invent. Math. {\bf 73} (1983), 33-49.
\bibitem{sft}Y. Eliashberg, A. Givental, H. Hofer. \textit{Introduction to Symplectic Field
    Theory.} Geom. Funct. Anal., Special volume, Part II (2000), 560-673.
\bibitem{filling}Y. Eliashberg. \textit{Filling by holomorphic discs and its applications, Geometry
    of low-dimensional manifolds.} 2 (Durham, 1989), London Math. Soc. Lecture Note Ser., vol. 151,
    Cambridge Univ. Press, Cambridge, (1990) 45-67.
\bibitem{franks}J. Franks. \textit{Geodesics on $S^2$ and periodic points of annulus
    homeomorphisms.} Invent. Math. {\bf 108} (1992), 403-418.
\bibitem{giroux}E. Giroux. \textit{Convexit\'e en topologie de contact.} Comm. Math. Helvetici, 66:
    637-677, 1991.
\bibitem{gromov}M. Gromov. \textit{Pseudoholomorphic curves in symplectic manifolds.} Invent. Math.
    {\bf 82} (1985), 307-347.
\bibitem{93}H. Hofer. \textit{Pseudoholomorphic curves in symplectisations with application to the
    Weinstein conjecture in dimension three.} Invent. Math. {\bf 114} (1993), 515-563.
\bibitem{hkriener}H. Hofer and M. Kriener. \textit{Holomorphic curves in contact geometry.}
    Proceedings of the conference on differential equations marking the 70th birthday of Peter Lax
    and Louis Nirenberg {\bf 65} (1999), 77-131.
\bibitem{props1}H. Hofer, K. Wysocki and E. Zehnder. \textit{Properties of pseudoholomorphic curves
    in symplectisations I: Asymptotics.} Ann. Inst. H. Poincar\'e Anal. Non Lin\'eaire {\bf 13}
    (1996), 337-379.
\bibitem{props2}H. Hofer, K. Wysocki and E. Zehnder. \textit{Properties of pseudoholomorphic curves
    in symplectisations II: Embedding controls and algebraic invariants.} Geom. Funct. Anal. {\bf
    5} (1995), no. 2, 270-328.
\bibitem{props3}H. Hofer, K. Wysocki and E. Zehnder. \textit{Properties of pseudoholomorphic curves
    in symplectizations III: Fredholm theory.} Topics in nonlinear analysis, Birkh\"auser, Basel,
    (1999), 381-475.
\bibitem{props4}H. Hofer, K. Wysocki and E. Zehnder. \textit{Properties of pseudoholomorphic curves
    in symplectizations IV: Asymptotics with degeneracies.} Contact and Symplectic Geometry,
    Cambridge University Press, 78-117, 1996.
\bibitem{char1}H. Hofer, K. Wysocki and E. Zehnder. \textit{A characterization of the tight three
    sphere.} Duke Math. J. {\bf 81} (1995), no. 1, 159-226.
\bibitem{char2}H. Hofer, K. Wysocki and E. Zehnder. \textit{A characterization of the tight three
    sphere II.} Commun. Pure Appl. Anal. {\bf 55} (1999), no. 9, 1139-1177.
\bibitem{convex}H. Hofer, K. Wysocki and E. Zehnder. \textit{The dynamics of strictly convex energy
    surfaces in $\R^4$.} Ann. of Math. {\bf 148} (1998), 197-289.
\bibitem{fols}H. Hofer, K. Wysocki and E. Zehnder. \textit{Finite energy foliations of tight
    three-spheres and Hamiltonian dynamics.} Ann. Math {\bf 157} (2003), 125-255.
\bibitem{long}H. Hofer, K. Wysocki and E. Zehnder. \textit{Finite energy cylinders with small
    area.} Ergodic Theory Dynam. Systems {\bf 22} (2002), no. 5, 1451-1486.
\bibitem{hry}U. Hryniewicz. \textit{Systems of global surfaces of section on dynamically convex energy levels.} Preprint (arXiv:1105.2077).
\bibitem{pedro}U. Hryniewicz and P. Salom\~ao. \textit{On the existence of disk-like global sections for Reeb flows on the tight $3$-sphere.} To appear in Duke Math. J. (arXiv:1006.0049).
\bibitem{kato}T. Kato. \textit{Perturbation theory for linear operators.} Springer-Verlag, New
    York, 1976.
\bibitem{kontsevich}M. Kontsevich. \textit{Enumeration of rational curves via torus actions.} In
    {\it The moduli space of curves}, Progress in Mathematics, Birkh\"auser, {\bf 129} (1997).
\bibitem{dusa}D. Mc Duff. \textit{The local behaviour of J-holomorphic curves in almost-complex
    4-manifolds.} J. Differential Geom. {\bf 34} (1991), 143-164.
\bibitem{intro}D. McDuff and D. Salamon, {\it Introduction to Symplectic Topology}, Oxford Math.
    Monogr., The Clarendon Press, Oxford (1998).
\bibitem{mcdsal}D. McDuff  and D. Salamon. \textit{$J$-holomorphic curves and symplectic topology.}
    Amer. Math. Soc. Colloq. Publ. {\bf 52} (2004).
\bibitem{rab1}P. Rabinowitz. \textit{Periodic solutions of Hamiltonian systems.} Comm. Pure Appl.
    Math., {\bf 31} (1978), 157-184.
\bibitem{rob}J. Robbin and D. Salamon. \textit{The Maslov index for paths.} Topology {\bf 32} (1993), 827-844.
\bibitem{salzehn}D. Salamon and E. Zehnder. \textit{Morse theory, the Conley index and Floer homology.} Bull. L. M. S. {\bf 22} (1990), 113-140.
\bibitem{mschwarz}M. Schwarz. \textit{Cohomology operations from $S^1$-cobordisms in Floer homology.} PhD Thesis (1995).
\bibitem{taubes}C. Taubes. \textit{The Seiberg-Witten equations and the Weinstein conjecture.}
    Geom. Topol. 11 (2007), 2117-2202.
\bibitem{thomas}C. B. Thomas. \textit{Contact and Symplectic Geometry.} Pubblications of the Newton
    Institute. {\bf 8} (1996).
\bibitem{weinstein}A. Weinstein. \textit{Periodic orbits for convex hamiltonian systems.} Ann.
    Math. {\bf 108} (1978), 507-518.
\bibitem{wendl}C. Wendl. \textit{Compactness for embedded pseudoholomorphic curves in 3-manifolds}. J. Eur. Math. Soc. (JEMS) {\bf 12} (2010), no. 2, 313-342.
\end{thebibliography}
\end{document}